\theoremstyle{plain}
\newtheorem{theorem}{Theorem}[section]
\newtheorem{lemma}[theorem]{Lemma}
\newtheorem{corollary}[theorem]{Corollary}
\newtheorem{proposition}[theorem]{Proposition}
\newtheorem{assumption}{Assumption}[section]
\newtheorem{condition}{Condition}[section]
\theoremstyle{definition}
\theoremstyle{remark}
\newtheorem{remark}{Remark}[section]
\newcommand*\circled[1]{\tikz[baseline=(char.base)]{
            \node[shape=circle,draw,inner sep=2pt] (char) {#1};}}
\tikzstyle{bag} = [align=center]
\def\E{\mathbb{E}}
\def\R{\mathbb{R}}
\def\N{\mathbb{N}}
\newcommand{\xmark}{\ding{55}}
\numberwithin{equation}{section}
\begin{document}


\title{Fast Unconstrained Optimization via Hessian Averaging and Adaptive Gradient Sampling Methods}


\author{Thomas O'Leary-Roseberry and Raghu Bollapragada}

\maketitle

\begin{abstract}

We consider minimizing finite-sum and expectation objective functions via Hessian-averaging based subsampled Newton methods. These methods allow for gradient inexactness and have fixed per-iteration Hessian approximation costs. The recent work (Na et al. 2023) demonstrated that Hessian averaging can be utilized to achieve fast $\mathcal{O}\left(\sqrt{\tfrac{\log k}{k}}\right)$ local superlinear convergence for strongly convex functions in high probability, while maintaining fixed per-iteration Hessian costs. These methods, however, require gradient exactness and strong convexity, which poses challenges for their practical implementation. To address this concern we consider Hessian-averaged methods that allow gradient inexactness via norm condition based adaptive-sampling strategies. For the finite-sum problem we utilize deterministic sampling techniques which lead to global linear and sublinear convergence rates for strongly convex and nonconvex functions respectively. In this setting we are able to derive an improved deterministic local superlinear convergence rate of $\mathcal{O}\left(\tfrac{1}{k}\right)$. For the 
expectation problem we utilize stochastic sampling techniques, and derive global linear and sublinear rates for strongly convex and nonconvex functions, as well as a $\mathcal{O}\left(\tfrac{1}{\sqrt{k}}\right)$ local superlinear convergence rate, all in expectation. We present novel analysis techniques that differ from the previous probabilistic results. Additionally, we propose scalable and efficient variations of these methods via diagonal approximations and derive the novel diagonally-averaged Newton (Dan) method for large-scale problems. Our numerical results demonstrate that the Hessian averaging not only helps with convergence, but can lead to state-of-the-art performance on difficult problems such as CIFAR100 classification with ResNets.

\end{abstract}



\section{Introduction}

We consider finite-sum optimization problems of the form
\begin{equation}\label{eq:prob_det}
    \min_{w \in \R^d} f(w) := \frac{1}{N}\sum_{i = 1}^{N} F_i(w),
\end{equation}
where the objective function $f:\R^d \rightarrow \R$ and the component functions $F_i:\R^d \rightarrow \R$ (for $i \in \{1,2,\dots,N\}$) are twice continuously differentiable functions. Problems of this form are ubiquitous in modern computing applications including machine learning \cite{Goodfellow-et-al-2016,bottou2018optimization} and scientific computing \cite{kouri2018optimization,Luo2024}.
In addition, finite-sum problems commonly arise in stochastic optimization settings when sample average approximations (SAA) of the optimization problems of the form
\begin{equation}\label{eq:prob_exp}
    \min_{w \in \R^d} f(w) := \E_{\zeta}[F(w,\zeta)],
\end{equation}
are considered. Here $\zeta$ is a random variable with associated probability space $(\Omega, \mathcal{F}, P)$, where $\Omega$ is a sample space, $\mathcal{F}$ is an event space, and $P$ is a probability distribution. The function $F: \mathbb{R}^d \times \Omega \rightarrow \mathbb{R}$ is a twice continuously differentiable function, and $\E_{\zeta}[\cdot]$ is the expectation taken with respect to the distribution of $\zeta$. For any given set of random realizations $\{\zeta_1,\zeta_2,\cdots, \zeta_N\}$ generated from the distribution $P$, an SAA problem of the form \eqref{eq:prob_det} is constructed by defining $F_i(\cdot):= F(\cdot,\zeta_i)$. In supervised machine learning, the random variable $\zeta:=(x,y)$ represents input-output data pairs and the function $f$ is a composition of a prediction function and a smooth loss function \cite{bottou2018optimization,Goodfellow-et-al-2016}. The resulting finite-sum problem is referred to as \emph{empirical risk} and the expectation problem is referred to as the \emph{expected risk} minimization problem \cite{vapnik1991principles}. In this paper, we consider algorithms for solving problem \eqref{eq:prob_det} and also suitably adapt them to problem \eqref{eq:prob_exp}. 

Several classes of methods have been proposed to solve \eqref{eq:prob_det} and \eqref{eq:prob_exp} (c.f. \cite{bottou2018optimization,nocedal1999numerical}). In this paper, we focus on subsampled Newton methods that employ gradient and Hessian approximations of the objective function in standard Newton-type methods. 
In these methods, at any given iteration $k \in \N$, the search direction $p_k$ is computed as the solution of the linear system of equations  
\begin{align*}
   \nabla^2 F_{S_k}(w_k) p_k = -\nabla F_{X_k}(w_k),
\end{align*}
where 
\begin{equation} \label{eq:sampling}
\nabla F_{X_k}(w_k) = \frac{1}{|X_k|}\sum_{i \in X_k} \nabla F_i(w_k), \quad 
    \nabla^2 F_{S_k}(w_k) = \frac{1}{|S_k|}\sum_{i \in S_k} \nabla^2 F_i(w_k).
\end{equation}
Here 
the sets $X_k, S_k$ are subsets of the index set $\{1,2,\cdots\}$ and $\zeta_i$'s are drawn independently from each other at random from the distribution $P$. In the case of the finite-sum problem \eqref{eq:prob_det}, the sets $X_k, S_k$ are subsets of the index set $\{1,2,\cdots,N\}$. The choice of the subsets $X_k, S_k$, referred to as sample sets, result in different algorithms. Several recent works have analyzed the theoretical and empirical properties for different choices of sampling sets $X_k$ and $S_k$ 
\cite{bollapragada2019exact,roosta2019sub,agarwal2017second,byrd2011use,byrd2012sample,erdogdu2015convergence,friedlander2012hybrid,pilanci2017newton,carter1991global}. 
Although the computationally efficient choice is to choose Hessian sample sizes ($|S_k|$) to be a fixed small constant, the Hessian approximations in these works require large $|S_k|$ to achieve fast local linear convergence, and increasingly large $|S_k|$ to achieve local superlinear convergence \cite{bollapragada2019exact,roosta2019sub}. However, such large Hessian sample sizes lead to high per-iteration computational cost making them unsuitable for large-scale optimization settings. 


Recently, Na et. al. \cite{na2023hessian}, have proposed and analyzed stochastic Hessian-averaged Newton methods that overcome the high per-iteration Hessian sampling costs of previous methods by instead employing a weighted average of the subsampled Hessians computed at past iterations. This approach increases the accuracy of the Hessian approximations by reducing its variance, albeit at the cost of introducing bias due to utilization of past iterates. Na et. al. established fast superlinear convergence results in high probability with exact gradients and fixed small Hessian sample sizes ($|S_k| = |S_0|$ for all $k$). However, the requirements therein of exact gradient computation at each iteration is not practical in large-scale optimization problems. In this work, we consider the gradients to be inexact and also propose deterministic Hessian-averaging methods where the Hessian samples (with fixed sample size) are chosen in a cyclic order without replacement that lead to faster deterministic superlinear rate of convergence for the finite-sum problem \eqref{eq:prob_det}.

While inexact gradients 
can be utilized in Newton-type methods,
such methods lead to slower rate of convergence which result in a significant increase in the overall number of linear system solves in these methods. Adaptive gradient sampling methods overcome this limitation where the sample sizes employed in the gradient estimation are gradually increased to increase the accuracy in gradient estimation and retain the fast convergence properties of their deterministic counterparts \cite{berahas2022adaptive, byrd2012sample,bollapragada2018adaptive,bollapragada2018progressive,cartis2018global,paquette2020stochastic,pasupathy2018sampling}. Although these methods employ increasingly accurate gradient approximations as the iterations increase ($|X_k|$ increases), they achieve similar overall gradient computational complexity as in the stochastic gradient methods (see \cite[Table 4.1]{byrd2012sample}, Table~\ref{table:complexity}).
These methods typically employ tests to control sample sizes that automatically adapt to problem settings \cite{byrd2012sample,bollapragada2018adaptive,bollapragada2018progressive,carter1991global}. In this work, we extend adaptive gradient sampling tests to the Hessian-averaging setting, where the gradient sample sizes are chosen either deterministically or randomly at each iteration. 
Our goal is a framework for fully inexact (stochastic) Hessian-averaged subsampled Newton 
methods that limit per-iteration Hessian computational costs, maintain rigorous convergence theory (i.e., global convergence with fast (superlinear) local convergence), and at the same time lead to practical algorithms.

While Hessian-based methods have sound theoretical properties, they may not be viable for modern large-scale optimization problems such as those arising in machine learning due to the storage cost of $\mathcal{O}(d^2)$ and ostensible matrix inversion cost of $\mathcal{O}(d^3)$ associated with Hessian matrix. To overcome this limitation, we propose practical diagonally approximated Newton (Dan) algorithms based on the Hessian-averaged Newton methods, similar to the one employed in Adahessian \cite{yao2021adahessian}, requiring only $\mathcal{O}(1)$ per-iteration Hessian-vector products and $\mathcal{O}(d)$ storage, making it appropriate for modern memory-constrained settings. We consider variations of this algorithm based on different weightings, and sampling schemes. Additionally, we illustrate the performance benefits of these methods on numerical examples ranging from stochastic quadratic minimization to large-scale deep learning classification, such as CIFAR100 with ResNets.

\subsection{Related Work}
We provide a concise summary of second-order methods that employ Hessian approximations in Newton-type methods and adaptive gradient sampling methods for solving \eqref{eq:prob_det} and \eqref{eq:prob_exp}. We note that this is not an extensive review of methods but rather closely related works to the methods considered in this work. 

\paragraph{Second-order methods.} Several second-order methods that employ Hessian approximations have been developed in the literature. Subsampled Newton methods are one of the main class in these methods \cite{agarwal2017second,bollapragada2019exact,byrd2011use,byrd2012sample,byrd2016inexact,erdogdu2015convergence, eisenstat1996choosing, martens2010deep,dembo1982inexact,lee2014proximal,pilanci2017newton,wang2015subsampled,xu2016sub,xu2020second}. Roosta et al. have analyzed the theoretical properties of these methods and provided convergence results in probability for the finite-sum problems \cite{roosta2019sub}. Bollapragada et al. have established convergence results in expectation for both finite-sum and expectation problems and employed conjugate gradient method for solving the Newton system of equations inexactly \cite{bollapragada2019exact}. They assumed that the individual component functions to be strongly convex to establish the results in expectation. Both these works have established that Hessian samples have to be increased to achieve superlinear rate of convergence. The empirical performance of subsampled Newton methods has been established on different problems \cite{xu2020second,berahas2020investigation,wang2019utilizing,chen2018comparison}.  Erdogdu and Montanari et al. have developed and analyzed subsampled Newton methods with truncated eigenvalue decomposition \cite{erdogdu2015convergence}. Agarwal et al. have analyzed Newton methods where the Newton system of equations are solved inexactly using a stochastic gradient method at each iteration\cite{agarwal2017second}. Subsampled Newton methods have also been adapted to nonconvex settings using trust region and cubic regularization methods \cite{xu2020newton,xing2023convergence}.

Newton-sketch methods are alternate methods for subsampled Newton methods applied to finite-sum problems \cite{pilanci2017newton,berahas2020investigation,gupta2020oversketched,lacotte2021adaptive,gower2019rsn}. These methods require access to the square root of the Hessian, which is possible when generalized linear models are considered in machine learning. Typically, sketching strategies such as randomized Hadamard transformations provide better Hessian approximations compared to subsampling strategies; however they are typically more computationally expensive due to the high per-iteration cost associated with constructing the linear system of equations \cite{berahas2020investigation}. Derezi\'{n}ski et al. proposed the Newton-Less method that employs sparse sketch matrices to reduce the computational cost of forming the approximate Hessians \cite{derezinski2021newton}. Sketching techniques have also been adapted to the distributed optimization settings \cite{bartan2023distributed}.

Subsampled Newton and Newton sketch methods require increasingly accurate Hessian approximations to achieve superlinear rate of convergence. Na et al. have proposed Hessian-averaging methods that overcome this limitation \cite{na2023hessian}. Jiang et al. have improved the global rate of convergence of \cite{na2023hessian} while maintaining similar superlinear rate of convergence \cite{jiang2024stochastic}. In these works, true gradients are employed in the step computations, thus limiting their deployment in practice. In this work, we consider inexact gradients and also consider deterministic Hessian-averaging methods in addition to stochastic Hessian-averaging methods that could further improve the superlinear rate of convergence. 



\paragraph{Adaptive gradient sampling methods.} 
Stochastic gradient methods are well-known and widely used method in machine learning. This method however suffers from slow sublinear convergence for strongly convex function due to variance in the stochastic gradient estimation. Adaptive gradient sampling methods overcome this limitation by gradually increasing the accuracy in the gradient estimation by controlling the gradient samples $|X_k|$ to ensure similar convergence guarantees as their deterministic counterparts \cite{bollapragada2023adaptive,byrd2012sample,cartis2018global,friedlander2012hybrid,paquette2020stochastic,pasupathy2018sampling,carter1991global}. Several adaptive rules have been developed to choose the gradient accuracy at each iteration within the algorithm and ``norm test" is a popular condition proposed for the unconstrained settings \cite{carter1991global,byrd2012sample,bollapragada2018adaptive,cartis2018global}. These adaptive methods achieve both optimal theoretical convergence results and first-order complexity results to achieve an $\epsilon$-accurate solution for the expectation problem. These methods have also been adapted to other problem settings, including derivative-free optimization \cite{shashaani2018astro,bollapragada2023adaptive,bollapragada2024derivative,bollapragadaWild2023adaptive} and stochastic constrained optimization \cite{beiser2023adaptive,berahas2022adaptive,xie2024constrained,bollapragada2023adaptive}. 
In this work, we will adapt these methods to the Hessian-averaging based subsampled Newton methods.  

\paragraph{Other related methods.} There are several other classes of methods for solving \eqref{eq:prob_det} and \eqref{eq:prob_exp}.  Variants of first-order methods including diagonal scaling and momentum attain good empirical performance on challenging machine learning tasks \cite{KingBa15,duchi2011adaptive,hinton2012neural,loshchilov2017decoupled}. Stochastic quasi-Newton methods that construct quadratic models of the objective function using only stochastic gradient information are a popular class of methods \cite{bollapragada2018progressive,berahas2016multi,mokhtari2015global,schraudolph2007stochastic,berahassampled,yousefi2022efficiency}.
The limited memory variants of these methods are competitive to first-order methods on several machine learning classification problems \cite{bollapragada2018progressive}. Additionally, Kronecker- factored approximate curvature (KFAC) methods have been demonstrated as powerful algorithms in stochastic optimization \cite{martens2015optimizing,ba2017distributed}.

\subsection{Contributions}
The main contributions of our work are as follows.

\begin{enumerate}
    \item We develop an adaptive Hessian-averaging algorithmic framework where we incorporate deterministic and stochastic adaptive generalizations of the ``norm condition" to choose the gradient accuracy at each iteration for solving \eqref{eq:prob_det} and \eqref{eq:prob_exp} respectively. We choose Hessian samples either in a deterministic cyclic fashion without replacement for the finite-sum problem \eqref{eq:prob_det} or randomly from the distribution $P$ for the expectation problem \eqref{eq:prob_exp}. Furthermore, we modify the Hessian-averaging scheme whenever it is not a positive-definite matrix to ensure that the Newton steps are employed at each iteration instead of skipping them (see \cite[Algorithm 1]{na2023hessian}) which ensures a convergence rate for every iteration from the start of the iteration as opposed to having a warm-up phase \cite{jiang2024stochastic}. Furthermore, such modifications automatically become inactive after the iterates enter locally strongly convex regime (see Lemma~\ref{lem:nonconvex}).
    
    \item For the finite-sum problem \eqref{eq:prob_det}, we establish \textit{deterministic} global linear (Theorem~\ref{thm:linear}) and sublinear (Theorem~\ref{thm:global_sublinear_bounded_deterministic}) convergence results by employing appropriately chosen gradient accuracy conditions for strongly convex and nonconvex functions respectively. When the iterates enter a locally strongly convex regime, we further establish \textit{deterministic} local superlinear convergence for the case where the Hessian samples are chosen in a cyclic manner without replacement as opposed to randomly choosing samples at each iteration (Theorem~\ref{thm:super_det}). This choice of Hessian samples leads to an improved superlinear rate of $\mathcal{O}\left(\frac{1}{k}\right)$ as opposed to existing results in the literature (see Table~\ref{table:conv_results}). Moreover this choice produces stronger deterministic results compared to results in probability or expectation. 
    \item We establish theoretical convergence results for the stochastic settings \eqref{eq:prob_exp} where the inaccurate gradient approximations are chosen such that the stochastic gradient accuracy conditions are satisfied and the Hessian samples are chosen randomly from the distribution $P$ at each iteration. We established similar global linear (Theorem~\ref{thm:linear_exp}) and sublinear (Theorem~\ref{thm:global_sublinear_bounded_expectation})  convergence results for strongly convex and nonconvex functions, respectively, as in the case of finite-sum problem. Furthermore, using an additional assumption related to the boundedness of the moment of iterates and local strong convexity of subsampled Hessians, we establish local superlinear convergence where the rate is $\mathcal{O}\left(\frac{1}{\sqrt{k}}\right)$ that matches  existing results in \cite{na2023hessian,jiang2024stochastic}, albeit in expectation as opposed to in probability (see Table~\ref{table:conv_results}). 
    We note that in order to prove the $\mathcal{O}\left(\frac{1}{\sqrt{k}}\right)$ expectation result and the $\mathcal{O}\left(\frac{1}{k}\right)$ deterministic result, we utilize different proof techniques than those employed in \cite{na2023hessian,jiang2024stochastic}. While the probabilistic results therein rely on Freedman's inequality for matrix martingales \cite{tropp2011freedman}, our approach decomposes the Hessian error and places the statistical sampling error at the optimum, allowing us to concentrate the sampling error via direct analysis. We overview the contributions of our convergence results relative to existing methods in Table \ref{table:conv_results}.

    \item We establish total computational complexity for Hessian (Corollary \ref{corr:iter_complexity_det}) and gradient (Corollary \ref{thm:grad_complexity}) computations necessary  to achieve an $\epsilon$-accurate solution. To allow for appropriate comparisons with existing results in the literature, we only considered globally strongly convex functions. Although, the gradient samples are increasing at each iteration, the overall complexity in terms of total gradient samples match with the well-known stochastic gradient method for the expectation problem in terms of the dependence on $\epsilon$. Furthermore, the dependence on the condition number of the problem is improved due to the Hessian-averaging techniques.These results are summarized in Table \ref{table:complexity}.
    
    \item To tackle large-scale problems, we motivate the use of Hessian-matrix products, which can be efficiently implemented via vectorization on GPUs, leading to algorithms with $\mathcal{O}(1)$ Hessian-vector products per iteration and $\mathcal{O}(d)$ memory requirements. In particular we utilize a diagonal approximation, which can be efficiently computed via randomized estimation \cite{MartinssonTropp2020,dharangutte2023tight}, leading to the novel diagonally-averaged Newton (Dan) method and its variants. 
    
    \item We demonstrate the numerical benefits of Hessian averaging on a range of numerical experiments ranging from stochastic quadratics, logistic regression, image classification (CIFAR10 and CIFAR100) as well as neural operator training.
    We demonstrate that Hessian averaging overcomes the inherent instability of fully-subsampled Newton methods.
    In order to target large-scale optimization problems, we propose efficient implementation of Dan using randomized Hutchinson diagonal estimation.    
    In our experiments, we demonstrate that Dan was competitive with Adam and Adahessian (often achieving superior performance); notably Dan does not employ gradient momentum.

\end{enumerate}
\begin{table}[H]
\centering
{\renewcommand{\arraystretch}{1.3}
\begin{tabular}{|c|c|c|c|c|c|c|}
\hline
    result      & objective & gradient & Hessian & global & \begin{tabular}{@{}c@{}}local \\ (superlinear)\end{tabular} & \begin{tabular}{@{}c@{}}result \\ type\end{tabular}\\ \hline
\cite{roosta2019sub} &    \begin{tabular}{@{}c@{}} strong convex \\ finite-sum \end{tabular}      &   subsampled       & subsampled      &   linear    & asymptotic      & $\mathbb{P}$\\ \hline
\cite{bollapragada2019exact}          &     \begin{tabular}{@{}c@{}} strong convex \\ expectation \end{tabular}   &    subsampled      &  subsampled        &   linear     & asymptotic     & $\mathbb{E}$ \\ \hline
\cite{na2023hessian,jiang2024stochastic}         &   \begin{tabular}{@{}c@{}} strong convex \\ finite-sum \end{tabular}    &  exact       & path-averaged      &  linear   &    $\mathcal{O}\left(\sqrt{\tfrac{\log k}{k}}\right)$   & $\mathbb{P}$ \\ \hline
\multirow{2}{*}{\begin{tabular}{@{}c@{}} \enskip \\ Theorem \ref{thm:super_det}\end{tabular}} &     nonconvex      & \multirow{2}{*}{ \begin{tabular}{@{}c@{}} \enskip \\ adaptive\end{tabular}} & \multirow{2}{*}{\begin{tabular}{@{}c@{}} \enskip \\ path-averaged\end{tabular}} &    sublinear    & \multirow{2}{*}{{\begin{tabular}{@{}c@{}} \enskip \\ $\mathcal{O}\left(\frac{1}{k}\right)$\end{tabular}}} & \multirow{2}{*}{{\begin{tabular}{@{}c@{}} \enskip \\ deterministic\end{tabular}}} \\ \cline{2-2} \cline{5-5}
                        &    \begin{tabular}{@{}c@{}} strong convex \\ finite-sum\end{tabular}       &                  &                   &     linear   &                   &                   \\ \hline
\multirow{2}{*}{\begin{tabular}{@{}c@{}} \enskip \\ Theorem \ref{thm:super_stoc}\end{tabular}} &     nonconvex      & \multirow{2}{*}{\begin{tabular}{@{}c@{}} \enskip \\ adaptive\end{tabular}} & \multirow{2}{*}{\begin{tabular}{@{}c@{}} \enskip \\ path-averaged\end{tabular}} &    sublinear    & \multirow{2}{*}{{\begin{tabular}{@{}c@{}} \enskip \\ $\mathcal{O}\left(\frac{1}{\sqrt{k}}\right)$\end{tabular}}} & \multirow{2}{*}{{\begin{tabular}{@{}c@{}} \enskip \\ $\mathbb{E}$\end{tabular}}} \\ \cline{2-2} \cline{5-5}
                        &     \begin{tabular}{@{}c@{}} strong convex \\ expectation\end{tabular}      &                   &                   &    linear    &                   &                   \\ \hline

\end{tabular}
}
\caption{Comparison of convergence results. Here $\mathbb{P}$ denotes that the result is probabilistic, while $\mathbb{E}$ denotes that the result is proven in expectation. }
\label{table:conv_results}
\end{table}

\subsection{Organization}
The paper is organized into six sections. In the remainder of this section, we define notation that is employed throughout the paper. In Section~\ref{sec:alg}, we first describe the Hessian averaging methods and then discuss adaptive gradient-accuracy conditions and the corresponding sample size requirements to satisfy these conditions. In Section~\ref{section:det_analysis}, we establish theoretical results for the finite-sum minimization problem \eqref{eq:prob_det}. We establish global convergence, superlinear local convergence, followed by global to local transition iteration complexity results. In Section~\ref{section:stoch_analysis}, we repeat this analysis for the expectation based sampling methods whose target is the expectation minimization problem \eqref{eq:prob_exp}. We also provide iteration and gradient evaluation complexity results. In Section~\ref{sec:practical}, we discuss practical algorithms that are designed to handle large-scale problems. In Section~\ref{section:numerics}, we illustrate the performance of the proposed algorithms on various problems. Finally, concluding remarks 
are presented in Section~\ref{sec:conclusion}.

\subsection{Notation}

We use the following notation throughout the paper. 
We work with the natural numbers $\mathbb{N}= \{1,2,\cdots\}$, positive integers $\mathbb{Z}^+ = \mathbb{N}\cup\{0\}$, and the reals $\mathbb{R} = (-\infty,+\infty)$.
We consider real-valued ($\mathbb{R}$) spaces $\mathbb{R}^{d}, \mathbb{R}^{d\times r}$ for vectors and matrices, respectively, for dimensions $d,r \in \mathbb{N}$. 
We use the Euclidean $\ell^2$ norm, $\|\cdot \| = \|\cdot\|_2$ for both vectors and matrices, unless otherwise specified.
Assuming $A \in \mathbb{R}^{d\times d}$ is symmetric positive definite, the $A$ weighted norm is defined as $\|w\|_A = \sqrt{w^TAw}$.
Given a symmetric but potentially indefinite matrix $A$, we denote by $\lambda_{\min}(A), \lambda_{\max}(A)$ the smallest and largest eigenvalues of $A$, respectively and $|A|$ denotes the matrix obtained by replacing the negative eigenvalues with their magnitudes.
We denote by $\mathbb{E}_k[\cdot]$ the conditional expectation conditioned on the fact that the algorithm reach iterate $w_k$. When using more specific conditional expectations, definitions will be specified in the text. 
Given a sequence $\{e_k\in \mathbb{R}\}_{k=1}^{\infty}$ with a limit $e^*$, we characterize its rate of convergence as follows. We say that $e_k$ has Q-convergence with order $q$ to $e^* \in \mathbb{R}$ if there exists a constant $C$ such that
\begin{equation*}
    \lim_{k\rightarrow\infty} \frac{e_{k+1}}{e_k^q} = C.
\end{equation*}
For all cases with $q>1$, and the case that $q=1$ and $C = 0$, we say that $e_k$ convergences Q-superlinearly to $e^*$. 
We say that $e_k$ has R-convergence with order $q$ to $e^*$ if there exists a sequence $r_k$, such that $|e_k - e^*| < r_k$ for all $k$ and $r_k$ has Q-convergence with order $q$ to $0$. R-superlinear convergence is attained when $r_k$ has Q-superlinear convergence to $0$.
We note by $\mathcal{\tilde{O}}(\cdot)$ the limiting behavior of a function, disregarding logarithmic factors.

\section{Hessian Averaging with Adaptive Gradient Sampling}
\label{sec:alg}

The generic iterate update form of Hessian-averaged Newton method for solving problems of the form \eqref{eq:prob_det} or \eqref{eq:prob_exp} is given as,
\begin{equation}\label{eq:iter}
 w_{k+1} = w_k - \alpha_k p_k, \quad p_k = \widetilde{H}_k^{-1} g_k,     
\end{equation}
where $\alpha_k > 0$ is the step size parameter, and $g_k \in \R^d, \widetilde{H}_k \in \R^{d \times d}$, are the gradient and Hessian approximations of $\nabla f(w_k)$ and $\nabla^2 f(w_k)$ respectively. We now discuss the algorithmic components of Hessian averaging to compute $\widetilde{H}_k$ and adaptive gradient sampling to compute $g_k$ for each $w_k$, with $k \in \mathbb{Z}^+$.
  
\paragraph{Hessian Averaging.} 

In typical subsampled Newton methods, Hessians are approximated via subsampling where the sample sets are chosen either in a deterministic cyclic fashion or randomly drawn from $P$ as given in \eqref{eq:sampling}. Though these estimates are unbiased, they typically have large variance and require either large or increasing sample sizes $|S_k|$ to achieve fast local linear or superlinear rates of convergence respectively \cite{bollapragada2019exact,roosta2019sub}. Additionally, subsampled Newton iterates that are based on aggressively subsampled Hessian approximations may become unstable, due to the injection of statistical sampling errors in the iterate update. 
Sample size selection procedures that overcome these issues are not computationally viable in large-scale settings. Hessian-averaging approaches overcome this computational hurdle and reduce the variance in the estimation by employing a path-averaged Hessian with coefficients $\gamma_i \in [0,1]$, $\sum_{i=0}^k\gamma_i = 1$, defined as follows:
\begin{equation}\label{eq:avrg_hessian}
	\text{Path-averaged Hessian:} \qquad \widehat{H}_k = \sum_{i=0}^k \gamma_i\nabla^2 F_{S_i}(w_i).
\end{equation}
Here $S_i$ are the independent sample sets drawn either deterministically without replacement in a cyclic fashion or at random at each iteration $i$. The Hessian approximation error corresponding to this path-averaged estimate can be decomposed into two terms as follows:  

\begin{align} \label{eq:hessian_error_decomposition}
	\widehat{H}_k - \nabla^2 f(w_k) = \underbrace{\sum_{i=0}^k \gamma_i \left(\nabla^2 F_{S_i}(w_i) - \nabla^2 F_{S_i}(w_k)\right)}_{\text{Hessian memory error}} + \underbrace{\sum_{i=0}^k \gamma_i\nabla^2 F_{S_i}(w_k) -\nabla^2 f(w_k)}_{\text{sampling error}}.
\end{align}
Choosing $\gamma_i=0$ for all $i=0,\cdots,k-1$ and $\gamma_k = 1$ correspond to the well-known subsampled Newton methods \cite{bollapragada2019exact,roosta2019sub} and results only in the \emph{sampling error} which could be significantly large when sample size $|S_k|$ is small. On the other hand, choosing $\gamma_i = \frac{1}{k+1}$ for all $i=0,\cdots,k,$ reduces the \emph{sampling error} but introduces \emph{Hessian memory error} due to utilization of past information. However, as the iterates converge, both these errors decrease leading to accurate Hessian approximations which is the main motivation for this approach.

In nonconvex settings, the path-averaged subsampled Hessians $\widehat{H}_k$ \eqref{eq:avrg_hessian} may not be positive-definite and so the computation of search direction $\widehat{H}_k^{-1}g_k$ may not be well-defined. To overcome this limitation, we modify the path-averaged subsampled Hessians to ensure it is spectrally lower bounded below. That is, for any given $\tilde{\mu} > 0$, we define the Hessian approximation as  
\begin{equation}\label{eq:htilde_def}
    \widetilde{H}_k = 
    \begin{cases} 
        |\widehat{H}_k| \quad &\mbox{if } \lambda_{\min}(|\widehat{H}_k|) \geq \tilde{\mu} \\
        |\widehat{H}_k| + \left(\tilde{\mu} - \lambda_{\min}(|\widehat{H}_k|)\right)I \quad &\mbox{otherwise},
    \end{cases}
\end{equation}
where $\lambda_{\min}(A)$ is the smallest eigenvalue of symmetric matrix $A\in\R^{d \times d}$, and $|A|$ denotes the matrix obtained by replacing the negative eigenvalues with their magnitudes. That is, for any symmetric matrix $A = U\Lambda U^T$, $U\in\R^{d\times d}$ is the orthogonal matrix, and $\Lambda \in \R^{d\times d}$ is the diagonal matrix with eigenvalues,  $|A| = U|\Lambda| U^T$. This modification ensures that 
\begin{equation}\label{eq:spec_lower_bnd}
    \widetilde{H}_k \succeq \tilde{\mu} I. 
\end{equation}
and in the case where $ \widehat{H}_k \succeq \tilde{\mu} I$, there is no modification to the path-averaged Hessian ($\widetilde{H}_k = \widehat{H}_k$).

\paragraph{Adaptive Gradient Sampling.} The performance of the algorithms based on the iterate update form given in \eqref{eq:iter} also depends on the accuracy of the gradient approximations. In \cite{na2023hessian}, the authors consider exact gradients, which is not practical in large-scale finite-sum problems \eqref{eq:prob_det} or expectation problems \eqref{eq:prob_exp}. Subsampled gradients overcome this limitation, however the sampling errors that they introduce lead to slow convergence. Furthermore, due to the additional costs due to the Hessian evaluations, it is imperative to reduce the number of overall iterations to achieve computational efficiency. Adaptive sampling approaches gradually increase the accuracy in the gradient estimation via increasing sample sizes used in the gradient approximation to achieve fast convergence. These methods ensure that the error in the gradient approximation at each iteration is relatively small compared to the gradient itself. In this work, we combine these approaches with Hessian averaging approaches leading to generalized versions of the norm condition \cite{byrd2012sample,bollapragada2018adaptive,carter1991global} adapted to Newton-type methods. Specifically, we consider the following deterministic and stochastic conditions on the accuracy of the gradient approximations for the finite-sum \eqref{eq:prob_det} and expectation \eqref{eq:prob_exp} problems. 

\begin{condition}{(Gradient sampling conditions).}\label{condition:expected_gradient_bounds}
At each iteration $k \in \mathbb{Z}^+$ and a given symmetric positive-definite matrix $A_k \in \R^{d \times d}$, we utilize the following conditions. 
\begin{subequations}
\begin{enumerate}
\item Deterministic norm condition: For any given $\theta_k \in [0,1]$ and $\iota_k > 0$, the error in the gradient approximation satisfies the following deterministic norm condition. 
\begin{equation}\label{eq:deter_norm}
\|g_k - \nabla f(w_k)\|_{A_k}^2 \leq \theta_k^2\|\nabla f(w_k)\|_{A_k}^2 + \iota_k.
\end{equation}
\item Stochastic norm condition: For any given $\theta_k > 0$ and $\iota_k > 0$, the expected error in the gradient approximation satisfies the following generalized norm condition. 
\begin{equation}\label{eq:expected_norm}
\mathbb{E}[\|g_k - \nabla f(w_k)\|_{A_k}^2 | w_k, A_k] \leq \theta_k^2\|\nabla f(w_k)\|_{A_k}^2 + \iota_k,
\end{equation}
where $\mathbb{E}[\cdot | w_k, A_k]$ denote the conditional expectation conditioned on $A_k$ and that the algorithm reach iterate $w_k$.
\end{enumerate}
\end{subequations}
\end{condition}
\begin{remark}
We note that these gradient sampling conditions reduce to the well-known ``norm condition" for the choice of $A_k = I$ and $\iota_k=0$. In this paper, we consider this choice along with $A_k = \widetilde{H}_k^{-1}$ that leads to better theoretical convergence results (see Theorems ~\ref{thm:linear} and ~\ref{thm:global_sublinear_bounded_expectation}). In addition, the sequence $\iota_k$ further relaxes the norm condition and by suitably driving this sequence to zero, we establish the convergence and rate of convergence results. 
\end{remark}

These gradient sampling conditions are satisfied by choosing the gradient sample sizes $|X_k|$ appropriately. Specifically, under the following assumption, we can establish bounds on $|X_k|$ that satisfy these conditions.
\begin{assumption}{(Gradient approximations).}\label{assum:bnd_var}
For all $w\in\R^d$, the individual component gradients are bounded relative to the gradient of the objective function $f(w)$. That is, 
\begin{subequations}
\begin{enumerate}
    \item For the finite-sum problem: There exists constants $\beta_{1,g},\beta_{2,g} \geq 0$ such that
    \begin{equation}\label{eq:det_grad_var_bnd}
        \|\nabla F_i(w)\|^2 \leq \beta_{1,g} \|\nabla f(w)\|^2 + \beta_{2,g} \quad \forall i\in \{1,2,\cdots,N\}.
    \end{equation}
     \item For the expectation problem: There exists constants $\sigma_{1,g}, \sigma_{2,g} \geq 0$ such that
    \begin{equation}\label{eq:exp_grad_var_bnd}
        \E_{\zeta}[\|\nabla F(w,\zeta) - \nabla f(w)\|^2| w] \leq \sigma_{1,g}^2\|\nabla f(w)\|^2 + \sigma_{2,g}^2.
    \end{equation}
\end{enumerate}
\end{subequations}
\end{assumption}
\begin{remark}
    We note that \eqref{eq:det_grad_var_bnd} is a weaker assumption compared to the assumption where the individual gradient components are absolutely bounded and  
    \eqref{eq:exp_grad_var_bnd} is a standard assumption in stochastic optimization literature \cite{bottou2018optimization}. 
\end{remark}

The following lemma establishes the bounds on gradient sample sizes $|X_k|$ at each iteration $k$.

\begin{lemma}\label{lem:samplesizes}
    Suppose Assumption~\ref{assum:bnd_var} holds. For any $k \in \mathbb{Z}^+$  and $\lambda_{\min}(A_k), \lambda_{\max}(A_k) \in (0,\infty)$ denote the smallest and largest eigenvalues of $A_k$ respectively, we have that
    \begin{subequations}
    \begin{enumerate} 
        \item If \eqref{eq:det_grad_var_bnd} holds and 
        \begin{equation}\label{eq:gradsamp_bnd_det}
            |X_k| \geq N \left(1 - \sqrt{\frac{\theta_k^2\|\nabla f(w_k)\|_{A_k}^2 + \iota_k}{4\lambda_{\max}(A_k) ( \beta_{1,g}\|\nabla f(w_k)\|^2 + \beta_{2,g})}} \right),
        \end{equation}
        then deterministic norm condition \eqref{eq:deter_norm} is satisfied.
        \item If \eqref{eq:exp_grad_var_bnd} holds, $\E[g_k | w_k] = \nabla f(w_k)$, and 
        \begin{equation}\label{eq:gradsamp_bnd_exp}
            |X_k| \geq \frac{\lambda_{\max}(A_k)(\sigma_{1,g}^2\|\nabla f(w_k)\|^2 + \sigma_{2,g}^2)}{\theta_k^2\|\nabla f(w_k)\|_{A_k}^2 + \iota_k},
        \end{equation}
        then stochastic norm condition \eqref{eq:expected_norm} is satisfied.
    \end{enumerate}
    \end{subequations}
\end{lemma}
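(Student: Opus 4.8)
The plan is to prove each bound separately by relating the weighted norm $\|\cdot\|_{A_k}$ to the Euclidean norm and then invoking standard sample-average-approximation variance bounds for sampling without replacement (deterministic case) and with replacement (stochastic case).

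\textbf{Deterministic case.} First I would note that for any vector $v$, $\|v\|_{A_k}^2 = v^T A_k v \le \lambda_{\max}(A_k)\|v\|^2$, so it suffices to show $\|g_k - \nabla f(w_k)\|^2 \le \frac{1}{\lambda_{\max}(A_k)}\left(\theta_k^2\|\nabla f(w_k)\|_{A_k}^2 + \iota_k\right)$, and in fact it will be cleaner to bound $\|g_k - \nabla f(w_k)\|^2$ in Euclidean norm and then require that this Euclidean bound times $\lambda_{\max}(A_k)$ is at most the right-hand side of \eqref{eq:deter_norm}. Since $g_k = \nabla F_{X_k}(w_k)$ is an average over $|X_k|$ indices drawn without replacement from $\{1,\dots,N\}$, the standard without-replacement deviation bound gives $\|\nabla F_{X_k}(w_k) - \nabla f(w_k)\|^2 \le \frac{N - |X_k|}{|X_k|(N-1)} \cdot (\text{something})$; here, rather than using the population variance, I would use the cruder but assumption-free bound that each $\|\nabla F_i(w_k) - \nabla f(w_k)\|$ term is controlled, and more simply bound $\|\nabla F_{X_k}(w_k) - \nabla f(w_k)\| \le \frac{N-|X_k|}{|X_k|}\max_i\|\nabla F_i(w_k) - \nabla f(w_k)\|$ type estimate — but to match the factor $4$ and the square-root form appearing in \eqref{eq:gradsamp_bnd_det}, the intended route is: use $\left(1 - \frac{|X_k|}{N}\right)^2 \le \frac{\theta_k^2\|\nabla f(w_k)\|_{A_k}^2 + \iota_k}{4\lambda_{\max}(A_k)(\beta_{1,g}\|\nabla f(w_k)\|^2 + \beta_{2,g})}$, which is exactly the rearrangement of \eqref{eq:gradsamp_bnd_det}, combined with a deterministic bound of the form $\|\nabla F_{X_k}(w_k) - \nabla f(w_k)\|^2 \le 4\left(1 - \frac{|X_k|}{N}\right)^2(\beta_{1,g}\|\nabla f(w_k)\|^2 + \beta_{2,g})$. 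This last inequality follows by writing the sampling error in terms of the complementary index set and applying \eqref{eq:det_grad_var_bnd} together with $\|\nabla f(w_k)\|^2 \le \beta_{1,g}\|\nabla f(w_k)\|^2 + \beta_{2,g}$ (so the "$4$" absorbs cross terms via $\|a+b\|^2 \le 2\|a\|^2 + 2\|b\|^2$ applied twice, or a direct Jensen/triangle estimate). I would then chain: $\|g_k - \nabla f(w_k)\|_{A_k}^2 \le \lambda_{\max}(A_k)\|g_k - \nabla f(w_k)\|^2 \le \lambda_{\max}(A_k)\cdot 4(1-|X_k|/N)^2(\beta_{1,g}\|\nabla f(w_k)\|^2+\beta_{2,g}) \le \theta_k^2\|\nabla f(w_k)\|_{A_k}^2 + \iota_k$, where the final inequality is precisely the hypothesis \eqref{eq:gradsamp_bnd_det}.

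\textbf{Stochastic case.} Here $g_k$ is an unbiased average of $|X_k|$ i.i.d. samples $\nabla F(w_k,\zeta_i)$, so $\E_k\|g_k - \nabla f(w_k)\|^2 = \frac{1}{|X_k|}\E_\zeta\|\nabla F(w_k,\zeta) - \nabla f(w_k)\|^2 \le \frac{1}{|X_k|}(\sigma_{1,g}^2\|\nabla f(w_k)\|^2 + \sigma_{2,g}^2)$ by \eqref{eq:exp_grad_var_bnd}. Then $\E_k\|g_k - \nabla f(w_k)\|_{A_k}^2 \le \lambda_{\max}(A_k)\E_k\|g_k - \nabla f(w_k)\|^2 \le \frac{\lambda_{\max}(A_k)}{|X_k|}(\sigma_{1,g}^2\|\nabla f(w_k)\|^2 + \sigma_{2,g}^2)$, and requiring this to be at most $\theta_k^2\|\nabla f(w_k)\|_{A_k}^2 + \iota_k$ is exactly the rearrangement giving \eqref{eq:gradsamp_bnd_exp}. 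This direction is essentially immediate once the conditional-expectation variance identity for i.i.d. averages is in place.

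\textbf{Main obstacle.} The only delicate point is the deterministic case: pinning down exactly which elementary inequality produces the constant $4$ and the complementary-set scaling $(1 - |X_k|/N)$ rather than $(1 - |X_k|/N)/(N-1)$-type factors, and making sure the bound is genuinely deterministic (worst-case over the draw of $X_k$) rather than in expectation. I expect the cleanest argument writes $\nabla F_{X_k}(w_k) - \nabla f(w_k) = \frac{1}{|X_k|}\sum_{i\in X_k}(\nabla F_i(w_k) - \nabla f(w_k))$, bounds its norm by $\frac{1}{|X_k|}\sum_{i\in X_k}\|\nabla F_i(w_k) - \nabla f(w_k)\| \le \max_i(\|\nabla F_i(w_k)\| + \|\nabla f(w_k)\|)$, and then — to recover the $(1-|X_k|/N)$ factor — instead uses the identity $\sum_{i\in X_k}(\nabla F_i - \nabla f) = -\sum_{i\notin X_k}(\nabla F_i - \nabla f)$ to also get the bound $\frac{N-|X_k|}{|X_k|}\max_i\|\nabla F_i(w_k)-\nabla f(w_k)\|$; squaring, using $(a+b)^2\le 2a^2+2b^2$ on $\|\nabla F_i\|\le\|\nabla F_i-\nabla f\|+\|\nabla f\|$ won't be needed if one bounds $\max_i\|\nabla F_i(w_k)-\nabla f(w_k)\|^2 \le 2\max_i\|\nabla F_i(w_k)\|^2 + 2\|\nabla f(w_k)\|^2 \le 2\beta_{1,g}\|\nabla f(w_k)\|^2 + 2\beta_{2,g} + 2\|\nabla f(w_k)\|^2$, and then crudely absorbing $2\|\nabla f(w_k)\|^2 \le 2\beta_{1,g}\|\nabla f(w_k)\|^2$ only if $\beta_{1,g}\ge 1$ — so more likely the factor $4$ comes from a slightly different grouping. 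I would present the deterministic bound via the complementary-index identity and a clean application of \eqref{eq:det_grad_var_bnd}, deferring the precise bookkeeping of the constant to a short computation, and state explicitly that \eqref{eq:gradsamp_bnd_det}–\eqref{eq:gradsamp_bnd_exp} are nothing more than the rearrangements of the two chained inequalities above for $|X_k|$.
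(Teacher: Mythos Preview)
Your proposal is correct and follows essentially the same route as the paper: bound the $A_k$-weighted error by $\lambda_{\max}(A_k)$ times the Euclidean error, then invoke a deterministic sampling bound (finite-sum case) or the i.i.d.\ variance identity (expectation case), and finally rearrange the sample-size condition. The stochastic case is exactly as you wrote it.

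The only place you hedge is the constant $4$ in the deterministic bound, and your attempted derivations via $\|\nabla F_i - \nabla f\|$ are slightly off track. The paper (following Friedlander--Schmidt) does \emph{not} center at $\nabla f$; instead it writes
\[
g_k - \nabla f(w_k) = \frac{N-|X_k|}{N|X_k|}\sum_{i\in X_k}\nabla F_i(w_k) - \frac{1}{N}\sum_{i\notin X_k}\nabla F_i(w_k),
\]
applies the triangle inequality, and bounds each $\|\nabla F_i(w_k)\|$ by $\sqrt{\beta_{1,g}\|\nabla f(w_k)\|^2+\beta_{2,g}}$ directly from \eqref{eq:det_grad_var_bnd}. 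Both sums then contribute $\frac{N-|X_k|}{N}\sqrt{\beta_{1,g}\|\nabla f(w_k)\|^2+\beta_{2,g}}$, giving a factor of $2$ before squaring and hence $4$ after. No $\beta_{1,g}\geq 1$ assumption or $(a+b)^2\leq 2a^2+2b^2$ trick is needed.
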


\begin{proof}
\emph{Deterministic norm condition.} Consider
\begin{align*}
    \|g_k - \nabla f(w_k)\|_{A_k}^2 &\leq \lambda_{\max}(A_k) \|g_k - \nabla f(w_k)\|^2 \\
    &\leq 4 \lambda_{\max}(A_k) \left( \frac{N - |X_k|}{N}\right)^2 \left( \beta_{1,g}\|\nabla f(w_k)\|^2 + \beta_{2,g}\right) \\
    &\leq \theta^2_k  \|\nabla f(w_k)\|_{A_k}^2 + \iota_k.
\end{align*}
where the first inequality is due to $w^TAw \leq \lambda_{\max} (A_k) \|w\|^2$, the second inequality is due to \eqref{eq:det_grad_var_bnd} and the analysis provided in \cite[Section 3.1]{friedlander2012hybrid}, and the third inequality is due to the bound on $|X_k|$ given in \eqref{eq:gradsamp_bnd_det}. 
We provide the derivation of the second inequality in Appendix \ref{appendix:analysis} for completeness. 
 
\emph{Stochastic norm condition.} Following a similar approach as in the deterministic norm condition, and using $\E[g_k | w_k] = \nabla f(w_k)$, we have
\begin{align*}
    \mathbb{E}[\|g_k - \nabla f(w_k)\|_{A_k}^2|w_k,A_k] &\leq \lambda_{\max}(A_k) \mathbb{E}[\|g_k - \nabla f(w_k)\|^2|w_k] \\
    &= \lambda_{\max}(A_k) \frac{\E_{\zeta}[\|\nabla F(w_k,\zeta) - \nabla f(w_k)\|^2|w_k]}{|X_k|} \\
    &\leq \lambda_{\max}(A_k) \frac{\sigma_{1,g}^2\|\nabla f(w_k)\|^2 + \sigma_{2,g}^2}{|X_k|} \\
    &\leq  \theta_k^2 \|\nabla f(w_k)\|_{A_k}^2 + \iota_k.
\end{align*}
\end{proof}
In Section~\ref{subsection:stochastic_gradient_practical}, we provide practical strategies for choosing the gradient sample sizes $|X_k|$ at each iterate $w_k$, $k\in \mathbb{Z}^+$ 
instead of employing these pessimistic theoretical bounds that require accessing unknown problem specific constants such as $\sigma_{1,g}$ and $\sigma_{2,g}$. 

In what follows, we split our convergence theory into two parts: first the deterministic sampling convergence theory where we prove a novel ${\mathcal{O}}\left(\tfrac{1}{k}\right)$ superlinear local convergence rate, followed by a section where we extend this analysis to the stochastic setting, deriving bounds \emph{in expectation}, where our results match the \emph{probabilistic} $\tilde{\mathcal{O}}\left(\tfrac{1}{\sqrt{k}}\right)$ superlinear local convergence rate. In both cases we simultaneously assume gradient and Hessian inexactness, and maintain a fixed per-iteration Hessian computational cost.

\section{Deterministic Sampling Analysis} \label{section:det_analysis}

\begin{figure}[H]
\center
\begin{tikzpicture}[scale = 0.95, transform shape,every node/.style={draw,outer sep=0pt,thick},box/.style={draw, rectangle, minimum width=1cm,}]
\node[box,fill=lightgray!5] at (0,0)  [minimum width=3.5cm,minimum height=8cm, label=\textbf{Nonconvex functions}] (NCBB) {};
  
\node[bag, fill=white] (GlobalSub) at (0,0) [minimum width=2cm,minimum height=2cm,label=\textbf{Global sublinear}] {Rate: $\mathcal{O}\left(\tfrac{1}{k}\right)$\\ \enskip \\$\alpha_k = \mathcal{O}\left(\tfrac{1}{\kappa}\right)  $\\$\theta_k = \tilde{\theta}_g \in [0,1)$\\$\sum\iota_k < \infty$};

\node[box,fill=lightgray!25] at ($(NCBB.east)+(6.5,0)$)  [minimum width=13cm,minimum height=8cm, label=\textbf{Strongly convex functions}] (SCBB) {};

\node[bag, fill=white] (GlobalLin) at ($(GlobalSub.east)+(3.5,0)$) [minimum width=2cm,minimum height=2cm,label=\textbf{Global linear}] {Rate constant:\\ $\max\left\{\left(1 - \frac{\left(1-\tilde{\theta}_g^2\right)}{2\kappa^2}\right), a_g\right\}$\\\enskip \\$\alpha_k = \mathcal{O}\left(\tfrac{1}{\kappa}\right)  $\\$\theta_k = \tilde{\theta}_g \in [0,1)$\\$\iota_k = \iota_0a_g^k, \enskip a_g \in [0,1)$ };

\node[bag, fill=white] (LocalLin) at ($(GlobalLin.east)+(4.5,2)$) [minimum width=6cm,minimum height=2cm,label=\textbf{Local linear}] {Rate constant: $\sqrt{a_l}\in [0,1)$\\
\enskip\\
$\alpha_k = 1$\\
$\theta_k = \mathcal{O}\left(\sqrt{\frac{a_l}{\kappa^3}}\right)$\\$\iota_k = \iota_0a_l^k$ };

\node[bag, fill=white] (LocalSup) at ($(GlobalLin.east)+(4.5,-2)$) [minimum width=6cm,minimum height=2cm,label=\textbf{Local superlinear}] {Asymptotic rate constant: $\mathcal{O}\left(\frac{1}{k}\right)$\\
\enskip\\
$\alpha_k = 1$\\
$\theta_k = \mathcal{O}\left(\sqrt{\frac{a_l}{k^2\kappa^3}}\right)$, $a_l \in [0,1)$\\$\iota_k = \frac{\iota_0a_l^k}{k^4}$ };

\draw[thick, -stealth] (GlobalSub.east) -- (GlobalLin.west);
\draw[thick, -stealth] (GlobalLin.east) -- (LocalLin.west);
\draw[thick, -stealth] (GlobalLin.east) -- (LocalSup.west);

\node[bag, fill=cyan!12.5] (Obj) at ($(GlobalSub.east)+(0.75,2.875)$) [minimum width=4cm,minimum height=1.5cm] {\textbf{Objective Function}\\\enskip\\$\min_w f(w) = \frac{1}{N}\sum_{i=1}^NF_i(w)$};


\node[bag, fill=green!12.5] (Obj) at ($(GlobalLin)+(0.8,-2.75)$) [minimum width=4cm,minimum height=1.5cm] {\textbf{Global-local}\\ \textbf{transition complexity}\\\enskip\\$\tilde{\mathcal{O}}\left(\max\left\{\tfrac{N}{|S_0|}, \kappa^2(1+\tfrac{M}{\mu^{3/2}})\right\}\right)$};

\node[box,fill=cyan!12.5] at ($(NCBB.east)+(4.75,-4.5)$)  [minimum width=16.5cm,minimum height=1cm] (NCBB) {\textbf{Constants}: $N$, $\kappa$, $\mu$, $M$ \qquad  \textbf{Hyperparameters}: $\alpha_k$, $\theta_k$, $\iota_k$, $|S_0|$};
\end{tikzpicture}

\caption{Overview of the results presented in this section. We characterize the main results for global and local convergence results, and their relationship to the problem constants and algorithmic hyperparameters. Here, $N$ denotes the number of data,  $\mu$ is the Hessian spectral lower bound, $\kappa = \frac{L}{\mu}$ is a condition-number like constant, and $M$ is the Hessian Lipschitz constant.}\label{fig:analysis_roadmap_det}
\end{figure}
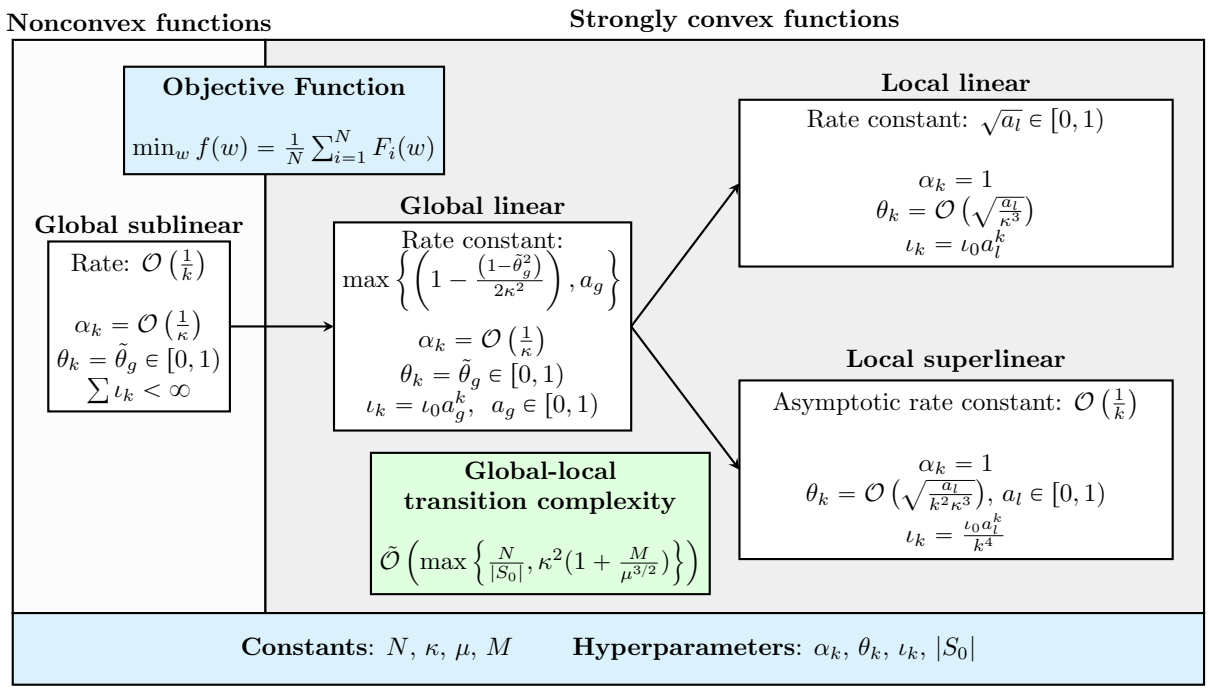

We begin our analysis by focusing on the deterministic sampling-based algorithms for the solution of the finite-sum minimization problem \eqref{eq:prob_det}, where the subsampled gradients satisfy the deterministic norm condition \eqref{eq:deter_norm}, and the subsampled Hessians are deterministically sampled without replacement in a cyclic fashion.
We establish theoretical global linear and sublinear convergence results for strongly convex and nonconvex functions respectively, and local superlinear convergence results when the iterates enter a strongly convex neighborhood of the optimal solution. A schematic of this analysis summarizing the convergence results is given in Figure~\ref{fig:analysis_roadmap_det}. We begin with an assumption about the subsampled functions.

\begin{assumption}{(Spectral upper bounds of subsampled Hessians).} \label{assumption:hessian_spectral_bounds}
The subsampled functions are twice continuously differentiable with the eigenvalues of the subsampled Hessians bounded above where the bound depend on the sample size $|S|$. That is, for all $|S|\in \mathbb{N}$, there exists constants $0 < L_{|S|} < \infty$ such that
\begin{equation}
 \nabla^2 F_{S}(w) \preceq L_{|S|} I, \qquad \forall w \in \mathbb{R}^d.
\end{equation}
Furthermore there exists constant $L$ such that $L_{|S|} \leq L < \infty$ for all $|S| \in \mathbb{N}$.
As a consequence, we have that $\nabla^2 f(w)\preceq L I $ for all $w \in \R^d$.
\end{assumption}
Before establishing theoretical convergence results, we state key inequalities due to the Assumption~\ref{assumption:hessian_spectral_bounds} that are used throughout the analysis.  From Assumption~\ref{assumption:hessian_spectral_bounds} and Taylor's theorem \cite{nocedal1999numerical,taylor1717methodus}, it follows that 
\begin{equation}\label{eq:desc_lemma}
    f(w) \leq f(v) + \nabla f(v)^T(w-v) + \frac{L}{2}\|w-v\|^2, \quad \forall w,v\in \R^d. 
\end{equation}
Furthermore, we also have that the path-averaged Hessians have upper bounded eigenvalues. That is,
due to \eqref{eq:htilde_def} and Assumption~\ref{assumption:hessian_spectral_bounds}, we have
\begin{align}
\widehat{H}_k\preceq \widehat{L}_kI, \enskip &\widehat{L}_k \leq \sum_{i=0}^k\gamma_i L_{|S_i|} \leq L, \label{eq:L_bnd_hat}\\
   \widetilde{H}_k \preceq \tilde{L} I, \enskip &\tilde{L} \leq \sum_{i=0}^{k}\gamma_i L_{|S_i|} + \tilde{\mu} - \lambda_{\min}(|\widehat{H}_k|) \leq L + \tilde{\mu}. \label{eq:L_bnd_tilde}
\end{align}

We begin our analysis by providing a technical lemma that establishes an upper bound on the difference between the objective function values at successive iterations.

\begin{lemma}\label{lem:tech_1}
Suppose Assumption~\ref{assumption:hessian_spectral_bounds} holds. For any $w_0$, let $\{w_k: k \in \N\}$ be iterates generated by \eqref{eq:iter} with the Hessian approximation given in \eqref{eq:htilde_def}.  If the step size $\alpha_k$ at each iteration $k$ is chosen such that $\alpha_k \leq \frac{\tilde{\mu}}{L}$. Then, for all $k \in \mathbb{Z}^+$, it follows that,
\begin{equation}\label{eq:desclemma}
    f(w_{k+1}) \leq f(w_k) - \frac{\alpha_k}{2}\nabla f(w_k)^T\widetilde{H}_k^{-1}\nabla f(w_k) + \frac{\alpha_k}{2}\delta_k^T \widetilde{H}_k^{-1} \delta_k, 
\end{equation}
where $\delta_k = g_k - \nabla f(w_k)$.
\end{lemma}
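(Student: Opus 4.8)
The plan is to start from the descent inequality \eqref{eq:desc_lemma} applied at $v = w_k$ and $w = w_{k+1} = w_k - \alpha_k p_k$ with $p_k = \widetilde{H}_k^{-1} g_k$. This yields
\begin{equation*}
f(w_{k+1}) \leq f(w_k) - \alpha_k \nabla f(w_k)^T \widetilde{H}_k^{-1} g_k + \frac{L \alpha_k^2}{2} \|\widetilde{H}_k^{-1} g_k\|^2.
\end{equation*}
The first task is to control the quadratic term. Using the spectral lower bound \eqref{eq:spec_lower_bnd}, $\widetilde{H}_k \succeq \tilde{\mu} I$, we have $\|\widetilde{H}_k^{-1} g_k\|^2 = g_k^T \widetilde{H}_k^{-2} g_k \leq \tilde{\mu}^{-1} g_k^T \widetilde{H}_k^{-1} g_k$, so that
\begin{equation*}
\frac{L \alpha_k^2}{2} \|\widetilde{H}_k^{-1} g_k\|^2 \leq \frac{L \alpha_k^2}{2 \tilde{\mu}} g_k^T \widetilde{H}_k^{-1} g_k \leq \frac{\alpha_k}{2} g_k^T \widetilde{H}_k^{-1} g_k,
\end{equation*}
where the last step uses the step-size restriction $\alpha_k \leq \tilde{\mu}/L$. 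Substituting back, the bound becomes $f(w_{k+1}) \leq f(w_k) - \alpha_k \nabla f(w_k)^T \widetilde{H}_k^{-1} g_k + \frac{\alpha_k}{2} g_k^T \widetilde{H}_k^{-1} g_k$.

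The remaining task is a purely algebraic manipulation of the $\widetilde{H}_k^{-1}$-quadratic form, writing $g_k = \nabla f(w_k) + \delta_k$. Expanding,
\begin{equation*}
-\nabla f(w_k)^T \widetilde{H}_k^{-1} g_k + \tfrac{1}{2} g_k^T \widetilde{H}_k^{-1} g_k = -\nabla f(w_k)^T \widetilde{H}_k^{-1}(\nabla f(w_k)+\delta_k) + \tfrac{1}{2}(\nabla f(w_k)+\delta_k)^T \widetilde{H}_k^{-1}(\nabla f(w_k)+\delta_k).
\end{equation*}
Since $\widetilde{H}_k^{-1}$ is symmetric, this equals $-\tfrac{1}{2}\nabla f(w_k)^T \widetilde{H}_k^{-1} \nabla f(w_k) + \tfrac{1}{2}\delta_k^T \widetilde{H}_k^{-1} \delta_k$ by completing the square (the cross terms $-\nabla f(w_k)^T \widetilde{H}_k^{-1}\delta_k$ appear with total coefficient $-1 + 1 = 0$). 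Multiplying by $\alpha_k$ and adding $f(w_k)$ gives exactly \eqref{eq:desclemma}.

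I expect no serious obstacle here; the only points requiring care are: (i) that $\widetilde{H}_k$ is symmetric positive definite so that $\widetilde{H}_k^{-1}$ and $\widetilde{H}_k^{-2}$ are well-defined and the inequality $\widetilde{H}_k^{-2} \preceq \tilde{\mu}^{-1}\widetilde{H}_k^{-1}$ holds (both follow from \eqref{eq:spec_lower_bnd} and the construction \eqref{eq:htilde_def}), and (ii) verifying that the cross terms in the quadratic expansion cancel exactly, which they do because the coefficient of $g_k^T\widetilde{H}_k^{-1}g_k$ is precisely $1/2$ — this is what the step-size condition $\alpha_k \leq \tilde{\mu}/L$ is engineered to guarantee. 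The result then holds for all $k \in \mathbb{Z}^+$ since every step of the argument is iteration-wise.
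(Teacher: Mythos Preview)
Your proof is correct and in fact somewhat cleaner than the paper's. Both start from the descent inequality \eqref{eq:desc_lemma} and use the spectral bound $\widetilde{H}_k \succeq \tilde{\mu} I$ together with the step-size condition $\alpha_k \leq \tilde{\mu}/L$, but they organize the argument differently. You apply the spectral bound immediately to the quadratic term, obtaining $\frac{L\alpha_k^2}{2}\|\widetilde{H}_k^{-1}g_k\|^2 \leq \frac{\alpha_k}{2}g_k^T\widetilde{H}_k^{-1}g_k$; with this exact coefficient $\frac{\alpha_k}{2}$ in front, the subsequent expansion in $g_k = \nabla f(w_k)+\delta_k$ is a pure identity (the cross terms cancel), and no further inequality is needed. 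The paper instead expands first, isolates the residual cross term $-\alpha_k(\widetilde{H}_k^{-1/2}\nabla f(w_k))^T[I - L\alpha_k\widetilde{H}_k^{-1}](\widetilde{H}_k^{-1/2}\delta_k)$, observes that the bracketed matrix is positive semidefinite under the step-size condition, and then applies Young's inequality $-w^TAv \leq \tfrac{1}{2}w^TAw + \tfrac{1}{2}v^TAv$ to absorb it. Your route replaces this Young-inequality step by an algebraic cancellation, at the cost of invoking the spectral bound one step earlier; the net effect and the required hypotheses are identical.
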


\begin{proof}
    Using \eqref{eq:desc_lemma} and the definition of $\delta_k$, we have
\begin{align}
    f(w_{k+1}) &\leq f(w_k) - \alpha_k \nabla f(w_k)^T\widetilde{H}_k^{-1} g_k + \frac{L\alpha_k^2}{2}\|\widetilde{H}_k^{-1}g_k\|^2 \nonumber \\
    &=  f(w_k) - \alpha_k \nabla f(w_k)^T \widetilde{H}_k^{-1}(\nabla f(w_k) + \delta_k) + \frac{L\alpha_k^2}{2}\|\widetilde{H}_k^{-1}(\nabla f(w_k) + \delta_k)\|^2 \label{eq:proof_part1} \\
    & = f(w_k) - \alpha_k \nabla f(w_k)^T \widetilde{H}_k^{-1} \nabla f(w_k) - \alpha_k \nabla f(w_k)^T\widetilde{H}_k^{-1}\delta_k \nonumber \\
    &\quad + \frac{L\alpha_k^2}{2}\left[\|\widetilde{H}_k^{-1} \nabla f(w_k)\|^2 + \|\widetilde{H}_k^{-1}\delta_k\|^2 + 2 \nabla f(w_k)^T\widetilde{H}_k^{-2} \delta_k \right] \nonumber \\
     &= f(w_k) - \alpha_k \nabla f(w_k)^T\widetilde{H}_k^{-1}\nabla f(w_k)+ \frac{L\alpha_k^2}{2}\left[\|\widetilde{H}_k^{-1} \nabla f(w_k)\|^2 + \|\widetilde{H}_k^{-1}\delta_k\|^2\right] \nonumber \\
     &\quad -\alpha_k (\widetilde{H}_k^{-1/2}\nabla f(w_k))^T\left[I - L\alpha_k \widetilde{H}_k^{-1}\right](\widetilde{H}_k^{-1/2}\delta_k). \label{eq:proof_ineq_1}
\end{align}
Substituting $I - L\alpha_k\widetilde{H}_k^{-1} \succeq 0$ due to $\alpha_k \leq \tfrac{\tilde{\mu}}{L}$ and using the fact that $-w^TAv \leq \tfrac{1}{2}w^TAw + \tfrac{1}{2}v^TAv$ for any $w,v\in \R^d$ and $0 \preceq A \in \R^{d \times d}$ in \eqref{eq:proof_ineq_1} yields
\begin{align}
    f(w_{k+1}) &\leq f(w_k) - \alpha_k \nabla f(w_k)^T\widetilde{H}_k^{-1}\nabla f(w_k)+ \frac{L\alpha_k^2}{2}\left[\|\widetilde{H}_k^{-1} \nabla f(w_k)\|^2 + \|\widetilde{H}_k^{-1}\delta_k\|^2\right] \nonumber \\
     &\quad +\frac{\alpha_k}{2}(\widetilde{H}_k^{-1/2}\nabla f(w_k))^T[I - L\alpha_k\widetilde{H}_k^{-1}](\widetilde{H}_k^{-1/2}\nabla f(w_k)) \nonumber \\
     &\quad +\frac{\alpha_k}{2}(\widetilde{H}_k^{-1/2}\delta_k))^T[I - L\alpha_k\widetilde{H}_k^{-1}](\widetilde{H}_k^{-1/2}\delta_k) \nonumber \\
     &= f(w_k) - \frac{\alpha_k}{2} \nabla f(w_k)^T\widetilde{H}_k^{-1}\nabla f(w_k)+ \frac{\alpha_k}{2} \delta_k^T\widetilde{H}_k^{-1}\delta_k. \nonumber 
\end{align}
 
\end{proof}

\subsection{Global Convergence}\label{sec:global}
In this section we derive global convergence rates. We first start with strongly convex functions, where we make the following assumption about the objective function.
\begin{assumption}{(Global strong convexity).} \label{assumption:strng_cnvx}
The eigenvalues of the Hessians are all positive and are bounded away from zero. That is, there exists constant $\mu > 0$ such that
\begin{equation}
	\nabla^2 f(w) \succeq \mu I.
\end{equation}  
\end{assumption}

From Assumption~\ref{assumption:strng_cnvx}, we have
\begin{equation}\label{eq:strngcnvx_grdbnd}
    \|\nabla f(w)\|^2 \geq 2\mu(f(w) - f(w^*)), \qquad \forall w\in\R^d,
\end{equation}
where $w^*$ is the unique optimal solution of \eqref{eq:prob_det} or \eqref{eq:prob_exp} (see \cite{bottou2018optimization} for the proof). 
We are now ready to provide the global linear convergence result for strongly convex functions.

\begin{theorem}{(Global linear convergence, deterministic sampling).}\label{thm:linear} Suppose Assumptions~\ref{assumption:hessian_spectral_bounds} and ~\ref{assumption:strng_cnvx} hold. For any $w_0\in\R^d$, let $\{w_k: k \in \N\}$ be iterates generated by \eqref{eq:iter} where the Hessian approximation is given in \eqref{eq:htilde_def} and the gradient approximations $g_k$ satisfies the Condition~\ref{condition:expected_gradient_bounds} with $\iota_{k+1} = \iota_k a_g$ for some $\iota_0 > 0$ and $a_g \in [0,1)$. Then, if $g_k$ satisfies deterministic norm condition \eqref{eq:deter_norm} with $A_k = \widetilde{H}_k^{-1}$ and $\theta_k  = \tilde{\theta}_g \in [0,1)$, and the step size is chosen such that $\alpha_k = \alpha \leq \frac{\tilde{\mu}}{L}$, 
 \begin{align}\label{eq:lin_det_reslt}
     f(w_{k}) - f(w^*) &\leq \tilde{C}_1\tilde{\rho}_1^k,\\
     \quad \tilde{C}_1 :=\max\left\{f(w_0) - f(w^*), \frac{\tilde{L}\iota_0}{\mu\left(1-\tilde{\theta}_g^2\right)} \right\}&, \mbox{ and } \tilde{\rho}_1 := \max\left\{1 - \frac{\alpha\mu\left(1-\tilde{\theta}_g^2\right)}{2\tilde{L}}, a_g\right\}\nonumber.
 \end{align}

\end{theorem}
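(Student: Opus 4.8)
The plan is to combine the descent inequality from Lemma~\ref{lem:tech_1} with the strong-convexity bound \eqref{eq:strngcnvx_grdbnd} and the deterministic norm condition, then close the recursion by induction. First I would start from \eqref{eq:desclemma}, which gives
\begin{equation*}
    f(w_{k+1}) \leq f(w_k) - \frac{\alpha}{2}\nabla f(w_k)^T\widetilde{H}_k^{-1}\nabla f(w_k) + \frac{\alpha}{2}\delta_k^T\widetilde{H}_k^{-1}\delta_k.
\end{equation*}
Since the norm condition is imposed with $A_k = \widetilde{H}_k^{-1}$, the error term is exactly $\tfrac{\alpha}{2}\|\delta_k\|_{\widetilde{H}_k^{-1}}^2 \leq \tfrac{\alpha}{2}\bigl(\tilde\theta_g^2 \|\nabla f(w_k)\|_{\widetilde{H}_k^{-1}}^2 + \iota_k\bigr)$, and $\|\nabla f(w_k)\|_{\widetilde{H}_k^{-1}}^2 = \nabla f(w_k)^T\widetilde{H}_k^{-1}\nabla f(w_k)$, so the two quadratic-form terms combine to leave a factor $(1-\tilde\theta_g^2)$ in front of $\nabla f(w_k)^T\widetilde{H}_k^{-1}\nabla f(w_k)$, plus the benign $\tfrac{\alpha}{2}\iota_k$ term. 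This is the key place the choice $A_k = \widetilde{H}_k^{-1}$ pays off: it makes the error term directly comparable to the descent term without introducing extra condition-number factors.

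Next I would lower-bound the descent term: $\widetilde{H}_k \preceq \tilde L I$ from \eqref{eq:L_bnd_tilde} gives $\widetilde{H}_k^{-1} \succeq \tfrac{1}{\tilde L}I$, hence $\nabla f(w_k)^T\widetilde{H}_k^{-1}\nabla f(w_k) \geq \tfrac{1}{\tilde L}\|\nabla f(w_k)\|^2 \geq \tfrac{2\mu}{\tilde L}(f(w_k)-f(w^*))$ by \eqref{eq:strngcnvx_grdbnd}. Writing $e_k := f(w_k) - f(w^*)$, this yields the one-step recursion
\begin{equation*}
    e_{k+1} \leq \left(1 - \frac{\alpha\mu(1-\tilde\theta_g^2)}{2\tilde L}\right)e_k + \frac{\alpha}{2}\iota_k.
\end{equation*}
Using $\alpha \leq \tilde\mu/L$ one checks $\tfrac{\alpha\mu(1-\tilde\theta_g^2)}{2\tilde L} \in (0,1)$ so the contraction factor lies in $[0,1)$. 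Then I would substitute $\iota_k = \iota_0 a_g^k$ and prove \eqref{eq:lin_det_reslt} by induction on $k$: assuming $e_k \leq \tilde C_1 \tilde\rho_1^k$, one gets $e_{k+1} \leq \tilde\rho_1 \tilde C_1\tilde\rho_1^k + \tfrac{\alpha}{2}\iota_0 a_g^k \leq \tilde\rho_1^{k+1}\tilde C_1 + \tfrac{\alpha}{2}\iota_0 a_g^k$. To absorb the additive term into $\tilde C_1\tilde\rho_1^{k+1}$ it suffices that $\tfrac{\alpha}{2}\iota_0 \leq \tilde C_1 \tilde\rho_1^{k+1}/a_g^k$; since $\tilde\rho_1 \geq a_g$ and (from $\alpha \leq \tilde\mu/L \leq 1$, roughly, plus the explicit form of $\tilde\rho_1$) a short computation shows the choice $\tilde C_1 \geq \tfrac{\tilde L\iota_0}{\mu(1-\tilde\theta_g^2)}$ makes this hold for all $k$; the base case is immediate from $\tilde C_1 \geq e_0$.

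The main obstacle I anticipate is purely bookkeeping in the induction: getting the constant $\tilde C_1$ and the rate $\tilde\rho_1$ to line up exactly as stated requires carefully handling the two regimes $\tilde\rho_1 = 1 - \tfrac{\alpha\mu(1-\tilde\theta_g^2)}{2\tilde L}$ versus $\tilde\rho_1 = a_g$, and verifying that in both cases the additive $\tfrac{\alpha}{2}\iota_0 a_g^k$ term is dominated. The cleanest route is to note $\tfrac{\alpha}{2}\iota_0 a_g^k \leq \tfrac{\alpha}{2}\iota_0\tilde\rho_1^k$ and to require the residual inequality $\tilde C_1(\tilde\rho_1 - \tilde\rho_1)\ldots$ — more precisely, picking up the slack between $\tilde\rho_1$ and the genuine contraction factor $1-\tfrac{\alpha\mu(1-\tilde\theta_g^2)}{2\tilde L}$ — to cover the $\iota_0$ term; when $\tilde\rho_1$ equals the contraction factor there is no slack, but then one instead bounds $\sum_{j} (\tfrac{\alpha}{2}\iota_0)\tilde\rho_1^{-j-1}a_g^j$ geometrically. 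I expect the stated $\tilde C_1 = \max\{e_0, \tilde L\iota_0/(\mu(1-\tilde\theta_g^2))\}$ to be exactly what makes the unrolled sum telescope, and I would present the argument as a direct induction with that $\tilde C_1$ plugged in, verifying the inductive step by the elementary inequality above rather than by summing the series.
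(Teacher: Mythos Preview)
Your approach is exactly the paper's: descent lemma \eqref{eq:desclemma}, norm condition with $A_k=\widetilde H_k^{-1}$ to combine the quadratic forms, the spectral bound $\widetilde H_k^{-1}\succeq \tfrac{1}{\tilde L}I$, then strong convexity \eqref{eq:strngcnvx_grdbnd}, then induction. The only slip is arithmetic: from $\nabla f(w_k)^T\widetilde H_k^{-1}\nabla f(w_k)\ge \tfrac{2\mu}{\tilde L}e_k$ the one-step recursion is
\[
e_{k+1}\le \Bigl(1-\tfrac{\alpha\mu(1-\tilde\theta_g^2)}{\tilde L}\Bigr)e_k+\tfrac{\alpha}{2}\iota_k,
\]
not with $2\tilde L$ in the denominator. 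This factor-of-two is precisely the ``slack'' you were looking for and it dissolves your bookkeeping worry: since $\tilde\rho_1\ge 1-\tfrac{\alpha\mu(1-\tilde\theta_g^2)}{2\tilde L}$ is \emph{weaker} than the actual contraction, and $\tilde C_1\ge \tfrac{\tilde L\iota_0}{\mu(1-\tilde\theta_g^2)}$ forces $\tfrac{\alpha\iota_0}{2\tilde C_1}\le \tfrac{\alpha\mu(1-\tilde\theta_g^2)}{2\tilde L}$, the induction step reads
\[
e_{k+1}\le \tilde C_1\tilde\rho_1^k\Bigl(1-\tfrac{\alpha\mu(1-\tilde\theta_g^2)}{\tilde L}+\tfrac{\alpha\iota_0}{2\tilde C_1}\bigl(\tfrac{a_g}{\tilde\rho_1}\bigr)^k\Bigr)\le \tilde C_1\tilde\rho_1^k\Bigl(1-\tfrac{\alpha\mu(1-\tilde\theta_g^2)}{2\tilde L}\Bigr)\le \tilde C_1\tilde\rho_1^{k+1},
\]
with no case split and no geometric summation needed.
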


\begin{proof}
From \eqref{eq:deter_norm} and \eqref{eq:desclemma}, we have 
\begin{align}
    f(w_{k+1}) &\leq f(w_k) - \frac{\alpha_k}{2} \nabla f(w_k)^T\widetilde{H}_k^{-1}\nabla f(w_k)+ \frac{\alpha_k}{2} \delta_k^T\widetilde{H}_k^{-1}\delta_k\nonumber \\
     &\leq f(w_k) - \frac{\alpha_k}{2} \nabla f(w_k)^T\widetilde{H}_k^{-1}\nabla f(w_k)+ \frac{\alpha_k \theta_k^2}{2} \nabla f(w_k)^T\widetilde{H}_k^{-1}\nabla f(w_k) + \frac{\alpha_k \iota_k}{2}\nonumber\\
    &\leq f(w_k) - \frac{\alpha_k(1 - \theta_k^2)}{2 \tilde{L}} \|\nabla f(w_k)\|^2 + \frac{\alpha_k \iota_k}{2}, \label{eq:gradnorm_det_reslt}
\end{align}
where the last inequality is due to \eqref{eq:L_bnd_tilde}. Subtracting $f(w^*)$ from both sides of \eqref{eq:gradnorm_det_reslt} and using \eqref{eq:strngcnvx_grdbnd}, it follows that 
\begin{equation}\label{eq:proof_det_ind}
    f(w_{k+1}) - f(w^*) \leq \left(1 - \alpha_k\frac{\mu(1-\theta_k^2)}{\tilde{L}} \right)\left(f(w_k) - f(w^*) \right) + \frac{\alpha_k \iota_k}{2}.
\end{equation}
We will use induction to show the rest of the proof. Substitute $\alpha_k = \alpha$ and $\theta_k = \tilde{\theta}_g$ in \eqref{eq:proof_det_ind}. 
We note that \eqref{eq:lin_det_reslt} trivially holds for $k=0$. Now, suppose that \eqref{eq:lin_det_reslt} holds for some $k$. Considering \eqref{eq:proof_det_ind}, we get,  
\begin{align*}
f(w_{k+1}) - f(w^*) &\leq \left(1 - \alpha\frac{\mu(1-\tilde{\theta}_g^2)}{\tilde{L}} \right) \tilde{C}_1\tilde{\rho}_1^k + \frac{\alpha \iota_0 a_g^k}{2}\\
&= \tilde{C}_1\tilde{\rho}_1^k\left(1 - \alpha\frac{\mu(1-\tilde{\theta}_g^2)}{\tilde{L}} + \frac{\alpha \iota_0}{2 \tilde{C}_1}\left(\frac{a_g}{\tilde{\rho}_1}\right)^k\right)\\
&\leq \tilde{C}_1\tilde{\rho}_1^k\left(1 - \alpha\frac{\mu(1-\tilde{\theta}_g^2)}{\tilde{L}} + \frac{\alpha \iota_0}{2 \tilde{C}_1}\right)\\
&\leq \tilde{C}_1\tilde{\rho}_1^k\left(1 - \alpha\frac{\mu(1-\tilde{\theta}_g^2)}{2\tilde{L}}\right) =  \tilde{C}_1\tilde{\rho}_1^{k+1},
\end{align*}
where the inequalities are due to the definitions of $\tilde{C}_1$ and $\tilde{\rho}_1$. 

\end{proof}
\begin{remark}\label{remark_glob_lin} We make the following remarks about the Theorem~\ref{thm:linear}
    \begin{itemize}
    \item For $\tilde{\theta}_g=0$ and $a_g=0$ ($\iota_k=0$), Theorem~\ref{thm:linear} recovers the classical global linear convergence result for Newton's method. We note that the rate constant $\left(1 - \frac{\alpha \mu}{2\tilde{L}}\right)$ is worse than that of steepest descent method, which is an artifact of global convergence analysis of Newton's method. As is typical with Newton methods, the global convergence bounds are more pessimistic than similar bounds for first-order methods due to taking into account worst-case spectral bounds of the Hessian. We note that in practice we do not expect significantly worse convergence rates for second-order methods, indeed our numerical results demonstrate that Hessian-averaged Newton methods are able to take larger steps than first-order methods.
\item We do not consider other cases of deterministic norm condition in this setting of $A_k = I$ as it enforces stringent restrictions on the choice of $\tilde{\theta}_g$.  
\end{itemize}
\end{remark}

Next we consider general nonconvex functions where we provide the following sublinear convergence results.

\begin{theorem}{(Global sublinear convergence, deterministic sampling).} \label{thm:global_sublinear_bounded_deterministic}
Suppose Assumption~\ref{assumption:hessian_spectral_bounds} holds and the objective function $f$ is bounded below by $f_\text{min}$. For any $w_0$, let $\{w_k: k \in \N\}$ be iterates generated by \eqref{eq:iter} 
where the gradient approximations $g_k$ satisfy the Condition~\ref{condition:expected_gradient_bounds}with $\sum_{i=0}^{\infty} \iota_k = \tilde{\iota} < \infty$. Then, if $g_k$ satisfies deterministic norm condition \eqref{eq:deter_norm} with  $A_k = \widetilde{H}_k^{-1}$ and $\theta_k  = \tilde{\theta}_g\in [0,1)$, and the step size is chosen such that $\alpha_k = \alpha \leq \frac{\tilde{\mu}}{L}$, then $\lim_{k\rightarrow \infty} \|\nabla f(w_k)\|^2 = 0$ and for any positive integer $T$, 
    \begin{equation}\label{eq:sublinear_det}
         \min\limits_{0\leq k \leq T-1} \|\nabla f(w_k)\|^2 \leq \frac{\tilde{L}}{(1-\tilde{\theta}_g^2)T}\left(\frac{2(f(w_0) - f_\text{min})}{\alpha} + \tilde{\iota}\right).
    \end{equation}

\end{theorem}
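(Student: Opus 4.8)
The plan is to combine the per-step descent inequality from Lemma~\ref{lem:tech_1} with the deterministic norm condition \eqref{eq:deter_norm} applied with $A_k = \widetilde{H}_k^{-1}$, mirroring the first few lines of the proof of Theorem~\ref{thm:linear} but without invoking strong convexity. First I would reproduce the chain \eqref{eq:gradnorm_det_reslt}: starting from \eqref{eq:desclemma}, substitute the bound $\delta_k^T\widetilde{H}_k^{-1}\delta_k \le \theta_k^2 \nabla f(w_k)^T\widetilde{H}_k^{-1}\nabla f(w_k) + \iota_k$, collect the $\nabla f(w_k)^T\widetilde{H}_k^{-1}\nabla f(w_k)$ terms into a factor $(1-\theta_k^2)$, and then lower-bound $\nabla f(w_k)^T\widetilde{H}_k^{-1}\nabla f(w_k) \ge \frac{1}{\tilde L}\|\nabla f(w_k)\|^2$ using \eqref{eq:L_bnd_tilde}. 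With $\theta_k = \tilde\theta_g$ and $\alpha_k = \alpha$ this gives
\begin{equation*}
 f(w_{k+1}) \le f(w_k) - \frac{\alpha(1-\tilde\theta_g^2)}{2\tilde L}\|\nabla f(w_k)\|^2 + \frac{\alpha \iota_k}{2}.
\end{equation*}

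Next I would telescope this inequality over $k = 0, 1, \dots, T-1$. Summing and using $f(w_T) \ge f_{\min}$ and $\sum_{k=0}^{T-1}\iota_k \le \tilde\iota$ yields
\begin{equation*}
 \frac{\alpha(1-\tilde\theta_g^2)}{2\tilde L}\sum_{k=0}^{T-1}\|\nabla f(w_k)\|^2 \le f(w_0) - f_{\min} + \frac{\alpha\tilde\iota}{2}.
\end{equation*}
Bounding the sum below by $T \min_{0\le k\le T-1}\|\nabla f(w_k)\|^2$ and rearranging gives exactly \eqref{eq:sublinear_det}. For the limit statement $\lim_{k\to\infty}\|\nabla f(w_k)\|^2 = 0$, I would note that the telescoped inequality shows $\sum_{k=0}^{\infty}\|\nabla f(w_k)\|^2 < \infty$ (the right-hand side is finite and independent of $T$), which in particular forces $\|\nabla f(w_k)\|^2 \to 0$; one does not even need a liminf-to-lim upgrade here since summability is immediate.

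The only genuine subtlety — and the one place I would be careful — is the requirement $\alpha \le \tilde\mu/L$ needed to apply Lemma~\ref{lem:tech_1}, together with the fact that $\tilde L$ depends on $k$ through $\widetilde{H}_k$; here I would invoke the uniform bound $\tilde L \le L + \tilde\mu$ from \eqref{eq:L_bnd_tilde} so that the descent constant $\frac{\alpha(1-\tilde\theta_g^2)}{2\tilde L}$ can be replaced by the $k$-independent quantity $\frac{\alpha(1-\tilde\theta_g^2)}{2(L+\tilde\mu)}$ before telescoping (or, equivalently, keep $\tilde L$ as an upper bound throughout since $1/\tilde L \ge 1/(L+\tilde\mu)$). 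This is entirely routine; the main ``obstacle'' is really just bookkeeping to make sure the nonconvex argument does not accidentally lean on Assumption~\ref{assumption:strng_cnvx} or on \eqref{eq:strngcnvx_grdbnd}, which were used in Theorem~\ref{thm:linear} but must be dropped here. No positive-definiteness of $\widehat{H}_k$ is needed because the modification \eqref{eq:htilde_def} already guarantees $\widetilde{H}_k \succeq \tilde\mu I$, so $\widetilde{H}_k^{-1}$ is well-defined throughout.
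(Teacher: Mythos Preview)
Your proposal is correct and follows essentially the same route as the paper: substitute the deterministic norm condition into Lemma~\ref{lem:tech_1} to obtain the inequality \eqref{eq:gradnorm_det_reslt}, telescope over $k=0,\dots,T-1$, use $f(w_T)\ge f_{\min}$ and $\sum_k\iota_k\le\tilde\iota$, and bound the minimum by the average. Your summability argument for $\|\nabla f(w_k)\|^2\to 0$ is in fact slightly cleaner than the paper's one-line ``taking the limits on $T$'' justification.
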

\begin{proof}
Substituting $\alpha_k=\alpha$ and $\theta_k=\theta_g$ in \eqref{eq:gradnorm_det_reslt}, rearranging the terms, and summing up the inequalities from $k=0$ to $T-1$ yields
\begin{align*}
\sum_{k=0}^{T-1}\|\nabla f(w_k)\|^2
&\leq \frac{\tilde{L}}{(1-\tilde{\theta}_g^2)}\left(\frac{2(f(w_0) - f(w_T))}{\alpha} + \sum_{k=0}^{T-1}\iota_k\right)\leq \frac{\tilde{L}}{(1-\tilde{\theta}_g^2)}\left(\frac{2(f(w_0) - f_\text{min})}{\alpha} + \tilde{\iota}\right),
\end{align*}
where the last inequality is due to $f(w_T) \geq f_\text{min}$ and $\sum_{k=0}^{\infty}\iota_k = \tilde{\iota}$. Taking the limits on $T$ yields $\lim_{k\rightarrow \infty} \|\nabla f(w_k)\|^2 = 0$. In addition, we have 
\begin{align*}
\min\limits_{0 \leq k \leq T-1} \|\nabla f(w_k)\|^2 &\leq \frac{1}{T} \sum_{k=0}^{T-1}\|\nabla f(w_k)\|^2 \leq \frac{\tilde{L}}{(1-\tilde{\theta}_g^2)T}\left(\frac{2(f(w_0) - f_\text{min})}{\alpha} + \tilde{\iota}\right).
\end{align*}

\end{proof}

\subsection{Local Convergence}
We now provide local superlinear rates of convergence results for the iterates generated by \eqref{eq:iter} when unit step size is eventually employed. We make the following standard assumption in local analysis of Newton-type methods that the Hessians are Lipschitz continuous. That is,     

\begin{assumption}{(Lipschitz continuous Hessians).} \label{assumption:lipschitz_hessian}
For any $|S| \in \mathbb{N}$, there exists a constant $0 < M_{|S|}<\infty$ such that
\begin{equation}
	\|\nabla^2 F_{S}(w) - \nabla^2 F_{S}(v)\| \leq M_{|S|}\|w-v \| \qquad \forall w,v \in \mathbb{R}^d.
\end{equation}
Furthermore, there exists constant $M$ such that $M_{|S|} < M < \infty$ for all $|S| \in \N$. As a consequence, we have that $\|\nabla^2 f(w) - \nabla^2 f(v)\| \leq M\|w -v\|$ for all $w,v \in \R^d$. 
\end{assumption}
We start by presenting a fundamental lemma that establishes a generalized linear-quadratic bound on the iterate distance to optimality when unit step size is employed at that iteration ($\alpha_k=1$). 

\begin{lemma}\label{lemma:tech_local}
Suppose Assumptions~\ref{assumption:hessian_spectral_bounds} and~\ref{assumption:lipschitz_hessian} hold. For any $w_0\in\R^d$, let $\{w_k: k \in \N\}$ be iterates generated by \eqref{eq:iter} where the Hessian approximation is given in \eqref{eq:htilde_def} and the gradient approximations $g_k$ satisfies the Condition~\ref{condition:expected_gradient_bounds} with $\lambda_{\min}(A_k) \geq \lambda_{A}$ and $\frac{\lambda_{\max}(A_k)}{\lambda_{\min}(A_k)} \leq \kappa_{A}$ for some positive constants $\lambda_A, \kappa_{A} < \infty$. If at any iteration $k \in \N$, unit step size is chosen ($\alpha_k=1$). Then, if $g_k$ satisfies deterministic norm condition \eqref{eq:deter_norm}  
    \begin{align}
    \|w_{k+1} - w^*\| &\leq \frac{M}{2\tilde{\mu}}\|w_k-w^*\|^2 + \frac{1}{\tilde{\mu}}\|(\widetilde{H}_k - \nabla^2 f(w_k))(w_k - w^*)\| \nonumber \\
    &\quad + \frac{L\sqrt{\kappa_{A}}}{\tilde{\mu}}\theta_k\|w_k - w^*\| + \frac{\sqrt{\iota_k}}{\tilde{\mu}\sqrt{\lambda_A}},\label{eq:twoterm_det}
    \end{align}

where $w^*$ is an optimal solution. 
\end{lemma}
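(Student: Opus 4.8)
The plan is to start from the Newton update $w_{k+1} = w_k - \widetilde H_k^{-1} g_k$ with $\alpha_k = 1$, and write
\[
w_{k+1} - w^* = w_k - w^* - \widetilde H_k^{-1} g_k = \widetilde H_k^{-1}\bigl(\widetilde H_k(w_k - w^*) - \nabla f(w_k) - \delta_k\bigr),
\]
where $\delta_k = g_k - \nabla f(w_k)$. Taking norms and using $\|\widetilde H_k^{-1}\| \le 1/\tilde\mu$ from \eqref{eq:spec_lower_bnd}, it suffices to bound
\[
\bigl\|\widetilde H_k(w_k - w^*) - \nabla f(w_k) - \delta_k\bigr\| \le \bigl\|\nabla f(w_k) - \nabla^2 f(w_k)(w_k - w^*)\bigr\| + \bigl\|(\widetilde H_k - \nabla^2 f(w_k))(w_k - w^*)\bigr\| + \|\delta_k\|.
\]

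First I would handle the first term with the standard Newton/Taylor estimate: since $\nabla f(w^*) = 0$, $\nabla f(w_k) = \int_0^1 \nabla^2 f(w^* + t(w_k - w^*))(w_k - w^*)\,dt$, so by the Lipschitz Hessian Assumption~\ref{assumption:lipschitz_hessian},
\[
\bigl\|\nabla f(w_k) - \nabla^2 f(w_k)(w_k - w^*)\bigr\| \le \frac{M}{2}\|w_k - w^*\|^2,
\]
which after multiplying by $1/\tilde\mu$ gives the first term of \eqref{eq:twoterm_det}. The second term is carried along verbatim as the ``Hessian error'' term $\frac{1}{\tilde\mu}\|(\widetilde H_k - \nabla^2 f(w_k))(w_k - w^*)\|$. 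The third term, $\|\delta_k\|/\tilde\mu$, is where the gradient-inexactness bound enters: from the deterministic norm condition \eqref{eq:deter_norm}, $\|\delta_k\|_{A_k}^2 \le \theta_k^2 \|\nabla f(w_k)\|_{A_k}^2 + \iota_k$. Using $\lambda_{\min}(A_k) \ge \lambda_A$ to get $\|\delta_k\|^2 \le \|\delta_k\|_{A_k}^2/\lambda_A$, and $\lambda_{\max}(A_k)/\lambda_{\min}(A_k) \le \kappa_A$ to get $\|\nabla f(w_k)\|_{A_k}^2 \le \lambda_{\max}(A_k)\|\nabla f(w_k)\|^2 \le \kappa_A \lambda_A \|\nabla f(w_k)\|^2$, together with $\sqrt{a+b} \le \sqrt a + \sqrt b$, yields
\[
\|\delta_k\| \le \frac{1}{\sqrt{\lambda_A}}\sqrt{\theta_k^2 \|\nabla f(w_k)\|_{A_k}^2 + \iota_k} \le \theta_k \sqrt{\kappa_A}\,\|\nabla f(w_k)\| + \frac{\sqrt{\iota_k}}{\sqrt{\lambda_A}}.
\]
Finally I would bound $\|\nabla f(w_k)\| = \|\nabla f(w_k) - \nabla f(w^*)\| \le L\|w_k - w^*\|$ using the Hessian upper bound $L$ from Assumption~\ref{assumption:hessian_spectral_bounds}, so $\|\delta_k\| \le L\theta_k\sqrt{\kappa_A}\|w_k - w^*\| + \sqrt{\iota_k}/\sqrt{\lambda_A}$. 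Dividing by $\tilde\mu$ and summing the three contributions gives exactly \eqref{eq:twoterm_det}.

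The only mildly delicate point is the interplay of the weighted norms — making sure the factor comes out as $\sqrt{\kappa_A}$ (not $\kappa_A$) and that the $\sqrt{\iota_k}$ term carries $1/(\tilde\mu\sqrt{\lambda_A})$ — which just requires being careful with which direction each eigenvalue inequality is applied; there is no real obstacle, since everything is a triangle-inequality decomposition plus the Taylor remainder. The harder conceptual work is deferred to later results, where the ``Hessian error'' term $\|(\widetilde H_k - \nabla^2 f(w_k))(w_k - w^*)\|$ must be controlled via the decomposition \eqref{eq:hessian_error_decomposition} into memory error and sampling error; here that term is simply left intact.
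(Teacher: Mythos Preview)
Your proposal is essentially the paper's proof: the same three-term decomposition (Newton remainder, Hessian error, gradient error), the same Taylor bound $\tfrac{M}{2}\|w_k-w^*\|^2$ for the first term, and the same use of the deterministic norm condition together with $\|\nabla f(w_k)\|\le L\|w_k-w^*\|$ for the last.

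One small slip to fix in your eigenvalue bookkeeping: the intermediate claim $\lambda_{\max}(A_k)\le \kappa_A\lambda_A$ does not follow from the hypotheses, since you only know $\lambda_{\min}(A_k)\ge \lambda_A$ (not $\le$). Once you have already replaced $\lambda_{\min}(A_k)$ by $\lambda_A$ in the denominator, you cannot recover the ratio $\kappa_A$. The clean route (which the paper uses and which you yourself flagged as the delicate point) is to keep $\lambda_{\min}(A_k)$ in the denominator until after splitting: from $\|\delta_k\|\le \tfrac{1}{\sqrt{\lambda_{\min}(A_k)}}\bigl(\theta_k\|\nabla f(w_k)\|_{A_k}+\sqrt{\iota_k}\bigr)$, bound the first summand via $\|\nabla f(w_k)\|_{A_k}\le \sqrt{\lambda_{\max}(A_k)}\|\nabla f(w_k)\|$ so that the factor becomes $\sqrt{\lambda_{\max}(A_k)/\lambda_{\min}(A_k)}\le\sqrt{\kappa_A}$, and only then use $\lambda_{\min}(A_k)\ge\lambda_A$ on the $\sqrt{\iota_k}$ term. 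With that ordering fixed, your argument is complete and matches the paper.
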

\begin{proof}
We proceed by decomposing the iterate update \eqref{eq:iter}  into three terms: Newton update term, Hessian error term, and gradient error term as follows. 
\begin{align}
	&\|w_{k+1} - w^*\| \nonumber \\
 &\quad \leq \|\widetilde{H}_k^{-1}\|\|\widetilde{H}_k(w_k - w^*) - g_k\| \nonumber \\
 &\quad \leq \frac{1}{\tilde{\mu}} \left(\underbrace{\|\nabla^2 f(w_k)(w_k - w^*) - \nabla f(w_k)\|}_\text{Newton update} + \underbrace{\|(\widetilde{H}_k - \nabla^2 f(w_k))(w_k - w^*)\|}_{\text{Hessian error}} + \underbrace{\|g_k - \nabla f(w_k)\|}_{\text{gradient error}}\right). \label{eq:three_terms} 
\end{align}
The Newton update term is standard in Newton-type methods and has been analyzed in many prior works and we provide it here for the sake of completeness. Using Assumption~\ref{assumption:lipschitz_hessian}, it follows that
\begin{align}
    \|\nabla^2 f(w_k)(w_k - w^*) - \nabla f(w_k)\| &= \left\|\nabla^2 f(w_k)(w_k - w^*) - \int_{t=0}^{1}\nabla^2 f(w_k + t(w^* - w_k))(w_k - w^*)dt\right\| \nonumber \\
    &\leq \|w_k-w^*\|\int_{t=0}^{1}\|\nabla^2 f(w_k) - \nabla^2 f(w_k + t(w^* - w_k))\|dt \nonumber \\ 
    &\leq M\|w_k-w^*\|^2\int_{t=0}^{1}tdt  =\frac{M}{2}\|w_k-w^*\|^2. \label{eq:quad_term}
\end{align}
Next, we analyze the gradient error term. If $g_k$ satisfies deterministic norm condition \eqref{eq:deter_norm}, then we have that

\begin{align}
\sqrt{\lambda_{\min}(A_k)}\|g_k - \nabla f(w_k)\| &\leq \|g_k - \nabla f(w_k)\|_{A_k} \nonumber \\
&\leq \sqrt{\theta_k^2 \|\nabla f(w_k)\|_{A_k}^2 + \iota_k} \nonumber \\
&\leq \sqrt{\lambda_{\max}(A_k)}\theta_k\|\nabla f(w_k)\| + \sqrt{\iota_k}.
\end{align}
Rearranging the terms in the above inequality and using Assumption~\ref{assumption:hessian_spectral_bounds}, we get
\begin{equation}
    \|g_k - \nabla f(w_k)\| \leq L\sqrt{\frac{\lambda_{\max}(A_k)}{\lambda_{\min}(A_k)}}\theta_k\|w_k-w^*\| + \sqrt{\frac{\iota_k}{\lambda_{\min}(A_k)}}. \label{eq:grad_term}
\end{equation}
Combining \eqref{eq:three_terms}, \eqref{eq:quad_term}, and \eqref{eq:grad_term} yields \eqref{eq:twoterm_det}. 

\end{proof}

Lemma~\ref{lemma:tech_local} establishes the dependence of the iterate distance to optimality on the Hessian approximation error. In the following lemmas, we derive bounds for individual terms, in the service of establishing the local rate of convergence. We first decompose the error into different error terms as stated in the following lemma.   

\begin{lemma}\label{lem:hess_error}
Suppose Assumption~\ref{assumption:lipschitz_hessian} holds. For any iteration $k \in \mathbb{Z}^+$, the error in the Hessian approximation is upper bounded as
\begin{align}\label{eq:total_hessianerror}
    &\|(\widetilde{H}_k - \nabla^2 f(w_k))(w_k - w^*)\| \nonumber \\
    &\quad \leq \|(\widetilde{H}_k - \widehat{H}_k)(w_k - w^*)\| + 3M\|w_k - w^*\|^2 + \sum_{i=0}^k \gamma_i  \|(\nabla^2 F_{S_i} (w_i) - \nabla^2 F_{S_i}(w^*))(w_k - w^*)\| \nonumber \\
    &\qquad \quad 
    + \left\|\left(\sum_{i=0}^k \gamma_i\nabla^2 F_{S_i}(w^*) - \nabla^2f(w^*)\right)(w_k - w^*)  \right\|.
\end{align}
\end{lemma}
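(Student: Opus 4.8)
The plan is to take the quantity $\widetilde{H}_k - \nabla^2 f(w_k)$, insert and subtract the path-averaged Hessian $\widehat{H}_k$, and then further split the resulting ``sampling/memory'' terms by inserting and subtracting Hessians evaluated at the optimum $w^*$. This mirrors the decomposition in \eqref{eq:hessian_error_decomposition}, but pushes the statistical sampling error from $w_k$ to $w^*$, which is exactly the structural move the introduction advertises. Concretely, I would write
\begin{align*}
\widetilde{H}_k - \nabla^2 f(w_k)
&= \bigl(\widetilde{H}_k - \widehat{H}_k\bigr)
+ \sum_{i=0}^k \gamma_i\bigl(\nabla^2 F_{S_i}(w_i) - \nabla^2 F_{S_i}(w^*)\bigr) \\
&\quad + \Bigl(\sum_{i=0}^k \gamma_i\nabla^2 F_{S_i}(w^*) - \nabla^2 f(w^*)\Bigr)
+ \bigl(\nabla^2 f(w^*) - \nabla^2 f(w_k)\bigr),
\end{align*}
using $\sum_{i=0}^k \gamma_i = 1$ to absorb $\nabla^2 f(w_k)$ into the telescoped sum. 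Then I multiply on the right by $(w_k - w^*)$, take norms, and apply the triangle inequality to get four terms. The first, third, and (part of the) second match the claimed right-hand side verbatim; the fourth term, $\|(\nabla^2 f(w^*) - \nabla^2 f(w_k))(w_k - w^*)\|$, is bounded by $M\|w_k - w^*\|^2$ via Assumption~\ref{assumption:lipschitz_hessian}.

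Next I would handle the ``Hessian memory'' term $\sum_{i=0}^k \gamma_i\|(\nabla^2 F_{S_i}(w_i) - \nabla^2 F_{S_i}(w^*))(w_k - w^*)\|$. The statement as written keeps this term essentially intact, so no work is needed there beyond the triangle inequality across the weighted sum. However, looking at the claimed bound, it has $3M\|w_k - w^*\|^2$ as a standalone term rather than $M\|w_k-w^*\|^2$, which suggests the intended split also routes some memory contribution through $w_k$: one natural variant is to write $\nabla^2 F_{S_i}(w_i) - \nabla^2 F_{S_i}(w^*) = (\nabla^2 F_{S_i}(w_i) - \nabla^2 F_{S_i}(w_k)) + (\nabla^2 F_{S_i}(w_k) - \nabla^2 F_{S_i}(w^*))$ and bound the first piece by $M_{|S_i|}\|w_i - w_k\|$ and the second by $M_{|S_i|}\|w_k - w^*\|$, but that does not obviously produce the factor $3$. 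More likely the factor $3M\|w_k-w^*\|^2$ collects (i) the $\nabla^2 f(w^*) - \nabla^2 f(w_k)$ term above, contributing $M\|w_k-w^*\|^2$, plus (ii) an additional $2M\|w_k - w^*\|^2$ coming from replacing $\nabla^2 f(w^*)$ by $\nabla^2 f(w_k)$ and $\nabla^2 f(w_k)$ by various $\nabla^2 F_{S_i}$ evaluated at $w^*$ versus $w_k$ inside the third term — i.e., keeping the ``sampling error'' term anchored at $w^*$ forces absorbing Lipschitz corrections elsewhere. The cleanest route is: keep exactly the four-term split above, bound term four by $M\|w_k-w^*\|^2$, and then note that in term two we may freely replace, for bookkeeping convenience in the subsequent lemmas, using Lipschitz continuity at the cost of at most $2M\|w_k-w^*\|^2$ additional slack (e.g. replacing $\nabla^2 F_{S_i}(w^*)$ consistency across the second and third terms), so that the constant becomes $3M$.

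The main obstacle is getting the constant exactly right: the three ``honest'' Lipschitz replacements (optimum-to-current-iterate in the deterministic-Hessian part, and the two places where $w^*$ and $w_k$ are interchanged to align the sampling term with the optimum) each cost $M\|w_k-w^*\|^2$, and I need to confirm they sum to exactly $3M\|w_k-w^*\|^2$ rather than, say, $2M$ or $4M$, by carefully tracking which Hessian is evaluated at which point in each of the four grouped terms. I would do this by writing the full identity with all intermediate insertions explicit, grouping terms so that each grouped difference is of the form $\nabla^2(\cdot)(w^*) - \nabla^2(\cdot)(w_k)$ with a common subsampled or full Hessian, applying Assumption~\ref{assumption:lipschitz_hessian} to each, and collecting. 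Everything else — the triangle inequality, pulling $\gamma_i$ out of norms since $\gamma_i \ge 0$, and the submultiplicativity $\|A(w_k-w^*)\| \le \|A\|\|w_k-w^*\|$ — is routine. No appeal to strong convexity or to the specific sampling scheme is needed; the lemma is purely an algebraic-plus-Lipschitz decomposition, valid for any weights $\gamma_i \in [0,1]$ with $\sum_i \gamma_i = 1$.
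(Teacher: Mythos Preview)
Your four-term identity is correct and actually proves a \emph{stronger} bound than the stated lemma: applying the triangle inequality to your split and bounding only the fourth term $\|(\nabla^2 f(w^*) - \nabla^2 f(w_k))(w_k-w^*)\|$ via Assumption~\ref{assumption:lipschitz_hessian} already gives the right-hand side of \eqref{eq:total_hessianerror} with $M\|w_k-w^*\|^2$ in place of $3M\|w_k-w^*\|^2$. Since $M\le 3M$, the lemma follows immediately; there is no missing $2M$ you need to manufacture, and your speculation about extra Lipschitz replacements is unnecessary.

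The paper takes a slightly longer route: it first invokes the memory/sampling decomposition \eqref{eq:hessian_error_decomposition}, which anchors everything at $w_k$, and only afterwards re-anchors each piece at $w^*$. Concretely, the memory term $\sum_i\gamma_i(\nabla^2 F_{S_i}(w_i)-\nabla^2 F_{S_i}(w_k))$ is split through $w^*$ (one Lipschitz correction), and the sampling term $\sum_i\gamma_i\nabla^2 F_{S_i}(w_k)-\nabla^2 f(w_k)$ is split through $w^*$ twice (once for the subsampled Hessians, once for $\nabla^2 f$), yielding two more Lipschitz corrections. That is where the factor $3$ comes from. Your direct telescoping $\widehat{H}_k \to \sum_i\gamma_i\nabla^2 F_{S_i}(w^*) \to \nabla^2 f(w^*) \to \nabla^2 f(w_k)$ bypasses the detour through $w_k$ and avoids the redundant corrections, at the cost of not visually matching the decomposition \eqref{eq:hessian_error_decomposition} that motivates the paper's narrative.
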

\begin{proof}
    Let $e_k = w_k - w^*$. The Hessian approximation error can be decomposed into two terms.
    \begin{align}\label{eq:hessian_error_terms}
    (\widetilde{H}_k - \nabla^2 f(w_k))e_k = \underbrace{(\widetilde{H}_k - \widehat{H}_k)e_k}_{\text{nonconvex error}} +  (\widehat{H}_k - \nabla^2 f(w_k))e_k,
    \end{align}
    where the first term is arising due to the nonconvexity. Now, using the decomposition of the second term given in \eqref{eq:hessian_error_decomposition}, we get
    \begin{align}\label{eq:proof_twoterms}
        &\|(\widehat{H}_k - \nabla^2 f(w_k))e_k\| \nonumber \\
        &\quad \leq \left\|\left(\sum_{i=0}^k \gamma_i \left(\nabla^2 F_{S_i}(w_i) - \nabla^2 F_{S_i}(w_k)\right)e_k\right)\right\| + \left\|\left(\sum_{i=0}^k \gamma_i\nabla^2 F_{S_i}(w_k) -\nabla^2 f(w_k)\right)e_k\right\|. 
    \end{align}
    Considering the first term in \eqref{eq:proof_twoterms}, using $\sum_{i=0}^{k}\gamma_i =1$ and Assumption~\ref{assumption:lipschitz_hessian}, we get
    \begin{align}
        &\left\|\left(\sum_{i=0}^k \gamma_i \left(\nabla^2 F_{S_i}(w_i) - \nabla^2 F_{S_i}(w_k)\right)\right)e_k\right\| \nonumber \\
        &\quad \leq \sum_{i=0}^k\gamma_i\|(\nabla^2 F_{S_i}(w_i) - \nabla^2 F_{S_i}(w^*))e_k\| + \|(\nabla^2 F_{S_i}(w^*) - \nabla^2 F_{S_i}(w_k))e_k\| \nonumber \\
        &\quad \leq \sum_{i=0}^k\gamma_i\|(\nabla^2 F_{S_i}(w_i) - \nabla^2 F_{S_i}(w^*))e_k\| + M\|w_k - w^*\|^2. \label{eq:proof_t1}
    \end{align}
    Considering the second term in \eqref{eq:proof_twoterms}, we get
    \begin{align}
        &\left\|\left(\sum_{i=0}^k \gamma_i\nabla^2 F_{S_i}(w_k) -\nabla^2 f(w_k)\right)e_k\right\| \nonumber \\
        &\quad \leq \left\|\sum_{i=0}^k \gamma_i(\nabla^2 F_{S_i}(w_k) - \nabla^2 F_{S_i}(w^*))e_k\right\| + \left\|\left(\sum_{i=1}^k \gamma_i\nabla^2 F_{S_i}(w^*) - \nabla^2f(w^*)\right)e_k  \right\| \nonumber \\
        &\quad \quad + \|(\nabla^2 f(w_k) - \nabla^2 f(w^*))e_k\|\nonumber \\
        &\quad \leq 2M\|w_k-w^*\|^2 + \left\|\left(\sum_{i=0}^k \gamma_i\nabla^2 F_{S_i}(w^*) - \nabla^2f(w^*)\right)e_k  \right\|. \label{eq:proof_t2}
    \end{align}
    Combining \eqref{eq:hessian_error_terms}, \eqref{eq:proof_twoterms}, \eqref{eq:proof_t1}, and \eqref{eq:proof_t2} yields \eqref{eq:total_hessianerror}. 
\end{proof}
We will analyze each term in the upper bound on the Hessian approximation error established in Lemma~\ref{lem:hess_error}. We achieve this by utilizing a key assumption in the local analysis of Newton methods where we assume that the iterates generated by \eqref{eq:iter} eventually enter a locally strongly convex regime. 
That is, we make the following assumption about the iterates generated by \eqref{eq:iter} with $\alpha_k$ specified in Section~\ref{sec:global} and $g_k$ satisfying deterministic norm condition \eqref{eq:deter_norm}. 

\begin{assumption}{(Local strong convexity).} \label{assum:local_strng_cnvx}
For any $w_0$, there exists $\nu > 0$ such that for all $k\in \{j \in \N \enskip |\enskip \|\nabla f(w_k)\| \leq \nu\}$, we have that $\nabla^2 f(w_k) \succeq \mu I$, where \{$w_k: k\in \N$\} are iterates generated by \eqref{eq:iter}. Moreover, we also assume that the following well-known inequalities associated with strong convexity also hold with respect to a local solution $w^*$ ($\|\nabla f(w^*)\| = 0$).
\begin{align}\label{eq:local_ineq_1}
    \|\nabla f(w_k)\|^2 &\geq 2\mu (f(w_k) - f(w^*)) \geq \mu^2 \|w_k - w^*\|^2. 
\end{align}
\end{assumption}

\begin{remark}
 We note that similar assumptions have been made in the constrained setting \cite[Assumption 5.1]{berahas2024modified}. This assumption is required not for all iterates, but only for those obtained after running the algorithm for a sufficiently large number of iterations, such that the iterates enter a locally strongly convex regime. This assumption is trivially satisfied when the functions are globally strongly convex (see Assumption~\ref{assumption:strng_cnvx}).
Due to the global convergence results established in Section~\ref{sec:global}, this assumption also implies that the iterates are indeed converging to a second-order stationary point ($\nabla f(w^*) = 0$ and $\nabla^2 f(w^*) \succ 0$). Such assumptions are commonly employed in local analysis of Newton-type methods, albeit in the form of proximity to a second-order stationary point $w^*$. That is, $\|w_k-w^*\| \leq \nu$. 
\end{remark}

In the next lemma, we establish upper bounds for the terms in  Lemma~\ref{lem:hess_error}. The main approach in the proof of this lemma is that using global convergence results, the iterates will enter the locally strongly convex regime after the global sublinear phase established in Theorem~\ref{thm:global_sublinear_bounded_deterministic}. Furthermore, once the iterates enters this phase, they will remain in this regime thereby achieving the linear convergence as established in Theorem~\ref{thm:linear}.   

\begin{lemma}\label{lemma:memory_det}
    Suppose Assumptions~\ref{assumption:hessian_spectral_bounds},~\ref{assumption:lipschitz_hessian} and ~\ref{assum:local_strng_cnvx} hold. For any $w_0\in\R^d$, let $\{w_k: k \in \N\}$ be iterates generated by \eqref{eq:iter} where the Hessian approximation is given in \eqref{eq:htilde_def} with $\gamma_i=\frac{1}{k+1}$ for all $i=0,\cdots,k$, and the gradient approximations $g_k$ satisfies the Condition~\ref{condition:expected_gradient_bounds} with $\sum_{i=0}^{\infty} \iota_k = \tilde{\iota} < \infty$. If $A_k$ and the corresponding step size $\alpha_k$ are chosen according to Theorem~\ref{thm:global_sublinear_bounded_deterministic}. Then there exists $k_{\text{lin}} \geq 0$ such that for any $k \geq k_{\text{lin}}$ if $\iota_{k+1} = \iota_{k} a_g$ for some $\iota_{k_\text{lin}} \geq 0$ and $a_g \in [0,1)$, we have that if $g_k$ satisfies deterministic norm condition \eqref{eq:deter_norm}, then there exists a constant $C_{p,d}$ such that
    \begin{align}\label{eq:lemma36_det}
        \sum_{i=0}^k \gamma_i  \|(\nabla^2 F_{S_i} (w_i) - \nabla^2 F_{S_i}(w^*))(w_k - w^*)\|&\leq \frac{C_{p,d}}{k+1}\|w_k - w^*\|.
    \end{align}

\end{lemma}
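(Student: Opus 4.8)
\textbf{Proof proposal for Lemma~\ref{lemma:memory_det}.}

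The plan is to control the sum $\sum_{i=0}^k \gamma_i \|(\nabla^2 F_{S_i}(w_i) - \nabla^2 F_{S_i}(w^*))(w_k - w^*)\|$ by combining the Hessian Lipschitz bound (Assumption~\ref{assumption:lipschitz_hessian}) with the linear convergence of the iterates once they enter the locally strongly convex regime. First I would use $\|(\nabla^2 F_{S_i}(w_i) - \nabla^2 F_{S_i}(w^*))(w_k - w^*)\| \le M\|w_i - w^*\|\,\|w_k - w^*\|$, so that with $\gamma_i = \frac{1}{k+1}$ the sum is bounded by $\frac{M\|w_k - w^*\|}{k+1}\sum_{i=0}^k \|w_i - w^*\|$. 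The goal then reduces to showing $\sum_{i=0}^k \|w_i - w^*\|$ is bounded by a constant independent of $k$ (this constant becomes $C_{p,d}/M$).

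Next I would invoke the global convergence theory. By Theorem~\ref{thm:global_sublinear_bounded_deterministic}, $\|\nabla f(w_k)\| \to 0$, so there exists a finite index $k_{\text{lin}}$ after which $\|\nabla f(w_k)\| \le \nu$ and Assumption~\ref{assum:local_strng_cnvx} applies; by the argument in Theorem~\ref{thm:linear} (now valid because the iterates stay in the strongly convex region, with $\iota_{k+1} = \iota_k a_g$ for $k \ge k_{\text{lin}}$), one gets $f(w_k) - f(w^*) \le \tilde{C}_1 \tilde{\rho}_1^{k - k_{\text{lin}}}$ for $k \ge k_{\text{lin}}$, hence via \eqref{eq:local_ineq_1}, $\|w_k - w^*\| \le \frac{\sqrt{2\tilde{C}_1}}{\mu}\,\tilde{\rho}_1^{(k-k_{\text{lin}})/2}$. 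Splitting the sum $\sum_{i=0}^k \|w_i - w^*\| = \sum_{i=0}^{k_{\text{lin}}-1}\|w_i - w^*\| + \sum_{i=k_{\text{lin}}}^{k}\|w_i - w^*\|$: the first (finite) piece is bounded by some constant $D_1$ depending only on the iterates up to $k_{\text{lin}}$, and the second piece is a geometric-type series bounded by $\frac{\sqrt{2\tilde{C}_1}}{\mu(1 - \sqrt{\tilde{\rho}_1})} =: D_2$. Setting $C_{p,d} := M(D_1 + D_2)$ gives \eqref{eq:lemma36_det}.

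One subtlety I would need to handle carefully: the first piece $\sum_{i=0}^{k_{\text{lin}}-1}\|w_i - w^*\|$ must be genuinely finite, which follows because $k_{\text{lin}}$ is a fixed finite number (the global sublinear/linear theory guarantees $\|\nabla f(w_k)\|$ eventually drops below $\nu$ in finitely many steps, even if no explicit bound on $k_{\text{lin}}$ is given here) and each $\|w_i - w^*\|$ is finite since all iterates are well-defined. It is also worth noting that the bound on $\|w_k - w^*\|$ from the linear phase does not require unit step sizes — it only uses the global linear rate of Theorem~\ref{thm:linear} with the stepsize $\alpha_k = \alpha \le \tilde\mu/L$ inherited from Theorem~\ref{thm:global_sublinear_bounded_deterministic} — so there is no circularity with the later use of $\alpha_k = 1$ in the superlinear analysis.

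The main obstacle is bookkeeping rather than conceptual depth: one must ensure the constant $C_{p,d}$ depends only on problem constants ($M$, $\mu$, $L$, $\tilde\mu$, the bound on $\|\nabla f(w_0)\|$ or $f(w_0) - f(w^*)$) and the algorithmic hyperparameters ($\tilde\theta_g$, $\iota_0$, $a_g$, $\alpha$), and \emph{not} on $k$. The key structural insight being exploited is that the averaging weights $\gamma_i = \frac{1}{k+1}$ pull a factor $\frac{1}{k+1}$ out front while the remaining sum $\sum_i \|w_i - w^*\|$ converges because the iterates approach $w^*$ at least R-linearly; this is precisely where the memory error of the path-averaged Hessian gets a $\mathcal{O}(1/k)$ decay, ultimately feeding into the $\mathcal{O}(1/k)$ superlinear rate of Theorem~\ref{thm:super_det}.
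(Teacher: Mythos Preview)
Your proposal is correct and follows the same overall strategy as the paper: split the sum at $k_{\text{lin}}$, use Lipschitz continuity of the subsampled Hessians together with the linear convergence rate from Theorem~\ref{thm:linear} to bound the tail $\sum_{i \ge k_{\text{lin}}} \|w_i - w^*\|$ by a geometric series, and absorb the early part into a constant.

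The one place your argument differs from the paper is in how the pre-$k_{\text{lin}}$ terms are handled. You apply the Lipschitz bound $\|\nabla^2 F_{S_i}(w_i) - \nabla^2 F_{S_i}(w^*)\| \le M\|w_i - w^*\|$ uniformly to all $i$ and then bound $\sum_{i=0}^{k_{\text{lin}}-1}\|w_i - w^*\|$ by an unspecified finite constant $D_1$ depending on the early iterates. The paper instead uses the cruder spectral bound $\|\nabla^2 F_{S_i}(w_i) - \nabla^2 F_{S_i}(w^*)\| \le 2L$ for $i < k_{\text{lin}}$, which avoids having to control $\|w_i - w^*\|$ in the nonconvex phase and yields the explicit constant $C_{p,d} = 2Lk_{\text{lin}} + \tfrac{\nu M}{\mu(1-\sqrt{\tilde{\rho}_1})}$. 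This explicitness matters later (Corollary~\ref{cor:ksup_d}) when the transition iteration $k_{\text{sup}}$ is quantified in terms of problem constants; your $D_1$ would not directly feed into that analysis. A second minor difference: you obtain $k_{\text{lin}}$ by invoking $\|\nabla f(w_k)\| \to 0$ from Theorem~\ref{thm:global_sublinear_bounded_deterministic} and taking the tail where $\|\nabla f(w_k)\| \le \nu$, whereas the paper defines $k_{\text{lin}}$ as the \emph{first} index with $\|\nabla f(w_{k_{\text{lin}}})\| \le \nu$ and then proves by induction (using the linear rate and a sufficiently small $\iota_{k_{\text{lin}}}$) that the iterates never leave the region---your route is cleaner for existence but again loses the explicit bound $\tilde{k}_{\text{lin}}$ in \eqref{eq:klin_d}.
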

\begin{proof}
Since the conditions of Theorem~\ref{thm:global_sublinear_bounded_deterministic} are satisfied, from \eqref{eq:sublinear_det} we have that, for any positive integer $T$ with $\alpha_k=\alpha=\frac{\tilde{\mu}}{L}$, 
\begin{equation*}
         \min\limits_{0\leq k \leq T-1} \|\nabla f(w_k)\|^2 \leq \frac{\tilde{L}}{\left(1-\tilde{\theta}_g^2\right)T}\left(\frac{2(f(w_0) - f_\text{min})}{\alpha} + \tilde{\iota}\right).
\end{equation*}
Choosing 
\begin{equation}\label{eq:klin_d}
T\geq \tilde{k}_{\text{lin}} := \frac{\tilde{L}L}{\left(1-\tilde{\theta}_g^2\right)\mu\nu^2}\left(\frac{2(f(w_0) - f_\text{min})}{\alpha} + \tilde{\iota}\right),
\end{equation} 
we get
$$\min\limits_{0\leq k \leq T-1} \|\nabla f(w_k)\|^2 \leq \frac{\nu^2\mu}{L} \leq \nu^2.$$ 
Let $k_{\text{lin}}\leq \tilde{k}_{\text{lin}}$ be the first iterate at which $\|\nabla f(w_{k_{\text{lin}}})\|^2 \leq \nu^2$. Due to Assumption~\ref{assum:local_strng_cnvx}, it follows that $\nabla^2 f(w_{k_{\text{lin}}}) \succeq \mu I$. We will now show that starting with this iterate $w_{k_{\text{lin}}}$, all the following iterates will remain in this locally strongly convex phase. That is, we will show that $\|\nabla f(w_k)\| \leq \nu$ for all $k \geq k_{\text{lin}}$. We use induction to prove this statement. Note that for the base case of $k=k_{\text{lin}}$, it is trivially satisfied. Let us assume that the statement is true till some iteration $k-1>k_{\text{lin}}$. Using the strongly convex results established in Theorem~\ref{thm:linear} with $\alpha_k=\alpha=\frac{\tilde{\mu}}{L}$, starting with iterate $w_{k_{\text{lin}}}$, from \eqref{eq:lin_det_reslt}, we get,
\begin{align*}
    \|\nabla f(w_k)\|^2 &\leq 2L(f(w_k) - f(w^*))\leq 2L\tilde{C}_1\tilde{\rho}_1^{k - k_{\text{lin}}}\leq 2L\tilde{C}_1,
\end{align*}
where the first inequality is a well-known result for functions with Lipschitz continuous gradients \cite[Eq (3.16)]{bollapragada2018adaptive}.
Moreover, choosing $\iota_{k_{\text{lin}}} \leq \frac{\mu\left(1-\tilde{\theta}_g^2\right)\nu^2}{2L\tilde{L}}$, we have,
\begin{align}
    \|\nabla f(w_k)\|^2  \leq 2L\tilde{C}_1 &= 2L\max\left\{f(w_{k_{\text{lin}}}) - f(w^*), \frac{\tilde{L}\iota_{k_{\text{lin}}}}{\mu(1-\tilde{\theta}_g^2)} \right\} \nonumber \\
    &\leq 2L\max\left\{f(w_{k_{\text{lin}}}) - f(w^*), \frac{\nu^2}{2L} \right\} \nonumber \\
    &\leq 2L\max\left\{\frac{\|\nabla f(w_{k_{\text{lin}}})\|^2}{2\mu}, \frac{\nu^2}{2L} \right\} \leq \nu^2, \label{eq:proof_sub_-1}
\end{align}
where the third inequality is due to \eqref{eq:local_ineq_1} and the last inequality is due to $\|\nabla f(w_{k_{\text{lin}}})\|^2 \leq \frac{\nu^2\mu}{L}$. 

Therefore, for all $k\geq k_{\text{lin}}$, we conclude that $\|\nabla f(w_k)\|\leq \nu$ and consequently from Assumption~\ref{assum:local_strng_cnvx}, we have that 
$\nabla^2 f(w_k) \succeq \mu I$. 
Let $e_k = w_k - w^*$, and consider
\begin{align}
    \sum_{i=0}^k \gamma_i  &\|(\nabla^2 F_{S_i} (w_i) - \nabla^2 F_{S_i}(w^*))e_k\| \nonumber \\
    &=\frac{1}{k+1} \left(\sum_{i=0}^{k_{\text{lin}}-1}\|(\nabla^2 F_{S_i} (w_i) - \nabla^2 F_{S_i}(w^*))e_k\| + \sum_{i=k_{\text{lin}}}^{k}\|(\nabla^2 F_{S_i} (w_i) - \nabla^2 F_{S_i}(w^*))e_k\|\right) \nonumber \\
    &\leq \frac{2Lk_{\text{lin}}}{k+1}\|w_k - w^*\| + \frac{M}{k+1}\sum_{i=k_{\text{lin}}}^{k}\|w_i - w^*\|\|w_k - w^*\|, \label{eq:proof_sub_0}
\end{align}
where the first term in the inequality is due to $\|\nabla^2 F_{S_i}(\cdot)\| \leq L$ and the second term is due to Assumption~\ref{assumption:lipschitz_hessian}.
Now, starting with $k=k_{\text{lin}}$, the iterates are in locally strongly convex regime. Therefore, from \eqref{eq:lin_det_reslt}, \eqref{eq:local_ineq_1}, and \eqref{eq:proof_sub_-1}, we have for any $k \geq k_{\text{lin}}$,
\begin{align}\label{eq:proof_lin_1}
    \|w_k - w^*\|^2 \leq \frac{2}{\mu}(f(w_k) - f(w^*))\leq \frac{2\tilde{C}_1\tilde{\rho}_1^{k-k_{\text{lin}}}}{\mu} \leq \frac{\nu^2 \tilde{\rho}_1^{k - k_{\text{lin}}}}{\mu^2}.
\end{align}

Summing this inequality from $k_{\text{lin}}$ to $k$ yields,
\begin{align}\label{eq:proof_sum_1}
    \sum_{i=k_{\text{lin}}}^{k}\|w_i - w^*\| &\leq \sum_{i=k_{\text{lin}}}^{k}\frac{\nu(\sqrt{\tilde{\rho}_1})^{i-k_{\text{lin}}}}{\mu } 
    \leq \frac{\nu}{\mu} \sum_{i=0}^{k - k_{\text{lin}}} (\sqrt{\tilde{\rho}_1})^{i} < \frac{\nu}{\mu \left(1-\sqrt{\tilde{\rho}_1}\right)}.
\end{align}
Substituting \eqref{eq:proof_sum_1} in \eqref{eq:proof_sub_0} and choosing
\begin{equation}\label{eq:Cpd}
C_{p,d} := 2Lk_{\text{lin}} + \frac{\nu M}{\mu(1-\sqrt{\tilde{\rho}_1})}, 
\end{equation}
yields

\begin{align}
   \sum_{i=0}^k \gamma_i  \|(\nabla^2 F_{S_i} (w_i) - \nabla^2 F_{S_i}(w^*))e_k\| &\leq \frac{\|e_k\|}{k+1}\left(2Lk_{\text{lin}} + \frac{\nu M}{\mu\left(1-\sqrt{\tilde{\rho}_1}\right)}\right) = \frac{C_{p,d}}{k+1}\|w_k - w^*\|.
\end{align}

\end{proof}
\begin{remark}
If the functions are globally strongly convex (Assumption~\ref{assumption:strng_cnvx} holds) then there is no sublinear convergent phase in the algorithm. That is, $k_{\text{lin}} = 0$ in this setting. 
\end{remark}

We will now establish that the nonconvex error term in Lemma~\ref{lem:hess_error}  vanishes for sufficiently large number of iterations.

\begin{lemma}
    \label{lem:nonconvex}
    Suppose conditions of Lemma~\ref{lemma:memory_det} hold. Let $\tilde{\mu} \leq \frac{\mu}{2}$, and $g_k$ satisfies deterministic norm condition \eqref{eq:deter_norm} and the sample sets $S_k$ are chosen deterministically without replacement in a cyclic fashion such that $n|S_k| = N$ for some $n \in \N$ with $n\geq 1$. Let $\lambda_{\min}(\nabla^2 F_{S_i}(w_i)) \geq -\hat{\lambda}$ for some $\hat{\lambda} \in [0,\infty)$. Then there exists $k_{\text{non}} \geq k_{\text{lin}}$ such that for all $k \geq k_{\text{non}}$, $\widetilde{H}_k = \widehat{H}_k$. 
\end{lemma}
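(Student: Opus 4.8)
The plan is to show that the ``nonconvex correction'' added in \eqref{eq:htilde_def} becomes inactive once the iterates have spent enough time in the locally strongly convex regime. The correction is inactive precisely when $\widehat{H}_k \succeq \tilde{\mu}I$, i.e.\ $\lambda_{\min}(\widehat{H}_k) \geq \tilde{\mu}$, so I need a lower bound on $\lambda_{\min}(\widehat{H}_k)$ that eventually exceeds $\tilde{\mu}$. Write $\widehat{H}_k = \sum_{i=0}^k \gamma_i \nabla^2 F_{S_i}(w_i)$ with $\gamma_i = \frac{1}{k+1}$. The idea is to split this sum at $k_{\text{lin}}$: the ``early'' block $i < k_{\text{lin}}$ contributes at worst $-\hat\lambda$ per term (by the assumed lower bound $\lambda_{\min}(\nabla^2 F_{S_i}(w_i)) \geq -\hat\lambda$), but its weight $\tfrac{k_{\text{lin}}}{k+1}$ decays to $0$ as $k\to\infty$; the ``late'' block $k_{\text{lin}} \leq i \leq k$ should contribute a term bounded below by something close to $\mu$, because the $w_i$ are near $w^*$, where $\nabla^2 f(w^*) \succeq \mu I$.

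The key technical step is the late-block estimate. For each $i \geq k_{\text{lin}}$ I would first relate $\nabla^2 F_{S_i}(w_i)$ to $\nabla^2 F_{S_i}(w^*)$ using the Lipschitz Assumption~\ref{assumption:lipschitz_hessian}: $\nabla^2 F_{S_i}(w_i) \succeq \nabla^2 F_{S_i}(w^*) - M\|w_i - w^*\|I$. Then I would use the crucial property that the $S_i$ are chosen cyclically without replacement with $n|S_k| = N$: over any $n$ consecutive indices the sample sets partition $\{1,\dots,N\}$, so the \emph{average} of $\nabla^2 F_{S_i}(w^*)$ over a full cycle equals $\nabla^2 f(w^*) \succeq \mu I$. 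Hence $\sum_{i=k_{\text{lin}}}^{k} \nabla^2 F_{S_i}(w^*) \succeq (\text{number of complete cycles})\cdot N \cdot \tfrac{\mu}{n} I$ minus a bounded-rank correction for the at most $n$ leftover terms (each of which is $\succeq -\hat\lambda I$, or $\succeq 0$ if we additionally invoke $\nabla^2 F_{S_i}(w^*)\succeq 0$ near a strongly convex optimum; the clean way is to keep the $-\hat\lambda$ bound on the $O(n)$ leftover terms). Combining: $\sum_{i=k_{\text{lin}}}^k \gamma_i \nabla^2 F_{S_i}(w_i) \succeq \tfrac{1}{k+1}\big[(k - k_{\text{lin}} - n)\mu - n\hat\lambda - M\sum_{i=k_{\text{lin}}}^k \|w_i - w^*\|\big] I$. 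By \eqref{eq:proof_sum_1} in Lemma~\ref{lemma:memory_det}, $\sum_{i=k_{\text{lin}}}^k \|w_i - w^*\| < \tfrac{\nu}{\mu(1-\sqrt{\tilde\rho_1})}$ is a \emph{finite constant}, so that term is $O(1/(k+1))$.

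Putting the two blocks together gives
\[
\lambda_{\min}(\widehat{H}_k) \;\geq\; \frac{-k_{\text{lin}}\hat\lambda - n\hat\lambda - \tfrac{M\nu}{\mu(1-\sqrt{\tilde\rho_1})}}{k+1} \;+\; \frac{k - k_{\text{lin}} - n}{k+1}\,\mu,
\]
and the right-hand side tends to $\mu$ as $k\to\infty$. Since $\tilde\mu \leq \mu/2 < \mu$, there exists $k_{\text{non}} \geq k_{\text{lin}}$ such that for all $k \geq k_{\text{non}}$ the right-hand side exceeds $\tilde\mu$; one can write $k_{\text{non}}$ explicitly by solving the inequality for $k$. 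For such $k$ we have $\widehat{H}_k \succeq \tilde\mu I$, in particular $\widehat{H}_k$ is positive definite so $|\widehat{H}_k| = \widehat{H}_k$ and $\lambda_{\min}(|\widehat{H}_k|) \geq \tilde\mu$; by the definition \eqref{eq:htilde_def} this is exactly the case where no modification is applied, i.e.\ $\widetilde{H}_k = \widehat{H}_k$.

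The main obstacle I anticipate is handling the cyclic-sampling bookkeeping cleanly — in particular being careful that ``without replacement in a cyclic fashion with $n|S_k| = N$'' genuinely forces full cycles to partition the index set (so that the partial-sum-of-Hessians-at-$w^*$ lower bound holds), and correctly accounting for the $O(n)$ boundary terms at each end of the window $[k_{\text{lin}}, k]$ without inflating the error beyond $O(1/(k+1))$. The Lipschitz perturbation from $w_i$ to $w^*$ and the summability of $\|w_i - w^*\|$ are already supplied by Lemma~\ref{lemma:memory_det}'s proof, so that part is routine; the delicate point is purely the combinatorics of the deterministic cyclic sampling.
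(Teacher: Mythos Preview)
Your proposal is correct and follows essentially the same route as the paper: split $\widehat{H}_k$ at $k_{\text{lin}}$, bound the early block by $-\hat\lambda k_{\text{lin}}/(k+1)$, and for the late block write $\nabla^2 F_{S_i}(w_i) = \nabla^2 F_{S_i}(w^*) + (\nabla^2 F_{S_i}(w_i) - \nabla^2 F_{S_i}(w^*))$, controlling the perturbation via Lipschitz continuity plus the summability bound \eqref{eq:proof_sum_1}, and using cyclicity so that any $n$ consecutive $\nabla^2 F_{S_i}(w^*)$ average to $\nabla^2 f(w^*) \succeq \mu I$ with at most $n-1$ leftover terms bounded by $-\hat\lambda$. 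The paper arrives at the same lower bound (up to bookkeeping constants) and then defines $k_{\text{non}}$ explicitly so the bound exceeds $\mu/2 \geq \tilde\mu$; your concern about ``boundary terms at each end'' is slightly overcautious since cyclicity makes \emph{any} window of length $n$ a full partition, so leftover occurs only at one end, but your looser $O(n)$ count does no harm.
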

\begin{proof}

Any iteration $k\geq k_{\text{lin}}$ can be written as $k = k_{\text{lin}} + nm -1 + k_{\text{rem}}$ where $m \in \mathbb{Z}^+$, $k_{\text{rem}} \in \N$ and $1\leq k_{\text{rem}} \leq n-1$. Let $v \in \R^n$ be any vector and consider 
\begin{align}
    v^T\widehat{H}_kv &= \frac{1}{k+1}\sum_{i=0}^{k}v^T\nabla^2 F_{S_i}(w_i)v \nonumber \\
    &=\frac{1}{k+1}\sum_{i=0}^{k_{\text{lin}} -1}v^T\nabla^2 F_{S_i}(w_i)v + \frac{1}{k+1}\sum_{i=k_{\text{lin}}}^{k}v^T\nabla^2 F_{S_i}(w_i)v \nonumber \\
    &\geq \frac{-\hat{\lambda}k_{\text{lin}}}{k+1}\|v\|^2 + \frac{1}{k+1}\sum_{i=k_{\text{lin}}}^{k}v^T\nabla^2 F_{S_i}(w_i)v \label{eq:proof_noncon_0} \\
    &= \frac{-\hat{\lambda}k_{\text{lin}}}{k+1}\|v\|^2 + \frac{1}{k+1}\sum_{i=k_{\text{lin}}}^{k}v^T(\nabla^2 F_{S_i}(w_i) - \nabla^2 F_{S_i}(w^*))v + \frac{1}{k+1}\sum_{i=k_{\text{lin}}}^{k}v^T\nabla^2 F_{S_i}(w^*)v. \label{eq:proof_noncon_1} 
\end{align}
Using \eqref{eq:proof_sum_1}, we get
\begin{align}
    \frac{1}{k+1}\sum_{i=k_{\text{lin}}}^{k}v^T(\nabla^2 F_{S_i}(w_i) - \nabla^2 F_{S_i}(w^*))v &\geq - \frac{1}{k+1}\sum_{i=k_{\text{lin}}}^{k}\|\nabla^2 F_{S_i}(w_i) - \nabla^2 F_{S_i}(w^*)\|\|v\|^2 \nonumber \\
    &\geq \frac{-M\|v\|^2}{k+1}\sum_{i=k_{\text{lin}}}^{k}\|w_i - w^*\| \nonumber \\
    &\geq \frac{-M\nu}{(k+1)\mu(1 - \sqrt{\tilde{\rho}_1})}\|v\|^2. \label{eq:proof_noncon_2} 
\end{align}
Now consider
\begin{align}
    \frac{1}{k+1}&\sum_{i=k_{\text{lin}}}^{k}v^T\nabla^2 F_{S_i}(w^*)v \nonumber \\
    &=  \frac{1}{k+1}\sum_{i=k_{\text{lin}}}^{k_{\text{lin}} + nm - 1}v^T\nabla^2 F_{S_i}(w^*)v +  \frac{1}{k+1}\sum_{i=k_{\text{lin}} + nm }^{k_{\text{lin}} + nm - 1 + k_{\text{rem}}}v^T\nabla^2 F_{S_i}(w^*)v \nonumber \\
    &=\frac{nm}{k+1}v^T\nabla^2f(w^*)v + \frac{1}{k+1}\sum_{i=k_{\text{lin}} + nm }^{k_{\text{lin}} + nm -1 + k_{\text{rem}}}v^T\nabla^2 F_{S_i}(w^*)v \nonumber \\
    &\geq \frac{nm \mu}{k+1}\|v\|^2 - \frac{\hat{\lambda}k_{\text{rem}}}{k+1}\|v\|^2, \label{eq:proof_noncon_3}
\end{align}
where the second equality is due to the fact that sample sets $S_k$ are chosen deterministically without replacement in a cyclic fashion which implies that $\frac{1}{n}\sum_{i = j}^{j+n-1}\nabla^2 F_{S_i}(w^*) = \nabla^2 f(w^*)$ for any $j\in\N$. Let 
\begin{equation}\label{eq:k_non}
k_{\text{non}}:= \max\left\{4(1 + \tfrac{\hat{\lambda}}{\mu})(k_{\text{lin}} + n - 1), \tfrac{4M\nu}{\mu^2 (1 - \sqrt{\tilde{\rho}_1})}\right\} - 1.
\end{equation}
Combining \eqref{eq:proof_noncon_1}, \eqref{eq:proof_noncon_2}, \eqref{eq:proof_noncon_3}, and using \eqref{eq:k_non}, we get
\begin{align}
    \frac{v^T\widehat{H}_kv}{\|v\|^2} &\geq \frac{nm \mu}{k+1} -  \frac{\hat{\lambda}(k_{\text{lin}} + k_{\text{rem}})}{k+1} - \frac{M\nu}{(k+1)\mu(1 - \sqrt{\tilde{\rho}_1})} \nonumber \\
    &=\mu \left(\frac{k+1 -(k_{\text{lin}} + k_{\text{rem}})}{k+1} - \frac{\hat{\lambda}(k_{\text{lin}} + k_{\text{rem}})}{\mu(k+1)} - \frac{M\nu}{(k+1)\mu^2(1 - \sqrt{\tilde{\rho}_1})} \right) \nonumber \\
    &\geq \mu (1 - \tfrac{1}{4} -\tfrac{1}{4}) = \frac{\mu}{2} \geq \tilde{\mu}.
\end{align}
Therefore, $\lambda_{\min}(\widehat{H}_k) \geq \tilde{\mu}$ and $\widetilde{H}_k = \widehat{H}_k$ for all $k\geq k_{\text{non}}$. 

\end{proof}

To analyze the last term in Lemma~\ref{lem:hess_error}, we make the following standard assumption about individual Hessian components. 

\begin{assumption}{(Hessian approximations, deterministic case).}\label{assum:hessian_var}
The individual component Hessians are bounded relative to the Hessian of the objective function $f$ at the optimal solution $w^*$. That is, for the finite-sum problem, there exist constants $\beta_{1,H},\beta_{2,H} \geq 0$ such that
\begin{equation}\label{eq:det_hess_var_bnd}
    \|\nabla^2 F_i(w^*)\|^2 \leq \beta_{1,H}\|\nabla^2 f(w^*)\|^2 + \beta_{2,H}. 
\end{equation}
\end{assumption}

\begin{remark}
We note that Assumption~\ref{assum:hessian_var} is relatively weak compared to similar assumptions made in the literature \cite{bollapragada2019exact}, as it requires the Hessian components (or its variance) to be bounded only at the optimal solution instead at all iterates $w\in\R^d$. 
\end{remark}

\begin{lemma}\label{lem:sampling error}
    Suppose Assumptions~\ref{assumption:lipschitz_hessian} and~\ref{assum:hessian_var} hold. If $\gamma_i=\frac{1}{k+1}$ for all $i=0,\cdots,k$, and the sample sets $S_k$ are chosen deterministically without replacement in a cyclic fashion such that $n|S_k| = N$ for some $n \in \N$.
        \begin{align}\label{eq:lemma37_det}
            \left\|\left(\sum_{i=0}^k \gamma_i\nabla^2 F_{S_i}(w^*) - \nabla^2f(w^*)\right)(w_k - w^*)  \right\| &\leq \frac{C_{s,d}}{k+1}\frac{(n-1)^2}{n}\|w_k - w^*\|.
        \end{align}

\end{lemma}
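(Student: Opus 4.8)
The plan is to exploit the exact unbiasedness of the cyclic-without-replacement sampling \emph{over a full epoch}. Since $n|S_k| = N$ and the sets $S_0, S_1, \dots$ cycle through a fixed partition of $\{1,\dots,N\}$ with period $n$, we have for every $j$ that $\tfrac{1}{n}\sum_{i=j}^{j+n-1}\nabla^2 F_{S_i}(w^*) = \nabla^2 f(w^*)$, exactly the identity used in the proof of Lemma~\ref{lem:nonconvex}. First I would write $k+1 = qn + r$ with $q \in \mathbb{Z}^+$ and $0 \le r \le n-1$; then the first $qn$ summands collapse to $qn\,\nabla^2 f(w^*)$, so that
\[
\sum_{i=0}^k \gamma_i \nabla^2 F_{S_i}(w^*) - \nabla^2 f(w^*) = \frac{1}{k+1}\sum_{i=qn}^{qn+r-1}\left(\nabla^2 F_{S_i}(w^*) - \nabla^2 f(w^*)\right),
\]
a sum of at most $n-1$ terms (identically zero when $r=0$).

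Next I would obtain a sharp per-term bound. Writing $\nabla^2 F_{S_i}(w^*) = \tfrac{n}{N}\sum_{j\in S_i}\nabla^2 F_j(w^*)$ and $\nabla^2 f(w^*) = \tfrac{1}{N}\sum_{j=1}^N\nabla^2 F_j(w^*)$ and separating the $|S_i| = N/n$ in-sample indices from the $N(n-1)/n$ remaining ones,
\[
\nabla^2 F_{S_i}(w^*) - \nabla^2 f(w^*) = \frac{1}{N}\left((n-1)\sum_{j\in S_i}\nabla^2 F_j(w^*) - \sum_{j\notin S_i}\nabla^2 F_j(w^*)\right),
\]
so by the triangle inequality and Assumption~\ref{assum:hessian_var} (which gives $\|\nabla^2 F_j(w^*)\| \le \sqrt{\beta_{1,H}\|\nabla^2 f(w^*)\|^2 + \beta_{2,H}} =: C_H$ for each $j$) we get $\|\nabla^2 F_{S_i}(w^*) - \nabla^2 f(w^*)\| \le \tfrac{2(n-1)}{n}C_H$.

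Finally, combining the at-most-$(n-1)$ terms, each of norm at most $\tfrac{2(n-1)}{n}C_H$, dividing by $k+1$, and then using submultiplicativity $\|A(w_k-w^*)\| \le \|A\|\,\|w_k-w^*\|$, yields \eqref{eq:lemma37_det} with $C_{s,d} := 2\sqrt{\beta_{1,H}\|\nabla^2 f(w^*)\|^2 + \beta_{2,H}}$. The argument is elementary; the only delicate point is keeping the $(n-1)^2/n$ factor sharp, which requires \emph{both} the refined per-term estimate $\tfrac{2(n-1)}{n}C_H$ (crudely bounding $\|\nabla^2 F_{S_i}(w^*)\|$ and $\|\nabla^2 f(w^*)\|$ separately by $C_H$ would lose a factor and give only $2C_H$) \emph{and} the observation that the leftover partial epoch contributes at most $n-1$ terms, not $n$. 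Note that the choice $\gamma_i = \tfrac{1}{k+1}$ and the cyclic structure are both essential and enter exactly as in Lemma~\ref{lem:nonconvex}; Assumption~\ref{assumption:lipschitz_hessian} is not actually needed for this particular estimate.
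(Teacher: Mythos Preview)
Your proposal is correct and follows essentially the same route as the paper: both arguments write $k+1$ as a multiple of $n$ plus a remainder, use the cyclic property $\tfrac{1}{n}\sum_{i=j}^{j+n-1}\nabla^2 F_{S_i}(w^*)=\nabla^2 f(w^*)$ to collapse the full epochs, and then bound the leftover partial epoch via the Friedlander--Schmidt-type decomposition of $\nabla^2 F_{S}(w^*)-\nabla^2 f(w^*)$. The only cosmetic difference is that the paper first aggregates the leftover batches into a single set $S^\dagger$ and applies the deterministic sampling bound once, whereas you bound each leftover term separately and then sum; your constant $C_{s,d}=2\sqrt{\beta_{1,H}\|\nabla^2 f(w^*)\|^2+\beta_{2,H}}$ is in fact slightly sharper than the paper's $2\sqrt{\beta_{1,H}L^2+\beta_{2,H}}$, and your observation that Assumption~\ref{assumption:lipschitz_hessian} is not used here is correct.
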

\begin{proof}
Any iteration can be written as $k = nm - 1 + k_{\text{rem}}$ where $m \in \mathbb{Z}^{+}$, $k_{\text{rem}} \in \N$ and $1\leq k_{\text{rem}} \leq n-1$.  Consider, 
\begin{align}
     &\sum_{i=0}^k \gamma_i\nabla^2 F_{S_i}(w^*) - \nabla^2f(w^*) \nonumber \\
     &\quad = \frac{1}{k+1}\sum_{i=0}^{nm - 1} \nabla^2 F_{S_{i}}(w^*) - \frac{nm}{k+1}\nabla^2 f(w^*) + \frac{1}{k+1}\sum_{i=nm }^{k} \nabla^2 F_{S_i}(w^*) - \frac{k_{\text{rem}}}{k+1}\nabla^2 f(w^*) \nonumber \\
     &\quad= \frac{nm}{k+1}\left(\frac{1}{nm}\sum_{i=0}^{nm - 1} \nabla^2 F_{S_{i}}(w^*) - \nabla^2 f(w^*)\right) + \frac{k_{\text{rem}}}{k+1}\left(\frac{1}{k_{\text{rem}}}\sum_{i=nm}^{k} \nabla^2 F_{S_{i}}(w^*) - \nabla^2 f(w^*)\right) \nonumber \\
     &\quad= \frac{k_{\text{rem}}}{k+1}\left(\nabla^2 F_{S^\dagger}(w^*) - \nabla^2 f(w^*)\right) \leq \frac{n-1}{k+1}\left(\nabla^2 F_{S^\dagger}(w^*) - \nabla^2 f(w^*)\right), \label{proof:sampling_term_det}
\end{align}
where the third equality is due to the fact that $\frac{1}{n}\sum_{i=0}^{n-1}\nabla^2 F_{S_i}(w^*) = \nabla^2 f(w^*)$ and $S^\dagger = \cup_{i=(n-1)m + 1}^k S_i$. Following a similar approach in establishing the deterministic bounds on gradient approximation error given in \cite[Section 3.1]{friedlander2012hybrid} (which we restate in Appendix \ref{appendix:analysis} for completeness) and using $|S^\dagger| \geq |S_k|$, we get  
\begin{align}\label{eq:deterministic_hess_sample_bound}
    \|\nabla^2 F_{S^\dagger}(w^*) - \nabla^2 f(w^*)\|^2 &\leq 4\left(\frac{N - |S^\dagger|}{N}\right)^2\left(\beta_{1,H}\|\nabla^2 f(w^*)\|^2 + \beta_{2,H}\right) \nonumber \\
    &\leq 4\left(1 - \frac{1}{n}\right)^2\left(\beta_{1,H}L^2 + \beta_{2,H}\right).
\end{align}
Substituting \eqref{eq:deterministic_hess_sample_bound} in \eqref{proof:sampling_term_det} and choosing \begin{equation}\label{eq:Csd}
C_{s,d}:= 2\sqrt{\beta_{1,H}L^2 + \beta_{2,H}}, 
\end{equation}yields 
\begin{align*}
    \left\|\left(\sum_{i=0}^k \gamma_i\nabla^2 F_{S_i}(w^*) - \nabla^2f(w^*)\right)(w_k - w^*)  \right\| 
    &\leq \frac{2\sqrt{\beta_{1,H}L^2 + \beta_{2,H}}(n-1)^2}{(k+1)n}\|w_k - w^*\| \\
    &= \frac{C_{s,d}}{k+1}\frac{(n-1)^2}{n}\|w_k - w^*\|.
\end{align*}
	 
\end{proof}
\begin{remark}
	Lemma~\ref{lem:sampling error} establishes the bound on the sampling error in terms of the sample size $|S_0|$ and the iteration number $k$. Deterministic sampling without replacement (cyclic manner) has rate $\mathcal{O}\left(\tfrac{1}{k}\right)$, instead of the $\mathcal{O}\left(\tfrac{1}{\sqrt{k}}\right)$ rate for stochastic Hessians; this result is proven probabilistically in \cite{na2023hessian,jiang2024stochastic}, and in expectation in the next section. Moreover, the sampling error becomes zero after finishing every cycle in the deterministic sampling case. 
\end{remark}

Before proceeding to the main result, we provide the following technical lemma that establishes the linear and superlinear convergence of generic sequences where each term is bounded above by the previous term in a relaxed linear-quadratic manner.

\begin{lemma}\label{lemma:superlinear}
	Suppose $\{z_k: k \in \mathbb{Z}^+\}$ is a non-negative sequence that satisfies
	\begin{equation}\label{eq:lemma_seq}
    z_{k+1} \leq qz_k^2 + \tau_{k}z_k + o_k
    \end{equation}
    for any given non-negative constant $q$, non-negative sequence $\{\tau_k: k \in \mathbb{Z}^+\}$ with $\tau_{k+1}\leq \tau_k$ for all $k \in \N$, and non-negative sequence $\{o_k: k \in \mathbb{Z}^+\}$. If 
    $z_0 \leq  \tfrac{\upsilon}{3q}$, $\tau_0 \leq  \tfrac{\upsilon}{3}$ and $o_0 \leq \tfrac{\upsilon^2}{9 q}$ and $o_{k+1} = o_k \upsilon t_{k+1}^2$  where $\upsilon \in [0,1)$ is any given constant and $t_k$ is a non-negative sequence with $t_1\leq 1$ and $t_{k+1} \leq t_k$ for all $k \in \N$.
    Then, for all $k\in \N$,
    \begin{align}\label{eq:proof_lemma_tech0_sup}
         z_{k} \leq r_{k}, \quad r_{k+1} = \max\left\{ r_k \rho_k, r_0\upsilon^{k+1} \prod_{i=0}^{k}t_{i+1}\right\}, \quad r_0=\max\{z_0, \tfrac{3o_0}{\upsilon}\}, \quad \rho_k = qr_k + \tau_k + \tfrac{o_0}{r_0}\prod_{i=0}^{k-1}t_{i+1} \in [0,\upsilon].
    \end{align}
     Therefore, $z_k \rightarrow 0$ at an R-linear rate. Furthermore, if $\lim\limits_{k \rightarrow \infty} t_k = 0$ then $z_k \rightarrow 0$ at an R-superlinear rate.
\end{lemma}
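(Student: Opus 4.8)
The plan is an induction on $k$ establishing $z_k\le r_k$, carried out simultaneously with two auxiliary invariants: $r_k\le r_0\upsilon^k$ and $\rho_k\in[0,\upsilon]$. Before starting the induction I would set up bookkeeping. Writing $P_k:=\prod_{i=0}^{k-1}t_{i+1}=t_1\cdots t_k$ (so $P_0=1$), the recursion $o_{k+1}=o_k\upsilon t_{k+1}^2$ unrolls to $o_k=o_0\upsilon^k P_k^2$, the definition of $r_k$ becomes $r_{k+1}=\max\{r_k\rho_k,\ r_0\upsilon^{k+1}P_{k+1}\}$ with $\rho_k=qr_k+\tau_k+\tfrac{o_0}{r_0}P_k$, and two facts will be used throughout: each $t_i\le t_1\le 1$, so $\{P_k\}$ is non-increasing with $P_k\le 1$; and $r_k\ge r_0\upsilon^k P_k$ for every $k$ (immediate from the $r$-recursion, since $r_0\upsilon^kP_k$ is one entry of the defining $\max$). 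I would also record once and for all that $qr_0\le\upsilon/3$ and $o_0/r_0\le\upsilon/3$: since $r_0=\max\{z_0,3o_0/\upsilon\}$, the hypothesis $z_0\le\upsilon/(3q)$ gives $qz_0\le\upsilon/3$, the hypothesis $o_0\le\upsilon^2/(9q)$ gives $q\cdot(3o_0/\upsilon)\le\upsilon/3$, and $r_0\ge 3o_0/\upsilon$ gives $o_0/r_0\le\upsilon/3$.

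With this in hand the induction is short. The base case $z_0\le r_0$ is immediate from the definition of $r_0$. For the step, assume $z_k\le r_k\le r_0\upsilon^k$. First I would bound $\rho_k$ termwise: $qr_k\le qr_0\le\upsilon/3$, $\tau_k\le\tau_0\le\upsilon/3$, and $\tfrac{o_0}{r_0}P_k\le o_0/r_0\le\upsilon/3$, so $\rho_k\le\upsilon$ (and $\rho_k\ge 0$ since every summand is non-negative), which also propagates the second invariant via $r_{k+1}=\max\{r_k\rho_k,\ r_0\upsilon^{k+1}P_{k+1}\}\le\max\{r_0\upsilon^{k+1},r_0\upsilon^{k+1}\}=r_0\upsilon^{k+1}$. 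Then, using $z_k\le r_k$ in the given recursion \eqref{eq:lemma_seq} together with the identity $qr_k+\tau_k=\rho_k-\tfrac{o_0}{r_0}P_k$,
\[
z_{k+1}\le qr_k^2+\tau_k r_k+o_k=r_k\rho_k-\tfrac{r_k o_0}{r_0}P_k+o_0\upsilon^k P_k^2,
\]
and since $r_k\ge r_0\upsilon^k P_k$ the last two terms sum to at most $0$, so $z_{k+1}\le r_k\rho_k\le r_{k+1}$. This closes the induction and yields $z_k\le r_k\le r_0\upsilon^k$ for all $k$.

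For the convergence statements: $z_k\le r_0\upsilon^k\to 0$ Q-linearly, which is R-linear convergence of $z_k$ to $0$. For the R-superlinear claim I would show $r_{k+1}/r_k\to 0$; from the recursion $r_{k+1}/r_k=\max\{\rho_k,\ r_0\upsilon^{k+1}P_{k+1}/r_k\}$, the second entry is at most $\upsilon t_{k+1}$ (again via $r_k\ge r_0\upsilon^k P_k$ and $P_{k+1}=P_k t_{k+1}$), and $\rho_k=qr_k+\tau_k+\tfrac{o_0}{r_0}P_k$ has $qr_k\to 0$ (since $r_k\le r_0\upsilon^k$) and $\tfrac{o_0}{r_0}P_k\to 0$ once $t_k\to 0$ (then $P_k\to 0$), so $r_{k+1}/r_k\to 0$ provided $\tau_k\to 0$ as well.

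I expect the only non-mechanical point to be the algebraic matching in the inductive step — absorbing the additive term $o_k$ into the definition of $r_{k+1}$ — and the clean route is exactly the substitution $qr_k+\tau_k=\rho_k-\tfrac{o_0}{r_0}P_k$ paired with the free lower bound $r_k\ge r_0\upsilon^k P_k$ supplied by the $r$-recursion. A secondary subtlety worth stating explicitly is that the R-superlinear conclusion uses $\tau_k\to 0$ in addition to $t_k\to 0$ (with $\tau_k$ only assumed non-increasing and bounded by $\upsilon/3$, the recursion can stall at a linear rate); this holds in every application of the lemma in the paper, where $\tau_k$ carries a vanishing factor such as $\tfrac{1}{k+1}$ or $\theta_k$.
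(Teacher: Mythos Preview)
Your proof is correct and follows essentially the same approach as the paper's: the paper introduces an auxiliary sequence $b_k=(o_0/r_0)\prod_{i=0}^{k-1}t_{i+1}$ and verifies $o_k/r_k\le b_k$, which is exactly your unrolled identity $o_k=o_0\upsilon^kP_k^2$ together with the lower bound $r_k\ge r_0\upsilon^kP_k$; your single simultaneous induction merges what the paper does as two separate inductions. Your observation that the R-superlinear conclusion additionally requires $\tau_k\to 0$ is correct and matches the paper's own proof, which inserts this hypothesis even though it is absent from the lemma statement.
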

\begin{proof}
Let $b_{k+1} = b_k t_{k+1}$ for all $k\in \N$ with $b_0 = \tfrac{o_0}{r_0} \leq \tfrac{\upsilon}{3}$. We have that $b_k \leq b_0 \leq \tfrac{\upsilon}{3}$ and $\tfrac{o_{k+1}}{b_{k+1}} = \tfrac{o_{k}}{b_k}\upsilon t_{k+1} \leq \tfrac{o_{k}}{b_k}$ for all $k \in \N$. We will use induction to prove that 
\begin{align}\label{eq:proof_lemma_tech_gen}
z_{k} \leq r_{k}, \quad r_{k+1} = \max\left\{ r_k \rho_k, \tfrac{o_{k+1}}{b_{k+1}}\right\}, \quad r_0=\max\{z_0, \tfrac{3o_0}{\upsilon}\}, \quad \rho_k = qr_k + \tau_k + b_k \in [0,\upsilon].
\end{align}
Note that the base case of $k=0$ is trivially satisfied since $z_0 \leq r_0$. Suppose that this result is true for some $k$.  From \eqref{eq:lemma_seq}, we get
\begin{align*}
    z_{k+1} &\leq qz_k^2 + \tau_{k}z_k + o_k \leq r_k \left(q r_k + \tau_k + \frac{o_k}{r_k}\right) \leq r_k (q r_k + \tau_k + b_k) = r_k \rho_k\leq r_{k+1}.
\end{align*}
Next, we will use induction to show that $r_k \leq \frac{\upsilon}{3q}$ and $\rho_k \leq \upsilon$ for all $k\in \N$. Note the base case of $k=0$ is satisfied since $r_0 \leq \frac{\upsilon}{3q}$ and $\rho_0 = q r_0 + \tau_0 + b_0 \leq \upsilon$. Let us assume that this result is true for some $k$. Consider,
\begin{align*}
    r_{k+1} &=  \max\{ r_k \rho_k, \tfrac{o_k}{b_k}\} \leq \max\{ r_k, \tfrac{o_0}{b_0}\} \leq \tfrac{\upsilon}{3q} \nonumber \\
    \rho_{k+1} &= qr_{k+1} + \tau_{k+1} + b_k \leq \tfrac{\upsilon}{3} + \tau_0 + \tfrac{\upsilon}{3} \leq \upsilon.
\end{align*}
Therefore, from \eqref{eq:proof_lemma_tech_gen}, we have that
\begin{align*}
    \frac{r_{k+1}}{r_k} =  \max\left\{\rho_k, \frac{o_{k+1}}{b_{k+1}r_k}\right\} \leq \max\left\{\rho_k, \frac{o_{k+1}b_k}{b_{k+1}o_k}\right\} = \max\left\{\rho_k, \upsilon t_{k+1}\right\} \leq \upsilon < 1.   
\end{align*}
Hence, $r_k \rightarrow 0$ at a Q-linear rate and consequently $z_k \rightarrow 0$ at an R-linear rate. Furthermore, if $\lim\limits_{k \rightarrow \infty} t_k = 0$ and $\lim\limits_{k \rightarrow \infty} \tau_k = 0$, then 
\begin{align}\label{eq:lin_constant}
    \lim_{k \rightarrow \infty} \rho_k \leq \lim_{k \rightarrow \infty} qr_k + \tau_k + \tfrac{o_0}{r_0}t_1^k = 0. 
\end{align}
Therefore, 
\begin{align*}
    \lim_{k \rightarrow \infty} \frac{r_{k+1}}{r_k} = \lim_{k \rightarrow \infty} \max\left\{\rho_k, \upsilon t_{k+1}\right\} = 0.
\end{align*}    
Hence, $r_k \rightarrow 0$ at a Q-superlinear rate and consequently $z_k \rightarrow 0$ at an R-superlinear rate.
\end{proof}

\newpage

We are now ready to provide the main theoretical results in this section. 
\begin{theorem} {(Deterministic local linear and superlinear convergence).}\label{thm:super_det}
Suppose Assumptions~\ref{assumption:hessian_spectral_bounds}, ~\ref{assumption:lipschitz_hessian},~\ref{assum:local_strng_cnvx}, and ~\ref{assum:hessian_var} hold. For any $w_0\in\R^d$, let $\{w_k: k \in \N\}$ be iterates generated by \eqref{eq:iter} where the Hessian approximation is given in \eqref{eq:htilde_def} with $\tilde{\mu} \leq \tfrac{\mu}{2}$, the sample sets $S_k$ chosen deterministically without replacement in a cyclic fashion such that $n|S_k|=N$ for some $n\in\N$ with $n\geq1$, and $\gamma_i=\frac{1}{k+1}$ for all $i=0,\cdots,k$. Furthermore, the gradient approximation $g_k$ satisfies the deterministic norm condition \eqref{eq:deter_norm} with $A_k = \widetilde{H}_k^{-1}$ for all $k \in \N$, $\sum_{i=0}^{\infty}\iota_i = \tilde{\iota} < \infty$, and $\alpha_k =\alpha \leq \frac{\tilde{\mu}}{L}$ for all $k < k_{\text{sup}}$ for some $k_{\text{sup}} \in \N$. Furthermore, for all $k_{\text{lin}}\leq k < k_{sup}$, $\iota_{k+1} = \iota_{k} a_g$ for some $\iota_{k_\text{lin}} \geq 0$ and $a_g \in [0,1)$, where $k_{\text{lin}}$ is given in Lemma~\ref{lemma:memory_det}. Let $\tilde{\theta}_l = \frac{\sqrt{a_l}}{6}\left(\tfrac{\tilde{\mu}}{\tilde{L}}\right)^{3/2}$, $q= \tfrac{7M}{2\tilde{\mu}}$, and $\tau_k := \frac{1}{\tilde{\mu}}\left(\frac{nC_{p,d} + C_{s,d}(n-1)^2}{n(k+1)} + \theta_{k}L\sqrt{\tfrac{\tilde{L}}{\tilde{\mu}}}\right)$ for some $a_l \in [0,1)$, and let $r_k$ be a sequence such that $r_{k_{\text{sup}}}=\max\left\{\|w_{k_{\text{sup}}} - w^*\|, \tfrac{\sqrt{\tilde{L}\iota_{k_{\text{sup}}}}}{\tilde{\mu}}\right\} \leq \tfrac{\sqrt{a_l}}{3q}$.
\begin{enumerate}
\item If $\alpha_k= 1$, $\theta_k = \tilde{\theta}_l$, and $\iota_{k+1} = \iota_{k}a_l$ for all $k \geq k_{\text{sup}}$. Then 
\begin{align}\label{eq:loc_lin_det}
        \|w_{k} - w^*\|  \leq r_{k}, \quad r_{k+1} = \max\left\{ r_k \rho_k, r_{k_{\text{sup}}}(\sqrt{a_l})^{k - k_{\text{sup}} +1} \right\}, \quad \rho_k = qr_k + \tau_k + \tfrac{\sqrt{\tilde{L}\iota_{k_{\text{sup}}}}}{r_{k_{\text{sup}}}\tilde{\mu}} \in [0,\sqrt{a_l}].
\end{align}
Therefore, $\|w_{k} - w^*\| \rightarrow 0$ at an R-linear rate with rate constant  upper bounded by $\sqrt{a_l} \in [0,1)$.
\item If $\alpha_k=1$, $\theta_k = \frac{\tilde{\theta}_l}{k+1}$, and $\iota_{k+1} = \iota_{k}a_l t_{k+1}^4$, $t_k = \tfrac{1}{k}$  for all $k \geq k_{\text{sup}}$. Then 
\begin{align}
\|w_{k} - w^*\| &\leq r_{k}, \quad r_{k+1} = \max\left\{ r_k \rho_k, r_{k_{\text{sup}}}(\sqrt{a_l})^{k-k_{\text{sup}}+1} \prod_{i=k_{\text{sup}}}^{k}t_{i+1}\right\}, \nonumber \\
&\rho_k = qr_k + \tau_k + \tfrac{\sqrt{\tilde{L}\iota_{k_{\text{sup}}}}}{r_{k_{\text{sup}}}\tilde{\mu}}\prod_{i=k_{\text{sup}}}^{k-1}t_{i+1} = \mathcal{O}\left(\tfrac{1}{k}\right)\in [0,\sqrt{a_l}]. \label{eq:loc_suplin_det}
\end{align}
Therefore, $\|w_{k} - w^*\| \rightarrow 0$ at an R-superlinear rate with rate constant upper bounded by $\max\{\rho_k, \sqrt{a_l}t_{k+1}\} =\mathcal{O}\left(\tfrac{1}{k}\right)$. 
\end{enumerate}
\end{theorem}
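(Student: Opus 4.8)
The statement concerns only the tail $k \geq k_{\text{sup}}$: the global phase $k < k_{\text{sup}}$, the entry into the locally strongly convex basin (count $k_{\text{lin}}$), and the switching-off of the Hessian modification \eqref{eq:htilde_def} (count $k_{\text{non}}$) are already delivered by Theorems~\ref{thm:linear}--\ref{thm:global_sublinear_bounded_deterministic} and Lemmas~\ref{lemma:memory_det}--\ref{lem:nonconvex}. The plan is: (i) assemble a scalar recursion $z_{k+1} \leq q z_k^2 + \tau_k z_k + o_k$ for $z_k := \|w_k - w^*\|$ valid for all $k \geq k_{\text{sup}}$; (ii) check that this recursion, with $\upsilon := \sqrt{a_l}$, fits the hypotheses of the master recursion Lemma~\ref{lemma:superlinear}; (iii) read off the two convergence rates from its conclusion.

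For step (i) take $k_{\text{sup}} \geq \max\{k_{\text{lin}}, k_{\text{non}}\}$, so that unit step size is in force and Lemma~\ref{lemma:tech_local} applies. With $A_k = \widetilde{H}_k^{-1}$, the bounds \eqref{eq:spec_lower_bnd} and \eqref{eq:L_bnd_tilde} give $\lambda_{\min}(A_k) \geq 1/\tilde{L}$ and $\lambda_{\max}(A_k)/\lambda_{\min}(A_k) \leq \tilde{L}/\tilde{\mu}$, so the last two terms of \eqref{eq:twoterm_det} become $\tfrac{L}{\tilde{\mu}}\sqrt{\tilde{L}/\tilde{\mu}}\,\theta_k z_k$ and $\tfrac{\sqrt{\tilde{L}\iota_k}}{\tilde{\mu}}$. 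I then bound $\|(\widetilde{H}_k - \nabla^2 f(w_k))(w_k-w^*)\|$ via Lemma~\ref{lem:hess_error}: for $k \geq k_{\text{non}}$ the nonconvex error term vanishes by Lemma~\ref{lem:nonconvex}; the memory term is at most $\tfrac{C_{p,d}}{k+1}z_k$ by Lemma~\ref{lemma:memory_det}; the sampling term is at most $\tfrac{C_{s,d}(n-1)^2}{n(k+1)}z_k$ by Lemma~\ref{lem:sampling error}; and the leftover $3Mz_k^2$ combines with the $\tfrac{M}{2\tilde{\mu}}z_k^2$ of \eqref{eq:twoterm_det} into $q z_k^2$ with $q = \tfrac{7M}{2\tilde{\mu}}$. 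Collecting the terms linear in $z_k$ gives exactly the stated $\tau_k z_k$, and the residual constant is $o_k := \tfrac{\sqrt{\tilde{L}\iota_k}}{\tilde{\mu}}$; the monotonicity $\tau_{k+1}\leq\tau_k$ holds since both $\theta_k$ and $1/(k+1)$ are nonincreasing under either choice of $\theta_k$.

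For step (ii), reindex the recursion to begin at $k_{\text{sup}}$ and set $\upsilon = \sqrt{a_l}$. The hypothesis $z_{k_{\text{sup}}} \leq \upsilon/(3q)$ follows from the assumed $r_{k_{\text{sup}}} \leq \sqrt{a_l}/(3q)$; $\tau_{k_{\text{sup}}} \leq \upsilon/3$ holds because $\tilde\theta_l = \tfrac{\sqrt{a_l}}{6}(\tilde\mu/\tilde L)^{3/2}$ is calibrated precisely so that the $\theta_k$-contribution to $\tau_{k_{\text{sup}}}$ is at most $\sqrt{a_l}/6$, and the memory/sampling contribution is $O(1/k_{\text{sup}})$, hence also $\leq \sqrt{a_l}/6$ after enlarging $k_{\text{sup}}$; the smallness of $o_{k_{\text{sup}}}$ is likewise arranged by enlarging $k_{\text{sup}}$, since $\iota_k$ is driven down geometrically for $k \geq k_{\text{lin}}$. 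In Part 1 ($\theta_k \equiv \tilde\theta_l$, $\iota_{k+1} = \iota_k a_l$) one has $o_{k+1} = o_k\sqrt{a_l} = o_k\,\upsilon\,t_{k+1}^2$ with $t_k \equiv 1$, and Lemma~\ref{lemma:superlinear} gives $z_k \leq r_k$ with $r_k$ as in \eqref{eq:loc_lin_det}, hence R-linear convergence with constant $\leq \sqrt{a_l}$. In Part 2 ($\theta_k = \tilde\theta_l/(k+1)$, $\iota_{k+1} = \iota_k a_l t_{k+1}^4$, $t_k = 1/k$) one has $\tau_k = O(1/k) \to 0$ and $o_{k+1} = o_k\sqrt{a_l}\,t_{k+1}^2$ with $t_k \to 0$, so Lemma~\ref{lemma:superlinear} gives \eqref{eq:loc_suplin_det} and R-superlinear convergence with rate $\max\{\rho_k, \sqrt{a_l}\,t_{k+1}\} = O(1/k)$.

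\textbf{Main obstacle.} The crux is controlling the Hessian \emph{sampling} error $\big(\sum_i \gamma_i \nabla^2 F_{S_i}(w^*) - \nabla^2 f(w^*)\big)(w_k - w^*)$. The key idea, which departs from the matrix-martingale/Freedman route of \cite{na2023hessian,jiang2024stochastic}, is to anchor this error at $w^*$ (already done in Lemma~\ref{lem:hess_error}) and exploit cyclic sampling without replacement: over each completed cycle the partial averages telescope to $\nabla^2 f(w^*)$ exactly, so only the incomplete final cycle contributes, yielding the $O(1/k)$ bound of Lemma~\ref{lem:sampling error}, strictly sharper than the stochastic $O(1/\sqrt{k})$. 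The supporting difficulty is the invariance argument (embedded in Lemma~\ref{lemma:memory_det}) that, once $\|\nabla f(w_{k_{\text{lin}}})\| \leq \nu$, all later iterates remain in the strongly convex basin, so that $\sum_{i \geq k_{\text{lin}}}\|w_i - w^*\| < \infty$ (hence the memory error is $O(1/k)\|w_k-w^*\|$) and the modification \eqref{eq:htilde_def} stays inactive; and then the bookkeeping of constants ($\tilde\theta_l$, $q$, $\tau_k$, $o_k$, $k_{\text{sup}}$) so that Lemma~\ref{lemma:superlinear}'s smallness conditions are met. Everything after that reduces to a direct substitution into Lemma~\ref{lemma:superlinear}.
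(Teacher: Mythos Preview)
Your proposal is correct and follows essentially the same route as the paper: assemble the scalar recursion $z_{k+1}\le qz_k^2+\tau_k z_k+o_k$ from Lemmas~\ref{lemma:tech_local}, \ref{lem:hess_error}, \ref{lemma:memory_det}, \ref{lem:nonconvex}, \ref{lem:sampling error} (with $A_k=\widetilde H_k^{-1}$, $\lambda_A=1/\tilde L$, $\kappa_A=\tilde L/\tilde\mu$, and the $3Mz_k^2/\tilde\mu$ merging with $Mz_k^2/(2\tilde\mu)$ into $qz_k^2$), then invoke Lemma~\ref{lemma:superlinear} with $\upsilon=\sqrt{a_l}$ and $t_k\equiv 1$ or $t_k=1/k$. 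The paper's proof differs only in that it writes down $k_{\text{sup}}$ explicitly as a five-term maximum (\eqref{eq:ksup_det}) and checks each smallness condition against a specific term, whereas you argue more qualitatively that enlarging $k_{\text{sup}}$ suffices; both are valid.
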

\begin{proof}
Let 
\begin{align}\label{eq:ksup_det}
k_{\text{sup}} = \bigg\lceil \max \bigg\{&4\left(1 + \tfrac{\hat{\lambda}}{\mu}\right)(k_{\text{lin}} + n - 1), \tfrac{4M\nu}{\mu^2 (1 - \sqrt{\tilde{\rho}_1})}, \tfrac{6(nC_{p,d} + C_{s,d}(n-1)^2)}{n\tilde{\mu} \sqrt{a_l}},  \nonumber \\
&~~~k_{\text{lin}} + 2\log_{1/\tilde{\rho}_1}\left(\tfrac{3q\nu}{\mu \sqrt{a_l}}\right), k_{\text{lin}} + \log_{1/a_g}\left(\tfrac{81\tilde{L}q^2 \iota_{k_{\text{lin}}}}{a_l^2\tilde{\mu}^2}\right)\bigg\}\bigg\rceil, 
\end{align}
where $k_{\text{lin}}$ is defined in \eqref{eq:klin_d}, $\hat{\lambda}$ is defined in Lemma~\ref{lem:nonconvex}, $C_{p,d}, C_{s,d}$ are given in \eqref{eq:Cpd} and \eqref{eq:Csd} respectively, and $\tilde{\rho}_1$ is the linear convergence rate defined in Theorem~\ref{thm:linear}.
We note that $k_{\text{sup}} \geq k_{\text{non}}$ due to the first and second terms in \eqref{eq:ksup_det}  where $k_{\text{non}}$ is defined in \eqref{eq:k_non}. 
Using ~\eqref{eq:twoterm_det},~\eqref{eq:total_hessianerror},~\eqref{eq:lemma36_det}, and~\eqref{eq:lemma37_det}, we get for all $k \geq k_{\text{sup}}$,
\begin{align}
    \|w_{k+1} - w^*\| &\leq \frac{7M}{2\tilde{\mu}}\|w_k - w^*\|^2 + \frac{1}{\tilde{\mu}}\left(\frac{nC_{p,d} + C_{s,d}(n-1)^2}{n(k+1)} + \theta_kL\sqrt{\frac{\tilde{L}}{\tilde{\mu}}}\right)\|w_k - w^*\| + \frac{\sqrt{\tilde{L}\iota_k}}{\tilde{\mu}} \nonumber \\
    &= q\|w_{k} - w^*\|^2 + \tau_k \|w_{k} - w^*\| + o_k, \nonumber 
\end{align}
where 
$o_k := \frac{\sqrt{\tilde{L}\iota_k}}{\tilde{\mu}}$. Using the third term in \eqref{eq:ksup_det}, $L\leq \tilde{L}$, and $\theta_k =\tilde{\theta}_l = \frac{\sqrt{a_l}}{6}\left(\frac{\tilde{\mu}}{\tilde{L}}\right)^{3/2}$, we get $\tau_k \leq \frac{\sqrt{a_l}}{3}$ for $k = k_{\text{sup}}$. 
In addition, using the fourth term in \eqref{eq:ksup_det} and \eqref{eq:proof_lin_1}, we get $\|w_k - w^*\| \leq \frac{\sqrt{a_l}}{3q}$
for $k = k_{\text{sup}}$. Moreover, $\iota_{k_{\text{sup}}} = \iota_{k_{\text{lin}}} a_g^{k_{\text{sup}}-k_{\text{lin}}} \leq \tfrac{a_l^2\tilde{\mu}^2} {81\tilde{L}q^2}$ due to the fifth term in \eqref{eq:ksup_det} which implies that $o_{k} \leq \tfrac{a_l}{9q}$ for $k=k_{\text{sup}}$.

Therefore, starting with $k=k_{\text{sup}}$ and using Lemma~\ref{lemma:superlinear} with $\upsilon = \sqrt{a_l}$ and $\iota_{k+1} = \iota_k a_l$ yields \eqref{eq:loc_lin_det}. Moreover, from \eqref{eq:lin_constant}, we have that $\tfrac{r_{k+1}}{r_k} \leq \sqrt{a_l}<1$. 

Similarly starting with $k=k_{\text{sup}}$ and using Lemma~\ref{lemma:superlinear} with $\upsilon = \sqrt{a_l}$ and $\iota_{k+1} = \tfrac{\iota_k a_l}{(k+1)^4}$ yields \eqref{eq:loc_suplin_det}. Moreover, from \eqref{eq:lin_constant}, we get
\begin{align*}
\tfrac{r_{k+1}}{r_k}\leq \max\left\{\rho_k, \sqrt{a_l} t_{k+1}\right\}&\leq \max\left\{ qr_k + \tau_k + \tfrac{o_{k_{\text{sup}}}}{r_0}\prod_{i=k_{\text{sup}}}^{k-1}t_{i+1},\tfrac{\sqrt{a_l}}{k+1} \right\} \nonumber \\
&< \max\left\{ qr_{k_{\text{sup}}}(\sqrt{a_l})^{k-k_{\text{sup}}} + \tfrac{\tau_{k_{\text{sup}}}(k_{\text{sup}} + 1)}{k+1} + \tfrac{o_{k_{\text{sup}}}}{r_{k_{\text{sup}}} (k+1)},\tfrac{\sqrt{a_l}}{k+1} \right\} = \mathcal{O}\left(\tfrac{1}{k}\right).
\end{align*}
\end{proof}

\begin{remark}
    We make the following remarks about this result. 
    \begin{itemize} 
        \item \textbf{Case:} $\theta_k = 0$, $\iota_k = 0, \enskip   (t_k=0)$ (exact gradient). When exact gradients are employed, using Lemma~\ref{lemma:superlinear}, we get deterministic Q-superlinear convergence where the rate constant is $\mathcal{O}\left(\tfrac{1}{k}\right)$. We note that this is an improvement over the final phase superlinear convergence results established in probability established for stochastic Hessian sampling with mean zero sub-exponential Hessian noise where the rate constant is $\mathcal{O}\left(\sqrt{\tfrac{\log{k}}{k}}\right)$ \cite{na2023hessian,jiang2024stochastic}.   
        
        \item \textbf{Case:} $\theta_k \neq 0$, $\iota_k=0$ (inexact adaptive gradient). When inexact gradients are employed where the gradient accuracies are chosen solely relative to the gradient norm itself, we get Q-linear and Q-superlinear convergence results based on the choice of the $\theta_k$ parameter.

        \item \textbf{Rate constant}. We note that the local linear convergence rate constant ($\sqrt{a_l} \in [0,1)$) is a hyperparameter that doesn't depend on the problem characteristics. Therefore, this local linear convergence result is better than global linear convergence results that typically depend on the condition number of the problem. This result is similar to other local linear convergence results established in the literature \cite{bollapragada2019exact, roosta2019sub}, although those results are established either in probability or in expectation as opposed to the deterministic result presented here. 

        \item \textbf{Step size}. Two different step sizes are chosen in the the global phase ($\alpha_k = \tfrac{\tilde{\mu}}{L}$) and local superlinear phase ($\alpha_k = 1$). While such two phase approaches are common for Newton-type methods \cite{bollapragada2019exact}, these results can be unified using an inexact line search approach that employs inexact function evaluations where the unit step size is automatically selected in the second (local) phase with an appropriately modified Armijo sufficient decrease condition \cite{bollapragada2018progressive,paquette2020stochastic}. 
     
    \end{itemize}
\end{remark}
We will now characterize the number of iterations required to transition from one convergent phase of the algorithm to the other. For the sake of simplicity and to make it possible to compare our results with other existing results in the literature, we will only consider global strongly convex functions (see Assumption~\ref{assumption:strng_cnvx}). Therefore, the algorithm only encounters two phases: Global linear convergence and local linear or superlinear convergence depending on the choice of gradient accuracies as established in Theorem~\ref{thm:super_det}. It is possible to account for global sublinear convergence phase too by analyzing $k_{\text{lin}}$ given in \eqref{eq:klin_d}. However, for strongly convex functions $k_{\text{lin}} = 0$. 
\begin{corollary}\label{cor:ksup_d}
    Suppose Assumption~\ref{assumption:strng_cnvx} and conditions of Theorem~\ref{thm:super_det} hold where we choose $a_g = 1 - \tfrac{\mu \tilde{\mu}}{2L\tilde{L}} \in [0,1)$. Then, the number of iterations required for the iterates to reach local linear or superlinear convergence phase is given as 
    \begin{align}\label{eq:order_ksup}
        k_{\text{sup}} = \tilde{\mathcal{O}}\left(\max\left\{\tfrac{N}{|S_0|}(1 +\tfrac{ \hat{\lambda}}{\mu}), \kappa^2(1 + \tfrac{M}{\mu^{3/2}})\right\}\right),
    \end{align}
    where $\hat{\lambda}$ is such that $\lambda_{\min}(\nabla^2 F_{S_i}(w_i))\succeq -\hat{\lambda}$ for all $i \in \N$  and  $\kappa \leq \tfrac{\tilde{L}}{\tilde{\mu}}$. 
\end{corollary}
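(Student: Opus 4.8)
The plan is to read off the closed‑form expression \eqref{eq:ksup_det} for $k_{\text{sup}}$ that is produced inside the proof of Theorem~\ref{thm:super_det}, specialize it to the globally strongly convex case, and bound each of its five constituent terms. First I would invoke Assumption~\ref{assumption:strng_cnvx}: there is no global sublinear phase, so by the remark following Lemma~\ref{lemma:memory_det} we have $k_{\text{lin}}=0$, which removes every $k_{\text{lin}}$ offset in \eqref{eq:ksup_det}, collapses $C_{p,d}$ from \eqref{eq:Cpd} to $\tfrac{\nu M}{\mu(1-\sqrt{\tilde\rho_1})}$, and leaves $C_{s,d}=2\sqrt{\beta_{1,H}L^2+\beta_{2,H}}$ from \eqref{eq:Csd} unchanged; also $n=N/|S_0|$ since $n|S_k|=N$ and the Hessian sample size is fixed at $|S_0|$.

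Next I would pin down the two geometric rates that appear everywhere. With $\alpha=\tilde\mu/L$, Theorem~\ref{thm:linear} gives $\tilde\rho_1=\max\{1-\tfrac{\mu\tilde\mu(1-\tilde\theta_g^2)}{2L\tilde L},\,a_g\}$, and the prescribed choice $a_g=1-\tfrac{\mu\tilde\mu}{2L\tilde L}$ then forces $\tilde\rho_1=1-\tfrac{\mu\tilde\mu(1-\tilde\theta_g^2)}{2L\tilde L}$, so that both $1-\tilde\rho_1$ and $1-a_g$ are $\Theta(1/\kappa^2)$ (using $\kappa\le \tilde L/\tilde\mu$ and treating $\tilde\theta_g\in[0,1)$ as an absolute constant). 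Two consequences I would record for later use: $1-\sqrt{\tilde\rho_1}\ge\tfrac12(1-\tilde\rho_1)=\Theta(1/\kappa^2)$, and $\log_{1/\tilde\rho_1}(x)=\Theta(\kappa^2)\log x$, $\log_{1/a_g}(x)=\Theta(\kappa^2)\log x$. In every place these logarithms occur in \eqref{eq:ksup_det} the argument $x$ is a fixed $\mathrm{poly}(M,\mu,\tilde\mu,\tilde L,\nu,q,\iota_0,a_l)$, hence contributes only a logarithmic factor that $\tilde{\mathcal{O}}$ absorbs.

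Then I would bound the five terms in turn. Term one: $4(1+\hat\lambda/\mu)(k_{\text{lin}}+n-1)=\mathcal{O}\big((1+\hat\lambda/\mu)\tfrac{N}{|S_0|}\big)$. Term two: $\tfrac{4M\nu}{\mu^2(1-\sqrt{\tilde\rho_1})}=\tilde{\mathcal{O}}\big(\kappa^2 M/\mu^{3/2}\big)$ after substituting $1-\sqrt{\tilde\rho_1}=\Theta(1/\kappa^2)$ and absorbing the $\nu$‑dependence. Term three: splitting $\tfrac{nC_{p,d}+C_{s,d}(n-1)^2}{n\tilde\mu\sqrt{a_l}}=\tfrac{C_{p,d}}{\tilde\mu\sqrt{a_l}}+\tfrac{C_{s,d}(n-1)^2}{n\tilde\mu\sqrt{a_l}}$, bounding $(n-1)^2/n\le n-1$, and inserting the collapsed $C_{p,d}$ and $1-\sqrt{\tilde\rho_1}=\Theta(1/\kappa^2)$, this reduces to $\tilde{\mathcal{O}}\big(\kappa^2 M/\mu^{3/2}\big)+\tilde{\mathcal{O}}\big(\tfrac{N}{|S_0|}\big)$. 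Terms four and five each have the shape $k_{\text{lin}}+\log_{1/\tilde\rho_1}(\mathrm{poly})$ or $k_{\text{lin}}+\log_{1/a_g}(\mathrm{poly})$, hence are $\tilde{\mathcal{O}}(\kappa^2)$ by the previous paragraph. Taking the maximum of the five bounds and rolling the logs into $\tilde{\mathcal{O}}$ yields $k_{\text{sup}}=\tilde{\mathcal{O}}\big(\max\{\tfrac{N}{|S_0|}(1+\hat\lambda/\mu),\,\kappa^2(1+M/\mu^{3/2})\}\big)$, which is \eqref{eq:order_ksup}.

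The main obstacle is purely the bookkeeping of the second and third steps: one must faithfully propagate the dependence on $\kappa$, $\mu$, and $M$ through the nested quantities $\tilde\rho_1$, $a_g$, $q=7M/(2\tilde\mu)$, $C_{p,d}$, $C_{s,d}$, and $\tau_k$, and in particular verify that $1-\tilde\rho_1$ and $1-a_g$ are genuinely $\Theta(1/\kappa^2)$ — so that the logarithms in terms four and five acquire exactly a $\kappa^2$ prefactor and nothing worse — and that the residual $\nu$‑, $\tilde\theta_g$‑, $a_l$‑, $\beta_{j,H}$‑, and $\iota_0$‑dependent factors either cancel or remain inside logarithms, so they are legitimately swept into $\tilde{\mathcal{O}}$; the most delicate single item is checking that the Hessian‑cycling/sampling‑error term (term three) really does collapse to the stated $\max$ rather than producing a genuinely quadratic dependence in $N/|S_0|$.
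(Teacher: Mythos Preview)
Your approach is essentially the same as the paper's: both read off the five-term expression for $k_{\text{sup}}$ from the proof of Theorem~\ref{thm:super_det}, set $k_{\text{lin}}=0$, and bound each term using $1-\tilde\rho_1,\,1-a_g=\Theta(1/\kappa^2)$. The one substantive difference is that the paper does not simply ``absorb the $\nu$-dependence''; since under Assumption~\ref{assumption:strng_cnvx} the local parameter $\nu$ from Assumption~\ref{assum:local_strng_cnvx} is no longer needed, the paper re-derives the iterate-sum bound \eqref{eq:proof_sum_1} directly from global linear convergence, obtaining $\sum_i\|w_i-w^*\|<\sqrt{2\tilde C_1}/\big(\sqrt{\mu}(1-\sqrt{\tilde\rho_1})\big)$ and hence replacing $\nu/\mu$ by $\sqrt{2\tilde C_1/\mu}$ throughout (in $C_{p,d}$, in term two, and in the logarithm of term four). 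Your splitting of term three into a $\kappa^2 M/\mu^{3/2}$ piece and an $N/|S_0|$ piece is in fact slightly more explicit than the paper's treatment.
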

\begin{proof}
Since the function is strongly convex, we no longer require the iterates to enter the basin where $\|\nabla f(w_k)\|^2 \leq \nu^2$ for invoking local strong convex properties. Therefore, \eqref{eq:proof_sum_1} is updated using \eqref{eq:lin_det_reslt} as
\begin{align}\label{eq:proof_update_sum_1}
    \sum_{i=0}^{k}\|w_i - w^*\| \leq \sqrt{\frac{2}{\mu}}\sqrt{(f(w_k) - f(w^*))}\leq \sqrt{\frac{2\tilde{C}_1}{\mu}}\sum_{i=0}^{k}(\sqrt{\tilde{\rho}_1})^{i} < \frac{\sqrt{2\tilde{C}_1}}{\sqrt{\mu}(1 - \sqrt{\tilde{\rho}_1})}.
\end{align}
Using $k_{\text{lin}} = 0$, \eqref{eq:proof_update_sum_1} is updated as $
C_{p,d} := \frac{M\sqrt{2\tilde{C}_1}}{\sqrt{\mu}(1-\sqrt{\tilde{\rho}_1})}$. Using these update formulae, we get 
\begin{align}\label{eq:ksup_det_update}
k_{\text{sup}} = \bigg\lceil \max \bigg\{&4(1 + \tfrac{\hat{\lambda}}{\mu})(n - 1), \tfrac{4M\sqrt{2\tilde{C}_1}}{\mu^{3/2} (1 - \sqrt{\tilde{\rho}_1})}, \tfrac{6(nC_{p,d} + C_{s,d}(n-1)^2)}{n\tilde{\mu} \sqrt{a_l}},  \nonumber \\
&~~~2\log_{1/\tilde{\rho}_1}\left(\tfrac{3\sqrt{2}q\sqrt{\tilde{C}_1}}{\sqrt{\mu a_l}}\right), \log_{1/a_g}\left(\tfrac{81\tilde{L}q^2 \iota_{0}}{a_l^2\tilde{\mu}^2}\right)\bigg\}\bigg\rceil, 
\end{align}
Now, consider the second term in \eqref{eq:ksup_det_update}, we note that
\begin{align}\label{eq:term2_det}
    \tfrac{4M\sqrt{2\tilde{C}_1}}{\mu^{3/2} (1 - \sqrt{\tilde{\rho}_1})} \leq \tfrac{8M\sqrt{2\tilde{C}_1}}{\mu^{3/2} (1 - \tilde{\rho}_1)} \leq \tfrac{16\sqrt{2} \tilde{C}_1L^2 M}{\mu^{7/2}} = \mathcal{\tilde{O}}(\tfrac{\kappa^2 M}{\mu^{3/2}}).
\end{align}
The third term in \eqref{eq:ksup_det_update} is given as, 
\begin{align}\label{eq:term3_det}
    \tfrac{6(nC_{p,d} + C_{s,d}(n-1)^2)}{n\tilde{\mu} \sqrt{a_l}} &= \tfrac{6M\sqrt{2\tilde{C}_1}}{\mu^{3/2}(1-\sqrt{\tilde{\rho}_1})\sqrt{a_l}} + \tfrac{12\sqrt{\beta_{1,H}L^2 + \beta_{2,H}}(n-1)^2}{n\tilde{\mu}\sqrt{a_l}} = \mathcal{\tilde{O}}(\tfrac{\kappa^2 M}{\mu^{3/2}}).
\end{align}
Now considering the last two terms in \eqref{eq:ksup_det_update} and using $a_g = 1 - \tfrac{\mu \tilde{\mu}}{2L\tilde{L}}$, $\log(1-x) \approx -x$ for small $x \in (0,1)$, we have that
\begin{align}\label{eq:term45_det}
\max \bigg\{
2\log_{1/\tilde{\rho}_1}\left(\tfrac{3\sqrt{2}q\sqrt{\tilde{C}_1}}{\sqrt{\mu a_l}}\right), \log_{1/a_g}\left(\tfrac{81\tilde{L}q^2 \iota_{0}}{a_l^2\tilde{\mu}^2}\right)\bigg\} &= \tfrac{2L\tilde{L}}{\mu \tilde{\mu}} \max \bigg\{
2\log\left(\tfrac{3\sqrt{2}q\sqrt{\tilde{C}_1}}{\sqrt{\mu a_l}}\right), \log\left(\tfrac{81\tilde{L}q^2 \iota_{0}}{a_l^2\tilde{\mu}^2}\right)\bigg\} = \mathcal{\tilde{O}}(\kappa^2).  
\end{align}
Combining \eqref{eq:term2_det}, \eqref{eq:term3_det}, \eqref{eq:term45_det}, and using $n = \tfrac{N}{|S_0|}$ yields the desired result. 
\end{proof}

\begin{remark}
    We note that in the case where the subsampled functions are convex, i.e. $\lambda_{\min}(\nabla^2 F_{S_i}(x_i)) \succeq 0$, we have $\hat{\lambda} = 0$. Therefore, 
    \begin{equation}\label{eq:ksup_cnvx}
        k_{\text{sup}} = \tilde{\mathcal{O}}\left(\max\left\{\tfrac{N}{|S_0|}, \kappa^2(1+\tfrac{M}{\mu^{3/2}})\right\}\right).
    \end{equation}
    These transition phases are better than the final transition phases established for uniform weighted scheme when $N < \kappa^6$ and comparable to nonuniform weighted scheme when $N = \mathcal{O}(\kappa^2)$ and $\tfrac{M}{\mu^{3/2}}$ is small in \cite{na2023hessian}. Furthermore, one could speed up the transition to local phase by modifying Newton's method using proximal extra gradient methods, at additional per-iteration costs \cite{jiang2024stochastic}.
\end{remark}

\section{Stochastic Sampling Analysis} \label{section:stoch_analysis}

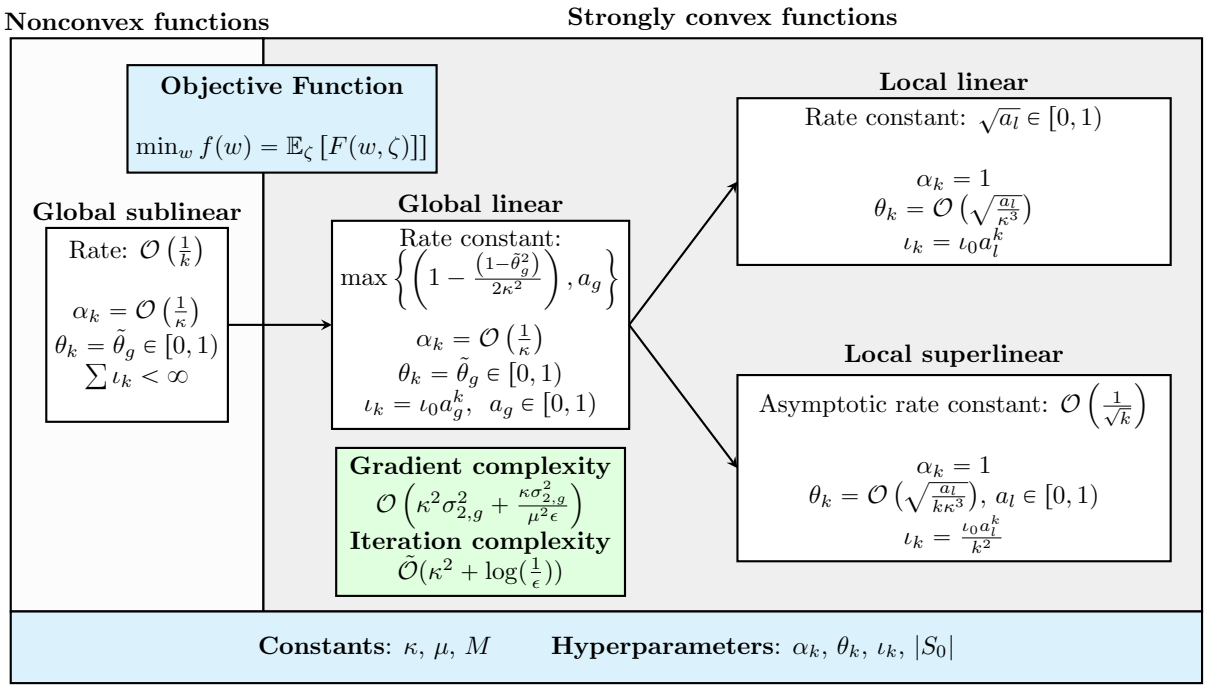
\begin{figure}[H]
\center
\begin{tikzpicture}[scale = 0.95, transform shape,every node/.style={draw,outer sep=0pt,thick},box/.style={draw, rectangle, minimum width=1cm,}]
\node[box,fill=lightgray!5] at (0,0)  [minimum width=3.5cm,minimum height=8cm, label=\textbf{Nonconvex functions}] (NCBB) {};
  
\node[bag, fill=white] (GlobalSub) at (0,0) [minimum width=2cm,minimum height=2cm,label=\textbf{Global sublinear}] {Rate: $\mathcal{O}\left(\tfrac{1}{k}\right)$\\ \enskip \\$\alpha_k = \mathcal{O}\left(\tfrac{1}{\kappa}\right)  $\\$\theta_k = \tilde{\theta}_g \in [0,1)$\\$\sum\iota_k < \infty$ \\ };

\node[box,fill=lightgray!25] at ($(NCBB.east)+(6.5,0)$)  [minimum width=13cm,minimum height=8cm, label=\textbf{Strongly convex functions}] (SCBB) {};

\node[bag, fill=white] (GlobalLin) at ($(GlobalSub.east)+(3.5,0)$) [minimum width=2cm,minimum height=2cm,label=\textbf{Global linear}] {Rate constant:\\ $\max\left\{\left(1 - \frac{\left(1-\tilde{\theta}_g^2\right)}{2\kappa^2}\right), a_g\right\}$\\\enskip \\$\alpha_k = \mathcal{O}\left(\tfrac{1}{\kappa}\right)  $\\$\theta_k = \tilde{\theta}_g \in [0,1)$\\$\iota_k = \iota_0a_g^k, \enskip a_g \in [0,1)$ };

\node[bag, fill=white] (LocalLin) at ($(GlobalLin.east)+(4.5,2)$) [minimum width=6cm,minimum height=2cm,label=\textbf{Local linear}] {Rate constant: $\sqrt{a_l}\in [0,1)$\\
\enskip\\
$\alpha_k = 1$\\
$\theta_k = \mathcal{O}\left(\sqrt{\frac{a_l}{\kappa^3}}\right)$\\$\iota_k = \iota_0a_l^k$ };

\node[bag, fill=white] (LocalSup) at ($(GlobalLin.east)+(4.5,-2)$) [minimum width=6cm,minimum height=2cm,label=\textbf{Local superlinear}] {Asymptotic rate constant: $\mathcal{O}\left(\frac{1}{\sqrt{k}}\right)$\\
\enskip\\
$\alpha_k = 1$\\
$\theta_k = \mathcal{O}\left(\sqrt{\frac{a_l}{k\kappa^3}}\right)$, $a_l \in [0,1)$\\$\iota_k = \frac{\iota_0a_l^k}{k^2}$ };

\draw[thick, -stealth] (GlobalSub.east) -- (GlobalLin.west);
\draw[thick, -stealth] (GlobalLin.east) -- (LocalLin.west);
\draw[thick, -stealth] (GlobalLin.east) -- (LocalSup.west);

\node[bag, fill=cyan!12.5] (Obj) at ($(GlobalSub.east)+(0.75,2.875)$) [minimum width=4cm,minimum height=1.5cm] {\textbf{Objective Function}\\\enskip\\$\min_w f(w) = \mathbb{E}_\zeta \left[ F(w,\zeta)] \right]$};

\node[bag, fill=green!12.5] (Obj) at ($(GlobalLin)+(0,-2.75)$) [minimum width=4cm,minimum height=1.5cm] {\textbf{Gradient complexity}\\ 
 $\mathcal{O}\left(\kappa^2\sigma^2_{2,g} + \tfrac{\kappa \sigma^2_{2,g}}{\mu^2\epsilon}\right)$\\
\textbf{Iteration complexity }\\$\mathcal{\tilde{O}}(\kappa^2 + \log(\tfrac{1}{\epsilon}))$};

\node[box,fill=cyan!12.5] at ($(NCBB.east)+(4.75,-4.5)$)  [minimum width=16.5cm,minimum height=1cm] (NCBB) {\textbf{Constants}: $\kappa$, $\mu$, $M$ \qquad  \textbf{Hyperparameters}: $\alpha_k$, $\theta_k$, $\iota_k$, $|S_0|$};

\end{tikzpicture}
\caption{Overview of the results presented in this section. We characterize the main results for global and local convergence results, and their relationship to the problem constants and algorithmic hyperparameters. 
Here $\mu$ is the Hessian spectral lower bound, $\kappa = \frac{L}{\mu}$ is a condition-number like constant, and $M$ is the Hessian Lipschitz constant.
We note that there are additional global sublinear and linear results that allow any $0<\tilde{\theta}_g<\infty$, but we do not annotate them in this figure for simplicity.}\label{fig:analysis_roadmap_exp} 
\end{figure}

We continue our analysis by focusing on the stochastic sampling-based algorithms for the solution of the expectation problem \eqref{eq:prob_exp}. Our analysis mirrors the previous section and builds off of many of its assumption and derivations. A schematic for the analysis in this section is given in Figure~\ref{fig:analysis_roadmap_exp}. In addition to building on the results of the previous section, we also provide total number of gradients evaluated (gradient complexity) and Hessians computed to achieve an $\epsilon$-accurate solution for the strongly convex functions.

We begin our analysis by providing the stochastic analog of Lemma \ref{lem:tech_1}, which establishes an upper bound on the difference between the objective function values at successive iterations, albeit in conditional expectation.

\begin{lemma}\label{lem:tech_1_stoch}
Suppose Assumption~\ref{assumption:hessian_spectral_bounds} holds. For any $w_0$, let $\{w_k: k \in \mathbb{Z}^+\}$ be iterates generated by \eqref{eq:iter} with the Hessian approximation given in \eqref{eq:htilde_def}.  If the step size $\alpha_k$ at each iteration $k$ is chosen such that $\alpha_k \leq \frac{\tilde{\mu}}{L}$, and if $g_k$ is an unbiased estimator of $\nabla f(w_k)$. Then, for all $k \in \mathbb{Z}^+$, it follows that,
\begin{equation}\label{eq:expdesclemma}
    \E_k[f(w_{k+1})] \leq f(w_k) - \alpha_k\left(1 - \frac{L\alpha_k}{2\tilde{\mu}}\right)\nabla f(w_k)^T\widetilde{H}_k^{-1}\nabla f(w_k) + \frac{L\alpha_k^2}{2\tilde{\mu}}\E_k[\delta_k^T \widetilde{H}_k^{-1} \delta_k]. 
\end{equation}
\end{lemma}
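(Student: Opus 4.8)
The plan is to mirror the proof of Lemma~\ref{lem:tech_1} almost line for line, with one structural change: in the deterministic case the cross term $\nabla f(w_k)^T\widetilde H_k^{-1}\delta_k$ was handled by an arithmetic–geometric-mean split together with the positivity of $I-L\alpha_k\widetilde H_k^{-1}$ (which is where $\alpha_k\le\tilde\mu/L$ entered), whereas here I would instead take the conditional expectation and use unbiasedness of $g_k$, so that every term linear in $\delta_k=g_k-\nabla f(w_k)$ simply vanishes. Throughout, $\E_k$ is read as the expectation conditioned on $w_k$, hence on the path-averaged Hessian $\widetilde H_k$ (recall the gradient sample $X_k$ is drawn independently of the Hessian sample sets), so $\widetilde H_k$ acts as a fixed symmetric positive-definite matrix while $\E_k[\delta_k]=0$.

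First I would apply the descent inequality \eqref{eq:desc_lemma} with $w=w_{k+1}=w_k-\alpha_k\widetilde H_k^{-1}g_k$ and $v=w_k$ to get the pointwise bound $f(w_{k+1})\le f(w_k)-\alpha_k\nabla f(w_k)^T\widetilde H_k^{-1}g_k+\tfrac{L\alpha_k^2}{2}\|\widetilde H_k^{-1}g_k\|^2$. Substituting $g_k=\nabla f(w_k)+\delta_k$ and expanding the squared norm as $\|\widetilde H_k^{-1}\nabla f(w_k)\|^2+2\nabla f(w_k)^T\widetilde H_k^{-2}\delta_k+\|\widetilde H_k^{-1}\delta_k\|^2$ — exactly the expansion that produces \eqref{eq:proof_part1}–\eqref{eq:proof_ineq_1} — and then taking $\E_k$, the terms $-\alpha_k\nabla f(w_k)^T\widetilde H_k^{-1}\delta_k$ and $L\alpha_k^2\,\nabla f(w_k)^T\widetilde H_k^{-2}\delta_k$ drop out, leaving $\E_k[f(w_{k+1})]\le f(w_k)-\alpha_k\nabla f(w_k)^T\widetilde H_k^{-1}\nabla f(w_k)+\tfrac{L\alpha_k^2}{2}\|\widetilde H_k^{-1}\nabla f(w_k)\|^2+\tfrac{L\alpha_k^2}{2}\E_k[\|\widetilde H_k^{-1}\delta_k\|^2]$.

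To finish I would invoke the spectral lower bound \eqref{eq:spec_lower_bnd}, $\widetilde H_k\succeq\tilde\mu I$, which implies $\widetilde H_k^{-2}\preceq\tfrac{1}{\tilde\mu}\widetilde H_k^{-1}$ (diagonalizing $\widetilde H_k$, each eigenvalue obeys $\lambda^{-2}\le\tilde\mu^{-1}\lambda^{-1}$ since $\lambda^{-1}\le\tilde\mu^{-1}$); applying this to the two quadratic forms replaces $\|\widetilde H_k^{-1}\nabla f(w_k)\|^2$ by $\tfrac{1}{\tilde\mu}\nabla f(w_k)^T\widetilde H_k^{-1}\nabla f(w_k)$ and $\|\widetilde H_k^{-1}\delta_k\|^2$ by $\tfrac{1}{\tilde\mu}\delta_k^T\widetilde H_k^{-1}\delta_k$, and collecting the coefficient of $\nabla f(w_k)^T\widetilde H_k^{-1}\nabla f(w_k)$ gives exactly \eqref{eq:expdesclemma}. (The hypothesis $\alpha_k\le\tilde\mu/L$ is not strictly needed for this inequality; it is retained only so that the descent coefficient $1-\tfrac{L\alpha_k}{2\tilde\mu}$ lies in $[\tfrac12,1)$, as used in the downstream results.) The main obstacle is really just the bookkeeping of the conditioning: one must be careful to argue that, given $w_k$, the matrix $\widetilde H_k$ is independent of $\delta_k$ and $g_k$ is unbiased, which is what legitimizes both the vanishing of the cross terms and treating $\widetilde H_k$ as constant when pulling it through $\E_k$; beyond that, every step is the same algebra as in Lemma~\ref{lem:tech_1}.
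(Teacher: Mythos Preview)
Your proposal is correct and follows essentially the same approach as the paper: start from the descent inequality \eqref{eq:proof_part1}, take the conditional expectation so the cross terms in $\delta_k$ vanish by unbiasedness, and then bound $\|\widetilde H_k^{-1}v\|^2\le\tfrac{1}{\tilde\mu}\,v^T\widetilde H_k^{-1}v$ via the spectral lower bound \eqref{eq:spec_lower_bnd}. Your observation that the hypothesis $\alpha_k\le\tilde\mu/L$ is not actually invoked in this derivation is also accurate; the paper states it only for use in the downstream results.
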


\begin{proof}
Using \eqref{eq:proof_part1}, taking conditional expectation, and using $\E_k[g_k] = \nabla f(w_k)$, we get   
 \begin{align}
    \E_k[f(w_{k+1})]
    &\leq  f(w_k) - \alpha_k \nabla f(w_k)^T\widetilde{H}_k^{-1} \nabla f(w_k) + \frac{L\alpha_k^2}{2}\E_k[(\widetilde{H}_k^{-1/2}\nabla f(w_k))^T\widetilde{H}_k^{-1}(\widetilde{H}_k^{-1/2}\nabla f(w_k)] \nonumber \\
    &\quad +\frac{L\alpha_k^2}{2}\E_k[(\widetilde{H}_k^{-1/2}\delta_k)^T\widetilde{H}_k^{-1}(\widetilde{H}_k^{-1/2}\delta_k)]. \nonumber 
\end{align}
Using the fact that $ \widetilde{H}_k^{-1} \preceq \frac{1}{\tilde{\mu}} I$ in the above inequality yields \eqref{eq:expdesclemma}. 
\end{proof}

\subsection{Global convergence}
We denote the expectation with respect to all the random variables as $\E[\cdot]$. That is,
\begin{equation*}
    \E[f(w_k)] = \E_0\E_1\cdots\E_{k-1}[f(w_k)].
\end{equation*}

We proceed with the stochastic analog of Theorem \ref{thm:linear} (in expectation). 

\begin{theorem}{(Global linear convergence).}\label{thm:linear_exp} Suppose Assumptions~\ref{assumption:hessian_spectral_bounds} and ~\ref{assumption:strng_cnvx} hold. For any $w_0\in\R^d$, let $\{w_k: k \in \N\}$ be iterates generated by \eqref{eq:iter} where the Hessian approximation is given in \eqref{eq:htilde_def} and the gradient approximations $g_k$ satisfies the Condition~\ref{condition:expected_gradient_bounds} with $\iota_{k+1} = \iota_k a_g$ for some $\iota_0 > 0$ and $a_g \in [0,1)$. Then, if $g_k$ satisfies stochastic norm condition \eqref{eq:expected_norm} with
 \begin{subequations}
 \begin{enumerate}
 \item $A_k = \widetilde{H}_k^{-1}$, $\theta_k = \tilde{\theta}_g \in [0,1)$ and $\alpha_k = \alpha \leq \frac{\tilde{\mu}}{L}$: 
 \begin{align}\label{eq:lin_exp_reslt}
     \mathbb{E}[f(w_{k}) - f(w^*)] &\leq \tilde{C}_1\tilde{\rho}_1^k,\\
     \quad \tilde{C}_1 :=\max\left\{f(w_0) - f(w^*), \frac{\tilde{L}\iota_0}{\mu\left(1-\tilde{\theta}_g^2\right)} \right\}&, \mbox{ and } \tilde{\rho}_1 := \max\left\{1 - \frac{\alpha\mu\left(1-\tilde{\theta}_g^2\right)}{2\tilde{L}}, a_g\right\}\nonumber.
 \end{align}
 \item  $g_k$ being an unbiased estimator of $\nabla f(w_k)$, $\theta_k = \tilde{\theta}_g \geq 0$, and either of the following two conditions hold: 
 \begin{enumerate}
    \item  $A_k = \widetilde{H}_k^{-1}$, $\alpha_k = \alpha \leq \frac{\tilde{\mu}}{L\left(1+\tilde{\theta}_g^2\right)}$, (or)
    \item $A_k = I$, $\alpha_k = \alpha \leq \tfrac{\tilde{\mu}}{L\left(1+\tfrac{\tilde{\theta}_g^2\tilde{L}}{\tilde{\mu}}\right)}$,
 \end{enumerate}
 then, 
 \begin{align}\label{eq:lin_exp_reslt_2}
     \mathbb{E}[f(w_{k}) - f(w^*)] &\leq \tilde{C}_2\tilde{\rho}_2^k,\\
     \quad \tilde{C}_2 :=\max\left\{f(w_0) - f(w^*), \frac{L\tilde{L} \alpha \iota_0}{\mu \tilde{\mu}\min\{1, \tilde{\mu}\}} \right\}&, \mbox{ and } \tilde{\rho}_2 := \max\left\{1 - \frac{\alpha\mu}{2\tilde{L}}, a_g\right\} \nonumber.
 \end{align}
 \end{enumerate}
 \end{subequations}

\end{theorem}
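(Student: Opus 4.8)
The plan is to reuse the two-stage template of the deterministic Theorem~\ref{thm:linear}: establish a one-step expected-decrease inequality of the form $\E[f(w_{k+1})-f(w^*)\mid w_k]\le \rho\,(f(w_k)-f(w^*))+c\,\iota_k$, and then close it by the same induction with $\iota_k=\iota_0 a_g^k$. The two parts differ only in which one-step lemma supplies the decrease and in how the gradient-error term is absorbed.

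For Part (1) I would not use unbiasedness. Starting from the pathwise bound \eqref{eq:desclemma} of Lemma~\ref{lem:tech_1} (valid since $\alpha\le\tilde\mu/L$), subtract $f(w^*)$ and take the conditional expectation given $w_k$ \emph{and} the Hessian sample set $S_k$, so that $\widetilde H_k$, hence $A_k=\widetilde H_k^{-1}$, is fixed; the stochastic norm condition \eqref{eq:expected_norm} then bounds $\E[\delta_k^T\widetilde H_k^{-1}\delta_k\mid w_k,S_k]\le \theta_k^2\,\nabla f(w_k)^T\widetilde H_k^{-1}\nabla f(w_k)+\iota_k$. Since $\tilde\theta_g<1$ the curvature term stays strictly negative; bounding $\nabla f(w_k)^T\widetilde H_k^{-1}\nabla f(w_k)\ge \|\nabla f(w_k)\|^2/\tilde L$ via \eqref{eq:L_bnd_tilde} (legitimate because the coefficient is negative) and then averaging over $S_k$ reproduces the stochastic analogue of \eqref{eq:gradnorm_det_reslt}. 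Applying strong convexity \eqref{eq:strngcnvx_grdbnd} and total expectation gives $\E[f(w_{k+1})-f(w^*)]\le(1-\alpha\mu(1-\tilde\theta_g^2)/\tilde L)\,\E[f(w_k)-f(w^*)]+\alpha\iota_k/2$, and the induction is verbatim that of Theorem~\ref{thm:linear}, yielding \eqref{eq:lin_exp_reslt} with the same $\tilde C_1,\tilde\rho_1$.

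For Part (2) unbiasedness lets me invoke Lemma~\ref{lem:tech_1_stoch}, i.e.\ \eqref{eq:expdesclemma}. In case (a), with $A_k=\widetilde H_k^{-1}$ the stochastic norm condition again gives $\E_k[\delta_k^T\widetilde H_k^{-1}\delta_k]\le \theta_k^2\,\E_k[\nabla f(w_k)^T\widetilde H_k^{-1}\nabla f(w_k)]+\iota_k$, so the coefficient of $\E_k[\nabla f(w_k)^T\widetilde H_k^{-1}\nabla f(w_k)]$ in \eqref{eq:expdesclemma} becomes $-\alpha_k(1-L\alpha_k(1+\theta_k^2)/(2\tilde\mu))$, which is $\le-\alpha_k/2$ exactly when $\alpha_k\le\tilde\mu/(L(1+\tilde\theta_g^2))$. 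In case (b), with $A_k=I$ I first pass the bound $\E_k[\|\delta_k\|^2]\le\theta_k^2\|\nabla f(w_k)\|^2+\iota_k$ through $\widetilde H_k^{-1}\preceq\tilde\mu^{-1}I$ and $\|\nabla f(w_k)\|^2\le\tilde L\,\nabla f(w_k)^T\widetilde H_k^{-1}\nabla f(w_k)$ to get $\E_k[\delta_k^T\widetilde H_k^{-1}\delta_k]\le \tilde\mu^{-1}(\theta_k^2\tilde L\,\nabla f(w_k)^T\widetilde H_k^{-1}\nabla f(w_k)+\iota_k)$; the curvature coefficient then becomes $-\alpha_k(1-\tfrac{L\alpha_k}{2\tilde\mu}(1+\theta_k^2\tilde L/\tilde\mu))$, which is $\le-\alpha_k/2$ under $\alpha_k\le\tilde\mu/(L(1+\tilde\theta_g^2\tilde L/\tilde\mu))$. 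Either way we reach $\E_k[f(w_{k+1})]\le f(w_k)-\tfrac{\alpha}{2\tilde L}\|\nabla f(w_k)\|^2+\tfrac{L\alpha^2\iota_k}{2\tilde\mu\min\{1,\tilde\mu\}}$, since the leftover $\iota_k$-term equals $L\alpha^2\iota_k/(2\tilde\mu)$ in case (a) and $L\alpha^2\iota_k/(2\tilde\mu^2)$ in case (b), both at most $L\alpha^2\iota_k/(2\tilde\mu\min\{1,\tilde\mu\})$. Strong convexity \eqref{eq:strngcnvx_grdbnd}, total expectation, and the same induction — now with contraction factor $1-\alpha\mu/\tilde L$ and $\iota_k=\iota_0 a_g^k$ — give \eqref{eq:lin_exp_reslt_2}; the $\min\{1,\tilde\mu\}$ in $\tilde C_2$ is precisely what makes the inductive step $-\alpha\mu/\tilde L+L\alpha^2\iota_0/(2\tilde\mu\min\{1,\tilde\mu\}\tilde C_2)\le-\alpha\mu/(2\tilde L)$ hold, and $\tilde\rho_2\ge 1-\alpha\mu/(2\tilde L)$ then closes it, uniformly whether $\tilde\mu\le1$ or $\tilde\mu>1$.

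I expect the main obstacle to be organizational rather than conceptual: nesting the conditional expectations correctly — conditioning on $(w_k,S_k)$ so that $A_k=\widetilde H_k^{-1}$ is measurable when \eqref{eq:expected_norm} is applied, then averaging over the independently drawn Hessian samples — and, in case (b), converting the Euclidean-norm gradient-error control into the $\widetilde H_k^{-1}$-weighted quantities of \eqref{eq:expdesclemma}, which is what dictates the tighter step-size ceiling and the $\tilde\mu^2$ appearing in the residual, hence the $\min\{1,\tilde\mu\}$ normalization needed to state a single constant $\tilde C_2$ covering both sub-cases. Everything else is the induction already performed in Theorem~\ref{thm:linear}.
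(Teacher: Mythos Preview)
Your proposal is correct and follows essentially the same approach as the paper: Case~(1) applies the pathwise descent inequality \eqref{eq:desclemma} together with the stochastic norm condition under the nested conditioning on $(w_k,\widetilde H_k)$, while Case~(2) uses Lemma~\ref{lem:tech_1_stoch} with the two step-size ceilings arising exactly as you derive, and both cases finish by the same induction as Theorem~\ref{thm:linear}. Your handling of the conditional expectations and your explanation of why the $\min\{1,\tilde\mu\}$ factor is needed in $\tilde C_2$ are in fact more explicit than the paper's own exposition; the only cosmetic difference is that in sub-case~(b) the paper first passes from $\nabla f(w_k)^T\widetilde H_k^{-1}\nabla f(w_k)$ to $\|\nabla f(w_k)\|^2$ and then absorbs the $\theta_k^2\|\nabla f(w_k)\|^2$ term, whereas you convert back to the $\widetilde H_k^{-1}$-weighted form before combining, which is an equivalent reordering.
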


\begin{proof}
\begin{subequations}
\noindent Case (1) From \eqref{eq:expected_norm} and \eqref{eq:desclemma}, we have
\begin{align}
    \E_k[f(w_{k+1})] &\leq f(w_k) - \frac{\alpha_k}{2} \nabla f(w_k)^T\widetilde{H}_k^{-1}\nabla f(w_k)+ \frac{\alpha_k}{2} \E_k\left[\E\left[\delta_k^T\widetilde{H}_k^{-1}\delta_k| w_k, \widetilde{H}_k^{-1}\right]\right]\nonumber \\
     &\leq f(w_k) - \frac{\alpha_k}{2} \nabla f(w_k)^T\widetilde{H}_k^{-1}\nabla f(w_k)+ \frac{\alpha_k \theta_k^2}{2} \E_k[\nabla f(w_k)^T\widetilde{H}_k^{-1}\nabla f(w_k)] + \frac{\alpha_k\iota_k}{2} \nonumber\\
    &\leq f(w_k) - \frac{\alpha_k(1 - \theta_k^2)}{2 \tilde{L}} \|\nabla f(w_k)\|^2 + \frac{\alpha_k\iota_k}{2}. \label{eq:gradnorm_stoch_reslt}
\end{align}

\noindent Case (2) From \eqref{eq:expected_norm} and \eqref{eq:expdesclemma}, and using condition $(a)$, we have
\begin{align}
 \E_k[f(w_{k+1})] &\leq f(w_k) - \alpha_k\left(1 - \frac{L\alpha_k}{2\tilde{\mu}}(1 + \theta_k^2)\right)\nabla f(w_k)^T\widetilde{H}_k^{-1}\nabla f(w_k) + \frac{L\alpha_k^2\iota_k}{2\tilde{\mu}} \nonumber \\
 &\leq f(w_k) - \frac{\alpha_k}{2\tilde{L}}\|\nabla f(w_k)\|^2 + \frac{L\alpha_k^2\iota_k}{2\tilde{\mu}}. \nonumber
\end{align}
Similarly, from \eqref{eq:expected_norm} and \eqref{eq:expdesclemma}, and using condition $(b)$, we have 
\begin{align}
 \E_k[f(w_{k+1})] &\leq f(w_k) - \frac{\alpha_k}{\tilde{L}}\left(1 - \frac{L\alpha_k}{2\tilde{\mu}}\right)\|\nabla f(w_k)\|^2 + \frac{L\alpha_k^2}{2\tilde{\mu}^2}\E_k[\|\delta_k\|^2] \nonumber \\
 &\leq f(w_k) - \frac{\alpha_k}{\tilde{L}}\left(1 - \frac{L\alpha_k}{2\tilde{\mu}}\left(1 + \tfrac{\theta^2\tilde{L}}{\tilde{\mu}}\right)\right)\|\nabla f(w_k)\|^2 + \frac{L\alpha_k^2\iota_k}{2\tilde{\mu}^2}\nonumber \\
 &\leq f(w_k) - \frac{\alpha_k}{2\tilde{L}}\|\nabla f(w_k)\|^2 + \frac{L\alpha_k^2\iota_k}{2\tilde{\mu}^2}. \nonumber
\end{align}
Combining the above two results yield
\begin{align}
 \E_k[f(w_{k+1})] 
 &\leq f(w_k) - \frac{\alpha_k}{2\tilde{L}}\|\nabla f(w_k)\|^2 + \frac{L\alpha_k^2\iota_k}{2\tilde{\mu}\min\{1, \tilde{\mu}\}}. \label{eq:gradnorm_stoch_reslt2}
\end{align}
The rest of the proof to attain  \eqref{eq:lin_exp_reslt} and \eqref{eq:lin_exp_reslt_2} follows by using similar arguments as in the deterministic norm condition analysis give in the proof of Theorem \ref{thm:linear}.  
\end{subequations}
\end{proof}
As was discussed in Remark \ref{remark_glob_lin}, we note that the rate constant $\left(1 - \frac{\alpha \mu}{2\tilde{L}}\right)$ being worse than that of steepest descent is an artifact of the analysis and not essentially algorithmic in nature. We do not observe deteriorated global convergence in our numerical experiments. Additionally we do not consider the biased stochastic norm condition with $A_k=I$, as it leads to restrictive choices of $\tilde{\theta}_g$.

\begin{theorem}{(Global sublinear convergence, stochastic case).} \label{thm:global_sublinear_bounded_expectation}
Suppose Assumption~\ref{assumption:hessian_spectral_bounds} holds and the objective function $f$ is bounded below by $f_\text{min}$. For any $w_0$, let $\{w_k: k \in \N\}$ be iterates generated by \eqref{eq:iter} 
where the gradient approximations $g_k$ satisfy the Condition~\ref{condition:expected_gradient_bounds}with $\sum_{i=0}^{\infty} \iota_k = \tilde{\iota} < \infty$. Then, for any positive integer $T$, if $g_k$ satisfies stochastic norm condition \eqref{eq:expected_norm} with
 \begin{subequations}
 \begin{enumerate}
    \item $A_k = \widetilde{H}_k^{-1}$, $\theta_k = \tilde{\theta}_g \in [0,1)$ and $\alpha_k = \alpha \leq \frac{\tilde{\mu}}{L}$:
    \begin{equation}\label{eq:sublinear_result_1}
         \min\limits_{0\leq k \leq T-1} \E[\|\nabla f(w_k)\|^2] \leq \frac{\tilde{L}}{(1-\tilde{\theta}_g^2)T}\left(\frac{2(f(w_0) - f_\text{min})}{\alpha} + \tilde{\iota}\right).
    \end{equation}
    \item  $g_k$ being an unbiased estimator of $\nabla f(w_k)$, $\theta_k = \tilde{\theta}_g \geq 0$, and either of the following two conditions hold: 
 \begin{enumerate}
    \item  $A_k = \widetilde{H}_k^{-1}$, $\alpha_k = \alpha \leq \frac{\tilde{\mu}}{L(1+\tilde{\theta}_g^2)}$, (or)
    \item $A_k = I$, $\alpha_k = \alpha \leq  \tfrac{\tilde{\mu}}{L\left(1+\tfrac{\tilde{\theta}_g^2\tilde{L}}{\tilde{\mu}}\right)}$,
 \end{enumerate}
 then, 
 \begin{equation}\label{eq:sublinear_result_2}
 \min\limits_{0 \leq k \leq T-1} \mathbb{E}[\|\nabla f(w_k)\|^2] \leq \frac{\tilde{L}}{T}\left(\frac{2(f(w_0) - f_\text{min})}{\alpha} + \frac{L\alpha \tilde{\iota}}{\tilde{\mu}\min\{1, \tilde{\mu}\}}\right).
 \end{equation}
 \end{enumerate}
\end{subequations}
Moreover, $\sum_{k=0}^{\infty}\|\nabla f(w_k)\|^2 < \infty$ almost surely and consequently $\|\nabla f(w_k)\|^2 \rightarrow 0$ as $k \rightarrow \infty$ almost surely.  
\end{theorem}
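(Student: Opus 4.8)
The plan is to reuse the per-iteration decrease inequalities already derived inside the proof of Theorem~\ref{thm:linear_exp}, namely \eqref{eq:gradnorm_stoch_reslt} for the Case~1 setting ($A_k=\widetilde{H}_k^{-1}$, $\theta_k=\tilde{\theta}_g\in[0,1)$) and \eqref{eq:gradnorm_stoch_reslt2} for the two sub-cases of Case~2 (unbiased $g_k$ with either $A_k=\widetilde{H}_k^{-1}$ or $A_k=I$ and the corresponding step-size caps), and then telescope. Concretely, for Case~1 I would substitute $\alpha_k=\alpha$ and $\theta_k=\tilde{\theta}_g$ into \eqref{eq:gradnorm_stoch_reslt}, take the total expectation $\E[\cdot]$, use the tower property to obtain $\E[f(w_{k+1})]\leq \E[f(w_k)] - \tfrac{\alpha(1-\tilde{\theta}_g^2)}{2\tilde{L}}\E[\|\nabla f(w_k)\|^2] + \tfrac{\alpha\iota_k}{2}$, rearrange, and sum from $k=0$ to $T-1$. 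The left-hand side telescopes to $f(w_0) - \E[f(w_T)] \leq f(w_0) - f_\text{min}$ (using $f\geq f_\text{min}$) and $\sum_{k=0}^{T-1}\iota_k \leq \tilde\iota$, which yields $\sum_{k=0}^{T-1}\E[\|\nabla f(w_k)\|^2] \leq \tfrac{\tilde{L}}{1-\tilde{\theta}_g^2}\bigl(\tfrac{2(f(w_0)-f_\text{min})}{\alpha} + \tilde\iota\bigr)$; dividing by $T$ and bounding the minimum by the average gives \eqref{eq:sublinear_result_1}. Case~2 is identical in structure starting from \eqref{eq:gradnorm_stoch_reslt2}, with the coefficient $\tfrac{\alpha}{2\tilde{L}}$ replacing $\tfrac{\alpha(1-\tilde{\theta}_g^2)}{2\tilde{L}}$ and the additive term $\tfrac{L\alpha^2\iota_k}{2\tilde{\mu}\min\{1,\tilde{\mu}\}}$ replacing $\tfrac{\alpha\iota_k}{2}$, producing \eqref{eq:sublinear_result_2}.

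For the almost-sure statement I would invoke the Robbins--Siegmund supermartingale convergence theorem. Setting $V_k := f(w_k) - f_\text{min} \geq 0$, the inequalities \eqref{eq:gradnorm_stoch_reslt}/\eqref{eq:gradnorm_stoch_reslt2} read $\E_k[V_{k+1}] \leq V_k - c\,\|\nabla f(w_k)\|^2 + \beta_k$ with $c>0$ a fixed constant and $\beta_k$ a deterministic nonnegative sequence satisfying $\sum_{k=0}^{\infty}\beta_k < \infty$ (since $\sum_{k=0}^{\infty}\iota_k = \tilde\iota<\infty$). Robbins--Siegmund then gives that $V_k$ converges almost surely to a finite random variable and that $\sum_{k=0}^{\infty}\|\nabla f(w_k)\|^2 < \infty$ almost surely; the latter forces $\|\nabla f(w_k)\|^2 \to 0$ almost surely.

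I expect the routine telescoping to be entirely mechanical; the only point requiring care is the measurability bookkeeping --- making sure the conditional expectation in \eqref{eq:gradnorm_stoch_reslt}/\eqref{eq:gradnorm_stoch_reslt2} is genuinely $\E_k[\cdot]$ (conditioning on reaching $w_k$, then averaging over both the Hessian sample set and the gradient sample set) so the tower property applies cleanly when passing to $\E[\cdot]$, and that the stochastic norm condition \eqref{eq:expected_norm} is applied with the appropriate $A_k$ in each sub-case. A second mild subtlety is that in Case~2 one must verify that the assumed step-size caps are exactly what makes the coefficient of $\|\nabla f(w_k)\|^2$ in \eqref{eq:expdesclemma} at least $\tfrac{\alpha}{2\tilde{L}}$; this computation is already carried out in the proof of Theorem~\ref{thm:linear_exp} and can simply be cited rather than repeated.
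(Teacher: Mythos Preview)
Your proposal is correct and follows essentially the same approach as the paper: the paper's proof simply says to start from \eqref{eq:gradnorm_stoch_reslt} and \eqref{eq:gradnorm_stoch_reslt2} and follow the deterministic telescoping procedure (Theorem~\ref{thm:global_sublinear_bounded_deterministic}) in expectation, and then apply the Robbins--Siegmund theorem to obtain the almost-sure claim. Your write-up is in fact more explicit than the paper's, correctly spelling out the tower-property step and the supermartingale setup $V_k = f(w_k) - f_\text{min}$.
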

\begin{proof}
Starting with inequalities \eqref{eq:gradnorm_stoch_reslt} and \eqref{eq:gradnorm_stoch_reslt2} and following the same procedure as in the deterministic case, albeit in expectation,  yields \eqref{eq:sublinear_result_1} and \eqref{eq:sublinear_result_2} respectively. 
Furthermore, applying Robbins-Siegmund Theorem \cite{robbins1970boundary} to \eqref{eq:gradnorm_stoch_reslt} or \eqref{eq:gradnorm_stoch_reslt2} yields $\sum_{k=0}^{\infty}\|\nabla f(w_k)\|^2 < \infty$ almost surely. Using this inequality, it is not difficult to show that $\|\nabla f(w_k)\|^2 \rightarrow 0$ as $k \rightarrow \infty$ almost surely.  
\end{proof}

\subsection{Local convergence} \label{subsec:local_stoch_conv}

We now provide local superlinear rates of convergence results for the iterates generated by \eqref{eq:iter} when unit step size is eventually employed. We begin by extending Lemma \ref{lemma:tech_local} to the stochastic setting.

\begin{lemma}\label{lemma:tech_local_stoch}
Suppose Assumptions~\ref{assumption:hessian_spectral_bounds} and~\ref{assumption:lipschitz_hessian} hold. For any $w_0\in\R^d$, let $\{w_k: k \in \N\}$ be iterates generated by \eqref{eq:iter} where the Hessian approximation is given in \eqref{eq:htilde_def} and the gradient approximations $g_k$ satisfies the Condition~\ref{condition:expected_gradient_bounds} with $\lambda_{\min}(A_k) \geq \lambda_{A}$ and $\frac{\lambda_{\max}(A_k)}{\lambda_{\min}(A_k)} \leq \kappa_{A}$ for some positive constants $\lambda_A, \kappa_{A} < \infty$. If at any iteration $k \in \N$, unit step size is chosen ($\alpha_k=1$), then, if $g_k$ satisfies stochastic norm condition \eqref{eq:expected_norm}
 \begin{align}
    \E\left[\|w_{k+1} - w^*\|\right] &\leq \frac{M}{2\tilde{\mu}}\E\left[\|w_k-w^*\|^2\right] + \frac{1}{\tilde{\mu}}\E\left[\|(\widetilde{H}_k - \nabla^2 f(w_k))(w_k - w^*)\|\right] \nonumber \\
    &\quad + \frac{L\sqrt{\kappa_{A}}}{\tilde{\mu}}\theta_k\E\left[\|w_k - w^*\|\right] + \frac{\sqrt{\iota_k}}{\tilde{\mu}\sqrt{\lambda_A}}, \label{eq:twoterm_exp}
    \end{align}
where $w^*$ is an optimal solution. 
\end{lemma}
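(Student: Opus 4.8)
The plan is to mirror the proof of Lemma~\ref{lemma:tech_local}, replacing the pointwise control of the gradient error by a conditional second-moment bound and then passing to total expectations. First I would observe that the pathwise three-term decomposition \eqref{eq:three_terms} holds for every realization, since it uses only $\widetilde H_k \succeq \tilde\mu I$ and the triangle inequality. Taking expectations of \eqref{eq:three_terms} gives
\[
\E[\|w_{k+1}-w^*\|] \le \tfrac{1}{\tilde\mu}\Big(\E[\|\nabla^2 f(w_k)(w_k-w^*)-\nabla f(w_k)\|] + \E[\|(\widetilde H_k-\nabla^2 f(w_k))(w_k-w^*)\|] + \E[\|g_k-\nabla f(w_k)\|]\Big).
\]
The Newton-update term is handled exactly as in \eqref{eq:quad_term}: that estimate is deterministic once $w_k$ is fixed, so taking expectations yields $\E[\|\nabla^2 f(w_k)(w_k-w^*)-\nabla f(w_k)\|]\le \tfrac{M}{2}\E[\|w_k-w^*\|^2]$. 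The Hessian-error term is left untouched here; it is controlled in the subsequent lemmas of this section.

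Next I would bound the gradient-error term. Writing $\delta_k = g_k - \nabla f(w_k)$, condition on $(w_k,A_k)$ and apply Jensen's inequality to get $\E[\|\delta_k\|\mid w_k,A_k]\le \sqrt{\E[\|\delta_k\|^2\mid w_k,A_k]}$. Using $\lambda_{\min}(A_k)\|\delta_k\|^2\le\|\delta_k\|_{A_k}^2$ and $\|\nabla f(w_k)\|_{A_k}^2\le\lambda_{\max}(A_k)\|\nabla f(w_k)\|^2$ together with the stochastic norm condition \eqref{eq:expected_norm} yields
\[
\E[\|\delta_k\|^2\mid w_k,A_k] \le \frac{\lambda_{\max}(A_k)}{\lambda_{\min}(A_k)}\theta_k^2\|\nabla f(w_k)\|^2 + \frac{\iota_k}{\lambda_{\min}(A_k)} \le \kappa_A\,\theta_k^2\|\nabla f(w_k)\|^2 + \frac{\iota_k}{\lambda_A}.
\]
Combining $\sqrt{a+b}\le\sqrt a+\sqrt b$, the bound $\|\nabla f(w_k)\|\le L\|w_k-w^*\|$ from Assumption~\ref{assumption:hessian_spectral_bounds} (using $\nabla f(w^*)=0$), and then the tower property gives $\E[\|\delta_k\|]\le L\sqrt{\kappa_A}\,\theta_k\,\E[\|w_k-w^*\|] + \sqrt{\iota_k/\lambda_A}$.

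Finally I would substitute these three estimates into the expectation of \eqref{eq:three_terms} and collect terms to obtain \eqref{eq:twoterm_exp}. I do not expect a serious obstacle: the one point requiring care is keeping the conditioning straight, since $A_k = \widetilde H_k^{-1}$ is itself random (it depends on the sampled Hessians $S_0,\dots,S_k$), so the gradient-error estimate must be derived conditionally on $(w_k,A_k)$ before the outer expectation is applied. Beyond this bookkeeping the argument is routine; the one genuinely new ingredient relative to Lemma~\ref{lemma:tech_local} is the Jensen step, which converts the conditional \emph{second}-moment guarantee of \eqref{eq:expected_norm} into the \emph{first}-moment bound that the linear-quadratic recursion \eqref{eq:twoterm_exp} is phrased in.
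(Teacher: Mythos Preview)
Your proposal is correct and follows essentially the same route as the paper: reuse the pathwise three-term decomposition \eqref{eq:three_terms} from Lemma~\ref{lemma:tech_local}, bound the Newton and Hessian terms as before, and for the gradient term apply Jensen's inequality conditionally on $(w_k,A_k)$ to convert the second-moment guarantee \eqref{eq:expected_norm} into a first-moment bound before taking total expectation. The paper's proof is in fact just a two-sentence sketch of exactly this argument.
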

\begin{proof}
We use a similar approach to the proof of Lemma \ref{lemma:tech_local}, see the proof there for the decomposition of errors. When $g_k$ satisfies stochastic norm condition \eqref{eq:expected_norm}, taking conditional expectations of the gradient term in \eqref{eq:three_terms} and using Jensen's inequality $\E[\|g_k - \nabla f(w_k)\|_{A_k}|w_k, A_k]\leq (\E[\|g_k - \nabla f(w_k)\|^2_{A_k}|w_k,A_k])^{1/2}$ followed by full expectation yields \eqref{eq:twoterm_exp}.  
\end{proof}

Lemma~\ref{lemma:tech_local_stoch} establishes the dependence of the iterate distance to optimality on the Hessian approximation error. Recall that Lemma \ref{lem:hess_error} decomposes the error into several terms, one of which includes the sum of errors in the difference between subsampled Hessian at each iterate, and the subsampled Hessian at the optimum, $\nabla^2 F_{S_i}(w_i) - \nabla^2F_{S_i}(w^*)$. In the next lemma, we establish upper bounds for the terms in  Lemma~\ref{lem:hess_error}, similar to the bounds derived in the deterministic sampling case as was done in Lemma \ref{lemma:memory_det}.

\begin{lemma}\label{lemma:memory_exp}
    Suppose Assumptions~\ref{assumption:hessian_spectral_bounds},~\ref{assumption:lipschitz_hessian} and ~\ref{assum:local_strng_cnvx} hold. For any $w_0\in\R^d$, let $\{w_k: k \in \N\}$ be iterates generated by \eqref{eq:iter} where the Hessian approximation is given in \eqref{eq:htilde_def} with $\gamma_i=\frac{1}{k+1}$ for all $i=0,\cdots,k$, and the gradient approximations $g_k$ satisfies the Condition~\ref{condition:expected_gradient_bounds} with $\sum_{i=0}^{\infty} \iota_k = \tilde{\iota} < \infty$. If $A_k$ and the corresponding step size $\alpha_k$ are chosen according to Theorem~\ref{thm:global_sublinear_bounded_expectation}. Then there exists $k_{\text{lin}} \geq 0$ such that for any $k \geq k_{\text{lin}}$ if $\iota_{k+1} = \iota_{k} a_g$ for some $\iota_{k_\text{lin}} \geq 0$ and $a_g \in [0,1)$, we have that if $g_k$ satisfies stochastic norm condition \eqref{eq:expected_norm} with any of the choices for $(A_k, \theta_k,\alpha_k)$ provided in Theorem~\ref{thm:global_sublinear_bounded_expectation}. In addition, suppose either $\sum_{i=0}^{\infty} \|\nabla f(w_i)\|^2 < \infty$ or Assumption~\ref{assumption:strng_cnvx} holds, then there exists a constant $C_{p,s}$ such that
        \begin{align}\label{eq:lemma36_exp}
            \sum_{i=0}^k \gamma_i  \E\left[\left\|\left(\nabla^2 F_{S_i} (w_i) - \nabla^2 F_{S_i}(w^*)\right)(w_k - w^*)\right\|\right]&\leq \frac{C_{p,s}}{k+1}\left(\E\left[\left\|w_k - w^*\right\|^2\right]\right)^{1/2}.
        \end{align}
\end{lemma}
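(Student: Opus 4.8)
The plan is to mirror the deterministic Lemma~\ref{lemma:memory_det}, replacing the pathwise estimates by estimates in expectation obtained through Cauchy--Schwarz and Jensen. First I would use Assumption~\ref{assumption:lipschitz_hessian} to get, for every $i$, the bound $\|(\nabla^2 F_{S_i}(w_i) - \nabla^2 F_{S_i}(w^*))(w_k - w^*)\| \le M\|w_i - w^*\|\,\|w_k - w^*\|$, together with the cruder bound $\le 2L\|w_k - w^*\|$ from Assumption~\ref{assumption:hessian_spectral_bounds}. As in the deterministic proof, the cruder bound is applied to the transient indices $i < k_{\text{lin}}$ and the Lipschitz bound to $i \ge k_{\text{lin}}$, where $k_{\text{lin}}$ is the first index past which the iterates lie in the locally strongly convex regime (i.e.\ $\nabla^2 f(w_k) \succeq \mu I$, equivalently $\|\nabla f(w_k)\| \le \nu$).

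Establishing such a $k_{\text{lin}}$ is the step that genuinely differs from the deterministic case. Under Assumption~\ref{assumption:strng_cnvx} it is immediate with $k_{\text{lin}} = 0$. Otherwise I would invoke Theorem~\ref{thm:global_sublinear_bounded_expectation}, whose conclusion $\sum_i \|\nabla f(w_i)\|^2 < \infty$ almost surely (exactly the extra hypothesis of the lemma) forces $\|\nabla f(w_i)\| \to 0$, so only finitely many iterates fall outside $\{\|\nabla f\| \le \nu\}$; combined with the induction device of Lemma~\ref{lemma:memory_det} and the linear rate of Theorem~\ref{thm:linear_exp} (applied starting from the first iterate in the regime), the iterates stay in the regime and $\E[f(w_i) - f(w^*)] \le \tilde{C}_1 \tilde{\rho}_1^{\,i - k_{\text{lin}}}$ for $i \ge k_{\text{lin}}$.

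With $\gamma_i = \tfrac{1}{k+1}$, splitting the sum at $k_{\text{lin}}$ and taking full expectation, I would apply Cauchy--Schwarz, $\E[\|w_i - w^*\|\,\|w_k - w^*\|] \le (\E\|w_i - w^*\|^2)^{1/2}(\E\|w_k - w^*\|^2)^{1/2}$, and Jensen, $\E\|w_k - w^*\| \le (\E\|w_k - w^*\|^2)^{1/2}$, to reach
\[
\sum_{i=0}^k \gamma_i \,\E\big[\|(\nabla^2 F_{S_i}(w_i) - \nabla^2 F_{S_i}(w^*))(w_k - w^*)\|\big] \le \frac{(\E\|w_k - w^*\|^2)^{1/2}}{k+1}\Big(2Lk_{\text{lin}} + M\sum_{i=k_{\text{lin}}}^k (\E\|w_i - w^*\|^2)^{1/2}\Big).
\]
Then, using $\E\|w_i - w^*\|^2 \le \tfrac{2}{\mu}\E[f(w_i) - f(w^*)] \le \tfrac{2\tilde{C}_1}{\mu}\tilde{\rho}_1^{\,i-k_{\text{lin}}}$ (from Assumption~\ref{assum:local_strng_cnvx} or~\ref{assumption:strng_cnvx} together with Theorem~\ref{thm:linear_exp}), the tail sum is dominated by the geometric series $\sqrt{2\tilde{C}_1/\mu}\,/(1-\sqrt{\tilde{\rho}_1})$, so setting $C_{p,s} := 2Lk_{\text{lin}} + \tfrac{M}{1-\sqrt{\tilde{\rho}_1}}\sqrt{2\tilde{C}_1/\mu}$ yields \eqref{eq:lemma36_exp}.

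The \textbf{main obstacle} is the middle step: in the stochastic setting a single iterate with small gradient norm does not by itself keep later iterates in the strongly convex regime, so the deterministic induction must be re-run using the almost-sure summability conclusion of Theorem~\ref{thm:global_sublinear_bounded_expectation}, and one must take care that the resulting $k_{\text{lin}}$ — and hence $C_{p,s}$ — can be chosen deterministic. This is precisely why the lemma carries the separate $\sum_i\|\nabla f(w_i)\|^2<\infty$ / global strong convexity hypothesis.
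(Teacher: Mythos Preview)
Your proposal is correct and follows essentially the same approach as the paper: split at $k_{\text{lin}}$, apply the $2L$ bound to the transient part and the Lipschitz bound to the tail, pass to expectation via Cauchy--Schwarz and Jensen, and control the tail by the geometric linear rate from Theorem~\ref{thm:linear_exp}. The paper's constant is $C_{p,s} = 2Lk_{\text{lin}} + \tfrac{\nu M}{\mu(1-\sqrt{\tilde{\rho}})}$ (using the further bound $\E\|w_i-w^*\|^2 \le \nu^2\tilde{\rho}^{\,i-k_{\text{lin}}}/\mu^2$, with $\tilde{\rho}$ the rate from whichever case of Theorem~\ref{thm:linear_exp} applies), which your $\sqrt{2\tilde{C}_1/\mu}$ version implies; and the paper also glosses over the determinacy-of-$k_{\text{lin}}$ issue you flag, handling the nonconvex case simply by invoking $\sum_i\|\nabla f(w_i)\|^2<\infty$ to assert existence of $k_{\text{lin}}$ without further comment.
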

\begin{proof}
In the stochastic setting, we assume that $\sum_{i=0}^{\infty} \|\nabla f(w_i)\|^2 < \infty$ or Assumption~\ref{assumption:strng_cnvx} holds. If $\sum_{i=0}^{\infty} \|\nabla f(w_i)\|^2 < \infty$ then there exists a $k_{\text{lin}} \in \N$ such that for all $k\geq k_{\text{lin}}$, $\|\nabla f(w_k)\| \leq \nu$. 
Therefore, for all $k\geq k_{\text{lin}}$ the iterates are all in locally strongly convex regime. Taking expectations in \eqref{eq:proof_sub_0} we get, 
\begin{align}
&\sum_{i=0}^k \gamma_i  \E\left[\left\|\left(\nabla^2 F_{S_i} (w_i) - \nabla^2 F_{S_i}(w^*)\right)e_k\right\|\right] \nonumber \\
&\quad \leq  \frac{2Lk_{\text{lin}}}{k+1}\E\left[\|w_k - w^*\|\right] + \frac{M}{k+1}\sum_{i=k_{\text{lin}}}^{k}\E\left[\|w_i - w^*\|\|w_k - w^*\|\right] \nonumber \\
&\quad \leq  \frac{2Lk_{\text{lin}}}{k+1}\left(\E\left[\|w_k - w^*\|^2\right]\right)^{1/2} + \frac{M}{k+1}\sum_{i=k_{\text{lin}}}^{k}\left(\E\left[\|w_i - w^*\|^2\right]\right)^{1/2}\left(\E\left[\|w_k - w^*\|^2\right]\right)^{1/2},
\label{eq:proof_sub_exp}
\end{align}
where the last inequality is due to the fact that $(\E[a])^2 \leq \E[a^2]$ and $(\E[ab])^2 \leq \E[a^2]\E[b^2]$ for any $a,b > 0$. 
Now using the global convergence results given in Theorem~\ref{thm:global_sublinear_bounded_expectation} and following the similar approach as in the deterministic norm condition analysis, albeit in expectation, we get, for an appropriate choice of  $\iota_{k_{\text{lin}}}$ that $\|\nabla f(w_k)\|^2 \leq \nu^2$ for $k\geq k_{\text{lin}}$. Furthermore, from Jensen's inequality, we also have
\begin{align}\label{eq:proof_lin_1_exp}
    \left(\E\left[\|w_k - w^*\|\right]\right)^2 \leq  \E\left[\|w_k - w^*\|^2\right] \leq \tfrac{\nu^2 \tilde{\rho}^{k-k_{\text{lin}}}}{\mu^2},
\end{align}
and 
\begin{align}\label{eq:proof_sum_1_exp}
    \sum_{i=k_{\text{lin}}}^{k}\left(\E\left[\|w_i - w^*\|^2\right]\right)^{1/2} & < \frac{\nu}{\mu (1-\sqrt{\tilde{\rho}})},
\end{align}
where $\tilde{\rho}$ is the rate constant that depends on the choice of $(A_k,\theta_k, \alpha_k)$ given in Theorem~\ref{thm:linear_exp}. Substituting \eqref{eq:proof_sum_1_exp} in \eqref{eq:proof_sub_exp} yields
\begin{align}
\sum_{i=0}^k \gamma_i  \E\left[\left\|\left(\nabla^2 F_{S_i} (w_i) - \nabla^2 F_{S_i}(w^*)\right)e_k\right\|\right] &\leq  \frac{1}{k+1}\left(2Lk_{\text{lin}} + \frac{\nu M}{\mu (1-\sqrt{\tilde{\rho}})}\right)\left(\E\left[\|w_k - w^*\|^2\right]\right)^{1/2} \nonumber \\
&= \frac{C_{p,s}}{k+1}\left(\E\left[\|w_k - w^*\|^2\right]\right)^{1/2}, \nonumber 
\end{align}
where 
\begin{equation}\label{eq:Cps}
C_{p,s} := 2Lk_{\text{lin}} + \frac{\nu M}{\mu(1-\sqrt{\tilde{\rho}})}.
\end{equation}
\end{proof}

\begin{remark}
If the functions are globally strongly convex (Assumption~\ref{assumption:strng_cnvx} holds) then there is no sublinear convergent phase in the algorithm. That is, $k_{\text{lin}} = 0$ in this setting. Furthermore, we made the assumption that $\sum_{i=0}^{\infty}\|\nabla^2 f(w_i)\|^2 < \infty$. This assumption, although made on the iterates which are stochastic, is necessary to ensure that the iterates eventually lie within a locally strongly convex regime. Moreover, from Theorem~\ref{thm:global_sublinear_bounded_expectation}, we have that $\sum_{i=0}^{\infty}\|\nabla^2 f(w_i)\|^2 < \infty$ almost surely, making this assumption relatively weak in this setting. 
\end{remark}

We will now establish that the nonconvex error term in Lemma~\ref{lem:hess_error}  vanishes for sufficiently large number of iterations. To achieve this, for the expectation problem, we need an additional assumption about the subsampled functions when the iterates enter the strongly convex regime. 
\begin{assumption}
    \label{assum:strng_cnv_components}
    In the expectation problem \eqref{eq:prob_exp}, for all $k\geq k_{\text{lin}}$, where $k_{\text{lin}}$ is defined in Lemma~\ref{lemma:memory_exp}, $\nabla^2 F_{S_k}(w_k) \succeq \mu I$. 
\end{assumption}
\begin{remark}
Assumption~\ref{assum:strng_cnv_components} implies that when the iterates enter the locally strongly convex regime, the subsampled functions are also strongly convex. In the case when the objective function is strongly convex, this assumption is typically satisfied due to regularziation in machine learning problems.  Moreover, such assumptions have been previously made in \cite{bollapragada2019exact,berahas2020investigation} and are required to establish strong convergence results in expectation.   
\end{remark}

\begin{lemma}
    \label{lem:nonconvex_stoch}
    Suppose conditions of Lemma~\ref{lemma:memory_exp} hold. Let $\tilde{\mu} \leq \frac{\mu}{2}$, and  $g_k$ satisfies stochastic norm condition \eqref{eq:expected_norm} and suppose that Assumption~\ref{assum:strng_cnv_components} holds. Then, there exists $k_{\text{non}} \geq k_{\text{lin}}$ such that for all $k \geq k_{\text{non}}$, $\widetilde{H}_k = \widehat{H}_k$. 
\end{lemma}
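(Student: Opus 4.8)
The plan is to mirror the proof of Lemma~\ref{lem:nonconvex}, but to use Assumption~\ref{assum:strng_cnv_components} to replace the cyclic‑averaging argument (which in the deterministic case produced the remainder terms indexed by $k_{\text{rem}}$ and forced a three‑way split) with a direct one‑line eigenvalue estimate. The point is that once $i \geq k_{\text{lin}}$, Assumption~\ref{assum:strng_cnv_components} gives $\nabla^2 F_{S_i}(w_i) \succeq \mu I$ \emph{directly}, so no Lipschitz correction of the form $\nabla^2 F_{S_i}(w_i) - \nabla^2 F_{S_i}(w^*)$ and no appeal to the summability bound \eqref{eq:proof_sum_1} are needed; only the finitely many ``early'' iterates require separate handling.

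First I would fix any $k \geq k_{\text{lin}}$ and any $v \in \R^d$ and split the path‑averaged Hessian into its early and late blocks,
\[
v^T \widehat{H}_k v = \frac{1}{k+1}\sum_{i=0}^{k_{\text{lin}}-1} v^T \nabla^2 F_{S_i}(w_i) v \;+\; \frac{1}{k+1}\sum_{i=k_{\text{lin}}}^{k} v^T \nabla^2 F_{S_i}(w_i) v .
\]
For the late block, Assumption~\ref{assum:strng_cnv_components} gives $v^T \nabla^2 F_{S_i}(w_i) v \geq \mu \|v\|^2$ for each of the $k - k_{\text{lin}}+1$ summands. For the early block, which involves only the finitely many fixed symmetric matrices $\nabla^2 F_{S_0}(w_0),\dots,\nabla^2 F_{S_{k_{\text{lin}}-1}}(w_{k_{\text{lin}}-1})$, I would set $\hat{\lambda} := \max\bigl\{0,\ -\min_{0 \leq i < k_{\text{lin}}} \lambda_{\min}(\nabla^2 F_{S_i}(w_i))\bigr\} < \infty$, so each such summand is at least $-\hat{\lambda}\|v\|^2$. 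Combining gives
\[
\frac{v^T \widehat{H}_k v}{\|v\|^2} \;\geq\; \frac{\mu\,(k+1-k_{\text{lin}}) - \hat{\lambda}\,k_{\text{lin}}}{k+1} \;=\; \mu - \frac{(\mu + \hat{\lambda})\,k_{\text{lin}}}{k+1}.
\]

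Next I would choose $k_{\text{non}} := \max\bigl\{ k_{\text{lin}},\ \bigl\lceil \tfrac{2(\mu+\hat{\lambda})\,k_{\text{lin}}}{\mu}\bigr\rceil - 1 \bigr\} \geq k_{\text{lin}}$, so that for all $k \geq k_{\text{non}}$ the subtracted term is at most $\tfrac{\mu}{2}$ and hence $\widehat{H}_k \succeq \tfrac{\mu}{2} I \succeq \tilde{\mu} I$, using $\tilde{\mu} \leq \tfrac{\mu}{2}$. In particular $\widehat{H}_k$ has no negative eigenvalues, so $|\widehat{H}_k| = \widehat{H}_k$ and $\lambda_{\min}(|\widehat{H}_k|) = \lambda_{\min}(\widehat{H}_k) \geq \tilde{\mu}$; by the definition \eqref{eq:htilde_def} the modification is inactive and $\widetilde{H}_k = \widehat{H}_k$ for all $k \geq k_{\text{non}}$.

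The only genuine subtlety — the ``hard part,'' such as it is — is justifying the uniform lower bound $\hat{\lambda}$ on the early subsampled Hessians, which in Lemma~\ref{lem:nonconvex} was an explicit hypothesis; here it is automatic precisely because only the \emph{finitely many} iterates $w_0,\dots,w_{k_{\text{lin}}-1}$ are involved. In the nonconvex case $k_{\text{lin}}$ is itself a (random) index supplied by Lemma~\ref{lemma:memory_exp}, so $k_{\text{non}}$ and $\hat{\lambda}$ are random too, but the conclusion holds realization by realization; in the strongly convex case $k_{\text{lin}} = 0$, $\hat{\lambda}$ is irrelevant, and $k_{\text{non}} = 0$. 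Everything past the decomposition is a routine eigenvalue computation, in contrast to the deterministic Lemma~\ref{lem:nonconvex} where the cyclic structure of the samples forced the more delicate estimate.
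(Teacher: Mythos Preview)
Your proof is correct and follows essentially the same approach as the paper: split $\widehat{H}_k$ into the early block (bounded below by $-\hat{\lambda}$) and the late block (bounded below by $\mu$ via Assumption~\ref{assum:strng_cnv_components}), then choose $k_{\text{non}}$ so the resulting lower bound exceeds $\tfrac{\mu}{2}\geq\tilde{\mu}$. Your explicit justification of $\hat{\lambda}$ as a finite maximum over finitely many early iterates is actually more careful than the paper, which simply reuses the symbol $\hat{\lambda}$ from Lemma~\ref{lem:nonconvex} without restating it as a hypothesis.
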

\begin{proof}
Let 
\begin{align}\label{eq:k_non_stoch}
k_{\text{non}} = 2 k_{\text{lin}}\left(1 + \tfrac{\hat{\lambda}}{\mu}\right).
\end{align}
Then, from \eqref{eq:proof_noncon_0}, for all $k\geq k_{\text{non}}$,  we have
\begin{align*}
    v^T\widehat{H}_kv &\geq \frac{-\hat{\lambda}k_{\text{lin}}}{k+1}\|v\|^2 + \frac{\mu (k + 1 - k_{\text{lin}})}{k+1}\|v\|^2 \geq \frac{\mu}{2}\|v\|^2 \geq \tilde{\mu}\|v\|^2,
\end{align*}
where we used $\tilde{\mu} \leq \tfrac{\mu}{2}$. Therefore, $\lambda_{\min}(\widehat{H}_k) \geq \tilde{\mu}$ and $\widetilde{H}_k = \widehat{H}_k$ for all $k\geq k_{\text{non}}$.  
\end{proof}

To analyze the last term in Lemma~\ref{lem:hess_error}, we make the following standard assumption about individual (stochastic) Hessian components. 

\begin{assumption}{(Hessian approximations, stochastic case).}\label{assum:hessian_var_stoch}
The individual component Hessians are bounded relative to the Hessian of the objective function $f$ at the optimal solution $w^*$. That is, 
\begin{subequations}
for the expectation problem, there exists constant $\sigma^2_{H} \geq 0$ such that 
    \begin{equation}\label{eq:exp_hess_var_bnd}
        \E_{\zeta}\left[\left\|\nabla^2 F(w^*,\zeta) - \nabla^2 f(w^*)\right\|^2\right] \leq \sigma^2_{H}. 
    \end{equation}

\end{subequations}
\end{assumption}

\begin{remark}
As with Assumption~\ref{assum:hessian_var}, we note that Assumption~\ref{assum:hessian_var_stoch} is relatively weak compared to similar assumptions made in the literature \cite{bollapragada2019exact}, as it only requires bounded variance at the optimum, instead at all possible iterates $w\in\R^d$. In addition, we also note that this assumption is trivially satisfied with the bound $\sigma_{H} = 2L$ when the functions are double differentiable with Lipschitz continuous gradients. 

\end{remark}

\begin{lemma}\label{lem:sampling error_stoch}
    Suppose Assumptions~\ref{assumption:lipschitz_hessian} and~\ref{assum:hessian_var_stoch} hold. If $\gamma_i=\frac{1}{k+1}$ for all $i=0,\cdots,k$, 
    and the sample sets $S_k$ are randomly chosen such that $|S_k| = |S_0| \in \N$. Then 
    \begin{align}\label{eq:lemma37_exp}
        \E\left[\left\|\left(\sum_{i=0}^k \gamma_i\nabla^2 F_{S_i}(w^*) - \nabla^2f(w^*)\right)(w_k - w^*)  \right\|\right] &\leq \frac{\sigma_H}{\sqrt{(k+1)|S_0|}}(\E[\|w_k - w^*\|^2])^{1/2}.
    \end{align}
\end{lemma}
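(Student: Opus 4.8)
The plan is to exploit that every summand matrix is evaluated at the \emph{fixed} optimum $w^\ast$, so the ensemble is mean zero regardless of where the random iterate $w_k$ happens to land. Write $e_k := w_k - w^\ast$ and $\bar Z_k := \sum_{i=0}^k \gamma_i\nabla^2 F_{S_i}(w^\ast) - \nabla^2 f(w^\ast)$. Since $\gamma_i = \tfrac{1}{k+1}$ and $|S_i|=|S_0|$ for all $i$, unrolling the sample averages in \eqref{eq:sampling} gives $\bar Z_k = \tfrac{1}{m}\sum_{\ell=1}^{m} W_\ell$ with $m := (k+1)|S_0|$ and $W_\ell := \nabla^2 F_{j_\ell}(w^\ast)-\nabla^2 f(w^\ast)$, where $j_1,\dots,j_m$ index all component Hessians drawn across iterations $0,\dots,k$. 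Because the sets $S_0,\dots,S_k$ are drawn independently of one another and $w^\ast$ is deterministic, the $W_\ell$ are i.i.d.\ with $\E[W_\ell]=0$ and, by Assumption~\ref{assum:hessian_var_stoch}, $\E\|W_\ell\|^2 \le \sigma_H^2$. Equivalently, each per-iteration block $Z_i := \nabla^2 F_{S_i}(w^\ast)-\nabla^2 f(w^\ast)$ is mean zero with $\E\|Z_i\|^2 \le \sigma_H^2/|S_0|$ (variance reduction from averaging $|S_0|$ i.i.d.\ draws), and the $Z_i$ are independent across $i$.

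Next I would decouple the matrix error from the iterate error. By Jensen's inequality $\E\|\bar Z_k e_k\| \le \bigl(\E\|\bar Z_k e_k\|^2\bigr)^{1/2}$, and then $\|\bar Z_k e_k\|\le\|\bar Z_k\|\,\|e_k\|$ followed by Cauchy--Schwarz on the product gives
\begin{equation*}
\E\|\bar Z_k e_k\| \;\le\; \bigl(\E\|\bar Z_k\|^2\bigr)^{1/2}\,\bigl(\E\|e_k\|^2\bigr)^{1/2}.
\end{equation*}
It then remains to bound $\E\|\bar Z_k\|^2$. Using independence and zero mean of the blocks, all cross terms cancel: contracting against directions (or, more carefully, processing the fresh block $Z_k$ first — it is mean zero conditionally on the iterate history, which kills its cross terms with the earlier blocks — and treating the remaining blocks analogously) yields $\E\|\bar Z_k\|^2 \le \sum_{i=0}^k \gamma_i^2\,\E\|Z_i\|^2 \le \bigl(\sum_{i=0}^k\gamma_i^2\bigr)\tfrac{\sigma_H^2}{|S_0|}$. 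The weight identity $\sum_{i=0}^k\gamma_i^2 = (k+1)\cdot(k+1)^{-2} = (k+1)^{-1}$ supplies the extra $1/(k+1)$, so $\E\|\bar Z_k\|^2 \le \tfrac{\sigma_H^2}{(k+1)|S_0|}$, and substituting into the display gives exactly \eqref{eq:lemma37_exp}.

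The step that needs care — and the reason the Hessian error was decomposed in Lemma~\ref{lem:hess_error} so that the sampling part sits at $w^\ast$ — is the statistical dependence between $e_k = w_k - w^\ast$ and the sample sets $S_0,\dots,S_{k-1}$: the iterate $w_k$ is computed using precisely those earlier subsampled Hessians, so $e_k$ cannot be treated as a vector independent of $\bar Z_k$. Evaluating the summands at the deterministic point $w^\ast$ is what keeps $\bar Z_k$ mean zero under any conditioning that does not touch a fresh sample, and the Jensen/Cauchy--Schwarz split is what lets us estimate $\E\|\bar Z_k e_k\|$ without ever needing $e_k$ independent of the past; only the cross-iteration independence of $\{S_i\}$ (which is assumed) enters, and only in the second-moment bound for $\E\|\bar Z_k\|^2$. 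This is the stochastic counterpart of the exact cancellation $\tfrac1n\sum_{i=0}^{n-1}\nabla^2 F_{S_i}(w^\ast)=\nabla^2 f(w^\ast)$ used in the deterministic Lemma~\ref{lem:sampling error}, with the $\mathcal{O}(1/k)$ there softened to the $\mathcal{O}(1/\sqrt{k})$ concentration rate obtained here.
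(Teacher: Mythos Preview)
Your argument is correct and matches the paper's proof essentially line for line: the paper also applies Cauchy--Schwarz to split $\E[\|\bar Z_k e_k\|]\le(\E\|\bar Z_k\|^2)^{1/2}(\E\|e_k\|^2)^{1/2}$, then pools all $(k+1)|S_0|$ i.i.d.\ Hessian draws at $w^\ast$ and uses mean-zero independence to get $\E\|\bar Z_k\|^2\le\sigma_H^2/((k+1)|S_0|)$. The Jensen step you mention is unnecessary (the pointwise bound $\|\bar Z_k e_k\|\le\|\bar Z_k\|\,\|e_k\|$ followed directly by Cauchy--Schwarz already gives the display), but it does no harm.
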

\begin{proof}
   Consider, 
	\begin{align}
		&\E\left[\left\|\sum_{i=0}^k \gamma_i\nabla^2 F_{S_i}(w^*) - \nabla^2f(w^*)(w_k - w^*)  \right\|\right] \nonumber \\ &\quad \leq   \left(\E\left[\left\|\sum_{i=0}^k \gamma_i\nabla^2 F_{S_i}(w^*) - \nabla^2f(w^*)\right\|^2\right]\right)^{1/2}\left(\E\left[\|w_k - w^*\|^2\right]\right)^{1/2}. \label{eq:proof_2_exp_1}
	\end{align}
where the inequality is due to 	$(\E[ab])^2 \leq \E[a^2]\E[b^2]$ for any $a,b > 0$. Let $S^\dagger = \cup_{i=0}^k S_i$ with $|S^\dagger| = (k+1)|S_0|$. Using $\nabla^2 F_{S^\dagger}(w^*) = \frac{1}{(k+1)|S_0|}\sum_{i \in S^{\dagger}}\nabla^2 F_{i} (w^*)$, we get
\begin{align}
	\E\left[\left\|\nabla^2 F_{S^\dagger}(w^*) - \nabla^2 f(w^*)\right\|^2\right] &= \frac{1}{(k+1)^2|S_0|^2}\E\left[\left\|\sum_{i \in S^{\dagger}} (\nabla^2 F_i(w^*) - \nabla^2 f(w^*))\right\|^2\right] \nonumber \\
	&=\frac{1}{(k+1)^2|S_0|^2}\sum_{i \in S^{\dagger}}\E\left[\left\|\nabla^2 F_i(w^*) - \nabla^2 f(w^*)\right\|^2\right] \nonumber \\
	&= \frac{1}{(k+1)|S_0|}\E_{\zeta}\left[\left\|\nabla^2 F(w^*,\zeta) - \nabla^2 f(w^*)\right\|^2\right] \leq \frac{\sigma^2_H}{(k+1)|S_0|}, \label{eq:proof_2_exp_2}
\end{align}
where the second and third equalities are due to the fact that the sample sets $S_k$'s consists of i.i.d samples of $\zeta$, and the inequality is due to Assumption~\ref{assum:hessian_var}. Substituting \eqref{eq:proof_2_exp_2} in \eqref{eq:proof_2_exp_1} yields the desired result.    
\end{proof}
\begin{remark}
	Lemma~\ref{lem:sampling error_stoch} establishes the bound on the sampling error in terms of the sample size $|S_0|$ and the iteration number $k$. We note that deterministic sampling without replacement in a cyclic manner has a better dependence on $k$ as compared to the stochastic subsampled Hessian (see Lemma~\ref{lem:sampling error}).
\end{remark}

To prove similar results for the expectation problem, we need an additional assumption on the second moments of the iterates as employed in stochastic second-order methods \cite{bollapragada2019exact,berahas2024modified}. 
\begin{assumption}
There exists a non-negative constant $\eta$ such that for all $k\in \mathbb{Z}^+$, 
$$\mathbb{E}\left[\|w_k - w^* \|^2\right] \leq \eta \left( \mathbb{E}[\|w_k - w^* \|] \right)^2.$$
\label{assum.second.moment.bound}
\end{assumption}
\begin{remark}
Although this assumption seems to be restrictive, it is imposed on non-negative numbers and is less restrictive than assuming that the iterates are bounded. It might be stronger than the sub-exponential assumption on the stochastic Hessian \cite{na2023hessian,jiang2024stochastic}, it is however required to establish results in expectation instead of results in probability. This assumption has been employed in other works to the same effect \cite{bollapragada2019exact,berahas2024modified}.
\end{remark}

Finally we provide linear and superlinear local convergence rates for the expectation problem. This theorem is the stochastic analog of Theorem \ref{thm:super_det}, and the proof is similar. We provide the entire proof for completeness.

\begin{theorem} {(Expectation local linear and superlinear convergence).}\label{thm:super_stoc}
Suppose Assumptions~\ref{assumption:hessian_spectral_bounds}, ~\ref{assumption:lipschitz_hessian},~\ref{assum:local_strng_cnvx},~\ref{assum:strng_cnv_components}, ~\ref{assum:hessian_var_stoch}, and~\ref{assum.second.moment.bound} hold. For any $w_0\in\R^d$, let $\{w_k: k \in \N\}$ be iterates generated by \eqref{eq:iter} where the Hessian approximation is given in \eqref{eq:htilde_def} with $\tilde{\mu} \leq \tfrac{\mu}{2}$, the sample sets $S_k$ are randomly chosen such that $|S_k| = |S_0|$ for all $k\in\N$, and $\gamma_i=\frac{1}{k+1}$ for all $i=0,\cdots,k$. Furthermore, the gradient approximations $g_k$ satisfies stochastic norm condition \eqref{eq:expected_norm} with $\sum_{i=0}^{\infty}\iota_i = \tilde{\iota} < \infty$ and any of the choices for $(A_k, \theta_k,\alpha_k)$ with $\frac{\lambda_{\max}(A_k)}{\lambda_{\min}(A_k)} \leq \kappa_{A} > 0$ provided in Theorem~\ref{thm:global_sublinear_bounded_expectation} for all $k < k_{\text{sup}}$ for some $k_{\text{sup}} \in \N$. 
In addition, suppose either $\sum_{i=0}^{\infty} \|\nabla f(w_i)\|^2 < \infty$ or Assumption~\ref{assumption:strng_cnvx} holds. Furthermore, for all $k_{\text{lin}}\leq k < k_{sup}$, $\iota_{k+1} = \iota_{k} a_g$ for some $\iota_{k_\text{lin}} \geq 0$ and $a_g \in [0,1)$, where $k_{\text{lin}}$ is given in Lemma~\ref{lemma:memory_exp}. Let $\tilde{\theta}_l = \frac{\sqrt{a_l}\tilde{\mu}}{9\sqrt{\kappa_A}L}$, $q= \tfrac{7M \eta}{2\tilde{\mu}}$, and $\tau_k := \frac{1}{\tilde{\mu}}\left(\tfrac{C_{p,s}\sqrt{\eta}}{k+1} + \tfrac{\sigma_{H}\sqrt{\eta}}{\sqrt{(k+1)|S_0|}} + \theta_kL\sqrt{\kappa_{A}}\right)$ for some $a_l \in [0,1)$, and let $r_k$ be a sequence with and $r_{k_{\text{sup}}}=\max\left\{\E[\|w_{k_{\text{sup}}} - w^*\|], \tfrac{3\sqrt{\iota_{k_{\text{sup}}}}}{\tilde{\mu}\sqrt{\lambda_{A}a_l}}\right\} \leq \tfrac{\sqrt{a_l}}{3q}$. 
\begin{enumerate}
\item If $\alpha_k= 1$, $\theta_k = \tilde{\theta}_l$, and $\iota_{k+1} = \iota_{k}a_l$ for all $k \geq k_{\text{sup}}$. Then 
\begin{align}\label{eq:loc_lin_exp}
        \E[\|w_{k} - w^*\|]  \leq r_{k}, \quad r_{k+1} = \max\left\{ r_k \rho_k, r_{k_{\text{sup}}}(\sqrt{a_l})^{k-k_{\text{sup}}+1} \right\}, \quad \rho_k = qr_k + \tau_k + \tfrac{\sqrt{\iota_{k_{\text{sup}}}}}{\tilde{\mu}\sqrt{\lambda_A}r_{k_{\text{sup}}}} \in [0,\sqrt{a_l}].
\end{align}
Therefore, $\E[\|w_{k} - w^*\|] \rightarrow 0$ at an R-linear rate with rate constant upper bounded by $\sqrt{a_l} \in [0,1)$.
\item If $\alpha_k=1$, $\theta_k = \frac{\tilde{\theta}_l}{\sqrt{k+1}}$, and $\iota_{k+1} = \iota_{k}a_l t_{k+1}^4$, $t_k = \tfrac{1}{\sqrt{k}}$  for all $k \geq k_{\text{sup}}$. Then 
\begin{align}
\E[\|w_{k} - w^*\|] &\leq r_{k}, \quad r_{k+1} = \max\left\{ r_k \rho_k, r_{k_{\text{sup}}}(\sqrt{a_l})^{k-k_{\text{sup}}+1} \prod_{i=k_{\text{sup}}}^{k}t_{i+1}\right\}, \nonumber \\
&\rho_k = qr_k + \tau_k + \tfrac{\sqrt{\iota_{k_{\text{sup}}}}}{\tilde{\mu}\sqrt{\lambda_A}r_{k_{\text{sup}}}}\prod_{i=k_{\text{sup}}}^{k-1}t_{i+1} = \mathcal{O}(\tfrac{1}{\sqrt{k}})\in [0,\sqrt{a_l}]. \label{eq:loc_suplin_exp}
\end{align}
Therefore, $\E[\|w_{k} - w^*\|] \rightarrow 0$ at an R-superlinear rate with rate constant upper bounded by $\max\{\rho_k, \sqrt{a_l}t_{k+1}\} =\mathcal{O}\left(\tfrac{1}{\sqrt{k}}\right)$. 
\end{enumerate}
\end{theorem}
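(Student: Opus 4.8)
The plan is to mirror the proof of Theorem~\ref{thm:super_det}, working with the scalar sequence $z_k := \E[\|w_k - w^*\|]$ in place of $\|w_k - w^*\|$, and reducing everything to an application of the abstract recursion Lemma~\ref{lemma:superlinear} with $\upsilon = \sqrt{a_l}$. First I would fix a threshold index $k_{\text{sup}}$ defined as the ceiling of a maximum of finitely many quantities, chosen so that simultaneously: (i) $k_{\text{sup}} \geq k_{\text{non}}$ from Lemma~\ref{lem:nonconvex_stoch}, so that $\widetilde H_k = \widehat H_k$ for all $k \geq k_{\text{sup}}$ and the nonconvex error term in \eqref{eq:total_hessianerror} vanishes; (ii) the non-$\theta_k$ contributions to $\tau_{k_{\text{sup}}}$ — namely $\tfrac{C_{p,s}\sqrt\eta}{\tilde\mu(k+1)}$ and $\tfrac{\sigma_H\sqrt\eta}{\tilde\mu\sqrt{(k+1)|S_0|}}$ — are each at most $\tfrac{\sqrt{a_l}}{9}$, which together with $\theta_{k_{\text{sup}}} L\sqrt{\kappa_A}/\tilde\mu \le \tfrac{\sqrt{a_l}}{9}$ (immediate from $\tilde\theta_l = \tfrac{\sqrt{a_l}\tilde\mu}{9\sqrt{\kappa_A}L}$) gives $\tau_{k_{\text{sup}}} \le \tfrac{\sqrt{a_l}}{3}$; (iii) using the global linear convergence of Theorem~\ref{thm:linear_exp} and Jensen's inequality, $z_{k_{\text{sup}}} = \E[\|w_{k_{\text{sup}}}-w^*\|] \le \tfrac{\sqrt{a_l}}{3q}$; and (iv) since $\iota_{k_{\text{sup}}} = \iota_{k_{\text{lin}}} a_g^{k_{\text{sup}}-k_{\text{lin}}}$ is geometrically small, $o_{k_{\text{sup}}} := \tfrac{\sqrt{\iota_{k_{\text{sup}}}}}{\tilde\mu\sqrt{\lambda_A}} \le \tfrac{a_l}{9q}$ and $\tfrac{3\sqrt{\iota_{k_{\text{sup}}}}}{\tilde\mu\sqrt{\lambda_A a_l}} \le \tfrac{\sqrt{a_l}}{3q}$. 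Conditions (iii)--(iv) together give $r_{k_{\text{sup}}} = \max\{z_{k_{\text{sup}}}, \tfrac{3\sqrt{\iota_{k_{\text{sup}}}}}{\tilde\mu\sqrt{\lambda_A a_l}}\} \le \tfrac{\sqrt{a_l}}{3q}$, matching the hypothesis stated in the theorem. This step uses in an essential way that, by Theorem~\ref{thm:global_sublinear_bounded_expectation}, $\sum_k \|\nabla f(w_k)\|^2 < \infty$ almost surely, so the hypothesis of Lemma~\ref{lemma:memory_exp} and hence Assumption~\ref{assum:strng_cnv_components} can be invoked for $k \ge k_{\text{lin}}$.

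The second step is to derive the scalar recursion. For $k \ge k_{\text{sup}}$ we take unit steps, so Lemma~\ref{lemma:tech_local_stoch} applies: starting from \eqref{eq:twoterm_exp}, I substitute the Hessian-error decomposition \eqref{eq:total_hessianerror} (whose nonconvex term is zero by (i)), bound the memory-error term by \eqref{eq:lemma36_exp} of Lemma~\ref{lemma:memory_exp} and the sampling-error term by \eqref{eq:lemma37_exp} of Lemma~\ref{lem:sampling error_stoch}. The terms $\frac{M}{2\tilde\mu}\E[\|w_k-w^*\|^2]$ and $\frac{3M}{\tilde\mu}\E[\|w_k-w^*\|^2]$ combine into $\frac{7M}{2\tilde\mu}\E[\|w_k-w^*\|^2]$, and Assumption~\ref{assum.second.moment.bound} converts this to $\frac{7M\eta}{2\tilde\mu} z_k^2 = q z_k^2$; likewise, each $(\E[\|w_k-w^*\|^2])^{1/2}$ appearing in \eqref{eq:lemma36_exp} and \eqref{eq:lemma37_exp} becomes $\sqrt{\eta}\, z_k$, producing exactly the stated coefficient $\tau_k = \frac{1}{\tilde\mu}\big(\tfrac{C_{p,s}\sqrt\eta}{k+1} + \tfrac{\sigma_H\sqrt\eta}{\sqrt{(k+1)|S_0|}} + \theta_k L\sqrt{\kappa_A}\big)$. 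The remaining term from \eqref{eq:twoterm_exp} is $o_k = \tfrac{\sqrt{\iota_k}}{\tilde\mu\sqrt{\lambda_A}}$. This yields $z_{k+1} \le q z_k^2 + \tau_k z_k + o_k$ for all $k \ge k_{\text{sup}}$, with $\tau_k$ monotonically non-increasing (it is a sum of decreasing sequences, since $\theta_k$ is constant in Case~1 and $\propto 1/\sqrt{k+1}$ in Case~2).

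The third step is to invoke Lemma~\ref{lemma:superlinear}. In Case~1, $\iota_{k+1} = \iota_k a_l$ gives $o_{k+1} = \sqrt{a_l}\, o_k$, i.e. $t_k \equiv 1$; the conditions $z_{k_{\text{sup}}} \le \tfrac{\sqrt{a_l}}{3q}$, $\tau_{k_{\text{sup}}} \le \tfrac{\sqrt{a_l}}{3}$, $o_{k_{\text{sup}}} \le \tfrac{a_l}{9q}$ are exactly (ii)--(iv), so Lemma~\ref{lemma:superlinear} with $\upsilon = \sqrt{a_l}$ gives \eqref{eq:loc_lin_exp} and R-linear convergence with rate $\sqrt{a_l}$. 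In Case~2, $\iota_{k+1} = \iota_k a_l t_{k+1}^4$ with $t_k = 1/\sqrt{k}$ gives $o_{k+1} = \sqrt{a_l}\, t_{k+1}^2\, o_k$ and $t_k \downarrow 0$, so Lemma~\ref{lemma:superlinear} delivers \eqref{eq:loc_suplin_exp} and R-superlinear convergence; the asymptotic rate constant $\max\{\rho_k, \sqrt{a_l}\, t_{k+1}\}$ is $\mathcal{O}(1/\sqrt{k})$ because $\tau_k = \mathcal{O}(1/\sqrt{k})$ dominates (the $q r_k$ piece decays geometrically and the $b_k = \tfrac{o_0}{r_0}\prod t_{i+1}$ piece decays super-polynomially). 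The main obstacle I anticipate is not any single inequality but the bookkeeping: one must ensure the mixed second-moment quantities from Lemmas~\ref{lemma:memory_exp} and~\ref{lem:sampling error_stoch} are legitimately collapsed via Assumption~\ref{assum.second.moment.bound} into a clean recursion in $z_k$ alone (so that the Cauchy–Schwarz steps $(\E ab)^2 \le \E a^2\,\E b^2$ used inside those lemmas are not applied twice), and that the single index $k_{\text{sup}}$ can be chosen large enough to validate all the smallness hypotheses of Lemma~\ref{lemma:superlinear} simultaneously, which requires tracking the dependence of $C_{p,s}$, $\iota_{k_{\text{lin}}}$, $k_{\text{lin}}$, and $\tilde\rho$ through the global phase.
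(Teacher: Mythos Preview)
Your proposal is correct and follows essentially the same approach as the paper: you set up the scalar recursion $z_{k+1} \le q z_k^2 + \tau_k z_k + o_k$ for $z_k = \E[\|w_k - w^*\|]$ by combining Lemma~\ref{lemma:tech_local_stoch}, the Hessian-error decomposition \eqref{eq:total_hessianerror}, Lemmas~\ref{lemma:memory_exp} and~\ref{lem:sampling error_stoch}, and Assumption~\ref{assum.second.moment.bound}, choose $k_{\text{sup}}$ large enough to trigger the hypotheses of Lemma~\ref{lemma:superlinear}, and then apply that lemma with $\upsilon = \sqrt{a_l}$ and the two choices of $t_k$. The paper's proof is exactly this; the only cosmetic difference is that the paper writes out the explicit five-term maximum defining $k_{\text{sup}}$ in \eqref{eq:ksup_exp}, whereas you describe the constraints (i)--(iv) that those terms enforce.
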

\begin{proof}
Let 
\begin{align}\label{eq:ksup_exp}
k_{\text{sup}} = \bigg\lceil \max \bigg\{&2 k_{\text{lin}}\left(1 + \tfrac{\hat{\lambda}}{\mu}\right),  \tfrac{9C_{p,s}\sqrt{\eta}}{\tilde{\mu}\sqrt{a_l}}, \tfrac{81\sigma^2_H\eta}{\tilde{\mu}^2|S_0|a_l},  \nonumber \\
&~~~k_{\text{lin}} + 2\log_{1/\tilde{\rho}}\left(\tfrac{3q\nu}{\mu \sqrt{a_l}}\right), k_{\text{lin}} + \log_{1/a_g}\left(\tfrac{81q^2 \iota_{k_{\text{lin}}}}{a_l^2\tilde{\mu}^2\lambda_{A}}\right)\bigg\}\bigg\rceil,
\end{align}
where $k_{\text{lin}}$ is defined in Lemma~\ref{lemma:memory_exp}, $\hat{\lambda}$ is defined in Lemma~\ref{lem:nonconvex}, $C_{p,s}$ is defined in \eqref{eq:Cps}, and $\tilde{\rho}$ is the linear convergence rate defined in Theorem~\ref{thm:linear_exp}.
We note that $k_{\text{sup}} \geq k_{\text{non}}$ due to the first term in \eqref{eq:ksup_exp}  where $k_{\text{non}}$ is defined in \eqref{eq:k_non_stoch}.
Using ~\eqref{eq:twoterm_exp},~\eqref{eq:total_hessianerror},~\eqref{eq:lemma36_exp}, and~\eqref{eq:lemma37_exp}, and Assumption~\ref{assum.second.moment.bound} we get for all $k \geq k_{\text{sup}}$,
\begin{align}
    \E[\|w_{k+1} - w^*\|] &\leq \tfrac{7M\eta}{2\tilde{\mu}}(\E[\|w_k - w^*\|])^2 + \tfrac{1}{\tilde{\mu}}\left(\tfrac{C_{p,s}\sqrt{\eta}}{k+1} + \tfrac{\sigma_{H}\sqrt{\eta}}{\sqrt{(k+1)|S_0|}} + \theta_kL\sqrt{\kappa_{A}}\right)\E[\|w_k - w^*\|] \nonumber \\
    &\qquad + \tfrac{\sqrt{\iota_k}}{\tilde{\mu}\sqrt{\lambda_A}} \label{eq:stoc_three_term}\\
    &= q\|w_{k} - w^*\|^2 + \tau_k \|w_{k} - w^*\| + o_k, \nonumber 
\end{align}
where $o_k := \tfrac{\sqrt{\iota_k}}{\tilde{\mu}\sqrt{\lambda_A}}$. Using the second and third terms in \eqref{eq:ksup_exp} and $\theta_k =\tilde{\theta}_l= \frac{\sqrt{a}_l\tilde{\mu}}{9\sqrt{\kappa_A}L}$, we get $\tau_k \leq \frac{\sqrt{a}_l}{3}$ for $k = k_{\text{sup}}$. In addition, using the fourth term in \eqref{eq:ksup_exp} and \eqref{eq:proof_lin_1_exp}, we get $\E[\|w_k - w^*\|] \leq \frac{\sqrt{a_l}}{3q}$
for $k = k_{\text{sup}}$. Moreover, $\iota_{k_{\text{sup}}} = \iota_{k_{\text{lin}}} a_g^{k_{\text{sup}}-k_{\text{lin}}} \leq \tfrac{a_l^2\tilde{\mu}^2\lambda_{A}} {81q^2}$ due to the fifth term in \eqref{eq:ksup_det} which implies that $o_{k} \leq \tfrac{a_l}{9q}$ for $k = k_{\text{sup}}$.


Therefore, starting with $k=k_{\text{sup}}$ and using Lemma~\ref{lemma:superlinear} with $\upsilon = \sqrt{a}_l$ and $\iota_{k+1} = \iota_k a_l$ yields \eqref{eq:loc_lin_exp}. Moreover, from \eqref{eq:lin_constant}, we have that $\tfrac{r_{k+1}}{r_k} \leq \sqrt{a_l}<1$. 

Similarly starting with $k=k_{\text{sup}}$ and using Lemma~\ref{lemma:superlinear} with $\upsilon = \sqrt{a_l}$ and $\iota_{k+1} = \tfrac{\iota_k a_l}{(k+1)^2}$ yields \eqref{eq:loc_suplin_exp}. Moreover, from \eqref{eq:lin_constant}, we get
\begin{align*}
\tfrac{r_{k+1}}{r_k} \leq \max\left\{\rho_k, \sqrt{a_l} t_{k+1}\right\} 
&\leq \max\left\{ qr_k + \tau_k + \tfrac{o_{k_{\text{sup}}}}{r_0}\prod_{i=k_{\text{sup}}}^{k-1}t_{i+1},\sqrt{\tfrac{a_l}{k+1}} \right\} \nonumber \\
&< \max\left\{ qr_{k_{\text{sup}}}(\sqrt{a_l})^{k-k_{\text{sup}}} + \tau_k  + \tfrac{o_{k_{\text{sup}}}}{r_{k_{\text{sup}}} \sqrt{k+1}},\sqrt{\tfrac{a_l}{k+1}} \right\} = \mathcal{O}(\tfrac{1}{\sqrt{k}}).
\end{align*}
\end{proof}

\begin{remark}
We note that unlike the deterministic Hessian sampling, stochastic sampling leads to a slower rate of superlinear convergence $\mathcal{O}\left(\tfrac{1}{\sqrt{k}}\right)$ similar to the final phase results established in \cite{na2023hessian,jiang2024stochastic}. Moreover, for the exact gradient settings, although we only presented the final superlinear convergence results, from \eqref{eq:stoc_three_term}, it can be seen that there are two phases in the superlinearly convergent regime, where initially rate is $\mathcal{O}\left(\tfrac{1}{k}\right)$ and the later (final) phase is $\mathcal{O}\left(\tfrac{1}{\sqrt{k}}\right)$, similar to results in \cite{na2023hessian,jiang2024stochastic}, albeit in expectation instead of high probability. We also note that the proof techniques employed in analyzing the Hessian approximations in this analysis are different from those established in \cite{na2023hessian,jiang2024stochastic} where we analyze the variance in the Hessian sampling error at the optimal solution.
\end{remark}

\subsection{Complexity Analysis}
In this section, we establish the iteration, gradient, and Hessian computational complexity bounds, i.e., the total number of iterations, individual gradient evaluations and Hessian evaluations required to get an $\epsilon$-accurate solution, characterized as any iterate $w_k$ satisfying 
\begin{align}
    \Big(\E[\|w_k - w^*\|]\Big)^2 \leq \epsilon,
\end{align}
where $w^*$ is an optimal solution. For the sake of simplicity, we will only consider global strongly convex functions (see Assumption~\ref{assumption:strng_cnvx}). The global sublinear convergence results for general nonconvex functions established in Theorem~\ref{thm:global_sublinear_bounded_expectation} are required only to establish that the iterates will eventually enter a basin with $\|\nabla f(w_k)\|^2 \leq \nu^2$ and therefore, a complete complexity analysis can be performed by including the analysis for this sublinear phase too.  Furthermore, to make our analysis simple and make it possible to compare our results with other methods in the literature, we limit our analysis to specific settings where we choose $\theta_k = 0$ and $\iota_k = \iota_0a_g^k$ for all $k < k_{\text{sup}}$ and $\iota_k = \iota_0a_l^k$ for all $k \geq k_{\text{sup}}$ which leads to fast local linear convergence (see Theorem~\ref{thm:super_stoc}).

\begin{corollary}
    \label{corr:iter_complexity_det}
Suppose Assumption~\ref{assumption:strng_cnvx} and conditions of Theorem~\ref{thm:super_stoc} hold with $\theta_k = 0$, $\iota_k = \iota_0a_g^k$ for all $k < k_{\text{sup}}$ where $a_g = 1 - \tfrac{\mu \tilde{\mu}}{2L\tilde{L}}$, and $\iota_k = \iota_0a_l^k$ for all $k \geq k_{\text{sup}}$ with $a_{\text{l}} \in [0,1)$, and $A_k = I$. Then, for any given (sufficiently small) $\epsilon > 0$,  we  get an $\epsilon$-accurate solution after performing $K_{\epsilon}$ iterations where 
\begin{align}\label{eq:kepsilob_stoc}
K_{\epsilon} = k_{\text{sup}} + \left\lceil \log_{1/\sqrt{a_l}}\left(\tfrac{r_{k_{\text{sup}}}}{\sqrt{\epsilon}}\right) \right\rceil = \mathcal{\tilde{O}}\left(\kappa^2\left(1 + \tfrac{M}{\mu^{3/2}}\right) + \log\left(\tfrac{1}{\sqrt{\epsilon}}\right)\right),
\end{align} 
where $r_{k_\text{sup}}$ is defined as in Theorem~\ref{thm:super_stoc}.
\end{corollary}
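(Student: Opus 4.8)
The plan is to chain the local linear convergence rate of Theorem~\ref{thm:super_stoc}(1) (specialized to $\theta_k=0$) with an explicit bound on the transition iteration $k_{\text{sup}}$ in the globally strongly convex regime. First I would apply Theorem~\ref{thm:super_stoc}(1) with the stated hyperparameters: since $\theta_k=0$, the sequence $\tau_k=\tfrac{1}{\tilde\mu}(\tfrac{C_{p,s}\sqrt\eta}{k+1}+\tfrac{\sigma_H\sqrt\eta}{\sqrt{(k+1)|S_0|}})$ is nonincreasing, and $\iota_k=\iota_0 a_l^k$ for $k\ge k_{\text{sup}}$ makes $o_k=\tfrac{\sqrt{\iota_k}}{\tilde\mu\sqrt{\lambda_A}}$ decay geometrically with ratio $\sqrt{a_l}$. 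The theorem then gives $\E[\|w_k-w^*\|]\le r_k$ with $r_{k+1}=\max\{r_k\rho_k,\,r_{k_{\text{sup}}}(\sqrt{a_l})^{k-k_{\text{sup}}+1}\}$ and $\rho_k\le\sqrt{a_l}$; substituting $\rho_k\le\sqrt{a_l}$ and inducting on $k$ shows $r_k\le r_{k_{\text{sup}}}(\sqrt{a_l})^{k-k_{\text{sup}}}$ for every $k\ge k_{\text{sup}}$. Consequently $(\E[\|w_k-w^*\|])^2\le r_{k_{\text{sup}}}^2\,a_l^{\,k-k_{\text{sup}}}\le\epsilon$ as soon as $k-k_{\text{sup}}\ge\log_{1/a_l}(r_{k_{\text{sup}}}^2/\epsilon)=\log_{1/\sqrt{a_l}}(r_{k_{\text{sup}}}/\sqrt\epsilon)$, which yields the first (exact) expression for $K_\epsilon$.

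It then remains to convert $k_{\text{sup}}$ and $r_{k_{\text{sup}}}$ into the claimed order estimates. For $r_{k_{\text{sup}}}$: since $A_k=I$ we have $\lambda_A=1$, and by the global linear convergence of Theorem~\ref{thm:linear_exp} (with $A_k=I$, $\tilde\theta_g=0$) together with strong convexity and Jensen's inequality, both $\E[\|w_{k_{\text{sup}}}-w^*\|]$ and $\tfrac{3\sqrt{\iota_{k_{\text{sup}}}}}{\tilde\mu\sqrt{a_l}}$ are bounded by problem-dependent constants independent of $\epsilon$, so $r_{k_{\text{sup}}}=\mathcal{O}(1)$ and, because $a_l$ is a fixed hyperparameter, $\log_{1/\sqrt{a_l}}(r_{k_{\text{sup}}}/\sqrt\epsilon)=\mathcal{O}(\log(1/\sqrt\epsilon))$. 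For $k_{\text{sup}}$ I would specialize formula~\eqref{eq:ksup_exp}: under Assumption~\ref{assumption:strng_cnvx} we have $k_{\text{lin}}=0$ (remark after Lemma~\ref{lemma:memory_exp}), so the basin radius $\nu$ drops out exactly as in the proof of the deterministic analog Corollary~\ref{cor:ksup_d}; the summed iterate distances are controlled directly through \eqref{eq:lin_exp_reslt_2}, which updates $C_{p,s}$ to the form $\tfrac{M\sqrt{2\tilde C}}{\sqrt\mu\,(1-\sqrt{\tilde\rho})}$ for the relevant linear-rate pair $(\tilde C,\tilde\rho)$. Using $1-\tilde\rho=\Theta(\kappa^{-2})$ and $-\log a_g=\Theta(\kappa^{-2})$ for $a_g=1-\tfrac{\mu\tilde\mu}{2L\tilde L}$ (exactly the estimates in Corollary~\ref{cor:ksup_d}), I would bound the five terms in \eqref{eq:ksup_exp}: the $C_{p,s}$-term is $\tilde{\mathcal{O}}(\kappa^2 M/\mu^{3/2})$, the $\sigma_H^2/(\tilde\mu^2|S_0|)$-term is $\tilde{\mathcal{O}}(\kappa^2)$ (using $\sigma_H=\mathcal{O}(L)$), and the two logarithmic terms $2\log_{1/\tilde\rho}(\cdots)$ and $\log_{1/a_g}(\cdots)$ are $\tilde{\mathcal{O}}(\kappa^2)$ since $q=\tfrac{7M\eta}{2\tilde\mu}$ appears only inside a logarithm. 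Taking the maximum gives $k_{\text{sup}}=\tilde{\mathcal{O}}(\kappa^2(1+M/\mu^{3/2}))$, and adding the geometric-phase count gives $K_\epsilon=\tilde{\mathcal{O}}(\kappa^2(1+M/\mu^{3/2})+\log(1/\sqrt\epsilon))$.

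The part I expect to be fiddly is the second step: checking that all five terms of \eqref{eq:ksup_exp} collapse to $\tilde{\mathcal{O}}(\kappa^2(1+M/\mu^{3/2}))$ once $k_{\text{lin}}=0$. This needs the spectral relations $1-\tilde\rho=\Theta(\mu\tilde\mu/(L\tilde L))=\Theta(\kappa^{-2})$ and $-\log a_g=\Theta(\kappa^{-2})$, the strongly-convex update of $C_{p,s}$, and the convention that $\eta$, $|S_0|$, $\lambda_A$, $a_l$, $\iota_0$ are $\mathcal{O}(1)$ constants whose contributions enter only through logarithms absorbed by $\tilde{\mathcal{O}}$. Everything else is the routine geometric-series argument already carried out for the deterministic case in Corollary~\ref{cor:ksup_d}.
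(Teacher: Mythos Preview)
Your proposal is correct and follows essentially the same approach as the paper: induct on the local linear bound~\eqref{eq:loc_lin_exp} using $\rho_k\le\sqrt{a_l}$ to obtain $r_k\le r_{k_{\text{sup}}}(\sqrt{a_l})^{k-k_{\text{sup}}}$, then specialize~\eqref{eq:ksup_exp} with $k_{\text{lin}}=0$, update $C_{p,s}$ via the global linear rate, and bound each remaining term by $\tilde{\mathcal{O}}(\kappa^2(1+M/\mu^{3/2}))$ exactly as you outline. The only cosmetic discrepancy is that after setting $k_{\text{lin}}=0$ the first term of~\eqref{eq:ksup_exp} vanishes, leaving four terms rather than five to bound.
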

\begin{proof}
From \eqref{eq:loc_lin_exp}, we have that for all $k\geq k_{\text{sup}}$, 
\begin{align}\label{eq:thm_final_rlocal_lin}
\E[\|w_k - w^*\|] \leq r_{k_{\text{sup}}}(\sqrt{a_l})^{k - k_{\text{sup}}}.
\end{align}

We use induction to prove this statement. It is trivially satisfied for $k=k_{\text{sup}}$. Suppose this statement is true for some $k\geq k_{\text{sup}}$. Then, we have
\begin{align*}
    \E[\|w_{k+1} - w^*\|] &\leq \max\left\{ r_k \rho_k, r_{k_{\text{sup}}}(\sqrt{a_l})^{k-k_{\text{sup}}+1} \right\} \leq \max\left\{ r_{k_{\text{sup}}} (\sqrt{a_l})^{k - k_{\text{sup}}}  \rho_k, r_{k_{\text{sup}}}(\sqrt{a_l})^{k-k_{\text{sup}}+1} \right\} \\&= r_{k_{\text{sup}}}(\sqrt{a_l})^{k-k_{\text{sup}}+1},
\end{align*}
where $\rho_k$ is defined as in Theorem~\ref{thm:super_stoc}.
Substituting $K_{\epsilon}$ in \eqref{eq:thm_final_rlocal_lin}, we get
\begin{align*}
    \E[\|w_k - w^*\|] \leq r_{k_{\text{sup}}}(\sqrt{a_l})^{\left\lceil \log_{1/\sqrt{a_l}}\left(\tfrac{r_{k_{\text{sup}}}}{\sqrt{\epsilon}}\right) \right\rceil} \leq \sqrt{\epsilon}. 
\end{align*}
We will now analyze $k_{\text{sup}}$. Following similar steps in the proof of Corollary~\ref{cor:ksup_d}, we have that $k_{\text{lin}} = 0$ and also utilizing 
\begin{align}\label{eq:proof_sum_1_exp_update}
    \sum_{i=k_{0}}^{k}(\E[\|w_i - w^*\|^2])^{1/2} & < \frac{\sqrt{2\tilde{C}_2}}{\sqrt{\mu} (1-\sqrt{\tilde{\rho}_2})},
\end{align}
we get $C_{p,s} : = \frac{M\sqrt{2\tilde{C}_2}}{\sqrt{\mu}(1-\sqrt{\tilde{\rho}_2})}$. Therefore $k_{\text{sup}}$ in \eqref{eq:ksup_exp}, is updated as, 
\begin{align}
k_{\text{sup}} &= \bigg\lceil \max \bigg\{  \tfrac{9C_{p,s}\sqrt{\eta}}{\tilde{\mu}\sqrt{a_l}}, \tfrac{81\sigma^2_H\eta}{\tilde{\mu}^2|S_0|a_l}, 2\log_{1/\tilde{\rho}_2}\left(\tfrac{3q\sqrt{2\tilde{C}_2}}{\sqrt{\mu} \sqrt{a_l}}\right), \log_{1/a_g}\left(\tfrac{81q^2 \iota_{0}}{a_l^2\tilde{\mu}^2}\right)\bigg\}\bigg\rceil \label{eq:ksup_exp_update} \\
&= \mathcal{\tilde{O}}\left(\max \bigg\{\tfrac{\kappa^2 M}{\mu^{3/2}}, \tfrac{\sigma_H^2}{\tilde{\mu}^2|S_0|}, \kappa^2, \kappa^2  \bigg\} \right) =  \mathcal{\tilde{O}}\left(\kappa^2\left(1 + \tfrac{M}{\mu^{3/2}}\right)\right), \nonumber
\end{align}
where we employed $\log(1 - x) \approx - x$ for sufficiently small $x$ and $\sigma_H^2 \leq L^2$. 

\end{proof}

\begin{remark}
We make the following remarks about this result. 
\begin{itemize}
    \item The number of iterations required to transition from global linear phase to fast local linear or superlinear phase $k_{\text{sup}}$ is similar to that of the deterministic sampling results established in Corollary~\ref{cor:ksup_d} (excluding the dependence on $N$). Furthermore, $k_{\text{sup}}$ is better (smaller) than the results established for uniform weighting scheme and comparable to nonuniform weighting scheme when $\tfrac{M}{\mu^{3/2}}$ is sufficiently small. However, we note that the analysis is in expectation and requires additional assumptions related to bounded moments and strong convexity of subsampled functions, which are not required in the deterministic sampling settings (see Section~\ref{section:det_analysis}).
    \item While Corollary~\ref{corr:iter_complexity_det} requires $\epsilon$ to be sufficiently small, we can establish iteration complexity results for the global phase when $\epsilon$ is large. In this the iteration complexity result is given as $K_{\epsilon} = \tilde{\mathcal{O}}\left(\kappa^2 \log\left(\tfrac{1}{\mu\sqrt{\epsilon}}\right)\right)$.
    \item This iteration complexity compared to a stochastic gradient method or an adaptive sampling gradient method has better dependence on $\epsilon$ as seen in Table~\ref{table:complexity}.  
\end{itemize}
\end{remark}
 We will now establish the total gradient evaluations required to achieve an $\epsilon$-accurate solution when the starting iterate is close enough to the optimum, $w^*$. We will only consider simpler settings common in stochastic gradient analysis where $\sigma_{1,g} = 0$ in Assumption~\ref{eq:exp_grad_var_bnd}.

\begin{corollary}
	\label{thm:grad_complexity}
	Suppose conditions of Corollary~\ref{corr:iter_complexity_det} are satisfied and $\sigma_{1,g} = 0$ in Assumption~\ref{eq:exp_grad_var_bnd}. In addition, if $w_0$ is sufficiently close to $w^*$ such that $f(w_0) - f(w^*) \leq \tfrac{L\iota_0}{\mu^2}$ and $\iota_0 \leq \tfrac{\mu^5}{324\eta^2 L M^2}$. Let $a_l = \tfrac{1}{2}$, and $|S_0| = \lceil \tfrac{81 \eta}{2}\rceil $. Then, after computing  
	\begin{align}
		\mathcal{W}_{g} = \mathcal{O}\left(\kappa^2\sigma^2_{2,g} + \tfrac{\kappa \sigma^2_{2,g}}{\mu^2\epsilon}\right)
	\end{align}
	stochastic gradients, we achieve an $\epsilon$-accurate solution. 
\end{corollary}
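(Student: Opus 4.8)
The plan is to bound $\mathcal{W}_g = \sum_{k=0}^{K_\epsilon-1}|X_k|$ by combining the iteration count $K_\epsilon$ from Corollary~\ref{corr:iter_complexity_det} with the per-iteration sample-size bound from Lemma~\ref{lem:samplesizes}, and then evaluating two geometric series. First, under the prescribed choices $\theta_k = 0$, $A_k = I$ (so $\lambda_{\max}(A_k)=1$) and $\sigma_{1,g}=0$, the sample-size bound \eqref{eq:gradsamp_bnd_exp} collapses to $|X_k| = \lceil \sigma_{2,g}^2/\iota_k\rceil \le 1 + \sigma_{2,g}^2/\iota_k$. Since $\iota_k = \iota_0 a_g^k$ for $k<k_{\text{sup}}$ and $\iota_k = \iota_0 a_l^k = \iota_0 2^{-k}$ for $k\ge k_{\text{sup}}$, the sequence $\{|X_k|\}$ is, up to the additive $1$, geometric on each of the two phases, so I would bound $\mathcal{W}_g$ by summing over each phase separately.

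For the global/burn-in phase $0\le k<k_{\text{sup}}$, the common ratio is $1/a_g$ with $1-a_g = \mu\tilde\mu/(2L\tilde L) = \Theta(1/\kappa^2)$, while Corollary~\ref{corr:iter_complexity_det} gives $k_{\text{sup}} = \tilde{\mathcal{O}}(\kappa^2(1+M/\mu^{3/2}))$. Summing yields $\sum_{k<k_{\text{sup}}}|X_k| = \mathcal{O}\big(k_{\text{sup}} + \tfrac{\sigma_{2,g}^2}{\iota_0}\cdot\tfrac{(1/a_g)^{k_{\text{sup}}}-1}{1/a_g-1}\big)$. Using $(1/a_g)^{k_{\text{sup}}} = \iota_0/\iota_{k_{\text{sup}}}$ together with the hypotheses $\iota_0 \le \mu^5/(324\eta^2 LM^2)$ and $f(w_0)-f(w^*)\le L\iota_0/\mu^2$ (which, via the global linear bound of Theorem~\ref{thm:linear_exp} and strong convexity, keep $\iota_{k_{\text{sup}}}$ a fixed multiple of $\iota_0$), and $1/(1/a_g-1) = a_g/(1-a_g)=\mathcal{O}(\kappa^2)$, this phase contributes $\mathcal{O}(\kappa^2\sigma_{2,g}^2)$.

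For the local linear phase $k_{\text{sup}}\le k<K_\epsilon$, the ratio is $1/a_l = 2$, so $\sum_{k=k_{\text{sup}}}^{K_\epsilon-1}|X_k| = \mathcal{O}\big((K_\epsilon - k_{\text{sup}}) + \sigma_{2,g}^2/\iota_{K_\epsilon-1}\big)$ and the dominant term is $\sigma_{2,g}^2/\iota_{K_\epsilon-1} = \mathcal{O}(\sigma_{2,g}^2/(\iota_{k_{\text{sup}}}\,a_l^{K_\epsilon-k_{\text{sup}}}))$. By Corollary~\ref{corr:iter_complexity_det}, $K_\epsilon - k_{\text{sup}} = \lceil\log_{1/\sqrt{a_l}}(r_{k_{\text{sup}}}/\sqrt\epsilon)\rceil$, and with $a_l=\tfrac12$ this gives $a_l^{K_\epsilon-k_{\text{sup}}} = \Theta(\epsilon/r_{k_{\text{sup}}}^2)$, so the phase costs $\mathcal{O}(\sigma_{2,g}^2 r_{k_{\text{sup}}}^2/(\iota_{k_{\text{sup}}}\,\epsilon))$. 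Finally I would plug in the definition of $r_{k_{\text{sup}}}$ from Theorem~\ref{thm:super_stoc} together with strong convexity — $\|w_{k_{\text{sup}}}-w^*\|^2\le \tfrac{2}{\mu}(f(w_{k_{\text{sup}}})-f(w^*))$, the global linear bound $f(w_{k_{\text{sup}}})-f(w^*)=\mathcal{O}(\tilde L\iota_{k_{\text{sup}}}/\mu)$, and the other branch $\tfrac{3\sqrt{\iota_{k_{\text{sup}}}}}{\tilde\mu\sqrt{\lambda_A a_l}}$ — to obtain $r_{k_{\text{sup}}}^2 = \mathcal{O}(\kappa\,\iota_{k_{\text{sup}}}/\mu^2)$, yielding the $\mathcal{O}(\kappa\sigma_{2,g}^2/(\mu^2\epsilon))$ contribution. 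Adding the two phases gives $\mathcal{W}_g = \mathcal{O}(\kappa^2\sigma_{2,g}^2 + \kappa\sigma_{2,g}^2/(\mu^2\epsilon))$.

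The hard part is the constant-tracking in the two geometric sums, and in particular showing the burn-in sum is genuinely $\mathcal{O}(\kappa^2\sigma_{2,g}^2)$ rather than exponentially larger: this requires $(1/a_g)^{k_{\text{sup}}}$ to remain bounded, which is precisely why the statement pins down $a_l=\tfrac12$, $|S_0|=\lceil 81\eta/2\rceil$, $\iota_0\le\mu^5/(324\eta^2 LM^2)$ and the closeness of $w_0$ — these must be substituted into the expression for $k_{\text{sup}}$ in \eqref{eq:ksup_exp_update} and into $r_{k_{\text{sup}}}$, so that the $\kappa$, $\mu$ and $\epsilon$ dependence stays clean while the factors $\eta$, $L$, $M$ (and the chosen $\iota_0$) are absorbed into the $\mathcal{O}(\cdot)$.
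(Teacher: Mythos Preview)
Your approach is essentially the paper's: bound $|X_k| \le 1 + \sigma_{2,g}^2/\iota_k$ via Lemma~\ref{lem:samplesizes}, split $\mathcal{W}_g$ into the two geometric sums over $k<k_{\text{sup}}$ and $k_{\text{sup}}\le k\le K_\epsilon$, and then control $(1/a_g)^{k_{\text{sup}}}$ and $r_{k_{\text{sup}}}^2/\iota_{k_{\text{sup}}}$ using the specific numerical hypotheses. The paper carries out exactly the constant-tracking you flag as the hard part: it substitutes the hypotheses into each of the four terms of the $\max$ in \eqref{eq:ksup_exp_update} to obtain $k_{\text{sup}}\approx 5\kappa^2$ (not merely $\tilde{\mathcal{O}}(\kappa^2(1+M/\mu^{3/2}))$), whence $(1-1/(2\kappa^2))^{-k_{\text{sup}}}=\mathcal{O}(1)$ via $(1-1/x)^{-x}\approx e$; and it bounds $r_{k_{\text{sup}}}^2/\iota_{k_{\text{sup}}}\le\max\{2\kappa/\mu^2,\,18/\tilde\mu^2\}$ using $\tilde C_2=L\iota_0/\mu^2$ and $\tilde\rho_2=a_g$, as you anticipate.

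One small point: your mid-proof claim that ``the global linear bound of Theorem~\ref{thm:linear_exp} and strong convexity keep $\iota_{k_{\text{sup}}}$ a fixed multiple of $\iota_0$'' is not the actual mechanism --- those results are used to bound $r_{k_{\text{sup}}}$, not $\iota_{k_{\text{sup}}}$. What keeps $\iota_{k_{\text{sup}}}/\iota_0 = a_g^{k_{\text{sup}}}$ bounded below is precisely the direct verification that $k_{\text{sup}}\le C\kappa^2$ for an \emph{absolute} constant $C$, which you correctly identify in your final paragraph as requiring the substitutions into \eqref{eq:ksup_exp_update}.
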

\begin{proof}
	From \eqref{eq:gradsamp_bnd_det}, choosing minimum number of samples $|X_k|$ at each iteration to satisfy the stochastic norm condition \cite{byrd2012sample}, we get
	\begin{align*}
		|X_k| = \left\lceil \tfrac{\sigma_{2,g}^2} {\iota_k} \right\rceil \leq \tfrac{\sigma_{2,g}^2} {\iota_k} + 1.
	\end{align*}
	The total number of gradient evaluations required to achieve an $\epsilon$-accurate solution is then given as, 
	\begin{align}
		\mathcal{W}_{g} &= \sum_{i=0}^{K_{\text{sup}}-1} |X_k| + \sum_{i=k_{\text{sup}}}^{K_{\epsilon}} |X_k| \nonumber \\ &=\tfrac{\sigma_{2,g}^2}{\iota_0}\sum_{i=0}^{K_{\text{sup}}-1} \tfrac{1}{a_g^i} + \tfrac{\sigma_{2,g}^2}{\iota_{k_{\text{sup}}}}\sum_{i=k_{\text{sup}}}^{K_{\epsilon}} \tfrac{1}{a_l^{i- k_{\text{sup}}}} + k_{\text{sup}} + 1 \nonumber \\
		&\leq \tfrac{\sigma_{2,g}^2}{\tfrac{1}{a_g} - 1}(\tfrac{1}{a_g})^{k_{\text{sup}}} + \tfrac{\sigma_{2,g}^2 a_l}{\iota_{k_{\text{sup}}}(1 - a_l)}(\tfrac{1}{a_l})^{K_{\epsilon} - k_{\text{sup}}} + k_{\text{sup}} + 1 \nonumber \\
		&\leq \underbrace{2\kappa^2 \sigma_{2,g}^2 (1 - \tfrac{1}{2\kappa^2})^{-k_{\text{sup}}}}_{\circled{1}} + \underbrace{\tfrac{\sigma_{2,g}^2 a_l}{\iota_{k_{\text{sup}}}(1 - a_l)}(\tfrac{1}{a_l})^{K_{\epsilon} - k_{\text{sup}}}}_{\circled{2}} + \underbrace{k_{\text{sup}} + 1}_{\circled{3}} \label{eq:proof_threeterm_1}
	\end{align}
	where we used $a_g = 1 - \tfrac{\mu \tilde{\mu}}{2L\tilde{L}} = 1 - \tfrac{1}{2\kappa^2}$. 
	We will now analyze the terms on the right hand side of \eqref{eq:proof_threeterm_1}. Each term requires the analysis of $k_\text{sup}$, so we proceed by analyzing that term. Using $f(w_0) - f(w^*) \leq \tfrac{L\iota_0}{\mu^2}$ and $\iota_0 \leq \tfrac{\mu^5}{324L\eta^2 M^2}$,  we get,
	\begin{equation}
		\label{eq:proof_comp_fin_c2}
		\tilde{C}_2 = \max\left\{f(w_0) - f(w^*), \tfrac{\tilde{L}\iota_0}{\mu\tilde{\mu}}\right\} =\tfrac{L\iota_0}{\mu^2} \leq \tfrac{\mu^3}{324\eta^2 M^2}.
	\end{equation}
	From \eqref{eq:ksup_exp_update}, we have that
	\begin{align}
		k_{\text{sup}} &= \bigg\lceil \max \bigg\{  \tfrac{9C_{p,s}\sqrt{\eta}}{\tilde{\mu}\sqrt{a_l}}, \tfrac{81\sigma^2_H\eta}{\tilde{\mu}^2|S_0|a_l}, 2\log_{1/\tilde{\rho}_2}\left(\tfrac{3q\sqrt{2\tilde{C}_2}}{\sqrt{\mu} \sqrt{a_l}}\right), \log_{1/a_g}\left(\tfrac{81q^2 \iota_{0}}{a_l^2\tilde{\mu}^2}\right)\bigg\}\bigg\rceil \nonumber \\
		&\leq \max \bigg\{  \tfrac{9C_{p,s}\sqrt{\eta}}{\tilde{\mu}\sqrt{a_l}}, \tfrac{81\sigma^2_H\eta}{\tilde{\mu}^2|S_0|a_l}, 2\log_{1/\tilde{\rho}_2}\left(\tfrac{3q\sqrt{2\tilde{C}_2}}{\sqrt{\mu} \sqrt{a_l}}\right), \log_{1/a_g}\left(\tfrac{81q^2 \iota_{0}}{a_l^2\tilde{\mu}^2}\right)\bigg\} + 1 \nonumber 
	\end{align}
	We will now analyze each term in the above bound. Consider,
	\begin{align}
		\tfrac{9C_{p,s}\sqrt{\eta}}{\tilde{\mu}\sqrt{a_l}} &\leq \tfrac{9M\sqrt{2\tilde{C_2}}\sqrt{\eta}}{\sqrt{\mu}\tilde{\mu}\sqrt{a_l}(1 - \sqrt{\tilde{\rho}_2})}\leq \tfrac{1}{\sqrt{\eta}(1 - \sqrt{\tilde{\rho}_2})} \leq \tfrac{2}{1 - \tilde{\rho}_2} = 4\kappa^2,
	\end{align}	
	where the first inequality is due to $C_{p,s} : = \frac{M\sqrt{2\tilde{C}_2}}{\sqrt{\mu}(1-\sqrt{\tilde{\rho}_2})}$ for strongly convex functions, the second inequality is due to \eqref{eq:proof_comp_fin_c2} and $\eta \geq 1$, the last inequality is due to the fact that $1 - \sqrt{x} \geq \tfrac{1 - x}{2}$ for any $x\in[0,1]$ and $\eta \geq 1$, and the equality is due to $\tilde{\rho}_2 = 1 - \tfrac{\mu\tilde{\mu}}{2L\tilde{L}} = 1 - \tfrac{1}{2\kappa^2}$.  
	
	Considering the second term and using $|S_0| =  \lceil \tfrac{81 \eta}{2}\rceil $ and $\sigma^2_H \leq L^2$, we get,
	\begin{align}
		\tfrac{81\sigma^2_H\eta}{\tilde{\mu}^2|S_0|a_l} \leq \tfrac{4\sigma^2_H}{\tilde{\mu}^2} \leq 4\kappa^2. 
	\end{align}	
	
	Using \eqref{eq:proof_comp_fin_c2}, $\log(1 - \tfrac{1}{2\kappa^2}) \approx -\tfrac{1}{2\kappa^2}$, and substituting $q = \frac{7M\eta}{2\tilde{\mu}}$ in the third term, we get, 
	\begin{align}
		2\log_{1/\tilde{\rho}_2}\left(\tfrac{3q\sqrt{2\tilde{C}_2}}{\sqrt{\mu} \sqrt{a_l}}\right) \leq -2\tfrac{\log(7/6)}{\log \left(\tilde{\rho_2}\right) } \approx 4\kappa^2.
	\end{align}
	Similarly, using $\log(1 - \tfrac{1}{2\kappa^2}) \approx -\tfrac{1}{2\kappa^2}$, $\iota_0 \leq \tfrac{\mu^5}{324L\eta^2 M^2}$, and substituting $q = \frac{7M\eta}{2\tilde{\mu}}$ in the fourth term, we get, 
	\begin{align}
		\log_{1/a_g}\left(\tfrac{81q^2 \iota_{0}}{a_l^2\tilde{\mu}^2}\right) = - \tfrac{\log \left(\tfrac{3969M^2\eta^2\iota_0}{\mu^4}\right)}{\log\left(1 - \tfrac{\mu \tilde{\mu}}{2L\tilde{L}}\right)}  \leq -\tfrac{\log(\tfrac{49}{4\kappa})}{\log\left(1 - \frac{\mu \tilde{\mu}}{2L\tilde{L}}\right)} \approx 5\kappa^2.
	\end{align}
    Therefore, combining all the above bounds, we get, $k_{\text{sup}} \approx 5\kappa^2 + 1$. 

    We are ready to analyze the terms in \eqref{eq:proof_threeterm_1}. For the first term, we have that
    \begin{align}\label{eq:proof_cor_fin_11}
        2\kappa^2 \sigma_{2,g}^2 \left(1 - \tfrac{1}{2\kappa^2}\right)^{-k_{\text{sup}}} = \mathcal{O}\left(\kappa^2\sigma_{2,g}^2\right),
    \end{align}
    where we used $(1 - 1/x)^{-x} \approx \exp$, for any $x > 0$ that is sufficiently large. 
    Consider, 
	\begin{align}
		\tfrac{r_{k_{\text{sup}}}^2}{\iota_{k_{\text{sup}}}} &\leq \max\left\{ \tfrac{(\E[\|w_{k_{\text{sup}}} - w^*\|])^2}{\iota_0 a_g^{k_{\text{sup}}}}, \tfrac{18}{\tilde{\mu}^2}\right\} \leq \max\left\{ \tfrac{2\tilde{C}_2\rho_2^{k_{\text{sup}}}}{\mu\iota_0 a_g^{k_{\text{sup}}}}, \tfrac{18}{\tilde{\mu}^2}\right\} \leq \max\left\{ \tfrac{2\tilde{C}_2}{\mu\iota_0}, \tfrac{18}{\tilde{\mu}^2}\right\} 
		\leq \max\left\{ \tfrac{2\kappa}{\mu^2}, \tfrac{18}{\tilde{\mu}^2}\right\} \label{eq:proof_comp_fin_rksup}
	\end{align}
	where the last inequality is due to \eqref{eq:proof_comp_fin_c2}. Now, consider the second term in \eqref{eq:proof_threeterm_1}, we get 
	\begin{align}
		\tfrac{\sigma_{2,g}^2 a_l}{\iota_{k_{\text{sup}}}(1 - a_l)}\left(\tfrac{1}{a_l}\right)^{K_{\epsilon} - k_{\text{sup}}} &= \tfrac{\sigma_{2,g}^2 a_l}{\iota_{k_{\text{sup}}}(1 - a_l)}\left(\tfrac{1}{a_l}\right)^{\left\lceil \log_{1/\sqrt{a_l}}\left(\tfrac{r_{k_{\text{sup}}}}{\sqrt{\epsilon}}\right) \right\rceil} 
		\leq \tfrac{\sigma_{2,g}^2r_{k_{\text{sup}}}^2}{\epsilon \iota_{k_{\text{sup}}}} + \tfrac{2\sigma_{2,g}^2}{\iota_{k_{\text{sup}}}} = \mathcal{O}\left(\tfrac{\kappa \sigma^2_{2,g}}{\mu^2 \epsilon}\right)\label{eq:proof_threeterm_1_2} 
	\end{align}
	where the first equality is due to \eqref{eq:kepsilob_stoc} and the last equality is due to \eqref{eq:proof_comp_fin_rksup}. Combining \eqref{eq:proof_cor_fin_11} and \eqref{eq:proof_threeterm_1_2} completes the proof. 
\end{proof}

\begin{remark}
We make the following remarks about this result and make a comparison with existing results in Table~\ref{table:complexity}. 
    \begin{itemize}
        \item We employ fixed number of stochastic Hessian samples at each iteration. Therefore, the Hessian complexity, i.e. number of Hessian computations required to get an $\epsilon$-accurate solution is same as the total number of iterations $\left(\mathcal{\tilde{O}}\left(\kappa^2 + \log\left(\tfrac{1}{\epsilon}\right)\right)\right)$.  
        \item The total number of stochastic gradients required improves upon stochastic gradient method in terms of the dependence on condition number ($\kappa$) even though the number of gradients evaluated per-iteration are increasing at each iteration. Furthermore,  we should note that our local results match with that of first-order adaptive gradient sampling methods even after employing inferior global convergence results associated with Newton's method compared to a gradient method.
        \item Although we only presented the gradient iteration complexity results here for the phase where the the starting iterate is sufficiently close to the optimal solution and $\epsilon$ is sufficiently small (see Corollaries~\ref{corr:iter_complexity_det} and~\ref{thm:grad_complexity}), we can also establish the results for the global phase where $\iota_0$ is chosen independent of the problem characteristics and $K_{\epsilon} = \tilde{\mathcal{O}}\left(\kappa^2 \log\left(\tfrac{1}{\epsilon}\right)\right)$. However such results are inferior to those of first-order adaptive gradient methods due to the artifact of global convergence analysis of Newton's method.  
    \end{itemize}
\end{remark}

\begin{table}[H]
\centering
{\renewcommand{\arraystretch}{1.5}
\begin{tabular}{|c|c|c|}
\hline
 Method         & Iteration & Gradient  \\ \hline
Stochastic Gradient \cite{bottou2007tradeoffs} & $\mathcal{O}\left(\tfrac{\kappa^2\sigma^2_{2,g}}{\mu^2\epsilon}\right)$ & $\mathcal{O}(\tfrac{\kappa^2\sigma^2_{2,g}}{\mu^2\epsilon})$ \\ \hline
Adaptive Gradient Sampling \cite{friedlander2012hybrid,byrd2012sample} & $\mathcal{O}(\kappa \log\left(\tfrac{1}{\epsilon}\right))$ & $\mathcal{O}\left(\tfrac{\kappa\sigma^2_{2,g}}{\mu^2\epsilon}\right)$  \\ \hline
This Paper (Corollary~\ref{corr:iter_complexity_det} \&~\ref{thm:grad_complexity}) & $\mathcal{\tilde{O}}\left(\kappa^2 + \log\left(\tfrac{1}{\epsilon}\right)\right)$ & $\mathcal{O}\left(\kappa^2\sigma^2_{2,g} + \tfrac{\kappa \sigma^2_{2,g}}{\mu^2\epsilon}\right)$ \\ \hline

\end{tabular}
}
\caption{Comparison of (local) iteration and gradient complexity results for strongly convex functions. Note: The complexity results in terms of expected function values ($\E[f(w_k) - f(w^*)] \leq \epsilon$) for stochastic gradient and adaptive gradient sampling summarized in \cite[Table 4.1]{byrd2012sample} have been converted to get the results in terms of $(\E[\|w_k - w^*\|])^2 \leq \epsilon$. }
\label{table:complexity}
\end{table}

\section{Practical Algorithms} \label{sec:practical}

In the previous sections we developed a convergence and complexity theory that establish the convergence benefits of Hessian-averaged Newton methods for the finite-sum and expectation problem settings.  However, hurdles remain to their deployment in very high-dimensional settings due to the costs of inverting Hessian matrices. In this section we discuss considerations relating to deploying Hessian-averaged Newton methods in practice. We begin by investigating a generic averaged Newton methods, which we refer to as fully-averaged Newton (FAN) for the purposes of our exposition. Due to the substantial costs of Hessian inversion this method is infeasible for even moderate problems. We propose a simple, diagonally averaged Newton method (Dan) for high-dimensional optimization problems such as those arising in machine learning (ML) settings. We investigate additional practical heuristics such as different weightings and norms that are used in the Hessian averaging, as well as practical gradient sampling strategies, which in turn lead to their own variant algorithms.

\subsection{Fully-Averaged Newton}
When the dimension of the optimization variable, $d$ is not too large, one may opt to construct a Newton-like algorithm using a weighted average of the full Hessian, which requires $d^2$ storage: 

\begin{equation} \label{eq:fan}
	\text{Fully-Averaged Newton (FAN):} \qquad p_k = \tilde{H}_k^{-1}g_k.
\end{equation}
Here $\tilde{H}_k$ is defined as in \eqref{eq:htilde_def}, or alternatively using the following formula:
\begin{equation}\label{eq:alternative_htilde}
	\tilde{H}_k = \sum_{i=0}^k\gamma_i|\nabla^2F_{S_i}(w_i)| + \mu I,
\end{equation}
where $\mu I$ can help improve the conditioning of the problem. This requires however costly eigenvalue or Cholesky decomposition at each iteration, thus incurring $\mathcal{O}(d^3)$ computation. Alternative methods to approximate inversion of the full Hessian include Krylov methods \cite[Chapter 5]{nocedal1999numerical}, however these methods do not simply extend to the averaging setting, since in this case one requires storage of the Hessian matrix in a given format, but not as a matrix-vector product callable as is necessary for Krylov methods.

\subsection{Overcoming high-dimensionality with Hessian-subspace products}

When the dimension $d$ is large, approximations of the Hessian that can be formed for less than $\mathcal{O}(d^2)$ operations and inverted for less than $\mathcal{O}(d^3)$ operations are necessary.
Randomized sketching is an easily extensible tool to construct efficient representations of matrices \cite{MartinssonTropp2020}. In this process one can construct a compressed representation of a matrix via its action on a matrix $V_r \in \mathbb{R}^{d\times r}$, where $r$ is a small number, that is often independent of $d$. In addition to diagonal approximations, other factorizations such as low rank \cite{HalkoMartinssonTropp2011} and hierarchical \cite{levitt2024linear,yesypenko2023randomized} approximations can be computed from the action of a given matrix on the $r$-dimensional subspace $V_r$.

This subspace action is easy to implement in modern ML workflows as it can be constructed from simple automatic differentiation tools around embarrassingly parallelizable linear algebra. For example, the Hessian subspace product can be computed with the same tools utilized to form the gradient; first by forming the gradient, then forming its transpose action on $V_r$, followed by taking the gradient (Jacobian) of this combined term:

\begin{equation}
	\text{Hessian subspace products:} \qquad \nabla^2 F_{S_k}(w_k) V_r = \nabla \left(\nabla F_{S_k}(w_k)^TV_r\right).
\end{equation}
E.g., forward-over reverse automatic differentiation \cite{baydin2018automatic} with vectorized GPU computing makes these Hessian approximations very computationally efficient. In our numerical tests, Hessian subspace products had approximately constant compute time until running out of GPU memory, see Figure \ref{fig:hessian_times_onGPU}. 

\begin{figure}[ht]
  \begin{center}
    \includegraphics[width=0.5\textwidth]{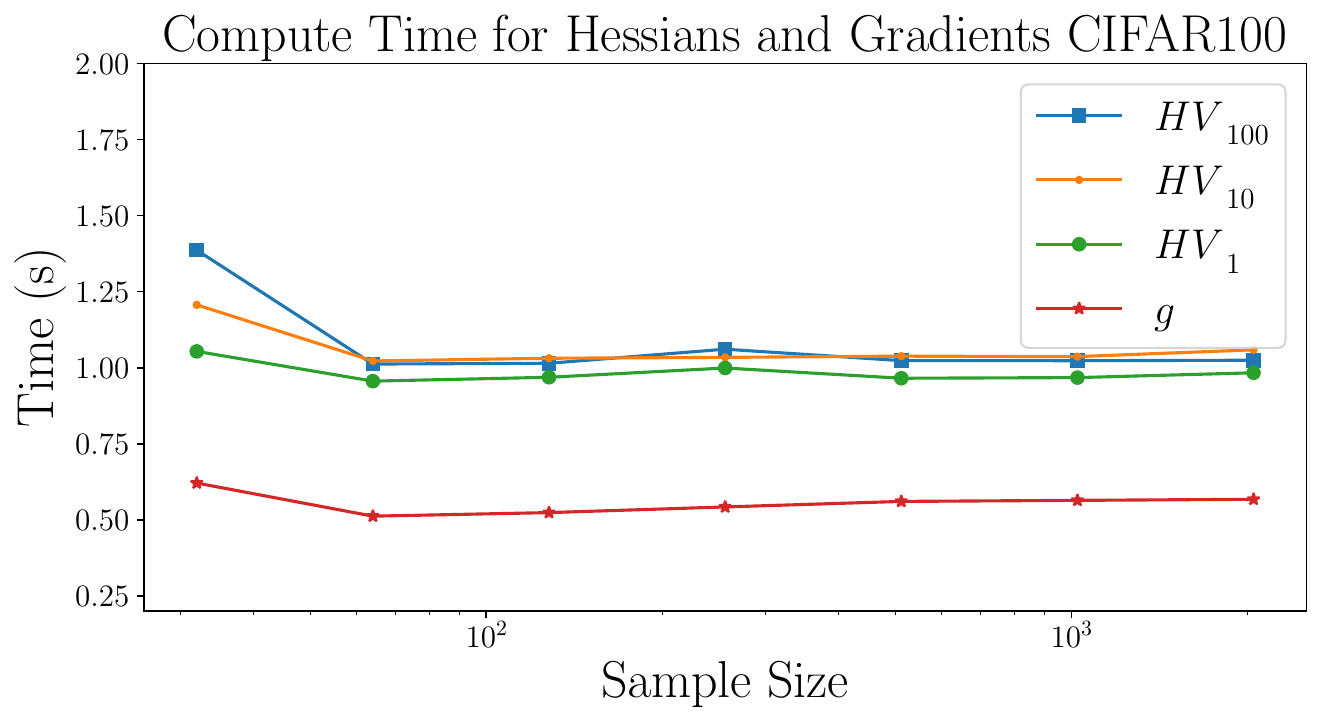}
  \end{center}
\caption{Efficiently implemented (vectorized) Hessian subspace products for varying ranks and sample sizes have approximately constant compute time until running out of GPU memory. These experiments are for a ResNet used in CIFAR100 classification, as shown in Section \ref{subsection:cifar_results}. The dimension of the neural network weights $w$ was $d =11,247,052$. This set of experiments was run on an NVIDIA L40S GPU which has 48GB of GPU RAM. }\label{fig:hessian_times_onGPU}
\end{figure}

\subsection{Diagonally-Averaged Newton }

In practical settings for stochastic optimization, such as deep learning, $d$ can be very large, and any algorithm requiring more than $\mathcal{O}(1)$ Hessian-vector products, and $\mathcal{O}(d)$ memory footprint at each iteration will be infeasible in modern compute settings, since they are typically memory bound \cite{gholami2024ai}. 

These constraints are achievable utilizing both diagonal or low rank Hessian approximations. As diagonal preconditioning of gradients (with and without momentum) dominates modern ML optimization methods, it is sensible to utilize diagonal Hessian approximations in Newton-like stochastic gradient updates that are targeted to these problems. The use of diagonal Hessian preconditioners was first introduced, to the best of our knowledge, by \cite{yao2021adahessian}, and was later used in \cite{liu2023sophia}. 

The Hessian diagonal can be computed matrix-free via randomized Hutchinson diagonal estimation 
\begin{align}\label{eq:hutchinson_diagonal_estimation}
	D_k = \text{diag}\left(\nabla^2F_{S_k}(w_k)\right)  = \mathbb{E}_{z \sim \pi_z} \left[ \text{diag}\left(\frac{z (\nabla^2F_{S_k}(w_k)z)^T}{z^Tz}\right)\right],
\end{align}
for a suitable choice of distribution $\pi_z$ \cite{dharangutte2023tight}. The diagonal matrix is trivially invertible in $d$ operations, overcoming the major computational hurdle for Newton-like methods. This approximation, and other matrix representations that are easily inverted and have $\mathcal{O}(d)$ memory footprint can be constructed via the use of randomized sketching. 

This naturally leads to the diagonally-averaged Newton method 
\begin{equation}\label{eq:dan}
	\text{Diagonally-Averaged Newton (Dan):} \qquad p_k = \big[\tilde{D}_k\big]^{-1}g_k,
\end{equation}
where in our implementation  $\tilde{D}_k = \sum_{i=1}^k \gamma_i |D_k|$, that is we approximate the diagonal of \eqref{eq:alternative_htilde} instead of \eqref{eq:htilde_def}. Diagonal approximations are useful for diagonally dominant Hessians, however these approximations may not be suitable for optimization problems with large off-diagonal components. We empirically observe that Dan and Adahessian perform well on difficult ML optimization problems.

\subsubsection{Differences with Adahessian}
Dan is similar to Adahessian \cite{yao2021adahessian} but differs in two key ways:
\begin{enumerate}
	\item Dan does not have any momentum in the gradient, so the relative performance of Dan to Adahessian helps isolate the effects of utilizing the averaged Hessian approximation in isolation from the effects of gradient momentum. 

	\item Dan utilizes a $\ell^1$ averaged approximation of the Hessian diagonal; this choice is motivated by our analysis which averages the Hessian and not its square. Adahessian and Sophia \cite{liu2023sophia} utilize an $\ell^2$ norm averaging, similar to what is used in Adam \cite{KingBa15}. We denote by $\ell^p_\gamma$ the averaging protocol:
 \begin{equation}
     \ell^p_\gamma \text{ averaging: }\qquad\sqrt[p]{\sum_{i=1}^k \gamma_i D_i^p}.
 \end{equation}
\end{enumerate}

Since Adahessian and Dan utilize different averagings, we thus propose an $\ell^2_\gamma$ modification of Dan, which we name Dan2:
\begin{equation}\label{eq:dan2}
    (\text{Dan2}): \qquad p_k = \left[\left(\sum_{i\leq k} \gamma_i D_i^2\right)^{\frac{1}{2}} \right]^{-1}g_k.
\end{equation}

\subsection{Additional algorithmic considerations}
In this section we consider additional adaptations of Dan and Dan2 that are common to other practical ML optimization methods.

\subsubsection{Infrequent Hessian computations} \label{subsection:infrequent_hessian_computations}

The per-iteration costs of the Hessian approximations can add significant additional costs relative to first-order methods. In traditional stochastic Newton methods, this burden can be lessened by (i) lower dimensional Hessian approximations (e.g., few samples for Hutchinson diagonal estimation), (ii) smaller Hessian sample size (i.e., $|S_k| < |X_k|$). In the context of Hessian averaging we can additionally lessen the burden by updating the Hessian approximation less frequently than the gradient, as is done in Adahessian. We utilize this in numerical experiments in Section \ref{subsection:cost_comparisons_cifar} where we maintain a fair cost-basis comparison between the first and second-order methods. 

\subsubsection{Non-uniform weightings}\label{subsection:weightings}

Adam, Adahessian and other popular ML optimizers utilize exponentially decaying sum averaging for their weightings, which is defined by the recurrence relationship 
\begin{equation} \label{eq:decay_weighting}
	\text{Decaying Weights:} \qquad \widehat{D}_{k+1} = \frac{\beta_2}{1 - \beta_2^k} \widehat{D}_{k} + \frac{(1 - \beta_2)}{1 - \beta_2^k}D_k,
\end{equation}
where $\beta_2\in (0,1)$ is a hyperparameter that controls the rate of decay in the averaging of past iterates, $\widehat{D}_k$ is the diagonal preconditioner being updated, and $D_k$ is the estimator of the diagonal at $w_k$. 
The benefits of this approach are that the effects of past iterates are de-emphasized, which is useful when the landscape is highly nonlinear, and the diagonal preconditioner's local information is changing rapidly. This weighting is widely used due to its ease of implementation and effectiveness.

Our local convergence analyses in Section \ref{section:det_analysis} and \ref{section:stoch_analysis} utilize uniform averaging to concentrate the Hessian statistical sampling errors. In order to prove superlinear convergence for this weighting deterministically, or in expectation, while still maintaining a fixed per-iteration Hessian sample size, other assumptions may be required. In \cite{na2023hessian} when the Hessian errors are assumed to be sub-Gaussian, superlinear local convergence rates are proven in probability for more general weightings such as \eqref{eq:decay_weighting}. In numerical experiments we demonstrate the performance of this weighting and the uniform weighting. 

\subsubsection{Gradient sample size selection}\label{subsection:stochastic_gradient_practical}

While our algorithmic framework allows for fixed per-iteration computational costs associated with the Hessian approximation, in order to achieve fast convergence one has to control the gradient statistical sampling error in relation to the \emph{true} gradient norm. This can be achieved either by geometrically increasing sample sizes (with sequence $\iota_k$), or the norm test (with sequence $\theta_k$), or a combination of both. The former (i.e., $\theta_k = 0$) is simple to implement in practice while the latter requires additional approximations as numerically verifying the bound $\mathbb{E}_k\left[\|\nabla F_{X_k}(w_k) - \nabla f(w_k)\|^2\right] \leq \theta_k^2 \|\nabla f(w_k)\|^2$ may be prohibitive due to its sampling costs. To address this challenge, one can employ an approximate norm test in practice, as was done in previous works \cite{byrd2012sample,bollapragada2018adaptive,bollapragada2018progressive}:
\begin{equation}
	\text{Approximate norm test:} \qquad \frac{1}{|S_k|}\sum_{i\in S_k} \left\|\nabla F_i(w_k) - \nabla F_{S_k}(w_k)\right\|^2  \leq \theta_k^2 \left\| \nabla F_{S_k}(w_k)\right\|^2 + \iota_k.
\end{equation}
This test approximates the expectations via Monte Carlo, and will give a rough indicator of the convergence of sample gradient to the true gradient.

In large scale subsampled optimization problems (e.g., deep learning), methods with fixed gradient sample sizes show empirically good performance.
Therefore, in numerical experiments we also consider variants of our algorithms that do not increase the gradient sample sizes, but instead use step size schedulers. This allows for direct comparison with state-of-the-art ML optimization routines. In particular Dan and Dan2 perform comparably to and often better than state-of-the-art methods in the ML optimization problems that we investigate.

\section{Numerical Experiments} \label{section:numerics}
In this section, we experiment with Hessian-averaged subsampled Newton methods on a variety of problems, separated into two classes with different metrics: (1) subsampled convex problems where we look at $f(w) - f(w^*)$, and (2) subsampled nonconvex (deep learning) problems where we are interested in the performance of the trained models on unseen data. First we investigate algorithmic trade-offs for stochastic quadratic minimization and logistic regression problems where computational costs allow us to consider full Hessian inversions (FAN). We then consider large-scale neural network problems: CIFAR[10,100] classification with ResNets and neural operator training, where the weight dimensions $d_W$ make full Hessian inversion prohibitive. In the large-scale context we investigate the performance of the Dan relative to Adam, Adahessian and SGD. 

Overall, the Hessian-averaged Newton methods were run in regimes (e.g., choice sample sizes and step sizes) that led to immediate instabilities for subsampled Newton methods not utilizing Hessian averaging. This point demonstrates the key algorithmic motivation for this type of method: to alleviate the instabilities of subsampled Newton methods at manageable costs. Additionally in the large-scale machine learning (ML) problems, Dan and Dan2 performed comparably to and often better than Adam \cite{KingBa15}, overall better than Adahessian \cite{yao2021adahessian}, and substantially better than SGD. This result suggests that the effects of Hessian averaging may be more beneficial than gradient momentum in some relevant practical settings. 

\subsection{Problem setups}
We give a high level overview of the experiments below. The approximate solution for a given method is given by $w^\dagger$, which we do not denote $w^*$ since it is not necessarily the true minimizer.

\subsubsection{Subsampled quadratic}
\begin{subequations}
First we investigate a stochastic quadratic minimization problem:
\begin{align}
    \text{(Subsampled quadratic):} \qquad &f(w) =  \mathbb{E}_{P_A,P_b}\left[\|P_AAw - P_bb\|^2 \right],\\
    \text{Evaluation criteria:} \qquad &f(w^\dagger)
\end{align}
where $P_A$, and $P_b$ are linear operators that randomly zero out entries in $A$ and $b$ respectively. That is at each iteration we randomly sample index sets for entries in $A$ and $b$ that are set to zero. This problem is a simple analogue to empirical risk minimization over a dataset.

\end{subequations}

\subsubsection{Logistic regression}

Second we consider two logistic regression for binary classification with $\ell^2$ regularization. Let $x_i$ be an input vector and $y_i\in \{-1,1\}$ be the corresponding output label 
\begin{subequations}
\begin{align}
\text{(Logistic Regression):} \qquad &f(w) = \frac{1}{n}\sum_{i=1}^n\log (1 + \exp(-y_i (w^Tx_i)) + \frac{1}{2n}{\|w\|^2}, \\
\text{Evaluation criteria:} \qquad &f(w^\dagger)
\end{align}
\end{subequations}

We note that this objective function is strongly convex \cite{berahas2020investigation}. We compare the performance of various methods on both the \texttt{ijcnn1} and \texttt{mushroom} datasets \cite{chang2011libsvm}.

\subsubsection{CIFAR[10,100] Classification}

Third, we consider the CIFAR10 and CIFAR 100 \cite{krizhevsky2009learning} classification using ResNet architectures \cite{he2016deep}, and a softmax cross-entropy loss function. Let $x_i$ be an input image, and $y_i$ be the corresponding vector label. Let $\phi(x_i,w)$ be the ResNet prediction in the pre-image of the softmax, then the problem is formulated as 
\begin{subequations}
    \begin{align}
        \text{(Softmax cross entropy):}\qquad &f(w) = -\frac{1}{n}\sum_{i=1}^n y_i^T \log\left( p(x_i,w)\right)  \text{  where  }  p_j(x_i,w) = \frac{e^{\phi_j(x_i,w)}}{\sum\limits_{i\in\text{Classes}}e^{\phi_j(x_i,w)}} \\
        \text{Evaluation criteria:} \qquad &\text{Correct classification percentage on unseen data}.
    \end{align}
\end{subequations}

\subsubsection{Parametric PDE Regression}

Finally, we consider regression problems for the approximation of parametric PDE input-output maps via neural networks (e.g., neural operators). We consider a coefficient-to-observable nonlinear reaction diffusion problem in a physical domain $\Omega \subset \mathbb{R}^2$. Here the input parameter $x \in \mathcal{X}=L^2(\Omega)$ is a heterogeneous spatially varying random field, with measure $\pi$. The PDE state $u\in \mathcal{U} = u_0 + H^1_0(\Omega)$, an affine space account for boundary conditions $u_0$. The outputs $y \in \mathbb{R}^{d_Y}$ represent finite observations of $u$ on a line in the domain.

\begin{subequations}
\begin{align}
    &\text{PDE:} \qquad\qquad\qquad\qquad -\nabla \cdot (e^x \nabla u) + cu^3  = f \qquad \text{ in } \Omega = (0,1)^2\\
    &\text{Boundary conditions:} \qquad u = 1 \text{ on } \Gamma_\text{top}, \qquad  u = 0 \text{ on } \Gamma_\text{bottom}, \qquad \nabla u \cdot \mathbf{n} &= 0 \text{ on } \Gamma_\text{sides}\\
    &\text{Parametric map:} \qquad\qquad\qquad x \mapsto y(x) = Bu(x),
\end{align}
\end{subequations}
where $\mathbf{n}$ is the unit normal to the domain $\Omega$, and $B:\mathcal{U} \rightarrow \mathbb{R}^{d_Y}$ is a linear restriction operator to points on line in the domain. The infinite-dimensional input functions are encoded to a finite input basis $\{\psi_i \in \mathcal{X}\}_{i=1}^r$ that is chosen to correspond to directions of the input space that the map is most sensitive to as in \cite{OLearyRoseberryVillaChenEtAl2022,OLearyRoseberryDuChaudhuriEtAl2021}. The corresponding coefficients $x_r\in \mathbb{R}^r$ are $(x_r)_j = (x,\psi_j)_\mathcal{X}$, where $(\cdot,\cdot)_\mathcal{X}$ is the inner product for $\mathcal{X}$. This choice of encoding makes the regression task naturally finite-dimensional, while still maintaining discretization invariance via the use of the infinite-dimensionally consistent basis vectors $\psi_i$.

The regression task is thus to learn an approximation of the map $x_r \mapsto y$ by a neural network $\phi_w$. We consider two formulations, first a parametric least squares formulation for learning the reduced coefficient map, which we refer to as the $L^2_\pi$ formulation. Second we consider a least squares formulation for learning the reduced coefficient map and its first Fr\'echet derivatives (e.g., derivative-informed neural operator (DINO) \cite{OLearyRoseberryChenVillaEtAl2022}), which we refer to as the $H^1_\pi$ formulation. 

\begin{subequations}
\label{eq:no_training_formulations}
\begin{align}
&(L^2_\pi \text{ training):} \qquad \min_w f(w) = \frac{1}{n}\sum_{i=1}^n\|y(x_r) - \phi_w(x_r)\|_2^2  \label{eq:l2_no_emp_risk}\\
&\qquad\text{Evaluation criteria:} \qquad \frac{\|y(x_r) - \phi_w(x_r)\|_2}{\|y(x_r)\|_2} \quad \text{ on unseen data } \\
&(H^1_\pi \text{ training):} \qquad \min_w f(w) = \frac{1}{n}\sum_{i=1}^n\left(\|y(x_r) - \phi_w(x_r)\|_2^2 +\|\nabla_{x_r}y(x_r)- \nabla_{x_r} \phi_w(x_r)\|_{F}^2\right)\label{eq:dino_emp_risk}\\
&\qquad\text{Evaluation criteria:} \qquad \frac{\|y(x_r) - \phi_w(x_r)\|_2}{\|y(x_r)\|_2},\frac{\|\nabla_{x_r}y(x_r) - \nabla_{x_r}\phi_w(x_r)\|_F}{\|\nabla_{x_r}y(x_r)\|_F}  \quad \text{ on unseen data }.
\end{align}
\end{subequations}

The $H^1_\pi$ formulation is particularly relevant when the surrogate is to be deployed in a setting where accurate derivatives are required, such as for the solution of optimization problems \cite{luo2023efficient}, or efficient Bayesian inference in function spaces \cite{cao2024efficient}. The $H^1_\pi$ training problem can thus be considered optimizing to optimize. We note the recent work \cite{zampini2024petscml} has also investigated the performance of second-order optimization methods for training parametric PDE surrogates.

\subsubsection{Additional details}

In order to have a one-to-one comparison of methods, our implementation of Adahessian differs slightly from the method proposed in \cite{yao2021adahessian}. First we consider versions of Adahessian that update the Hessian approximation both at each iteration, and also infrequently (the latter is proposed in \cite{yao2021adahessian}). Additionally we do not use averaging of convolution layers in the diagonal approximation, as we propose generic Hessian-vector products for Dan and Dan2 that are agnostic to the structure of what is being differentiated. Further details on the implementation details are given in Appendix \ref{appendix:numerics}, accompanied by an extended discussion of the parametric PDE problem in Appendix \ref{appendix:dino}.

\subsection{Subsampled Quadratic Minimization}\label{subsection:stochastic_quadratic}

\begin{figure}[h]
\center
\includegraphics[width = 0.7\textwidth]{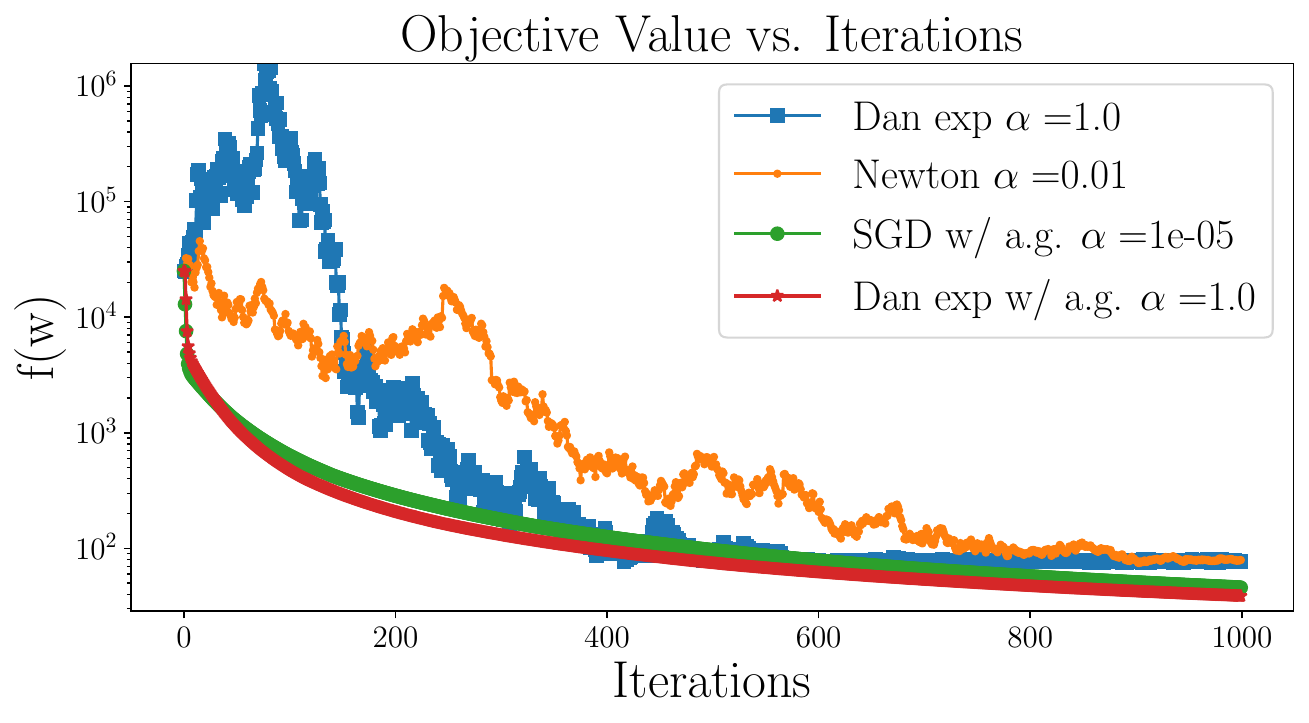}
\caption{The performance of the best 4 methods for the stochastic quadratic minimization problem. Dan with adaptive gradient sampling performed the best, both in terms of fast convergence and $f(w^\dagger)$. SGD with adaptive gradient sampling also performed well, but required significant limitations on the step size. The methods without adaptive gradient sampling plateaued with larger $f(w^\dagger)$. The averaging of the Hessian can eventually overcome stability issues after enough iterations progress to reduce the Hessian variance as seen by Dan exp $\alpha = 1.0$. When not using Hessian averaging, Newton methods required smaller steps to maintain stability. }

\label{fig:subquadresults}
\end{figure}

For this problem we compare FAN, Dan, Newton, and SGD all with and without adaptive gradient sampling (denoted ``a.g.''), which is implemented via the norm test. We do a sweep over fixed step sizes $\alpha \in[1.0,10^{-1},10^{-2},10^{-3},10^{-4},10^{-5}]$. For the averaged methods we consider both uniform and exponentially decaying weightings. Initially $P_A,P_b$ are both taken to randomly zero out $50\%$ of the entries in $A,b$ respectively. All optimizers start from the same initial guess $w_0 \sim \mathcal{N}(0,I_d{_W})$, where the stochastic gradient is approximately $13,500\%$ noisier than the true gradient. Each optimizer runs for $1,000$ iterations. 
We note that this does not constitute a fair comparison in terms of computational work, since we compare iterations. However, the methods that did not utilize adaptive gradient sampling were not making progress after $1,000$ iterations, so the gap in the objective function is indicative of superior performance for the adaptively sampled methods, and the trend would continue if run longer.
We report general trends over the $72$ different numerical runs. First all of the SGD runs without adaptive gradient sampling diverged except for $\alpha = 10^{-5}$; with adaptive gradient sampling all diverged except $\alpha \in \{10^{-4},10^{-5}\}$. 


The performance of the ten best methods is reported below in Table \ref{table:subquadresults}, and the (rolling average) of the stochastic function costs are shown in Figure \ref{fig:subquadresults}. The general trend for this problem is that the Dan methods outperformed all of the other Newton methods consistently. SGD required very small step sizes for stability, but performed well when taking many very small step sizes. The best performing methods utilized adaptive gradient sampling. Methods that did not utilize adaptive gradient sampling were able to reduce the objective function rapidly, but got stuck in neighborhoods where the objective function was not able to be reduced beyond the level of the noise. Perhaps surprising is the superior empirical performance of Dan to all other Newton methods. Dan is an economical Hessian approximation, where one might expect the omitted off-diagonal information would lead to deteriorated performance. For this problem, however, this is not the case. This bolsters the argument for this numerical approximation in high-dimensional settings where true Hessian inverse approximation is infeasible.

\begin{table}[H] 
\center
\begin{tabular}{|l|l|l|l|l|l|l|l|}
\hline
  & Method           & $\alpha$  & $f(w^\dagger)$ $\searrow$ &    & Method  & $\alpha$  & $f(w^\dagger)$ $\searrow$ \\ \hline
1 & Dan exp. w/ a.g. & 1.0       & \textbf{38.67}     & 6  & Newton  & $10^{-3}$ & 76.35     \\ \hline
2 & SGD w/ a.g.      & $10^{-5}$ & 45.69    & 7  & FAN exp & $10^{-1}$ & 76.98     \\ \hline
3 & Dan exp          & $10^{-1}$ & 73.77     & 8  & Dan exp & $10^{-1}$ & 77.01     \\ \hline
4 & Newton           & $10^{-2}$       & 74.69     & 9  & Dan exp & $10^{-2}$     & 77.11    \\ \hline
5 & SGD              & $10^{-5}$ & 75.11     & 10 & Dan uni  & $1.0$ & 77.12    \\ \hline
\end{tabular}
\caption{The ten best performing methods for the subsampled quadratic minimization, and the objective function value at their approximated optimum $w^\dagger$. The two best performing methods utilized adaptive gradient sampling, but interestingly the adaptive gradient sampling didn't lead to optimal results outside of these methods. In general Dan outperforms all of the other second-order methods, in particular the exponentially decaying weights generally outperformed the uniform averaging. }
\label{table:subquadresults}
\end{table}

For the remainder of the numerical results, we compare methods either for a fixed number of epochs or for fixed computational costs. In the case of a fixed total number of epochs, the adaptive sampling methods require fewer total computations than the same method without adaptive sampling, as these methods require fewer iterations and therefore fewer Hessian computations compared to the same method without adaptive sampling.

\subsection{Logistic Regression}

For a second numerical result we compare Dan, FAN, Newton, and SGD on two logistic regression problems. The \texttt{ijcnn1} dataset has $22$ input features and $35,000$ training samples. The \texttt{mushroom} dataset has $112$ features and $5,500$ training data. We compare fully subsampled methods (gradient and Hessian sample sizes fixed at 32), with adaptive gradient sampling methods. We run a total of 100 epochs for each method. For adaptive gradient sampling, we implement geometric growth in sample size instead of the norm test: we run 20 epochs with gradient sample sizes $|X_k| = \{32,128,512,2048,5500\}, \{32,128,512,2048,8192\}$ for \texttt{ijcnn1} and \texttt{mushroom} respectively. 
We note that in this case, since we consider epochs and not iterations, the adaptively sampled methods will require a lower total amount of computational work than the methods that do not utilize adaptive gradient sampling.
For the \text{ijcnn1} results the algorithms performed roughly the same, we show results for the step size of $\alpha = 0.1$, which are representative of other step sizes. For the \text{mushroom} dataset we show the performance of all methods as a function of the step size. The results are show below in Table \ref{table:logistic_results}.

\begin{table}[H]
\center
\begin{tabular}{|l|ll|lll|}
\hline
                        & \multicolumn{2}{c|}{\texttt{ijcnn1} $f(w^\dagger) \searrow$}                                 & \multicolumn{3}{c|}{\texttt{mushroom} $f(w^\dagger) \searrow$}                                   \\ \hline
                        & \multicolumn{1}{l|}{}                  & a.g.           & \multicolumn{1}{l|}{$\alpha$} & \multicolumn{1}{l|}{} & a.g. \\ \hline
\multirow{3}{*}{Dan}    & \multicolumn{1}{l|}{\multirow{3}{*}{}} & \multirow{3}{*}{} & \multicolumn{1}{l|}{0.1}      & \multicolumn{1}{l|}{$($\xmark$,0.123)$} & $($\xmark$,0.072)$        \\ \cline{4-6} 
                        & \multicolumn{1}{l|}{$0.398$}           &     $\mathbf{0.349}$       & \multicolumn{1}{l|}{0.01}     & \multicolumn{1}{l|}{$($\xmark$,0.123)$} & $($\xmark$,0.078)$        \\ \cline{4-6} 
                        & \multicolumn{1}{l|}{}                  &                   & \multicolumn{1}{l|}{0.001}    & \multicolumn{1}{l|}{$(0.136,0.137)$} &     $(0.105,0.102)$    \\ \hline
\multirow{3}{*}{FAN}    & \multicolumn{1}{l|}{\multirow{3}{*}{}} & \multirow{3}{*}{} & \multicolumn{1}{l|}{0.1}      & \multicolumn{1}{l|}{$0.121$} &  $\mathbf{0.071}$       \\ \cline{4-6} 
                        & \multicolumn{1}{l|}{$0.401$}           &     $\mathbf{0.349}$       & \multicolumn{1}{l|}{0.01}     & \multicolumn{1}{l|}{$0.123$} &  $0.079$       \\ \cline{4-6} 
                        & \multicolumn{1}{l|}{}                  &                   & \multicolumn{1}{l|}{0.001}    & \multicolumn{1}{l|}{$0.142$} &  $0.111$       \\ \hline
\multirow{3}{*}{Newton} & \multicolumn{1}{l|}{\multirow{3}{*}{}} & \multirow{3}{*}{} & \multicolumn{1}{l|}{0.1}      & \multicolumn{1}{l|}{$0.132$} &  $0.072$       \\ \cline{4-6} 
                        & \multicolumn{1}{l|}{$0.405$}           &     $\mathbf{0.349}$       & \multicolumn{1}{l|}{0.01}     & \multicolumn{1}{l|}{$0.132$} &  $0.072$ \\ \cline{4-6} 
                        & \multicolumn{1}{l|}{}                  &                   & \multicolumn{1}{l|}{0.001}    & \multicolumn{1}{l|}{$0.150$} &  $0.103$ \\ \hline
\multirow{3}{*}{SGD}    & \multicolumn{1}{l|}{\multirow{3}{*}{}} & \multirow{3}{*}{} & \multicolumn{1}{l|}{0.1}      & \multicolumn{1}{l|}{$0.123$} &  $0.092$ \\ \cline{4-6} 
                        & \multicolumn{1}{l|}{$0.389$}           &     $\mathbf{0.349}$       & \multicolumn{1}{l|}{0.01}     & \multicolumn{1}{l|}{$0.145$} &  $0.125$ \\ \cline{4-6} 
                        & \multicolumn{1}{l|}{}                  &                   & \multicolumn{1}{l|}{0.001}    & \multicolumn{1}{l|}{$0.198$} &  $0.201$ \\ \hline
\end{tabular}
\caption{Results for the two logistic regression problems are reported as averages over five different initial guesses. The methods performance improved uniformly with adaptive gradient sampling. For the \texttt{ijcnn1} dataset the four methods performed comparably. The symbol \xmark\enskip  denotes that these runs diverged. For the \texttt{mushroom} problem, Dan is reported for two different choices of the diagonal estimator rank, $ r \in \{1, 40\}$; for larger step sizes Dan diverged with $r = 1$, but otherwise performed comparably to FAN; this demonstrates that using more Hessian-vector products in the diagonal estimation can lead to better performance. FAN performed better than subsampled Newton without adaptive gradient sampling, and worse with it. Both Dan and FAN uniformly outperformed SGD. } \label{table:logistic_results}
\end{table}

\subsection{CIFAR[10,100] classification with ResNets}\label{subsection:cifar_results}

Next we demonstrate the performance of Dan in deep learning classication problems. We use the CIFAR10 and CIFAR 100 \cite{krizhevsky2009learning} classification using ResNet architectures \cite{he2016deep}, and a softmax cross-entropy loss function. For both problems, we use the standard 50,000 training data, and report generalization accuracy over the remaining test data. For all results we utilize a single learning rate scheduler that quarters the learning rate every 25\% of total epochs. For both problems we investigate the performance of the methods in two regimes:

\begin{enumerate}
    \item Comparison over 100 epochs. An extensive fixed data-access comparison of all methods with varying algorithmic hyperparameters. We note that this is not strictly a fair comparison, but it allows us to gain insight into the general performance of the different methods before running costlier, long trainings.

    \item Comparison of longer runs, on a fixed computational cost basis. We do a limited number of much longer training runs to compare how Dan, Dan2 and Adahessian compare to Adam.
\end{enumerate}

We note that other adaptive optimizers (e.g., RMSProp \cite{hinton2012neural} and Adagrad \cite{duchi2011adaptive}) performed substantially worse than Adam, so we omitted them. As these numerical results are very expensive to run, we rerun over two seeds and report the average results. The problems are quite computationally expensive and were run on NVIDIA A100 / L40S GPUs. 

\subsubsection{Comparison of methods over 100 epochs}

We begin with our comparison of methods over 100 epochs. For this set of results we investigate how the performance of the three Hessian-averaging based methods (Adahessian, Dan and Dan2) is effected by the number of Hessian vector products used in the Hutchinson diagonal estimation \eqref{eq:hutchinson_diagonal_estimation}. For a fair baseline of comparison, we additionally implement a version of Adahessian where a new Hessian approximation is computed at every iteration, as was discussed in Section \ref{subsection:infrequent_hessian_computations}. We explore the effects of limiting the frequency of Hessian computations in the next set of numerical results. We investigate the performance of Dan, Dan2 and SGD with and without adaptive gradient sampling. As with the last example, we implement a geometric increase of the gradient sample sizes. We first run $75$ epochs with $|X_k| = 32$, followed by five epochs each with the following sequence $X_k \in \{64,128,256,512,1024\}$. As a consequence the Dan w/ a.g. is notably substantially less expensive than regular Dan due to the lower iteration complexity in the final 25 epochs. Our results are shown below in Table \ref{table:cifar_results}.

\begin{table}[H]
\center
\begin{tabular}{|l|l|c|c|c|}
\hline
Method                       & $\alpha_0$ & Hessian rank  & CIFAR10 Accuracy $\nearrow$            & CIFAR100 Accuracy $\nearrow$ \\ \hline
\multirow{3}{*}{Adahessian}  & $0.05$    & $(1,5,10)$ & $(92.85,92.70,92.55)$            & $(71.97,71.67,72.15)$ \\ \cline{2-5} 
                             & $0.01$    & $(1,5,10)$ & $(92.47,93.20,93.21)$            & $(70.65,71.93,72.11)$ \\ \cline{2-5} 
                             & $0.001$   & $(1,5,10)$ & $(83.35,86.19,87.52)$            & $(57.49,60.10,62.82)$ \\ \hline
\multirow{3}{*}{Adam}        & $0.05$    & -          &  \xmark                          &   \xmark              \\ \cline{2-5} 
                             & $0.01$    & -          & $91.69$                          & $65.02$               \\ \cline{2-5} 
                             & $0.001$   & -          & $93.08$                          & $72.10$               \\ \hline
\multirow{3}{*}{Dan}         & $0.05$    & $(1,5,10)$ & $(91.80,92.31,90.74)$            & $(70.62,71.00,71.01)$ \\ \cline{2-5} 
                             & $0.01$    & $(1,5,10)$ & $(92.29,93.30,93.32)$            & $(71.18,72.10,71.57)^\dagger$ \\ \cline{2-5} 
                             & $0.001$   & $(1,5,10)$ & $(85.44,86.87,89.04)$            & $(60.33,61.78,64.98)$     \\ \hline
\multirow{3}{*}{Dan w/ a.g.} & $0.05$    & $(1,5,10)$ & $(92.99,92.16,92.56)$            & $(\mathbf{72.48},70.96,70.52)$ \\ \cline{2-5} 
                             & $0.01$    & $(1,5,10)$ & $(92.62,93.16,93.36)$            & $(71.11,72.03,72.22)$ \\ \cline{2-5} 
                             & $0.001$   & $(1,5,10)$ & $(82.41,86.78,89.13)$            & $(56.51,62.06,64.92)$     \\ \hline
\multirow{3}{*}{Dan2}        & $0.05$    & $(1,5,10)$ & $(92.81,92.80,92.40)$            & $(71.34,71.70,71.53)$             \\ \cline{2-5} 
                             & $0.01$    & $(1,5,10)$ & $(92.47,\textbf{93.37},93.14)$   & $(70.89,72.22,71.89)$             \\ \cline{2-5} 
                             & $0.001$   & $(1,5,10)$ & $(80.98,85.54,87.78)$                        & $(54.74,60.55,62.40)$             \\ \hline
\multirow{3}{*}{Dan2 w/ a.g.}& $0.05$    & $(1,5,10)$ & $(93.03,92.66,93.02)$            & $(72.09,72.25,72.33)$ \\ \cline{2-5} 
                             & $0.01$    & $(1,5,10)$ & $(91.90,92.99,92.96)$            & $(70.38,72.13,71.98)$ \\ \cline{2-5} 
                             & $0.001$   & $(1,5,10)$ & $(81.27,86.02,87.63)$            & $(54.74,60.55,62.40)$     \\ \hline
\multirow{3}{*}{SGD}         & $0.05$    & -          &   \xmark                         &   \xmark              \\ \cline{2-5} 
                             & $0.01$    & -          & $88.09$                          & $64.84$               \\ \cline{2-5} 
                             & $0.001$   & -          & $81.75$                          & $54.36$               \\ \hline
\multirow{3}{*}{SGD w/ a.g.} & $0.05$    & -          &   \xmark                         &   \xmark              \\ \cline{2-5} 
                             & $0.01$    & -          & $88.58$                          & $66.63$               \\ \cline{2-5} 
                             & $0.001$   & -          & $82.00$                          & $53.77$               \\ \hline
\end{tabular}
\caption{Comparison of optimization methods for CIFAR[10,100] ResNet training over 100 epochs. In both cases a Hessian-averaging based method produced the highest average generalization accuracy. Notable takeaways are that Dan with adaptive gradient led to the best result for the CIFAR100, showing the promise and competitiveness of our proposed framework in complicated deep learning tasks. Additionally, the Hessian averaging methods were able to take larger steps than the subsampled gradient methods. Adam was reliable as is usually the case, while SGD overfit substantially and produced poor  generalization accuracys. $\dagger$: See Appendix \ref{appendix:numerics_cifar}. }
\label{table:cifar_results}
\end{table}

Our proposed methods Dan and Dan2 produced the best overall results in terms of generalization accuracy. Adam and Adahessian were both competitive, while SGD was not competitive. The adaptive gradient sampling was able to improve the performance, for example Dan w/ a.g. for CIFAR100 notably produced significantly better results than the other methods. This demonstrates not only is Hessian averaging without momentum a good algorithmic building block, but additionally can lead to better results at lower costs, due to the geometric reduction in Hessian computational costs in the later epochs. Of note for other algorithmic considerations: in general the additional Hessian vector products in the diagonal estimations didn't clearly benefit performance, other than when taking small step sizes, in this case there is a clear trend that increasing the rank of the diagonal approximation led to better performance, however these methods performed substantially worth overall to the larger step sizes.




\subsubsection{Comparison of methods with respect to computational cost} \label{subsection:cost_comparisons_cifar}

In this section we investigate the performance of Adahessian, Dan and Dan2 in relation to Adam in terms of a computational cost, taking the additional Hessian computations into account. For this section we introduce a new algorithmic hyperparameter: the Hessian computation frequency, as discussed in \ref{subsection:infrequent_hessian_computations}. The modifications of Adahessian, Dan and Dan2 to compute Hessians infrequently significantly reduces the overall cost, yielding per-iteration costs closer to Adam's per-iteration cost. We only start utilizing infrequent Hessian computations after the first epoch, as empirically the Hessian statistical errors didn't concentrate fast enough to keep the iterates from diverging; a reminder of the inherent instability of subsampled Newton iterates in this regime. After the first epoch we only update the Hessian diagonal approximations every 10 iterations. We investigate a study of Hessian ranks of 1 and 5 for these methods. We introduce an epoch equivalent compute metric, as a means of putting the second-order and first-order methods on an equivalent cost basis:
\begin{equation}
    \text{(Epoch Equivalent Compute (E.E.C.))} =  \left(\underbrace{1}_\text{gradients} + \underbrace{\frac{2 \times\text{rank}}{\text{h.f.}}}_\text{Hessians} \right)\times\text{epochs} +\underbrace{ 2\times \text{rank}}_\text{first epoch Hessian}.
\end{equation}
We count one Hessian vector product as twice the cost of a gradient, when in reality it is empirically cheaper when utilizing vectorization (see Figure \ref{fig:hessian_times_onGPU}), these costs comparison are therefore conservative and make the Hessian-based methods seem more expensive than they may be given an efficient GPU implementation. An additional means of computational economy is reducing the Hessian sample size relative to the gradient sample size. We do not investigate this; as with the previous set of numerical experiments we use a sample size of 32 for both the gradients and the Hessians. In these experiments we do not investigate adaptive gradient sampling. Adam with $\alpha_0 = 10^{-3}$ is our reference method, we run it for 500, 1000 and 2000 epochs. We investigate Adahessian, Dan and Dan2 in comparison to Adam on a similar cost basis. These results are shown below in Table \ref{table:cifar_long_results}. The best individual runs over all hyperparameters and seeds for Adam, Adahessian, Dan and Dan2 are shown below in Figure \ref{fig:cifar_long}.

\begin{table}[H]
\center
\begin{tabular}{|l|l|c|c|l|c|c|ll}
\cline{1-7}
Method                      & epochs                & Hessian rank  & E.E.C.   & $\alpha_0$ & CIFAR10 & CIFAR100 &  &  \\ \cline{1-7}
\multirow{3}{*}{Adam}       &       2000            &       -       & 2000   & $0.001$   & $94.00$ & $73.06$  &  &  \\ \cline{2-7}
                            &       1000            &       -       & 1000   & $0.001$   & $93.78$ & $72.39$  &  &  \\ \cline{2-7}
                            &       500             &       -       & 500    & $0.001$   & $93.58$ & $72.39$  &  &  \\ \cline{1-7}
\multirow{4}{*}{Adahessian} & \multirow{2}{*}{1000} &  $(1,5)$      &$(1202,2010)$ & $0.05$   & (\xmark ,$93.44)$ & $(72.96,73.03)$  &  &  \\ \cline{3-7}
                            &                       &  $(1,5)$      &$(1202,2010)$ & $0.01$   & $(93.16,93.97)$ & $(69.57,72.50)$  &  &  \\ \cline{2-7}
                            & \multirow{2}{*}{500}  &  $(1,5)$      &$(602,1010)$ & $0.05$   & $(93.65,93.32)$ & $(72.30,69.42)$  &  &  \\ \cline{3-7}
                            &                       &  $(1,5)$      & $(602,1010)$ & $0.01$   & $(93.07,93.60)$ & $(70.72,71.36)$  &  &  \\ \cline{1-7}
\multirow{4}{*}{Dan}        & \multirow{2}{*}{1000} &  $(1,5)$      & $(1202,2010)$ & $0.05$   & $(91.98.90.83)$ & $(\mathbf{73.74},73.01)$  &  &  \\ \cline{3-7}
                            &                       &  $(1,5)$      & $(1202,2010)$ & $0.01$   & $(93.56,\mathbf{94.10})$  & $(71.62,72.73)$ &  \\ \cline{2-7}
                            & \multirow{2}{*}{500}  &  $(1,5)$      & $(602,1010)$ & $0.05$   & $(92.31,91.32)$ & $(73.30,73.05)$  &  &  \\ \cline{3-7}
                            &                       &  $(1,5)$      & $(602,1010)$ & $0.01$   & $(93.03,93.70)$ & $(71.30,72.30)$  &  &  \\ \cline{1-7}
\multirow{4}{*}{Dan2}       & \multirow{2}{*}{1000} &  $(1,5)$      & $(1202,2010)$ & $0.05$   & $(93.72,93.71)$ & $(73.30,72.99)$  &  &  \\ \cline{3-7}
                            &                       &  $(1,5)$      & $(1202,2010)$ & $0.01$   & $(93.07,93.85)$ & $(71.82,72.91)$  &  &  \\ \cline{2-7}
                            & \multirow{2}{*}{500}  &  $(1,5)$      & $(602,1010)$ & $0.05$   & $(93.80,93.08)$ & $(72.61,73.05)$  &  &  \\ \cline{3-7}
                            &                       &  $(1,5)$      & $(602,1010)$ & $0.01$   & $(92.80,93.43)$ & $(70.94,72.29)$  &  &  \\ \cline{1-7}
\end{tabular}
\caption{Comparison of Adam, Adahessian, Dan and Dan2 for equivalent computational cost bases. For the CIFAR10 dataset the performance of Adam was comparable to the best performing second-order methods, with Dan performing the best, but with some variance in performance. For CIFAR100 the performance of the second-order methods was drastically better than Adam. This is consistent with the results in the preceding section. }
\label{table:cifar_long_results}
\end{table}

\begin{figure}[H]
     \centering
     \begin{subfigure}[b]{0.5\textwidth}
         \centering
         \includegraphics[width=\textwidth]{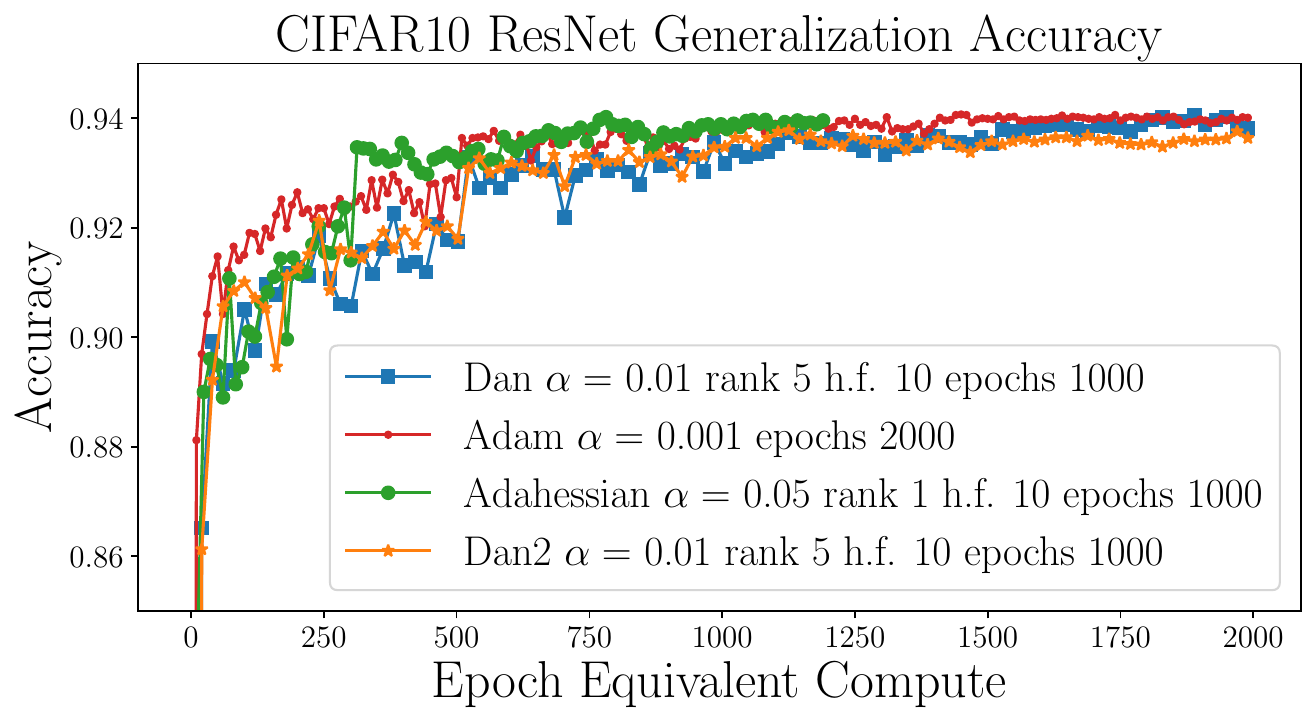}
     \end{subfigure}%
     \begin{subfigure}[b]{0.5\textwidth}
         \centering
         \includegraphics[width=\textwidth]{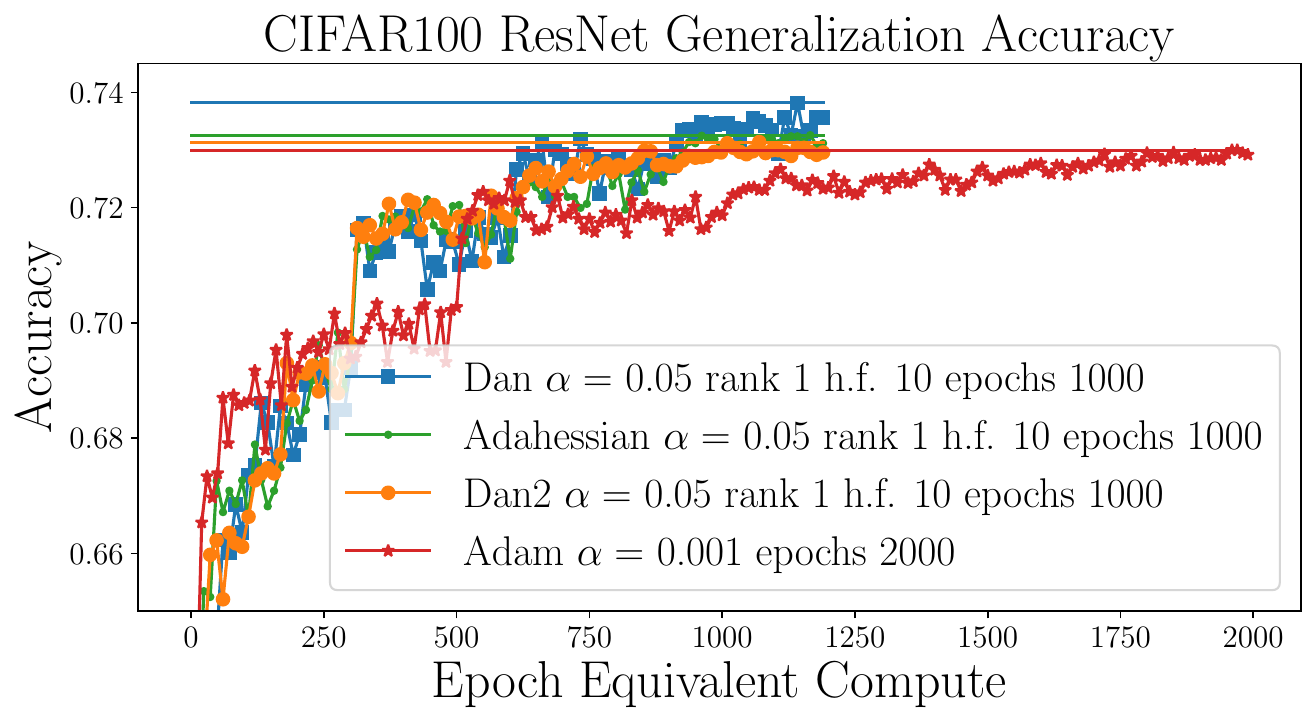}
     \end{subfigure}
        \caption{Comparison of the best runs for Adahessian, Adam, Dan and Dan2 in regards to epoch equivalent work. The best Adahessian happened to have lower Hessian ranks in the approximation than the best Dan and Dan2 for the CIFAR10, while for CIFAR100 the rank 1 methods shown all performed the best of individual runs. The CIFAR10 performance is quite similar for all four methods, while for CIFAR100 the Hessian-averaged methods substantially out-performed Adam; particularly Dan gave the best generalization accuracy (73.82\% for the run shown). }
        \label{fig:cifar_long}
\end{figure}

These results overall show the power of Hessian-averaged methods for difficult deep learning problems. While it was previously known that Adahessian performs well for these problems, these results show that the momentum in the gradient utilized by Adahessian may not be necessary to achieve good performance. Our algorithms (Dan, Dan2) do not use gradient momentum, and are competitive in challenging deep learning classification problems. 

\subsection{Parametric PDE Regression} \label{section:parametric_pde_numerics}

For a last numerical result we investigate the performance of Hessian averaged methods on some parametric PDE regression training problems; first where we learn a parametric map from input-output function data, and second where we include parametric (Fr\'{e}chet) derivatives as additional training data. These problems differ from the previous deep learning examples because (1) they are regression tasks, (2) the neural network representation is more compact, (3) there are substantially fewer sample data and (4) the derivative-learning task includes very rich information per sample. For additional details on neural operators see Appendix \ref{appendix:dino}

We train with each method for $200$ epochs on $4500$ samples of the PDE map (and its derivatives in the $H^1_\pi$ case), and use $500$ samples to compute generalization (relative) errors. We employ a one-step learning rate scheduler that reduces the step length by $10\times$ at the 150th epoch. We use a five layer feedforward network with gelu activation \cite{hendrycks2016gaussian}; the corresponding weight dimension is $d_W = 742,050$. Since the weights are lower dimensional, we numerically experiment with using larger rank Hessian approximations $k=(1,20,40)$, and demarcate the corresponding errors for these methods in tuples as such. The results are shown below in Table \ref{table:no_results}. In general for these problems, Adam performed reliably well in both the $L^2_\pi$ and $H^1_\pi$ training problems. The Hessian averaged methods performed worse than Adam on the $L^2_\pi$ problem. For the $H^1_\pi$ problem Dan and Dan2 performed about the same as Adam, with Adahessian performing slightly worse. SGD performed substantially worse than all other methods on this problem. We believe the superior performance of the Hessian averaged methods on the $H^1_\pi$ problem can be explain by the richer training data (e.g., the derivative training data) supplied for this problem; indeed the divergence experienced by Dan and Dan2 in the $L^2_\pi$ training is mitigated in the $H^1_\pi$ problem. Perhaps the additional data per iteration reduced statistical sampling errors that may have caused early iteration divergence for Dan and Dan2 in the $L^2_\pi$ training problem.

\begin{table}[]
\begin{tabular}{|l|l|l|ll|}
\hline
\multirow{2}{*}{Method}     & \multirow{2}{*}{$\alpha_0$} & $L^2_\pi$ training & \multicolumn{2}{l|}{$H^1_\pi$ training}                                               \\ \cline{3-5} 
                            &                             & ($y_r$ rel error $\searrow$ )      & \multicolumn{1}{l|}{($y_r$ rel error $\searrow$)}                    & ($\nabla_{x_r}y_r$ rel error  $\searrow$)       \\ \hline
\multirow{3}{*}{Adahessian} & $0.01$                      & $(0.366, 0.366, 0.362)$          & \multicolumn{1}{l|}{$(0.012, 0.010, $\xmark$)$}   & $(0.260, 0.258, $\xmark$)$    \\ \cline{2-5} 
                            & $0.005$                     & $(0.102, 0.097, 0.102)$         & \multicolumn{1}{l|}{$(0.010, 0.010, 0.011)$} & $(0.258, 0.258, 0.259)$ \\ \cline{2-5} 
                            & $0.001$                     & $(0.121, 0.12, 0.122)$           & \multicolumn{1}{l|}{$(0.015, 0.013, 0.012)$}  & $(0.264, 0.262, 0.260)$  \\ \hline
\multirow{3}{*}{Adam}       & $0.01$                      & $0.361$                            & \multicolumn{1}{l|}{$0.028$}                    & $0.274$                    \\ \cline{2-5} 
                            & $0.005$                     & $0.078$                            & \multicolumn{1}{l|}{$0.023$}                    & $0.264$                    \\ \cline{2-5} 
                            & $0.001$                     & $\mathbf{0.054}$                   & \multicolumn{1}{l|}{$0.008$}                    & $\mathbf{0.255}$                    \\ \hline
\multirow{3}{*}{Dan}        & $0.01$                      & $($\xmark,\xmark,\xmark$)$      & \multicolumn{1}{l|}{$(0.01, 0.008, $\xmark$)$}    & $(0.257, 0.256, $\xmark$)$   \\ \cline{2-5} 
                            & $0.005$                     & $(0.09, 0.084,$\xmark$)$         & \multicolumn{1}{l|}{$(0.009, 0.009, \mathbf{0.007})$} & $(0.257, 0.256, 0.256)$ \\ \cline{2-5} 
                            & $0.001$                     & $(0.12, 0.118, 0.114)$           & \multicolumn{1}{l|}{$(0.014, 0.013, 0.012)$} & $(0.263, 0.261, 0.260)$  \\ \hline
\multirow{3}{*}{Dan w/ a.g.}        & $0.01$                      & $($\xmark,\xmark,\xmark$)$      & \multicolumn{1}{l|}{$(0.011,0.009,\ddagger)$}    & $(0.258,0.257,\ddagger)$   \\ \cline{2-5} 
                            & $0.005$                     & $(0.108,0.088,0.078)$         & \multicolumn{1}{l|}{$(0.013,0.009,\ddagger)$} & $(0.261,0.256,\ddagger)$ \\ \cline{2-5} 
                            & $0.001$                     & $(0.145,0.134,0.126)$           & \multicolumn{1}{l|}{$(0.025,0.014,\ddagger)$} & $(0.273,0.263,\ddagger)$  \\ \hline
\multirow{3}{*}{Dan2}       & $0.01$                      & $($\xmark,\xmark,\xmark,$)$      & \multicolumn{1}{l|}{$(0.009, 0.009, 0.009)$} & $(0.257, 0.256, 0.256)$ \\ \cline{2-5} 
                            & $0.005$                     & $(0.092, 0.088, 0.084)$          & \multicolumn{1}{l|}{$(0.01, 0.009, 0.008)$}  & $(0.257, 0.257, 0.256)$ \\ \cline{2-5} 
                            & $0.001$                     & $(0.12, 0.119, 0.118)$          & \multicolumn{1}{l|}{$(0.015, 0.014, 0.012)$} & $(0.264, 0.262, 0.260)$  \\ \hline
\multirow{3}{*}{Dan2 w/ a.g.}        & $0.01$                      & $($\xmark,\xmark,\xmark$)$      & \multicolumn{1}{l|}{$(0.01,0.009,\ddagger)$}    & $(0.259,0.256,\ddagger)$   \\ \cline{2-5} 
                            & $0.005$                     & $(0.134,0.088,0.082)$         & \multicolumn{1}{l|}{$(0.014,0.009,\ddagger)$} & $(0.262,0.257,\ddagger)$ \\ \cline{2-5} 
                            & $0.001$                     & $(0.144,0.137,0.131)$           & \multicolumn{1}{l|}{$(0.027,0.013,\ddagger)$} & $(0.284,0.263,\ddagger)$  \\ \hline
\multirow{3}{*}{SGD}        & $0.01$                      & $0.283$                            & \multicolumn{1}{l|}{$0.046$}                    & $0.313$                    \\ \cline{2-5} 
                            & $0.005$                     & $0.154$                            & \multicolumn{1}{l|}{$0.042$}                    & $0.310$                    \\ \cline{2-5} 
                            & $0.001$                     & $0.204$                            & \multicolumn{1}{l|}{$0.056$}                    & $0.355$                    \\ \hline
\multirow{3}{*}{SGD w/ a.g.}        & $0.01$                      & $0.157$                            & \multicolumn{1}{l|}{$0.057$}                    & $0.057$                    \\ \cline{2-5} 
                            & $0.005$                     & $0.206$                            & \multicolumn{1}{l|}{$0.073$}                    & $0.403$                    \\ \cline{2-5} 
                            & $0.001$                     & $0.297$                            & \multicolumn{1}{l|}{$0.118$}                    & $0.534$                    \\ \hline
\end{tabular}
\caption{Results of neural operator training for the reaction diffusion PDE problem. Hessian averaged methods use $(1,20,40)$ vectors for diagonal estimation.  For the $L^2_\pi$ training problem, Adam reliably performed the best; while Dan, Dan2 ran into some stability issues for $\alpha_0 = 0.01$. When Dan and Dan2 did not diverge they performed better than Adahessian which performed better than SGD. For the $H^1_\pi$ training problem, the Hessian-averaged Newton methods performed much better; for these problems, Dan, Dan2 and Adam all performed somewhat comparably, with Adahessian performing slightly worse and SGD performing substantially worse. The $H^1_\pi$ training problem has much richer information per datum, perhaps leading to more well-informed subsampled Hessian approximations, leading to better performance. $\ddagger$: these runs failed to complete in the final iterations due to out of memory error, this could be overcome with a more efficient implementation, such an implementation is however outside of the scope of this work.  }\label{table:no_results}
\end{table}

\subsection{Summary of findings}

We summarize our findings as follows

\begin{itemize}
    \item Hessian averaging can overcome the instabilities of subsampled Newton methods and still produce good Hessian-like operators for generating optimization iterates.

    \item Dan and Dan2 (as with Adahessian) are competitive with Adam in per-iteration computational work, particularly when reducing the frequency of Hessian computations. All three were emprirically superior to SGD in the experiments that we conducted. Additionally, efficiently implemented Hessian approximations using GPU vectorized computations leads runtime performance that is independent of the subspace rank $r$ before GPU memory is exhausted.

    \item Dan and Dan2 had competitive and often better performance than state-of-the-art methods Adam and Adahessian in our experiments. Using adaptive gradient sampling can simultaneously improve performance (e.g., generalization accuracy) while also reducing total computation as the Hessian computations per-epoch are reduced as the gradient sample size is increased. 
    
\end{itemize}

\section{Conclusions}\label{sec:conclusion}

In this work we have advanced Hessian-averaged Newton methods with adaptive gradient sampling as a compelling class of fully-inexact 
methods with significant advantages both theoretically and practically. Our global and local convergence theory demonstrates that these methods are capable of fast local convergence for fixed per-iteration Hessian computations, when utilizing 
generalized norm test 
to determine gradient sample sizes. When utilizing deterministic sampling strategies without replacement, we developed a local superlinear convergence rate, that improved the best existing rate from $\mathcal{O}\left(\frac{1}{\sqrt{k}}\right)$ to $\mathcal{O}\left( \frac{1}{k} \right)$, while simultaneously relaxing assumptions of gradient exactness and maintaining similar global to local transition phase iteration complexity. We additionally extended our analysis to the stochastic  gradient sampling setting, establishing a full convergence theory in expectation, matching the $\mathcal{O}\left( \frac{1}{k} \right)$ rate of \cite{na2023hessian,jiang2024stochastic}, albeit using additional assumptions. Furthermore, we establish local gradient complexity results matching those of adaptive first-order gradient methods and improving upon stochastic gradient methods. 

From a practical standpoint, we advanced Hessian-averaging as a variance reduction strategy that reduced stochasticity-driven instabilities of second-order methods. From a computational cost perspective, we emphasized how matrix-free Hessian approximations can be efficiently computed in modern computing frameworks for $\mathcal{O}(1)$ per-iteration Hessian-vector products and computational time, only requiring $\mathcal{O}(d)$ memory. We introduce the efficient diagonally-averaged Newton methods (Dan and Dan2) as practical extensions of the algorithmic framework that we investigate in this work. In numerical experiments, we demonstrated that these methods are not only computationally competitive with first-order methods, but consistently produce competitive and often superior generalization accuracies in complex deep learning tasks.

\section{Acknowledgements}
This work was partially supported by the National Science Foundation under award DMS-2324643. 
The authors would like to thank Albert Berahas and Shagun Gupta for feedback on some of the writing. 
The authors would like to thank Omar Ghattas and Umberto Villa for access to computing resources.

\bibliographystyle{siam}
\bibliography{tom}

\section{Appendices}

\subsection{Deterministic Sampling Bounds}\label{appendix:analysis}
Given Assumption \ref{assum:bnd_var}, we have the following bound used in Lemma \ref{lem:samplesizes},

\begin{equation}
    \|g_k - \nabla f(w_k)\|^2 \leq 4 \left(\frac{N- |X_k|}{N} \right) (\beta_{1,g}\|\nabla f(w_k)\|^2 + \beta_{2,g}).
\end{equation}
The bound is stated in the beginning of Section 3.1 in \cite{friedlander2012hybrid}, we restate it here for completeness of our presentation:

\begin{align} \label{eq:friedlander_schmidt_gbound}
    \|g_k - \nabla f(w_k)\|^2 &= \left\|\left( \frac{N - |X_k|}{N|X_k|} \right)\sum_{i\in X_k}\nabla F_i(w_k) - \frac{1}{N}\sum_{i\in [N]\setminus X_k} \nabla F_i(w_k)\right\|^2 \nonumber\\ 
    &\leq \left[\left( \frac{N - |X_k|}{N|X_k|} \right)\left\|\sum_{i\in X_k}\nabla F_i(w_k)\right\| + \frac{1}{N} \left\|\sum_{i\in [N]\setminus X_k} \nabla F_i(w_k)\right\|\right]^2 \nonumber \\
    &\leq \left[\left( \frac{N - |X_k|}{N|X_k|} \right)\sum_{i\in X_k}\|\nabla F_i(w_k)\| + \frac{1}{N} \sum_{i\in [N]\setminus X_k}\| \nabla F_i(w_k)\|\right]^2 \nonumber \\
    &\leq  4 \left( \frac{N - |X_k|}{N} \right)(\beta_{1,g}\|\nabla f(w_k)\|^2 + \beta_{2,g}),
\end{align}
where $[N] := \{1,2,\cdots,N\}$.

Similarly, given  Assumption \ref{assum:hessian_var}, we have the following bound used in Lemma \ref{lem:sampling error},

\begin{equation}
    \|H_k - \nabla^2 f(w_k)\|^2 \leq 4 \left(\frac{N- |S_k|}{N} \right) (\beta_{1,H}\|\nabla f(w_k)\|^2 + \beta_{2,H}).
\end{equation}
The bound is stated in the beginning of Section 3.1 in \cite{friedlander2012hybrid}, we restate it here for completeness of our presentation:

\begin{align} \label{eq:friedlander_schmidt_Hbound}
    \|H_k - \nabla^2 f(w_k)\|^2 &= \left\|\left( \frac{N - |S_k|}{N|S_k|} \right)\sum_{i\in S_k}\nabla^2 F_i(w_k) - \frac{1}{N}\sum_{i\in [N]\setminus S_k} \nabla^2 F_i(w_k)\right\|^2 \nonumber\\ 
    &\leq \left[\left( \frac{N - |S_k|}{N|S_k|} \right)\left\|\sum_{i\in S_k}\nabla^2 F_i(w_k)\right\| + \frac{1}{N} \left\|\sum_{i\in [N]\setminus S_k} \nabla F_i(w_k)\right\|\right]^2 \nonumber \\
    &\leq \left[\left( \frac{N - |S_k|}{N|S_k|} \right)\sum_{i\in S_k}\|\nabla^2 F_i(w_k)\| + \frac{1}{N} \sum_{i\in [N]\setminus S_k}\| \nabla^2 F_i(w_k)\|\right]^2 \nonumber \\
    &\leq  4 \left( \frac{N - |S_k|}{N} \right)(\beta_{1,H}\|\nabla^2 f(w_k)\|^2 + \beta_{2,H}).
\end{align}

\subsection{Numerical Results}\label{appendix:numerics}

In this appendix we overview additional details of the numerical results. Most implementation detail questions can be answered by reviewing the code in the accompanying repository \url{github.com/tomoleary/hessianaveraging} \cite{hessianaveraging}. The code was implemented in \texttt{jax} \cite{jax2018github}, and the numerical results were run on servers with NVIDIA A100 and L40S GPU. Access to large memory GPU may be required to run some of the results in the manuscript. 

\subsubsection{Stochastic Quadratic Minimization}

In section \ref{subsection:stochastic_quadratic} we consider a subsampled quadratic minimization problem:
\begin{equation}
    \text{Subsampled quadratic:} \qquad \min_w f(w) =  \mathbb{E}_{P_A,P_b}\left[\|P_AAw - P_bb\|^2 \right],
\end{equation}
where $P_A$, and $P_b$ randomly zero out a certain number of entries in $A$ and $b$ respectively; this problem is a simple analogue to empirical risk minimization over a dataset. 

In order to investigate adaptive gradient sampling, we use different $P_A,P_b$ for the Hessian and gradient calculations, and in order to satisfy the norm test we reduce the number of zero entries in $P_A,P_b$ in order to satisfy the norm test. The true $A$ matrix is taken to be a positive definite matrix with spectrum $\lambda_i = 10^{-4} + (0.1i)^\frac{3}{2}$, with $d = 100$. In this case the Hessian condition number $\kappa(A^TA) \approx 10^6$, which gives a restrictive Lipschitz condition for gradient descent. When employing the norm test, we take $\theta_k = 0.5$ to be constant, so we are limited to the (fast) linear local convergence regime of our theory.

\subsubsection{CIFAR[10,100] classification with ResNets} \label{appendix:numerics_cifar}
In section \ref{subsection:cifar_results} we investigate image classification with CIFAR[10,100] datasets. We utilize a ResNet architecture based on \cite{jruseckas}, similar but not identical to those utilized in \cite{he2016deep}. Our results are able to achieve similar accuracies to typical ResNet architectures, but do not reference established benchmarks. All training runs for a given seed are run from the same initial guess. We use learning rate schedulers that reduce the learning rate by a factor of four every 25\% of epochs in order to obtain more practical performance. The architectures used for CIFAR10 and CIFAR100 are nearly identical, and only differ in the final layers, which map to $\mathbb{R}^{10},\mathbb{R}^{100}$, respectively. The weight dimensions are correspondingly $d = 11,200,882$ and $d = 11,247,052$. The details of the network architecture are much easier to define in code, and are taken from \cite{jruseckas}, so we refer the interested reader to the repository \cite{hessianaveraging} for the implementation, and encourage them to run the code. Access to GPUs with large RAM will be necessary, all results we used were run on NVIDIA A100 (40 and 80 GB) and L40S (48 GB). For all cases except one we report averages over \texttt{jax} seeds 0 and 1. 

\textbf{($\mathbf{\dagger}$)}: Note from Table \ref{table:cifar_results}: in the case of the CIFAR100 Dan with $\alpha = 0.01$ all methods suffered from early iteration divergence using seed 0. Averaged over seeds 0 and 1 the corresponding accuracies of this method goes from the reported $(71.18,72.10,71.57)$ down to $(48.68,62.52,64.03)$. This result is an anomaly due to unlucky initial guess and/or sampling order, which was resolved by running a different seed, however we document this issue for full transparency and reproducibility. 

\subsubsection{Parametric PDE Learning Numerical Details}

Note that the loss function in \eqref{eq:dino_emp_risk} requires one derivative of the neural network, and the Hessian-vector products used in all optimization problem require two more. For this reason with consider $C^3$ continuous activation functions, and thus uze the Gaussian error linear unit, \texttt{gelu} activation function. The networks are generic feedforward multi-layer perceptrons that have inputs of $200$, followed by five laters with dimension $400$, which then gets reduced to the output, which has dimension $50$.

\subsection{A Note on (Derivative-Informed) Parametric PDE Learning}\label{appendix:dino}

We give a brief synopsis of parametric PDE learning, which is of great interest to the authors, in order to give additional context to the numerical results in section \ref{section:parametric_pde_numerics}. Learning parametric PDE maps via neural network representations has become a research topic of great interest in recent years. In the typical setup there is a parameter function $x \in \mathcal{X}$, which is mapped out to an output $y \in \mathcal{Y}$ implicitly, through the solution of an expensive-to-evaluate PDE model; this map is then $x\mapsto y(x)$. The parametric PDE learning problem is typically motivated through computationally expensive tasks such as Bayesian inverse problems, optimization problems under uncertainty, optimal design, optimal experimental design, rare event estimation, all of which have very large computational costs through iteration and sampling complexities. The goal of the parametric PDE learning problem is to construct a surrogate for the parametric PDE maps showing up in the aforementioned tasks that can be substituted for direct forward simulation within these algorithms, and specifically to do so at a lower end-to-end cost, including accounting for the costs of sampling the PDE map to obtain training data. 

The spaces $\mathcal{X},\mathcal{Y}$ are generally separable Banach spaces, but for the remainder of this presentation we will assume them to be separable Hilbert spaces. The inputs $x \in \mathcal{X}$ are equipped with an input distribution $\pi$. We seek to construct and train a neural network approximation $y_w(x) \approx y(x)$. In this setting we consider $\mathcal{X}$ to represent an infinite-dimensional space, while $\mathcal{Y}$ is either an infinite-dimensional space (e.g., to represent the PDE state) or a finite-dimensional vector-valued function on the state (as was the case in the numerical results in section \ref{section:parametric_pde_numerics}). The so-called neural operator formulation is to formulate both the approximation and the training problem in a function space setting (following the so-called ``optimize-then-discretize'' approach) \cite{kovachki2023neural}\footnote{We note that the term neural operator is typically reserved for circumstances that both $x$ and $y$ represent infinite-dimensional functions, however the general framework is useful in cases where one or both are functions, as it may lead to discretization-dimension independent representations \cite{huang2024operator}.}. A typical formulation is in the parametric Bochner space $L^2_\pi = L^2(\mathcal{X},\pi;\mathcal{Y})$, i.e., to formulate the following optimization problem
\begin{equation} \label{eq:inf_dim_l2no}
    \min_w \left(\|y- y_w\|_{L^2_\pi}^2 =  \mathbb{E}_{\pi}\left[\|y - y_w\|_\mathcal{Y}^2 \right] \right).
\end{equation}
By first formulating the neural network training problem in the function space setting one can derive neural network architectures that respect the continuum limit of the PDE map, and lead to efficient statistical learning formulations. Popular examples of this general approach include PCANet, Fourier Neural Operator (FNO) and DeepONet \cite{kovachki2023neural,HesthavenUbbiali18,LiKovachkiAzizzadenesheliEtAl2020a,LuJinKarniadakis2019}. These architectural representations utilize appropriate basis representations (e.g., proper orthogonal decomposition (POD), Fourier basis etc.) that allow for efficient finite dimensional approximations that have approximation properties independent of the discretization dimension of the problem. In the numerical results we considered we utilize an architecture that restricts the input function $x$ to the dominant eigenfunctions of the expected sensitivity operator. For example when the inputs are distributed with Gaussian measure, $x \in \mathcal{N}(\overline{x},\mathcal{C}_x)$, we compute the dominant eigenfunctions $\psi_i$ of the following generalized eigenvalue problem for the eigenpairs $\lambda_i,\psi_i$ with $\lambda_i \geq \lambda_j$ for $i<j$:

\begin{equation}
    \mathbb{E}_\pi [D_x y ^* D_x y] \psi_i = \lambda_i\mathcal{C}_x^{-1} \psi_i,
\end{equation}
where $\cdot^*$ denotes the adjoint of $D_x y$. The architectures are referred to as derivative-informed projected neural networks DIPNets \cite{OLearyRoseberryVillaChenEtAl2022,OLearyRoseberryDuChaudhuriEtAl2021}, and are motivated by the existence of input-reduced approximations $y_r$ of the map with bounds satisfying
\begin{equation}
    \|y - y_r \circ \Psi_r \Psi_r^*\|_{L^2_\pi}^2 \leq \sum_{i\geq r}\lambda_i.
\end{equation}

The form of $y_r$ is a conditional expectation ridge function that marginalizes out the orthogonal complement to the subspace spanned matrix of eigenfunctions $\Psi_r = [\psi_1,\dots,\psi_r]$, with $r\in \mathbb{N}$. The bound is derived using the Poincar\'e inequality (in this case for Gaussian measures) \cite{bogachev1998gaussian,zahm2020gradient}. In our numerical results the output dimension was reduced via pointwise evaluation of the PDE state at finite points in the domain (as is relevant to inverse problems), however other dimension strategies such as POD could have also been employed to similar effect. The empirical risk minimization problem associated with \eqref{eq:inf_dim_l2no} in this specific architecture leads to the efficient finite-dimensional reduced basis coefficient learning problem in \eqref{eq:l2_no_emp_risk}.

Accurately trained neural operators have been deployed to solve complex inference and uncertainty propagation tasks that would have been out of reach when using a traditional forward simulation \cite{pathak2022fourcastnet}. However when they are deployed in the context of an optimization problem, the $L^2_\pi$ formulation is insufficient. Suppose we have an optimization problem of the form:
\begin{equation} \label{eq:generic_pde_constrained_opt}
    \min_x f(y(x),x),
\end{equation}
which is solved via gradient-based methods. Problems of the form \eqref{eq:generic_pde_constrained_opt} include traditional PDE-constrained optimization problems (additionally including uncertainty when $f$ is a risk measure over additional parameters in the PDE system), but also captures other tasks such as variational inference, e.g., evidence-based lower bound optimization (ELBO) \cite{ruthotto2021introduction}. We will show in the following proposition that training only on the function values and not also the derivative (e.g., the $L^p_\pi$ parametric Bochner space formulations) may be insufficient to ensure accurate gradients when substituting $y_w$ for $y$. In the following we use $D_x$ to denote the (total) Fr\'echet derivative of the objective function $f$ with respect to the input function $x$, while we denote partial derivatives by $\partial_x$.

\begin{proposition}{Error bound for parametric PDE gradients (similar to Proposition 3.1 in \cite{luo2023efficient}).}

Supposed the function $f$ has a Lipschitz partial (Fr\'echet) derivatives w.r.t both $x$ and $y$ with constant $L_f$, then we have the following bound:

\begin{align}
    \|D_x f(y(x),x) - D_x f(y_w(x),x)\| \leq \nonumber \\
    L_f(1 + \|D_x y(x)\|)\underbrace{\|y(x) - y_w(x)\|}_\text{function error} + \|\partial_y f(y_w(x),x)\|\underbrace{\|D_x y(x) - D_x y_w(x)\|}_\text{derivative error}.
\end{align}

\end{proposition}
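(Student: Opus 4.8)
The plan is to prove the bound by a chain-rule decomposition of the total derivative followed by triangle inequality, exactly as one would for the standard sensitivity-equation argument in PDE-constrained optimization. First I would write the total Fr\'echet derivative of the reduced objective $x \mapsto f(y(x),x)$ using the chain rule,
\begin{equation}
D_x f(y(x),x) = \partial_y f(y(x),x) D_x y(x) + \partial_x f(y(x),x),
\end{equation}
and do the same for $f(y_w(x),x)$, yielding $D_x f(y_w(x),x) = \partial_y f(y_w(x),x) D_x y_w(x) + \partial_x f(y_w(x),x)$. Subtracting these two expressions gives a difference that I would split into three groups: (i) the $\partial_x f$ terms, (ii) a term that isolates the change in $\partial_y f$ applied to the true derivative $D_x y$, and (iii) a term that isolates the change in the derivative $D_x y - D_x y_w$ applied through $\partial_y f$ evaluated at $y_w$.

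Concretely, I would add and subtract $\partial_y f(y_w(x),x) D_x y(x)$ to obtain
\begin{align}
D_x f(y(x),x) - D_x f(y_w(x),x) &= \big(\partial_x f(y(x),x) - \partial_x f(y_w(x),x)\big) \nonumber \\
&\quad + \big(\partial_y f(y(x),x) - \partial_y f(y_w(x),x)\big) D_x y(x) \nonumber \\
&\quad + \partial_y f(y_w(x),x)\big(D_x y(x) - D_x y_w(x)\big). \nonumber
\end{align}
Then I would bound each of the three summands with the triangle inequality and operator-norm submultiplicativity. For the first summand, Lipschitz continuity of $\partial_x f$ in its first argument with constant $L_f$ gives $\|\partial_x f(y(x),x) - \partial_x f(y_w(x),x)\| \le L_f \|y(x) - y_w(x)\|$. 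For the second summand, Lipschitz continuity of $\partial_y f$ in its first argument gives $\|\partial_y f(y(x),x) - \partial_y f(y_w(x),x)\| \le L_f \|y(x)-y_w(x)\|$, and then multiplying by $\|D_x y(x)\|$ contributes $L_f \|D_x y(x)\| \|y(x)-y_w(x)\|$. Combining these first two pieces gives the $L_f(1 + \|D_x y(x)\|)\|y(x) - y_w(x)\|$ function-error term. The third summand is bounded directly by $\|\partial_y f(y_w(x),x)\| \|D_x y(x) - D_x y_w(x)\|$, which is exactly the derivative-error term. Summing the three bounds yields the claimed inequality.

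The argument is essentially routine once the chain rule is written down carefully, so I do not expect a serious obstacle; the only subtlety is bookkeeping about which Lipschitz constant controls which term — one must use that $f$ has Lipschitz partial derivatives with respect to \emph{both} arguments (the hypothesis as stated) so that the single constant $L_f$ simultaneously bounds the variation of $\partial_x f$ and of $\partial_y f$ in their $y$-slots. I would also note implicitly that all norms are the appropriate operator norms on the relevant (Hilbert/Banach) spaces and that $D_x y(x)$ is assumed to exist and be bounded — these are the same standing regularity assumptions used throughout the parametric-PDE discussion, and the result is stated as an analog of Proposition 3.1 in the cited reference, so I would simply mirror that proof structure.
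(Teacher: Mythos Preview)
Your proposal is correct and follows essentially the same approach as the paper: write out the chain rule for both total derivatives, add and subtract the cross term $\partial_y f(y_w(x),x)D_x y(x)$, apply the triangle inequality to the resulting three pieces, and bound the first two via the Lipschitz assumption on the partial derivatives. The paper's proof is a terse version of exactly this argument.
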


\begin{proof}
We have the following bounds
\begin{align}
    \|D_x f(y(x),x) - D_x f(y_w(x),x)\| \leq \left\|\partial_x f(y(x),x) - \partial_x f(y_w(x),x) \right\| + \nonumber \\
    \quad \|\partial_{y}f(y(x),x) - \partial_{y}f(y_w(x),x)\|\|D_x y(x)\| + \|\partial_{y}f(y(x),x)\|\|D_x y(x) - D_x y_w(x) \|\nonumber \\
    = L_f(1 + \|D_x y(x)\|)\|y(x) - y_w(x)\| + \|\partial_y f(y_w(x),x)\|\|D_x y(x) - D_x y_w(x)\|.
\end{align}
\end{proof}

This bound shows that the derivative of the PDE map also needs to be controlled in addition to the function error, in order to obtain accurate gradients, which are required for the solution of optimization problems of the form \eqref{eq:generic_pde_constrained_opt}. For this reason the derivative-informed operator learning formulation was introduced in \cite{OLearyRoseberryChenVillaEtAl2022}, which proposes the operator learning in the parametric Sobolev space $H^1_\pi = H^1(\mathcal{X},\pi;\mathcal{Y})$, i.e. to solve the following optimization problem to train the surrogate:
\begin{equation} \label{eq:inf_dim_dino}
    \min_w \left( \|y - y_w\|_{H^1_\pi}^2 = \mathbb{E}_\pi \left[\|y - y_w\|_{\mathcal{Y}}^2 + \|D_x y - D_x y_w\|^2_{HS(\mathcal{X},\mathcal{Y}} \right] \right),
\end{equation}
where $\|\cdot\|_{HS(\mathcal{X},\mathcal{Y})}$ denotes the Hilbert Schmidt norm for linear operators $A:\mathcal{X} \rightarrow \mathcal{Y}$. Utilizing the linear reduced basis architecture discussed above the empirical risk minimization analogue of \eqref{eq:inf_dim_dino} takes the form of the efficient reduced basis coefficient derivative learning problem in \eqref{eq:dino_emp_risk}.

Neural operators trained in this formulation are referred to as derivative-informed neural operators (DINOs). They have favorable cost accuracy tradeoff over traditional formulations for learning the function in $L^2_\pi$; this includes the sampling costs for all PDE solves including the derivative computations using adjoints and directional sensitivities. This phenomenon is apparent in the numerical results shown in Section \ref{section:parametric_pde_numerics}. Additionally, they produce better gradients leading to better PDE-constrained optimization \cite{luo2023efficient}, and better (Gauss--Newton) Hessians used in the efficient solution of Bayesian inverse problems \cite{cao2024efficient}.

\appendix

\end{document}